\crefname{section}{\S}{\S}
\NewDocumentCommand\Crefnameitem { m m m O{\textup} O{(\roman*)}} {%
  \Crefname{#1enumi}{#2}{#3} 
  \AtBeginEnvironment{#1}{%
    \crefalias{enumi}{#1enumi}%
    \setlist[enumerate,1]{
        label={#4{#5}.},
        ref={#5}
    }%
  }  
}
\NewDocumentCommand\itemcref{mm}{%
  \labelcref{#1#2} of \cref{#1}%
}
\numberwithin{equation}{section}
\newcommand\numberthis{\addtocounter{equation}{1}\tag{\theequation}}
\theoremstyle{thmstyleone}%
\newtheorem{theorem}{Theorem}[section]
\newtheorem{proposition}[theorem]{Proposition}
\newtheorem{corollary}[theorem]{Corollary}
\newtheorem{lemma}[theorem]{Lemma}
\theoremstyle{thmstyletwo}%
\newtheorem{remark}[theorem]{Remark}
\newtheorem{warn}[theorem]{Warning}
\theoremstyle{thmstylethree}%
\newtheorem{definition}[theorem]{Definition}
\newtheorem{construction}[theorem]{Construction}
\Crefname{construction}{Construction}{Constructions}
\NewDocumentCommand\placeholder{}{\:\cdot\:} 
\NewDocumentCommand\NewPairedDelimiterS{mmm}{%
  \DeclarePairedDelimiterX{#1}[1]{#2}{#3}%
    {\ifblank{##1}{\placeholder}{##1}}%
}
\NewDocumentCommand\NewPairedDelimiterSS{mmmO{,}}{%
  \DeclarePairedDelimiterX{#1}[2]{#2}{#3}%
    {\ifblank{##1}{\placeholder}{##1}%
    #4%
    \ifblank{##2}{\placeholder}{##2}}%
}
\NewPairedDelimiterS\normord{\mathopen{:}}{\mathclose{:}} 
\langle\rangle[%
\DeclarePairedDelimiterX\bracket[3]\langle\rangle%
\NewDocumentMathCommand\dparen{m}{\lparen\!\lparen{#1}\rparen\!\rparen}
\NewDocumentMathCommand\dbrack{m}{\lbrack\!\lbrack{#1}\rbrack\!\rbrack}
\providecommand\given{}
\NewDocumentCommand \SetSymbol {o}
  { \nonscript\:#1\vert\allowbreak\nonscript\:\mathopen{} }
\DeclarePairedDelimiterX\Set[1]\{\}
\renewcommand\given{\SetSymbol[\delimsize]}#1 }
\DeclarePairedDelimiterX\GSet[1]\langle\rangle
\renewcommand\given{\SetSymbol[\delimsize]}#1 }
\RenewDocumentCommand \subset {} {\subseteq}
\NewDocumentCommand \concept {m} {\textbf{#1}}
\DeclareRobustCommand\monomorphism
\DeclareRobustCommand\dashrightarrow
\declaremathcommand\dashto{\dashrightarrow}
\NewDocumentCommand \txforall {O{\qquad}} {#1\text{for all}#1}%
\NewDocumentCommand \txand {O{\qquad}} {#1\text{and}#1}%
\NewDocumentCommand \txst {O{~}} {#1\text{s.t.}#1}%
\NewDocumentMathCommand \sequence { m O{1} O{n} }
  { \ensuremath{ {#1}_{#2}, \cdots, {#1}_{#3} } }
\NewDocumentMathCommand \supsequence { m O{1} O{n} }
  { \ensuremath{ {#1}^{#2}, \cdots, {#1}^{#3} } }
\NewDocumentCommand \fun { m e{^_} O{} }
  {%
    \operatorname{#1}%
    \IfValueT{#2}{\sp{#2}}%
    \IfValueT{#3}{\sb{#3}}%
    \ifblank{#4}{}{\mleft(#4\mright)}%
  }
\NewDocumentMathCommand\Dfrac{mm}{%
  \dfrac{\displaystyle #1}{\displaystyle #2}%
}
\NewDocumentMathCommand\odv{m}{\frac{\d}{\d{#1}}}
\NewDocumentMathCommand\pdv{m}{\frac{\partial}{\partial{#1}}}
\declaremathcommand\iu{\mathbb{i}}
\declaremathcommand\L{\mathcal{L}}
\declaremathcommand\U{\mathscr{U}}
\declaremathcommand\o{\otimes}
\declaremathcommand\spn{\fun{span}}
\declaremathcommand\Cc{\accentset{\circ}{C}}
\declaremathcommand\oC{\overline{C}}
\declaremathcommand\O{\mathcal{O}}
\declaremathcommand\PP{\mathbb{P}}
\declaremathcommand\la{\lambda}
\declaremathcommand\ds{\dots}
\declaremathcommand\al{\alpha}
\declaremathcommand\ra{\longrightarrow}
\declaremathcommand\tilX{\widetilde{\X}}
\declaremathcommand\Xc{\accentset{\circ}{\X}}
\declaremathcommand\Pc{\mathring{\mathbb{P}}^1}
\declaremathcommand\Cfb{\fun{\mathscr{C}}}
\declaremathcommand\Image{\fun{Im}}
\declaremathcommand\dQ{\mathfrak{Q}}
\NewDocumentMathCommand\fusion{O{M^1}O{M^2}O{M^3}}{\textstyle\binom{#3}{#1\;#2}}
\NewDocumentMathCommand\rfusion{O{M^1}O{M^2(0)}O{M^3(0)}}{\textstyle\binom{#3}{#1\;#2}}
\declaremathcommand\Fusion{\mathfrak{I}\fusion}
\declaremathcommand\Nusion{N\fusion}
\newmathcommand\vac{\mathbb{1}}
\NewDocumentMathCommand\delfun{mmO{{#2}^{-1}}}{#3\delta\left(\dfrac{#1}{#2}\right)}
\newmathcommand\del{\fun{\delta}}
\NewDocumentMathCommand\pfrac{mm}{\left(\dfrac{#1}{#2}\right)}
\NewDocumentMathCommand\vo{mm}{#1_{(#2)}}
\NewDocumentMathCommand\lo{mm}{#1({\textstyle #2})}
\declaremathcommand\ptseq{\sequence{\pp}}
\declaremathcommand\zseq{\sequence{z}}
\declaremathcommand\rseq{\sequence{r}}
\declaremathcommand\aseq{\supsequence{a}}
\NewDocumentMathCommand \cfbseq { O{1} O{n} }
  { \ensuremath{ (a^{#1},\pp_{#1})\cdots(a^{#2},\pp_{#2}) } }
\NewDocumentMathCommand \ainVseq { O{1} O{n} }
  { \ensuremath{ a^{#1}\in V^{r_{#1}}, \cdots, a^{#2}\in V^{r_{#2}} } }
\NewDocumentMathCommand \cfbseqb { O{1} O{p} }
  { \ensuremath{ (b^{#1},\pp_{+#1})\cdots(b^{#2},\pp_{+#2}) } }
\NewDocumentMathCommand \lobmseq { O{1} O{p} }
  { \ensuremath{ \lo{b^{#1}}{\frac{r_{#1}}{T}+m_{#1}}\cdots\lo{b^{#2}}{\frac{r_{#2}}{T}+m_{#2}} } }
\NewDocumentMathCommand\charge{m}{V_{L}^{#1}}
\NewDocumentMathCommand\charhalf{mm}{V_{#1+L}^{#2}}
\NewDocumentMathCommand\charlam{m}{V_{#1+L}}
\begin{document}

\title[Twisted restricted correlation functions and fusion rules]{Twisted restricted conformal blocks of vertex operator algebras I: $g$-twisted correlation functions and fusion rules}


\author[1]{\fnm{Xu} \sur{Gao}}\email{gausyu@tongji.edu.cn}
\equalcont{These authors contributed equally to this work.}

\author[2]{\fnm{Jianqi} \sur{Liu}}\email{jliu230@sas.upenn.edu}
\equalcont{These authors contributed equally to this work.}

\author*[3]{\fnm{Yiyi} \sur{Zhu}}\email{yzhu51@scut.edu.cn}
\equalcont{These authors contributed equally to this work.}

\affil[1]{\orgdiv{Department of Mathematics}, \orgname{Tongji University}, \orgaddress{\street{1239 Siping Road}, \city{Shanghai}, \postcode{200092}, \state{Shanghai}, \country{China}}}

\affil[2]{\orgdiv{Department of Mathematics}, \orgname{University of Pennsylvania}, \orgaddress{\street{209 South 33rd Street}, \city{Philadelphia}, \postcode{19104}, \state{PA}, \country{USA}}}

\affil*[3]{\orgdiv{Department of Mathematics}, \orgname{South China University of Technology}, \orgaddress{\street{381 Wushan Road}, \city{Guangzhou}, \postcode{510641}, \state{Guangdong}, \country{China}}}


\abstract{In this paper, we introduce a notion of $g$-twisted restricted conformal block on the three-pointed twisted projective line $\x\colon\oC\to\PP^1$ associated with an untwisted module $M^1$ and the bottom levels of two $g$-twisted modules $M^2$ and $M^3$ over a vertex operator algebra $V$. We show that the space of twisted restricted conformal blocks is isomorphic to the space of $g$-twisted (restricted) correlation functions defined by the same datum and to the space of intertwining operators among these twisted modules. As an application, we derive a twisted version of the Fusion Rules Theorem.}

\keywords{vertex operator algebra, fusion rule, twisted module, restricted conformal block}


\pacs[MSC Classification]{17B69, 81T40}

\maketitle
    
\tableofcontents
\section{Introduction}
This is the first paper in a series aiming to explore the general theory of twisted (restricted) conformal blocks of vertex operator algebras. In this paper, we focus on their $g$-twisted correlation functions and the fusion rules among $g$-twisted modules. 

The concept of \emph{twisted} representations of \emph{vertex operator algebras} (VOAs for short) originated in the realization of irreducible representations of twisted affine Lie algebras \cite{Lepowsky}, the construction of the celebrated moonshine module \cite{FLM}, and the study of orbifold models in conformal field theory \cite{DHVW86,DVVV89}. 
Over the past few decades, twisted representations have been extensively studied, e.g. \cite{DL,X95,DLM1,DLM00,DY02,BDM02}. 
One of the most notable applications is in the \emph{orbifold theory} of VOAs \cite{DLM00,DY02,CM16,DRX17}. 
The renowned \emph{orbifold conjecture} posits that every irreducible module over the fixed-point subVOA $V^G$ of $V$ under some finite automorphism group $G<\Aut(V)$ occurs in an irreducible $g$-twisted $V$-module for some $g\in G$, and if $V$ is strongly rational, then $V^G$ also follows suit. 
Recent breakthroughs have established the validity of this conjecture for cyclic groups \cite{M15,CM16,DRX17}. 
This has led to numerous new examples of strongly rational VOAs with irreducible modules emerging as direct summands of certain twisted modules.

In the landscape of VOA theory and the associated \emph{conformal field theory} (CFT for short), a crucial challenge is ascertaining the fusion algebra within the module category, entailing the computation of {fusion rules} among irreducible modules. 
By definition, the \emph{fusion rule} associated to $V$-modules $M^1$, $M^2$, and $M^3$ is the dimension of the space of \emph{intertwining operators} among them. 
In the context of certain rational orbifold CFTs, the application of the renowned \emph{Verlinde Formula} \cite{V88} has allowed the determination of fusion rules through a concrete description of the $S$-matrix in their modular transformations \cite{DVVV89}. 
On the VOA side, due to the intricate nature of twisted irreducible representations, the fusion rules were only established for certain $\Z/2\Z$ or $\Z/3\Z$-orbifold lattice VOAs. 
For instance, in the case of the $\theta$-cyclic orbifold VOAs $M(1)^+$ and $V_L^+$ introduced in \cite{FLM}, fusion rules were determined in \cite{ADL05} through explicit constructions of twisted intertwining operators for Heisenberg and lattice VOAs. 
In general, when dealing with an arbitrary strongly rational VOA $V$, the connection between fusion rules among ordinary modules over the orbifold VOA $V^G$ and fusion rules among twisted modules over $V$ remains elusive, and a unified method for computing fusion rules among twisted modules is currently lacking.

Let $V$ be a VOA, and let $g_1, g_2$, and $g_3$ be three finite-order automorphisms of $V$. The concept of \emph{twisted intertwining operators} among $g_1, g_2,$ and $g_3$-twisted $V$-modules $M^1$, $M^2$, and $M^3$ was initially introduced by Xu in \cite{X95}. 
Xu's definition generalizes the usual \emph{Jacobi identity} of untwisted intertwining operators in \cite{FHL} by incorporating factors involving rational powers of formal variables.
In addition, Huang has further extended the notion of twisted modules and twisted intertwining operators to arbitrary (not necessarily commuting or of finite order) automorphisms $g_1,g_2,$ and $g_3$ in \cite{H10,H18} by generalizing the duality properties of untwisted intertwining operators.  

One approach to fusion rules from the geometric side is by exploring \emph{conformal blocks} on algebraic curves associated to modules/sectors. 
Notably, the isomorphism between the space of \emph{correlation functions} of conformal blocks on the three-pointed complex projective line $(\PP^1, \infty,1,0)$ associated to irreducible $V$-modules $M^2$, $(M^3)'$, and $M^1$, and the space of intertwining operators of type $\fusion$ is well-known, as established in \cite{TUY89,MS89,Z94,Liu}. 
Furthermore, conformal blocks can be reconstructed from their restrictions on the \emph{bottom levels} $M^2(0)$ and $M^3(0)^{\ast}$, as established in \cite{Z,Liu}. 
Then, in this context, the fusion rule $\Nusion$ can be computed through the modules $M^2(0)$ and $M^3(0)^{\ast}$, and the bimodule $A(M^1)$ over \emph{Zhu's algebra} $A(V)$ for the VOA $V$.
This is the well-known \emph{Fusion Rules Theorem} claimed in {\cite{FZ}}. 
While the concept of conformal block for twisted modules has been formulated by Frenkel and Szczesny in \cite{FBZ04,FS04}, the twisted version of the aforementioned story remains unexplored.

In the present work, we address these questions in the simplest nontrivial scenario where $g_1=1$ and $g_2=g_3$, namely, when the $V$-module $M^1$ is untwisted, while $M^2$ and $M^3$ are both $g$-twisted for some automorphism $g$ of order $T<\infty$. We will refer to this scenario as the \emph{$g$-twisted} case. 
Let $I$ be a twisted intertwining operator among $M^1$, $M^2$, and $M^3$. 
In order to accommodate the rational powers $z^{1/T}$ and $w^{1/T}$ occurring in the twisted fields $Y_{M^2}(-,z)$, $Y_{M^3}(-,z)$, and $I(-,w)$ simultaneously, we introduce the \emph{$T$-twisted projective line} $\x\colon\oC\to\PP^1$. 
On this curve, we attach $M^2$ to $0$, $(M^3)'$ to $\infty$, and $M^1$ to a point $1\in \PP^1$ that is other than $0$ and $\infty$. 
In the spirit of \cite{TUY89,Z,Liu}, the space of $g$-twisted correlation functions associated to the datum 
$(\x\colon\oC\to\PP^1, \infty, 1, 0, (M^3)', M^1, M^2)$ can be defined by axiomizing the behaviors of the limit function (on $\oC$) of the \emph{Puiseux} series 
\begin{equation}\label{eq:TheSeries}
  \braket*{v'_3}{Y_{M^3}(a^{1},z_{1})\cdots Y_{M^3}(a^{k-1},z_{k-1})I(v,w)Y_{M^2}(a^{k},z_{k})\cdots Y_{M^2}(a^{n},z_{n})v_2}w^{h},
\end{equation} 
where $v'_3\in (M^3)'$, $v_2\in M^2$, $v\in M^1$, and $a^i\in V$.
Denote this space by $\Cor\fusion$. 
For the generality, we introduce a space $\Cor[\Sigma_{1}(N^3, M^1, M^2)]$ of \emph{$g$-twisted correlation functions} associated to the datum 
$\Sigma_{1}(N^3, M^1, M^2):=(\x\colon\oC\to\PP^1, \infty, 1, 0, N^3, M^1, M^2)$, where $N^3$ is an arbitrary $g^{-1}$-twisted $V$-module. See \cref{warn}.
Our first main theorem (\cref{thm:I=Cor}) establishes an isomorphism between $\Cor \fusion$ and the space $\Fusion$ of \emph{$g$-twisted intertwining operators}. 
To extend our construction to the general case where $M^1$ is also twisted, we have to introduce twisted curves of higher genus. 
For instance, the case when $g_3=g_1g_2=g_2g_1$ and $g_1^T=g_2^T=1$ involves the \emph{Fermat curve} of degree $T$, which has genus $\frac{(T-1)(T-2)}{2}$. This will be addressed in a subsequent work. 

To extend the \emph{Fusion Rules Theorem} to the $g$-twisted scenario, we introduce an auxiliary space $\Cor(\Sigma_{1}(U^3, M^1, U^2))$ of \emph{$g$-twisted restricted correlation functions} associated to the datum $\Sigma_{1}(U^3, M^1, U^2):=(\x\colon\oC\to\PP^1, \infty, 1, 0,$ $ U^3,$ $ M^1, U^2)$, where $U^2$ (resp. $U^3$) is an irreducible left (resp. right) module over the \emph{$g$-twisted Zhu's algebra} $A_g(V)$ introduced in \cite{DLM1}.
The axioms we impose on $\Cor (\Sigma_{1}(U^3, M^1, U^2))$ are based on behaviors of the limit function of the Puiseux series \labelcref{eq:TheSeries}, where $u_2\in M^2(0)$ and $u'_3\in M^3(0)^{\ast}$. 
A crucial difference between our axioms on $\Cor (\Sigma_{1}(U^3, M^1, U^2))$ and those in \cite{Liu} is the additional non-integer shifting of the coefficient functions $F_{n,i}(\pp,\qq)$ in the recursive formulas, due to the ramification of $\oC$ at the points $0$ and $\infty$. 
Our second main theorem (\cref{thm:CorBottom}) establishes an isomorphism between the space of $g$-twisted restricted correlation functions $\Cor (\Sigma_{1}(U^3, M^1, U^2))$ and the space of $g$-twisted correlation functions $\Cor[\Sigma_{1}(\overline{M}(U^3), M^1, \overline{M}(U^2))]$, where $\overline{M}(U^2)$ is the \emph{$g$-twisted generalized Verma module} associated to $U^2$, and $\overline{M}(U^3)$ is the \emph{$g^{-1}$-twisted generalized Verma module} associated to the right $A_g(V)$-module $U^3$ \cite{DLM1}. In the untwisted scenario, it was pointed out by Li in \cite{Li} that the \emph{Fusion Rules Theorem} does not hold for arbitrary $M^2$ and $M^3$.

The axioms of $\Cor (\Sigma_{1}(U^3, M^1, U^2))$ imply, in particular, that a system of correlation functions $S$ defines a linear functional $\varphi_S\colon u_3\otimes v\otimes u_2\mapsto S\bracket*{u_3}{(v,\qq)}{u_2}w^{\deg v}\in\C$ on the vector space $U^3\otimes M^1\otimes U^2$. 
Furthermore, the linear functional $\varphi_S$ vanishes on a subspace $J$ whose definition will be provided in \cref{def:resCfb}. The vanishing of $\varphi_S$ on $J$ can be interpreted as being invariant under the actions of the twisted chiral Lie algebras constrained at $\infty$ and $0$.
We call the space $(U^3\otimes M^1\otimes U^2)/J$ the space of \emph{$g$-twisted restricted coinvariants}, and the dual space $((U^3\otimes M^1\otimes U^2)/J)^\ast$ the space of \emph{$g$-twisted restricted conformal blocks}, denoted by $\Cfb[U^3, M^1, U^2]$. 
In \cref{sec4} and \cref{sec5}, we show that there is a one-to-one correspondence between the space of $g$-twisted restricted conformal blocks and the space of $g$-twisted restricted correlation functions (\cref{thm:iso-restrictcfb-bottomcorrelation}) by reconstructing a system of correlation functions from a given restricted conformal block $\varphi$ using the recursive formulas. 
In the twisted case, the occurrence of non-integer shifting of the coefficient functions $F_{n,i}(\pp,\qq)$ poses several challenges. Our main theorem in \cref{sec4} and \cref{sec5} can also be viewed as the $g$-twisted and restricted version of the ``propagation of vacua'' theorem in \cite{TUY89,Z94,FBZ04,NT}. 

The following diagram summarizes our main theorems in \cref{sec2}--\cref{sec5}, where we assume that $M^1$ is an untwisted $V$-module, and $M^2$ and $M^3$ are admissible $g$-twisted $V$-modules such that $M^2$ and $(M^3)'$ are generalized Verma modules, with bottom levels $M^2(0)=U^2$ and $M^3(0)^\ast=U^3$ being irreducible $A_g(V)$-modules:   
\[
  \begin{tikzcd}[column sep=1in, row sep=0.7in]
  \Cor\fusion\arrow[r, "{\text{Theorem } \labelcref{thm:I=Cor}}"]\arrow[d, "\text{Theorem }\labelcref{thm:CorBottom}"] & \Fusion\arrow[l]\arrow[d,dashed, "{\text{}}"] \\
  \Cor[\Sigma_{1}(U^3, M^1, U^2)]\arrow[r, "\text{Theorem }\labelcref{thm:iso-restrictcfb-bottomcorrelation}"]\arrow[u] &  \Cfb[U^3, M^1, U^2]\arrow[l]\arrow[u,dashed]
  \end{tikzcd}
\]
In particular, when $V$ is $g$-rational \cite{DLM1}, the space $\Fusion$ of intertwining operators is isomorphic to the space of $g$-twisted restricted conformal blocks $\Cfb[U^3, M^1, U^2]$, for arbitrary irreducible $g$-twisted $V$-modules $M^2$ and $M^3$. In a subsequent paper, we will introduce the notions of twisted conformal blocks $\Cfb[\Sigma_{1}(N^3, M^1, M^2)]$ and twisted restricted conformal blocks $\Cfb[\Sigma_{1}(U^3, M^1, U^2)]$ using the actions of (constrained) twisted chiral Lie algebra, and demonstrate the isomorphisms in the following diagram: 
\begin{equation*}
  \begin{tikzcd}[column sep=0.5in, row sep=0.3in]
  \Cor[\Sigma_{1}(N^3, M^1, M^2)]\arrow[r]\arrow[d] & \Cfb[\Sigma_{1}(N^3, M^1, M^2)]\arrow[l]\arrow[d] \\
  \Cor[\Sigma_{1}(U^3, M^1, U^2)]\arrow[r]\arrow[u] & \Cfb[\Sigma_{1}(U^3, M^1, U^2)].\arrow[l] \arrow[u]
  \end{tikzcd}
\end{equation*}

Furthermore, we will show in \cref{sec6} that the space of $g$-twisted restricted coinvariants $(U^3\otimes M^1\otimes U^2)/J$ is isomorphic to both $U^3\otimes_{A_g(V)} B_{g,h}(M^1)\otimes_{A_g(V)}U^2$ and $U^3\otimes_{A_g(V)} A_{g}(M^1)\otimes_{A_g(V)}U^2$, where $B_{g,h}(M^1)$ is an $A_g(V)$-bimodule generalizing $B_h(M^1)$ in \cite{Liu}, and $A_g(M^1)$ is an $A_g(V)$-bimodule constructed in \cite{JJ} that generalizes $A(M^1)$ in \cite{FZ}. Consequently, we have multiple methods to compute the fusion rules $\Nusion$ when $M^2$ and $(M^3)'$ are $g$-twisted generalized Verma modules.
Notably, the isomorphism \[\Fusion\cong (M^3(0)^{\ast}\otimes_{A_g(V)} A_{g}(M^1)\otimes_{A_g(V)}M^2(0))^{\ast}\] extends the renowned \emph{(untwisted) Fusion Rules Theorem} in \cite{FZ,Li,HY,Liu} to the $g$-twisted case. 
We also deduce several applications of the \emph{$g$-twisted Fusion Rules Theorem}. First, we establish the finiteness of $g$-twisted fusion rules under the assumption that $V$ is $C_2$-cofinite. Secondly, when $V$ is strongly rational, using the main theorem of \cite{CM16}, we find the relation between $g$-twisted fusion rules among irreducible $g$-twisted $V$-modules and the ordinary fusion rules among irreducible $V^0$-modules by decomposing $M^1$, $M^2$, and $M^3$ into direct sums of irreducible modules over $V^0$.

Lastly, in \cref{sec7}, we determine the fusion rules among irreducible $\theta$-twisted modules over the \emph{Heisenberg VOA} $M(1)$ and rank one \emph{lattice VOA} $V_{L}$ with $L=\Z\al$ and $(\al|\al)=2$. This is achieved through the calculation of $A_\theta(M(1,\la))$ and $A_{\theta}(V_{L+\frac{1}{2}\al})$, where $\theta$ is the standard involution of $M(1)$ and $V_L$ \cite{FLM}. In these examples, the $\theta$-twisted fusion rules encompass all possibilities of fusion rules among $\theta$-twisted modules, given that $\theta^{2}=1$.

This paper is structured as follows. In \cref{sec2}, we introduce the \emph{twisted projective line} $\x\colon\oC\to\PP^1$ and the space $\Cor[\Sigma_{1}(N^3, M^1, M^2)]$ of \emph{$g$-twsited correlation functions}. The key result in this section establishes the isomorphism between $\Cor\fusion$ and $\Fusion$. 
In \cref{sec3}, we introduce the space $\Cor(\Sigma_{1}(U^3, M^1, U^2))$ of \emph{$g$-twisted restricted correlation functions} and demonstrate its isomorphism to $\Cor[\Sigma_{1}(\overline{M}(U^3), M^1, \overline{M}(U^2))]$.
In \cref{sec4}, we reconstruct a system of correlation functions $S_\varphi$ from a $g$-twisted restricted conformal block $\varphi$ in $\Cfb[U^3, M^1, U^2]$ and establish the locality of $S_\varphi$. In \cref{sec5}, we demonstrate the associativity and other axioms of the reconstructed $S_\varphi$. In \cref{sec6}, we prove the \emph{$g$-twisted fusion rules theorem} and discuss its applications. Finally, in \cref{sec7}, we compute the fusion rules among $\theta$-twisted modules over the Heisenberg VOAs and the rank one lattice VOA using the \emph{$g$-twisted fusion rules theorem}. 

\subsection*{Convention} 
In this paper, we adopt a specific formatting convention to enhance clarity. Text that we want to emphasize, terms with clear contextual meanings, or results available in standard textbooks will be presented in \emph{italic font}. Whereas terminology introduced in the context will be in \concept{bold font}.

We adhere to the following mathematical notation: $\N$ denotes the set of natural numbers, including $0$; $\Z$ stands for the ring of integers; $\Q$ represents the field of rational numbers, and $\C$ denotes the field of complex numbers. 
All vector spaces are defined over $\C$. 
Tensor products are over $\C$ unless otherwise specified.



\section{Space of twisted correlation functions}\label{sec2}

\subsection{Preliminaries}  
  

\emph{Throughout this article, we fix a VOA $(V,Y,\vac,\upomega)$ and an automorphism $g\in \Aut(V)$ of order $T$.} 
The VOA $V$ is then decomposed into $g$-eigenspaces
\[
  V^r=\Set*{ a\in V \given g.a=e^{2\pi\iu\frac{r}{T}}a }.
\] 
Notably, $V^0$ forms a subVOA of $V$, and each $V^r$ serves as a module over $V^0$, utilizing the same vertex operator $Y$ (cf. \cite{DL,DLM1,FLM}). 
Throughout this article, we keep the convention $0\le r\le T-1$ for the superscript $r$. 
Unless otherwise specified, when we talk about (weak, admissible, etc.) \emph{module}, we mean (weak, admissible, etc.) \emph{$V$-modules}. 
We will consistently use notations like $\lo{a}{n}$ to denote the elements in the Lie algebra $\L_g(V)$ (cf. \labelcref{eq:def:LgV}) and notations like $\vo{a}{n}$ to denote the components of a vertex operator $Y(a,z)$ or an intertwining operator $I(a,z)$.

\subsubsection*{Twisted modules and the twisted Jacobi identity}
Recall the following definition: 
\begin{definition}[\cite{DLM1}]\label{def:twistedmodule}
  A \concept{weak $g$-twisted $V$-module} is a vector space $M$ equipped with a linear map  
  \[
    Y_{M}\colon V\longrightarrow \End(M)\{z\},\qquad 
    a\longmapsto Y_{M}(a,z)=\sum_{n\in \Q} \vo{a}{n} z^{-n-1},
  \]
  satisfying the following axioms for all $a\in V^r$, $b\in V$, and $u\in M$:
  \begin{itemize}
    \item \textbf{Index property}: $Y_M(a,z)=\sum_{n\in \frac{r}{T}+\Z} \vo{a}{n} z^{-n-1}$. 
    \item \textbf{Truncation property}: $\vo{a}{n}u=0$ for $n\gg 0$.
    \item \textbf{Vacuum property}: $Y_{M}(\vac,z)=\id_{M}$.
    \item \textbf{Twisted Jacobi identity}: 
    \begin{equation}\label{eq:Jac}
    \begin{aligned}
      \MoveEqLeft
      z_{0}^{-1}
      \del[\dfrac{z_{1}-z_{2}}{z_{0}}]
      Y_{M}(a,z_{1})Y_{M}(b,z_{2})u
      -
      z_{0}^{-1}
      \del[\dfrac{-z_{2}+z_{1}}{z_{0}}]
      Y_{M}(b,z_{2})Y_{M}(a,z_{1})u\\
      &=
      z_{2}^{-1}
      \pfrac{z_{1}-z_{0}}{z_{2}}^{-\frac{r}{T}}
      \del[\dfrac{z_{1}-z_{0}}{z_{2}}]
      Y_{M}(Y(a,z_{0})b,z_{2})u.
    \end{aligned}
    \end{equation}
  \end{itemize}  

  A weak $g$-twisted $V$-module $M$ is called an \concept{admissible $g$-twisted $V$-module} if it admits a subspace decomposition $M=\bigoplus_{n\in \frac{1}{T}\N}M(n)$ such that 
  \begin{equation}\label{eq:admissible}
    \vo{a}{m}M(n)\subset M(\wt a-m-1+n)
  \end{equation}
  for any homogeneous $a\in V$, any $m\in \Z$, and any $n\in \frac{1}{T}\N$.

  A weak $g$-twisted $V$-module $M$ is called a \concept{$g$-twisted $V$-module} if $\vo{L}{0}$ acts on it semi-simply with finite dimensional eigenspaces $M_\lambda$, and the following property holds: for each $\lambda\in \C$, the eigenspace $M_{\lambda+\frac{n}{T}}$ vanishes when $n\in \Z$ is sufficiently small. 
\end{definition}

For a \emph{formal Puiseux series} $f(z)\in \C\dbrack{z^{\frac{1}{T}}}$, we employ the symbol $\Res_{z} f(z)$ for the coefficient of $z^{-1}$ in $f(z)$. 
Multiplying the \emph{twisted Jacobi identity} \labelcref{eq:Jac} with $z^{m+\frac{r}{T}}_1z_2^{n+\frac{s}{T}}z_0^l$ and then applying $\Res_{z_0}\Res_{z_1}\Res_{z_2}$, we obtain its component form as follows: 
\begin{lemma}\label{lem:Jac-comp}
  Let $M$ be a weak $g$-twisted module. 
  Then, for any $a\in V^r$, $b\in V^s$,
  \begin{equation}\label{eq:Jac'}
  \begin{aligned}
    \MoveEqLeft
    \sum_{i\ge 0} 
      \binom{l}{i} (-1)^i 
      \vo{a}{\frac{r}{T}+m+l-i}
      \vo{b}{\frac{s}{T}+n+i}
    -\sum_{i\ge 0} 
      \binom{l}{i} (-1)^{l+i} 
      \vo{b}{\frac{s}{T}+n+l-i}
      \vo{a}{\frac{r}{T}+m+i}\\
    &=\sum_{j\ge 0}
      \binom{m+\frac{r}{T}}{j} 
      \vo{(\vo{a}{j+l}b)}{\frac{r+s}{T}+m+n-j}
  \end{aligned}
  \end{equation}
  holds for all $m,n,l\in \Z$, where $\vo{a}{j+l}b:=\Res_{z}z^{j+l}Y(a,z)b$. 
\end{lemma}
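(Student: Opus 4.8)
The plan is to follow precisely the recipe announced just before the statement: start from the twisted Jacobi identity \labelcref{eq:Jac}, multiply through by $z_1^{m+\frac{r}{T}}z_2^{n+\frac{s}{T}}z_0^{l}$, and apply $\Res_{z_0}\Res_{z_1}\Res_{z_2}$. Since taking residues merely extracts coefficients, the entire proof reduces to bookkeeping with the standard binomial expansions of the three delta-type factors, so the only preparation needed is to fix, once and for all, the variable in which each factor is expanded: $z_0^{-1}\delta(\tfrac{z_1-z_2}{z_0})$ in nonnegative powers of $z_2$, $z_0^{-1}\delta(\tfrac{-z_2+z_1}{z_0})$ in nonnegative powers of $z_1$, and the twisted factor $z_2^{-1}(\tfrac{z_1-z_0}{z_2})^{-\frac r T}\delta(\tfrac{z_1-z_0}{z_2})$ in nonnegative powers of $z_0$.

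For the two left-hand terms, multiplying by $z_0^{l}$ and taking $\Res_{z_0}$ selects the exponent $l$ in each delta expansion, leaving $(z_1-z_2)^{l}Y_M(a,z_1)Y_M(b,z_2)$ and $(-z_2+z_1)^{l}Y_M(b,z_2)Y_M(a,z_1)$. Expanding these binomials and then applying $\Res_{z_1}\Res_{z_2}$ after multiplication by $z_1^{m+\frac r T}z_2^{n+\frac s T}$ pins the two mode indices to $\frac r T+m+l-i,\ \frac s T+n+i$ (first term) and $\frac r T+m+i,\ \frac s T+n+l-i$ (second term), with weights $\binom{l}{i}(-1)^{i}$ and $\binom{l}{i}(-1)^{l-i}$ respectively. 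Since $(-1)^{l-i}=(-1)^{l+i}$, this is exactly the left-hand side of \labelcref{eq:Jac'}.

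The right-hand side is where the genuinely twisted feature enters, and this is the step I expect to require the most care. Writing the twisted factor as $\sum_{k\in\Z}(z_1-z_0)^{k-\frac r T}z_2^{-k+\frac r T-1}$, expanding $(z_1-z_0)^{k-\frac r T}=\sum_{j\ge 0}\binom{k-\frac r T}{j}z_1^{k-\frac r T-j}(-z_0)^{j}$, and using $Y(a,z_0)b=\sum_{t\in\Z}(\vo{a}{t}b)z_0^{-t-1}$, the three residues impose $k=j-m-1$, $t=j+l$, and $q=\frac{r+s}{T}+m+n-j$ for the surviving $z_2$-mode $\vo{(\vo{a}{t}b)}{q}$, so that $\vo{a}{t}b=\vo{a}{j+l}b$ exactly as claimed. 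The one nonroutine point is the coefficient: after substituting $k=j-m-1$ it equals $(-1)^{j}\binom{j-m-1-\frac r T}{j}$, which I must rewrite as $\binom{m+\frac r T}{j}$ by the binomial negation identity $\binom{\alpha}{j}=(-1)^{j}\binom{j-\alpha-1}{j}$ applied with the fractional top entry $\alpha=m+\frac r T$. Finally, I will record well-definedness as operators on $M$: the $j$-sum on the right is finite because $\vo{a}{j+l}b=0$ for $j\gg 0$ by the truncation property in $V$, and each $i$-sum on the left terminates upon application to any $u\in M$ by the truncation property of $M$; hence \labelcref{eq:Jac'} holds as an identity of operators.
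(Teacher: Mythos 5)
Your proposal is correct and follows exactly the route the paper takes: the paper's entire "proof" is the one-sentence recipe of multiplying \labelcref{eq:Jac} by $z_1^{m+\frac{r}{T}}z_2^{n+\frac{s}{T}}z_0^{l}$ and applying $\Res_{z_0}\Res_{z_1}\Res_{z_2}$, and your write-up simply carries out that residue bookkeeping in full, including the correct use of the binomial negation identity to produce the coefficient $\binom{m+\frac{r}{T}}{j}$.
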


By \cref{lem:Jac-comp} and taking into account \cite[(3.5)]{DLM1}, we can readily establish a twisted version of the \emph{duality property}:
\begin{proposition}\label{prop:Jac-fun}
  Let $(M,Y_M)$ be an admissible $g$-twisted module and $M'$ be its graded dual space. 
  Then, for any $a\in V^r$, $b\in V^s$, and any $u\in M$, $u'\in M'$, there exists a rational function $f(z_1,z_2)$ with possible poles only at $z_1=0,z_2=0$, and $z_1=z_2$,
  such that the following identities of formal Laurent\footnote{Note that there are no fractional powers involved.} series hold:
  \begin{align*}
    \braket*{u'}{Y_M(a,z_1)Y_M(b,z_2)u}
    z^{\frac{r}{T}}_1z^{\frac{s}{T}}_2
    &=\iota_{z_1,z_2} f(z_1,z_2),
    \\ 
    \braket*{u'}{Y_M(b,z_2)Y_M(a,z_1)u}
    z^{\frac{r}{T}}_1z^{\frac{s}{T}}_2
    &=\iota_{z_2,z_1}f(z_1,z_2),
    \\
    \braket*{u'}{Y_M(Y(a,z_1-z_2)b,z_2)u}
    (z_2+z_1-z_2)^{\frac{r}{T}}z^{\frac{s}{T}}_2
    &=\iota_{z_2,z_1-z_2}f(z_1,z_2),
  \end{align*}
  where $\iota_{z_1,z_2},\iota_{z_2,z_1}$, and $\iota_{z_2,z_1-z_2}$ send a rational function $f(z_1,z_2)$ to its Laurent series expansions in the domains $|z_1|>|z_2|$, $|z_2|>|z_1|$, and $|z_2|>|z_1-z_2|$ respectively. 
  Furthermore, the component form of the twisted Jacobi identity \labelcref{eq:Jac'} is equivalent to the existence of such a rational function.
\end{proposition}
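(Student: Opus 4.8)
My plan is to reduce to homogeneous vectors, clear the fractional powers by the prefactors $z_1^{r/T}z_2^{s/T}$, and then run the classical rationality/commutativity/associativity argument of \cite{FLM,FHL} in the $g$-twisted form supplied by \cref{lem:Jac-comp} and \cite[(3.5)]{DLM1}. By bilinearity it suffices to treat homogeneous $u\in M(k)$ and $u'\in M(k')^{\ast}$, with $a\in V^r$ and $b\in V^s$ homogeneous. The grading condition \eqref{eq:admissible} then forces $\braket*{u'}{\vo{a}{m}\vo{b}{n}u}=0$ unless $m+n$ equals the single value $C=\wt a+\wt b-2+k-k'$; hence $\braket*{u'}{Y_M(a,z_1)Y_M(b,z_2)u}\,z_1^{r/T}z_2^{s/T}$ has only integer exponents and is \emph{homogeneous} of total degree $D=-C-2+\frac{r+s}{T}$ in $(z_1,z_2)$. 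The truncation property ($\vo{b}{n}u=0$ for $n\gg0$) makes its $z_2$-exponents bounded below, so it is the expansion in the region $|z_1|>|z_2|$ of a homogeneous formal distribution; the reversed product is the analogous expansion in $|z_2|>|z_1|$, whose $z_1$-exponents are bounded below.

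For the forward implication I would first extract the two standard consequences of the twisted Jacobi identity. Applying $\Res_{z_0}$ to \eqref{eq:Jac} (equivalently, reading off \cref{lem:Jac-comp}) yields weak commutativity, $(z_1-z_2)^N\,[Y_M(a,z_1),Y_M(b,z_2)]u=0$ for $N\gg0$, the cut-off $N$ being finite since $\vo{a}{j}b=0$ for $j\gg0$; applying $\Res_{z_1}$ and invoking \cite[(3.5)]{DLM1} yields the twisted iterate (weak associativity) $(z_0+z_2)^M Y_M(a,z_0+z_2)Y_M(b,z_2)u=(z_0+z_2)^M Y_M(Y(a,z_0)b,z_2)u$ for $M\gg0$. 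Pairing weak commutativity with $u',u$ and clearing fractional powers, the two homogeneous cleared coefficients become equal after multiplication by $(z_1-z_2)^N$; since one has $z_2$-exponent bounded below while the other has $z_2$-exponent bounded above, the common product is a Laurent polynomial. Dividing out, the first cleared coefficient is $\iota_{z_1,z_2}f$ for a rational function $f$ whose only poles lie on $\{z_1=0\}\cup\{z_2=0\}\cup\{z_1=z_2\}$; weak commutativity then identifies $\iota_{z_2,z_1}f$ with the second coefficient, and the twisted iterate identifies $\iota_{z_2,z_1-z_2}f$ with the third, the prefactor $(z_2+z_1-z_2)^{r/T}$ being precisely the $\iota_{z_2,z_1-z_2}$-expansion of the multiplier $z_1^{r/T}$.

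For the converse I would run the formal-variable calculus backward. Starting from a rational $f$ with the three stated expansions, I substitute them into the basic twisted delta-function relation underlying \eqref{eq:Jac}: the two expansions $\iota_{z_1,z_2}f$ and $\iota_{z_2,z_1}f$ feed the two commutator terms $z_0^{-1}\del[\frac{z_1-z_2}{z_0}]$ and $z_0^{-1}\del[\frac{-z_2+z_1}{z_0}]$, while $\iota_{z_2,z_1-z_2}f$, carrying the twist $\paren*{\frac{z_1-z_0}{z_2}}^{-r/T}$, feeds the iterate term $z_2^{-1}\del[\frac{z_1-z_0}{z_2}]$, reproducing \eqref{eq:Jac}. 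Taking $\Res_{z_0}\Res_{z_1}\Res_{z_2}$ against $z_1^{m+r/T}z_2^{n+s/T}z_0^{l}$ then returns the component form \eqref{eq:Jac'}, closing the equivalence.

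The step I expect to be the main obstacle is the bookkeeping of the fractional powers $z^{r/T}$ throughout. I must verify that the clearing multiplier $z_1^{r/T}z_2^{s/T}$ interacts correctly with the twisting factor $\paren*{\frac{z_1-z_0}{z_2}}^{-r/T}$ of \eqref{eq:Jac}, i.e.\ that $\iota_{z_2,z_1-z_2}z_1^{r/T}=(z_2+z_1-z_2)^{r/T}$ cancels it exactly in the iterate expansion, and that the weak-commutativity and delta-function manipulations remain valid verbatim for Puiseux rather than Laurent series—in particular that, after clearing, the only singularity produced along the diagonal is a genuine pole with no residual branching. Once this cancellation into the integer-exponent regime is confirmed, the remaining content is the homogeneity-plus-truncation boundedness argument above.
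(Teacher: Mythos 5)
Your argument is correct and follows exactly the route the paper intends: the paper gives no detailed proof, merely invoking \cref{lem:Jac-comp} and \cite[(3.5)]{DLM1}, and your write-up is precisely the standard FHL-style rationality argument (homogeneity plus truncation to bound exponents, weak commutativity from $\Res_{z_0}$ of \labelcref{eq:Jac}, the twisted iterate from \cite{DLM1}, and the reverse delta-function calculus for the converse) with the fractional powers cleared by $z_1^{r/T}z_2^{s/T}$. Your flagged concern about the cancellation $\iota_{z_2,z_1-z_2}z_1^{r/T}=(z_2+z_1-z_2)^{r/T}$ is exactly the right point to verify, and it does go through, so no gap remains.
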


\subsubsection*{The associated Lie algebra and lowest-weight modules} 
Let's recall the Lie algebra $\mathcal{L}_g(V)$ associated to 
a VOA $V$ and an automorphism $g$ of $V$ with order $T$, 
as introduced in \cite{DLM1}. 
The automorphism $g$ can be extended to the vertex algebra $V\otimes\C[t^{\pm\sfrac{1}{T}}]$ by
\begin{equation*}
  g(a\otimes t^\frac{m}{T}):=e^{-2\pi\iu\frac{m}{T}}(ga\otimes t^\frac{m}{T}).
\end{equation*}
Denote the $g$-invariant subspace of $V\otimes\C[t^{\pm\sfrac{1}{T}}]$ by $\mathcal{L}(V, g)$.
It is clear that $\mathcal{L}(V, g)$ is a sub-vertex algebra of $V\otimes\C[t^{\pm\sfrac{1}{T}}]$, with the translation operator $\nabla:= \vo{L}{-1}\otimes\id + \id\otimes\pdv{t}$. 
Then, $\mathcal{L}_g(V)$ is the quotient
\begin{equation}\label{eq:def:LgV}
  \mathcal{L}_g(V):=\mathcal{L}(V, g)/\nabla\mathcal{L}(V, g).
\end{equation}
For any $m\in \Z$ and $a\in V$, we denote the equivalent class of $a\otimes t^\frac{m}{T}$ in $\mathcal{L}_g(V)$ by $\lo{a}{\frac{m}{T}}$. 
Then, $\mathcal{L}_g(V)$ is a Lie algebra, with the Lie bracket given by
\[
  \Lie*{\lo{a}{m+\frac{r}{T}}}{\lo{b}{n+\frac{s}{T}}}
  =\sum_{j\ge 0}\binom{m+\frac{r}{T}}{j} \lo{(\vo{a}{j}b)}{m+n+\frac{r+s}{T}-j},
\]
for any $a\in V^r,b\in V^s$, and $m,n\in \Z$. 
Moreover, $\L_g(V)$ has a natural gradation given by $\deg \lo{a}{\frac{m}{T}}=\wt a-\frac{m}{T}-1$, 
where $m\in\Z$ and $a$ is a homogeneous element of $V$. 
Let $\L_g(V)_n$ be the subspace of $\L_g(V)$ spanned by elements of degree $n\in \frac{1}{T}\Z$. 
Then, we have a triangular decomposition:
\begin{equation*}
  \L_g(V)=\L_g(V)_-\oplus \L_g(V)_0\oplus \L_g(V)_+,
\end{equation*}
where $\L_g(V)_{\pm}=\bigoplus_{n\in \frac{1}{T}\mathbb{Z}_{>0}}\L_g(V)_{\pm n}$.
Recall the following result in \cite{DLM1,Li}:
\begin{proposition}\label{prop:repnOfLgV}
  Let $M$ be a weak $g$-twisted module. Then, the linear map
  \begin{equation*}
    \L_g(V)\longrightarrow \End(M), \quad 
    \lo{a}{\frac{m}{T}}\longmapsto 
    \Res_{z} Y_{M}(a,z)z^{\frac{m}{T}}
  \end{equation*} 
  defines a representation of the Lie algebra $\L_g(V)$ on $M$. 
  Furthermore, if $M$ is equipped with a $\frac{1}{T}\N$-gradation, then $M$ is an admissible $g$-twisted module if and only if $M$ is a graded module for the graded Lie algebra $\L_g(V)$.
\end{proposition}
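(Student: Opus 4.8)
The plan is to verify, for the assignment $\rho\colon\lo{a}{\frac{m}{T}}\longmapsto\Res_z Y_M(a,z)z^{\frac{m}{T}}$, three successive points: that $\rho$ is well defined on the quotient $\L_g(V)=\L(V,g)/\nabla\L(V,g)$, that it intertwines the Lie brackets, and---granting these---that in the presence of a $\frac{1}{T}\N$-grading the admissibility axiom \labelcref{eq:admissible} is precisely the statement that $\rho$ turns $M$ into a graded module. Throughout I use that for homogeneous $a\in V^r$ (where $g$-invariance forces $m\equiv r\bmod T$) one reads off $\rho(\lo{a}{\frac{m}{T}})=\vo{a}{\frac{m}{T}}$, so $\rho$ carries the generators of $\L_g(V)$ to the corresponding modes of $Y_M$.

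I expect the well-definedness to be the main obstacle. As $\rho$ is visibly linear on $\L(V,g)$, the only issue is that it must annihilate $\nabla\L(V,g)$. Writing $\nabla(a\otimes t^{\frac{m}{T}})=(\vo{L}{-1}a)\otimes t^{\frac{m}{T}}+\tfrac{m}{T}\,a\otimes t^{\frac{m-T}{T}}$ and applying $\rho$, I must show
\[
  \Res_z Y_M(\vo{L}{-1}a,z)z^{\frac{m}{T}}+\tfrac{m}{T}\Res_z Y_M(a,z)z^{\frac{m-T}{T}}=0.
\]
This is exactly where the twisted $\vo{L}{-1}$-derivative property $Y_M(\vo{L}{-1}a,z)=\partial_z Y_M(a,z)$ is needed; it is a consequence of the twisted Jacobi identity \labelcref{eq:Jac} (cf. \cite{DLM1}). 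Granting it, the first residue becomes $\Res_z(\partial_z Y_M(a,z))z^{\frac{m}{T}}$, and since the residue of a total $z$-derivative of a formal Puiseux series vanishes, integration by parts rewrites it as $-\tfrac{m}{T}\Res_z Y_M(a,z)z^{\frac{m}{T}-1}$, which cancels the second term. Thus $\rho$ descends to $\L_g(V)$.

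That $\rho$ is a Lie algebra homomorphism then falls out of \cref{lem:Jac-comp}. Specializing the component identity \labelcref{eq:Jac'} to $l=0$ collapses the binomial sums on its left-hand side to the single commutator $\Lie*{\vo{a}{\frac{r}{T}+m}}{\vo{b}{\frac{s}{T}+n}}$, while its right-hand side becomes $\sum_{j\ge 0}\binom{m+\frac{r}{T}}{j}\vo{(\vo{a}{j}b)}{\frac{r+s}{T}+m+n-j}$. Comparing with the bracket of $\L_g(V)$ recorded in the text and applying $\rho$ term by term, this reads $\Lie*{\rho(\lo{a}{m+\frac{r}{T}})}{\rho(\lo{b}{n+\frac{s}{T}})}=\rho\Lie*{\lo{a}{m+\frac{r}{T}}}{\lo{b}{n+\frac{s}{T}}}$ on homogeneous generators, hence everywhere by bilinearity. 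This settles the first assertion of the proposition.

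For the graded statement, suppose $M=\bigoplus_{n\in\frac{1}{T}\N}M(n)$. Each generator $\lo{a}{k}$ with $a\in V$ homogeneous is homogeneous of degree $\wt a-k-1$ in $\L_g(V)$, and such generators span every graded piece $\L_g(V)_d$. Since $\rho(\lo{a}{k})=\vo{a}{k}$, the admissibility requirement that each mode satisfy $\vo{a}{k}M(n)\subset M(\wt a-k-1+n)$ is literally the condition $\L_g(V)_d\,M(n)\subset M(n+d)$ tested on a spanning set, i.e. that $M$ is a graded $\L_g(V)$-module; conversely a graded module structure forces each homogeneous generator to shift the grading by its degree, which is \labelcref{eq:admissible}. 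Hence the two notions coincide, completing the proof.
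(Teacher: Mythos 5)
Your proof is correct. Note that the paper does not actually prove this proposition---it is recalled from \cite{DLM1,Li} without argument---and your reconstruction follows exactly the standard route of those references: well-definedness modulo $\nabla\L(V,g)$ via the $\vo{L}{-1}$-derivative property and integration by parts on the residue, the homomorphism property from the $l=0$ specialization of \labelcref{eq:Jac'}, and the observation that \labelcref{eq:admissible} is the graded-module condition tested on the spanning set of homogeneous generators. The one point you rightly flag is that the $\vo{L}{-1}$-derivative property is not among the axioms listed in \cref{def:twistedmodule}, but it does follow from the twisted Jacobi identity with $b=\vac$ together with the vacuum property, so your appeal to it is legitimate.
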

Recall the following definition in \cite{Li}: 
\begin{definition} \label{def:lowest-weight}
  A weak $g$-twisted module $M$ is called a \concept{lowest-weight module} if there exists $h\in\C$ such that the $\vo{L}{0}$-eigenspace $M_h$ with eigenvalue $h$ is an irreducible $\L_g(V)_0$-module, and $M=\U(\L_g(V)_+)M_h$. 
\end{definition}
If this is the case, then $\vo{L}{0}$ acts on $M$ semi-simply with eigenvalues in $h+\frac{1}{T}\Z$. 
We denote the eigenspace $M_{h+\frac{n}{T}}$ by $M(\tfrac{n}{T})$, and write
$\deg u=\frac{n}{T}$ and $\wt u=h+\frac{n}{T}$ for any homogeneous element $u\in M(\tfrac{n}{T})$. 
Then, $M=\oplus_{n\in \N}M(\frac{n}{T})$ is an admissible $g$-twisted module. 
\begin{definition}
  An admissible $g$-twisted module $M=\bigoplus_{n\in\N}M(\frac{n}{T})$ is said to be \concept{of conformal weight $h\in\C$}, if  $\vo{L}{0}$ acts on $M$ semi-simply and each eigenspace $M_{h+\frac{n}{T}}$ is precisely $M(\frac{n}{T})$.
\end{definition}

\begin{definition}
  Let $M=\bigoplus_{n\in \mathbb{N}}M(\frac{n}{T})$ be an admissible $g$-twisted module. 
  Then, its \emph{graded dual space}
  $M'=\bigoplus_{n\in \mathbb{N}}M(\frac{n}{T})^{\ast}$ 
  naturally carries \emph{right} $g$-twisted vertex operators given by compositions $u'\circ Y_{M}$ ($u'\in M'$). 
  Such a structure induces an usual admissible $g^{-1}$-twisted module structure $Y_{M'}$ defined as
  \begin{equation}\label{eq:def:contragredient}
    Y_{M'}(a, z)u' := u'\circ Y_{M}(e^{z\vo{L}{1}}(-z^{-2})^{\vo{L}{0}}a, z^{-1}), 
  \end{equation}
  where $a\in V$ and $u'\in M'$. This module is called the \textbf{contragredient module} of $M$; refer to \cite{FHL,X95} for more details. 
\end{definition}
\begin{warn}\label{warn}
  For an admissible $g$-twisted module $M$, its components $M(\frac{n}{T})$ need NOT be finite-dimensional. Consequently, its \emph{double} contragredient module $M''$ is not necessarily equal to $M$ itself.
  Hence, in general, given a $g^{-1}$-twisted module $N$, there is no guarantee that there exists an admissible $g$-twisted module $M$ such that $M' = N$.
\end{warn}

Note that \labelcref{eq:def:contragredient} implies
\begin{equation}\label{eq:contragredient'}
  \braket*{Y_{M'}(e^{z\vo{L}{1}}(-z^{-2})^{\vo{L}{0}}a, z^{-1})u'}{u} = 
  \braket*{u'}{Y_{M}(a,z)u}.
\end{equation}
This allows us to spell out a translation between the \emph{right} action of $\L_g(V)$ and the \emph{left} action of $\L_{g^{-1}}(V)$ induced from the contragredient vertex operator $Y_{M'}$.

Indeed, for any $a\in V$ and $m\in\Z$, define 
\begin{equation}\label{eq:def:theta}
  \theta(\lo{a}{\frac{m}{T}}):=\sum_{j\ge 0} \frac{(-1)^{\wt a}}{j!}
  \lo{(\vo{L}{1}^ja)}{\wt a-j-1 + \deg\lo{a}{\frac{m}{T}}}.
\end{equation}
Then, $\theta$ is an anti-isomorphism between $\L_g(V)$ and $\L_{g^{-1}}(V)$ and we have
\begin{equation}\label{eq:contragredient'-comp}
  \theta(\lo{a}{\frac{m}{T}})u'=
  u'\circ\vo{a}{\frac{m}{T}},\qquad
  a\in V, u'\in M'.
\end{equation}
In particular, we have
\begin{equation*}
  \theta(\lo{a}{\frac{m}{T}})M'(n)\subset M'(n-\deg{\lo{a}{\frac{m}{T}}})
\end{equation*}

\subsubsection*{$g$-twisted intertwining operators}

The following definition can be found in \cite{X95}:
\begin{definition}
  Let $M^1$ (resp. $M^2$ and $M^3$) be a weak \emph{untwisted} (resp. \emph{$g$-twisted}) module. 
  An \concept{$g$-twisted intertwining operator of type $\fusion$} is a linear map 
  \[
    I(\cdot, w)\colon 
    M^1 \longrightarrow \Hom(M^2, M^3)\{w\},\quad v\longmapsto I(v,w)=\sum_{m\in \C} v_{m} w^{-m-1},
  \] 
  satisfying the following axioms:
  \begin{itemize}  
    \item \textbf{Truncation property}: For any $v\in M^1$, $ v_2\in M^2$, and a fixed $\lambda\in \C$, we have 
    \[{v}_{\lambda+n}v_2=0\] whenever $n\in \Q$ and $n\gg0$.  
    \item \textbf{Twisted Jacobi identity}: For any $a\in V^{r}$ and $v \in M^1$,  we have
    \begin{equation}\label{eq:twistedJac}
    \begin{aligned}
      \MoveEqLeft
      z_1^{-1}\del[\dfrac{z_2-w}{z_1}]
        Y_{M^3}(a,z_2)I(v,w)
        -z_1^{-1}\del[\dfrac{w-z_2}{-z_1}]
        I(v,w)Y_{M^2}(a,z_2)\\
      &=z_2^{-1}\pfrac{w+z_1}{z_2}^{r/T}
        \del[\dfrac{w+z_1}{z_2}]
        I(Y_{M^1}(a,z_1)v,w).
    \end{aligned} 
    \end{equation}
    \item \textbf{$\vo{L}{-1}$-derivative property}: For any $v\in M^1$, 
    \begin{equation*}
     I(\vo{L}{-1}v, w)=\odv{w}I(v, w).    
    \end{equation*}
  \end{itemize}
  Denote the space of $g$-twisted intertwining operators of type $\fusion$ by $\Fusion$ and set 
  \begin{equation*}
    \Nusion:=\dim\Fusion.
  \end{equation*}
  These numbers are called the \textbf{fusion rules} associated to the above data. 
\end{definition}

The following proposition is a straightforward consequence of the \emph{twisted Jacobi identity} and the \emph{$\vo{L}{-1}$-derivative property}. See \cite{FHL} and \cite{FZ} for more details.
\begin{proposition}\label{prop:I-degree}
  Suppose $M^1$, $M^2$, and $M^3$ are of conformal weights $h_1$, $h_2$, and $h_3$ respectively. 
  For any $v\in M^1$, we can write the intertwining operator $I(v,w)$ as
  \begin{equation*}
    w^{h_1+h_2-h_3}I(v,w)=
    \sum_{m\in \Z}
    \vo{v}{\frac{m}{T}}
    z^{-\frac{m}{T}-1}
    \in\Hom(M^2, M^3)\dbrack{z^{\pm\frac{1}{T}}},
  \end{equation*}
  where $\vo{v}{\frac{m}{T}}={v}_{h_1+h_2-h_3+\frac{m}{T}}$.
  Furthermore, $\vo{v}{\frac{m}{T}}M^2(\frac{n}{T})\subset M^3(\frac{n}{T}+\deg v-\frac{m}{T}-1)$ for any homogeneous $v\in M^1$ and any $m,n\in \N$. 
\end{proposition}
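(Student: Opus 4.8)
The plan is to reduce both assertions to the way $I(v,w)$ interacts with the grading operator $\vo{L}{0}$, following the untwisted arguments of \cite{FHL,FZ}. Everything hinges on a single commutator identity, after which the fractional powers of $w$ and the degree shift can be read off once the conformal-weight gradings of $M^2$ and $M^3$ are taken into account.

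First I would establish the $\vo{L}{0}$-commutator formula
\[
  \Lie*{\vo{L}{0}}{I(v,w)}=w\,\odv{w}I(v,w)+I(\vo{L}{0}v,w).
\]
To prove it, specialize the twisted Jacobi identity \labelcref{eq:twistedJac} to $a=\upomega$. The key simplification is that $\upomega\in V^0$: since $g$ is a VOA automorphism it fixes the conformal vector, so $r=0$ and the fractional factor $\pfrac{w+z_1}{z_2}^{r/T}$ equals $1$, making the identity formally identical to its untwisted analogue. Now apply the operation $\Res_{z_1}\Res_{z_2}(z_2\,\cdot\,)$ to both sides. On the left, $\Res_{z_1}$ collapses the two $\delta$-distributions to $Y_{M^3}(\upomega,z_2)I(v,w)-I(v,w)Y_{M^2}(\upomega,z_2)$, and then $\Res_{z_2}(z_2\,\cdot\,)$ produces $\Lie*{\vo{L}{0}}{I(v,w)}$, using $\vo{L}{0}=\Res_{z_2}z_2\,Y(\upomega,z_2)$ on each module. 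On the right, $\Res_{z_2}(z_2\,\cdot\,)$ applied to $z_2^{-1}\del[\frac{w+z_1}{z_2}]$ yields the factor $w+z_1$, and then $\Res_{z_1}$ applied to $(w+z_1)\sum_k I(\vo{L}{k}v,w)z_1^{-k-2}$ retains only the $k=-1$ and $k=0$ terms, giving $wI(\vo{L}{-1}v,w)+I(\vo{L}{0}v,w)$. The $\vo{L}{-1}$-derivative property rewrites $I(\vo{L}{-1}v,w)$ as $\odv{w}I(v,w)$, yielding the displayed formula.

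Next, for homogeneous $v$ we have $\vo{L}{0}v=(\wt v)v$ with $\wt v=h_1+\deg v$, so $I(\vo{L}{0}v,w)=(\wt v)I(v,w)$; comparing coefficients of $w^{-m-1}$ in the commutator formula gives $\Lie*{\vo{L}{0}}{v_m}=(\wt v-m-1)v_m$. Thus $v_m$ raises the $\vo{L}{0}$-eigenvalue by $\wt v-m-1$. Because $\vo{L}{0}$ acts semisimply on $M^2$ (resp.\ $M^3$) with eigenvalues in $h_2+\frac{1}{T}\N$ (resp.\ $h_3+\frac{1}{T}\N$), a component $v_m$ can be nonzero only when $\wt v-m-1+h_2-h_3\in\frac{1}{T}\Z$, i.e.\ $m\in h_1+h_2-h_3+\frac{1}{T}\Z$ (absorbing the integer $\deg v-1$). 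This is exactly what justifies the reindexing $\vo{v}{\frac{m}{T}}:=v_{h_1+h_2-h_3+\frac{m}{T}}$ and shows $w^{h_1+h_2-h_3}I(v,w)\in\Hom(M^2,M^3)\dbrack{w^{\pm\frac{1}{T}}}$. Finally, applying $\vo{v}{\frac{m}{T}}$ to $M^2(\frac{n}{T})$ (eigenvalue $h_2+\frac{n}{T}$) produces eigenvalue $h_3+\frac{n}{T}+\deg v-\frac{m}{T}-1$, which records precisely the claimed inclusion $\vo{v}{\frac{m}{T}}M^2(\frac{n}{T})\subset M^3(\frac{n}{T}+\deg v-\frac{m}{T}-1)$.

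The only genuinely twisted input is the observation that $\upomega\in V^0$, which removes the fractional factor and makes the residue manipulation identical to the classical computation; the remaining work is purely bookkeeping, namely tracking that after normalization by $w^{h_1+h_2-h_3}$ the resulting Puiseux series in $w$ collapses to honest $\frac{1}{T}\Z$-powers. I expect no serious obstacle beyond keeping the eigenvalue accounting consistent.
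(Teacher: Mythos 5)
Your proof is correct and is essentially the argument the paper intends: the paper dismisses this as a "straightforward consequence of the twisted Jacobi identity and the $\vo{L}{-1}$-derivative property" with a pointer to \cite{FHL,FZ}, and you have correctly reconstructed that standard route — specializing \labelcref{eq:twistedJac} to $a=\upomega\in V^0$ (so the fractional factor drops out), extracting $\Lie*{\vo{L}{0}}{I(v,w)}=w\odv{w}I(v,w)+I(\vo{L}{0}v,w)$, and then doing the eigenvalue bookkeeping against the semisimple $\vo{L}{0}$-gradings of $M^2$ and $M^3$. (Your writing the expansion in powers of $w$ rather than the $z$ appearing in the statement is the correct reading; that $z$ is a typo in the paper.)
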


Multiplying the \emph{twisted Jacobi identity} \labelcref{eq:twistedJac} with $z^{m+\frac{r}{T}}_1z_2^{h+\frac{n}{T}}z_0^l$, where $m,n,l\in \Z$, then take $\Res_{z_0}\Res_{z_1}\Res_{z_2}$, we obtain its component form:
\begin{lemma} 
  Let $a\in V^r$ and $v\in M^1$, we have
  \begin{equation}\label{eq:twistedJac-comp}
  \begin{aligned}
    \MoveEqLeft
    \sum_{i\ge 0} 
    \binom{l}{i}(-1)^i
    \vo{a}{\frac{r}{T}+m+l-i}\vo{v}{\frac{n}{T}+i}
    -\sum_{i\ge 0}
    \binom{l}{i}(-1)^{l+i}
    \vo{v}{\frac{n}{T}+l-i}\vo{a}{m+\frac{r}{T}+i}\\
    &=\sum_{j\ge 0}
    \binom{m+\frac{r}{T}}{j}
    \vo{(\vo{a}{j+l}v)}{m-j+\frac{r+n}{T}},
  \end{aligned}
  \end{equation}
  for any $m,n,l\in \Z$. 
\end{lemma}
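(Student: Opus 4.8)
The plan is to mirror the derivation of \cref{lem:Jac-comp} from the module Jacobi identity \labelcref{eq:Jac}, now applied to the intertwining Jacobi identity \labelcref{eq:twistedJac}. The bookkeeping differences are that the second insertion variable is the intertwining variable $w$, which carries fractional powers through the renormalized modes of \cref{prop:I-degree}, and that the role played by the $V^s$-grading of $b$ in \cref{lem:Jac-comp} is now played by the uniform conformal-weight shift $h:=h_1+h_2-h_3$. Concretely, I would multiply \labelcref{eq:twistedJac} by the monomial $z_2^{m+\frac{r}{T}}\,w^{h+\frac{n}{T}}\,z_1^{l}$ (so that $z_2$ tracks the position of $a$, $w$ the position of $v$, and $z_1$ plays the role of the difference variable $z_0$ of \labelcref{eq:Jac}) and then apply $\Res_{z_1}\Res_{z_2}\Res_{w}$. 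Since all series involved are truncated from above, these residues are well defined and commute with the coefficientwise identity \labelcref{eq:twistedJac}.

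For the two left-hand terms I would carry out $\Res_{z_1}$ first. The factor $z_1^{l}z_1^{-1}\del[\frac{z_2-w}{z_1}]$ has residue $(z_2-w)^{l}$ expanded in the domain $\abs{z_2}>\abs{w}$, that is $\sum_{i\ge0}\binom{l}{i}(-1)^{i}z_2^{l-i}w^{i}$, whereas $z_1^{l}z_1^{-1}\del[\frac{w-z_2}{-z_1}]$ has residue $(-1)^{l}(w-z_2)^{l}$ expanded in the complementary domain $\abs{w}>\abs{z_2}$, that is $(-1)^{l}\sum_{i\ge0}\binom{l}{i}(-1)^{i}w^{l-i}z_2^{i}$. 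The remaining $\Res_{z_2}$ then extracts the modes $\vo{a}{\frac{r}{T}+m+l-i}$ and $\vo{a}{\frac{r}{T}+m+i}$ from $Y_{M^3}(a,z_2)$ and $Y_{M^2}(a,z_2)$, while by \cref{prop:I-degree} the $\Res_{w}$ extracts the renormalized modes $\vo{v}{\frac{n}{T}+i}$ and $\vo{v}{\frac{n}{T}+l-i}$ from $w^{h+\frac{n}{T}+\cdots}I(v,w)$. This yields exactly the two sums on the left of \labelcref{eq:twistedJac-comp}, the overall minus sign of the second term and the prefactor $(-1)^{l}$ combining into its coefficient $-(-1)^{l+i}$.

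For the right-hand term I would perform $\Res_{z_2}$ first: expanding $z_2^{m+\frac{r}{T}}z_2^{-1}\pfrac{w+z_1}{z_2}^{r/T}\del[\frac{w+z_1}{z_2}]$ collapses the delta together with the fractional prefactor into $(w+z_1)^{m+\frac{r}{T}}$. This is the one genuinely twisted feature: the non-integer exponent forces the expansion $\iota_{w,z_1}(w+z_1)^{m+\frac{r}{T}}=\sum_{j\ge0}\binom{m+\frac{r}{T}}{j}w^{m+\frac{r}{T}-j}z_1^{j}$ in the domain $\abs{w}>\abs{z_1}$, and this is precisely the source of the twisted binomial coefficient $\binom{m+\frac{r}{T}}{j}$. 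Since $M^1$ is untwisted, $Y_{M^1}(a,z_1)$ has integer modes, so $\Res_{z_1}z_1^{l+j}Y_{M^1}(a,z_1)v=\vo{a}{j+l}v$, and then $\Res_{w}w^{h+\frac{n}{T}+m+\frac{r}{T}-j}I(\vo{a}{j+l}v,w)=\vo{(\vo{a}{j+l}v)}{m-j+\frac{r+n}{T}}$ by \cref{prop:I-degree} applied to the element $\vo{a}{j+l}v\in M^1$, the shift $h$ being the same for every element of $M^1$. Matching the three pieces gives \labelcref{eq:twistedJac-comp}. I expect no deep obstacle here; the only thing requiring care is the consistent tracking of the fractional exponents, namely keeping the uniform shift $h$ attached correctly to every occurrence of $I(\cdot,w)$ and choosing the correct expansion domain for each delta and for $(w+z_1)^{m+\frac{r}{T}}$, so that the signs $(-1)^{i}$, $-(-1)^{l+i}$ and the coefficient $\binom{m+\frac{r}{T}}{j}$ emerge as stated.
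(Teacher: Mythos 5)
Your proof is correct and is essentially the paper's own derivation: the component identity is obtained by multiplying \labelcref{eq:twistedJac} by a monomial in $z_1,z_2,w$ and extracting residues in each variable, with the $\iota_{z_2,w}$ and $\iota_{w,z_2}$ expansions of $(z_2-w)^l$ producing the two left-hand sums and the $\iota_{w,z_1}$ expansion of $(w+z_1)^{m+\frac{r}{T}}$ producing the twisted binomial $\binom{m+\frac{r}{T}}{j}$, exactly as you describe. Your multiplier $z_2^{m+\frac{r}{T}}w^{h+\frac{n}{T}}z_1^{l}$ in fact correctly matches the roles of the variables in \labelcref{eq:twistedJac}, whereas the paper's one-line description carries over the variable names from \cref{lem:Jac-comp} verbatim.
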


\subsection{Functions on the twisted projective line}\label{sec:TheCurve}
Now we introduce the algebraic curve $\bar{C}$.
For the general theory of algebraic curves, we refer to various algebraic geometry textbooks such as \cite[Ch.~7]{liu2002algebraic} and \cite[Ch.~IV]{Hartshorne}, or consult \cite{fulton} for a traditional treatment and \cite{Schlag} for an analytic approach to Riemann surfaces.

Throughout this paper and its subsequences, we adopt the following conventions on an integral scheme $(\X,\O_{\X})$ that is proper over $\C$:
\begin{itemize}[wide]
  \item We use the same notation for the analytification of $\X$. According to \emph{GAGA}, the categories of coherent \emph{algebraic} sheaves on $\X$ and coherent \emph{analytic} sheaves on $\X$ are equivalent, thus the terminology of \concept{$\O_{\X}$-module} is unambiguous.
  A special case provides the equivalence between \concept{rational functions} and \concept{meromorphic functions} on $\X$, which allows us to use these terminologies interchangeably.
  We use $\cK_{\X}$ to denote the constant sheaf of the \concept{field of rational functions}. 
  \item The \concept{de Rham complex} of $\X$ is denoted as $(\Omega^{\bullet}_{\X},\d)$. When the $1$-forms $\d{x_1},\cdots,\d{x_n}$ form a basis of the first de Rham cohomology space $H^1(\X,\Omega^{\bullet}_\X)$, we will use $\pdv{x_1},\cdots,\pdv{x_n}$ to denote its dual basis in the module of derivatives.
  \item Unless otherwise specified, a \concept{point} of $\X$, we mean a closed point of $\X$. We use Fraktur letters, such as $\pp$ and $\qq$, to denote such points. By abuse of notation, we do not distinguish a \emph{skyscraper sheaf} supported at a point $\pp$ with its stalk at $\pp$. 
  For a point $\pp$, we use $\cI_\pp$ to denote its ideal sheaf and $\kappa_{\pp}$ the residue field $\O_{\X}/\cI_\pp$.
  \item Following \cite{FBZ04}, we use $\O_{\pp}$ to denote the \concept{complete local ring at $\pp$}. That is the \emph{$\cI_\pp$-adic completion} of $\O_{\X}$, namely the limit $\varprojlim\O_{\X}/\cI_\pp^n$. 
  It is also the completion of the stalk of $\O_\X$ at $\pp$. 
\end{itemize}
Now, we assume that $\X$ is a curve, i.e., $\dim \X=1$. 
\begin{itemize}[wide]
  \item Each complete local ring $\O_{\pp}$ is a DVR. We use $v_{\pp}$ to denote the normalized valuation (i.e. $v_{\pp}(\O_{\pp}\setminus\Set{0})=\N$). 
  This valuation extends to the function field $\cK_\X$.
  \item A \concept{divisor} on $\X$ is a linear combination of points of $\X$. 
  The \concept{support} $\supp\Delta$ of a divisor $\Delta$ is the set of points involved (i.e. has nonzero coefficient) in $\Delta$. 
  Any rational function $f$ on $\X$ defines a divisor $(f):=\sum_{\pp\in\X}v_{\pp}(f)\pp$.
  Given a divisor $\Delta$, we use $\O(\infty\Delta)$ to denote the sheaf of meromorphic functions with possible poles along $\Delta$.
  \item By the \emph{Cohen structure theorem}, $\O_{\pp}\cong\C\dbrack{t}$ for some topological generator $t$ of $\O_\X$. 
  Such an element $t$ is called a \concept{local coordinate at $\pp$}. 
  The choice of $\pp$ and $t$ provides a morphism $\iota_t\colon\O_{\X}\monomorphism\O_{\pp}\cong\C\dbrack{t}$, images under which are called \concept{formal expansions}. 
  The pair $(\pp,t)$ is thus called a \concept{local chart}. 
\end{itemize}

\subsubsection*{Formal expansions of rational functions}
First, we recall the general formal expansions. 

\begin{lemma}\label{lem:formalEXP}
  Let $U$ be an open neighborhood of a point $\pp$ of $\X$. 
  Then any rational function $f$ on $U$ admits an expansion 
  \[
    f = \sum_{n=v_{\pp}(f)}^{\infty}a_nt^n,
  \]  
  where $a_n\in\C$ and $t$ is a fixed local coordinate at $\pp$. 
  This equality is understood in the sense that the series on the right-hand side converges to $f$ under the $\cI_\pp$-adic topology. 
\end{lemma}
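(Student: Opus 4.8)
The plan is to realize the claimed expansion as the image of $f$ under the formal-expansion embedding of the function field into formal Laurent series, after clearing the pole at $\pp$. Concretely, I would set $m:=v_\pp(f)$ and pass to $g:=t^{-m}f$, which lies in $\cK_\X$ and satisfies $v_\pp(g)=-m\,v_\pp(t)+v_\pp(f)=0$ since $t$ is a local coordinate (so $v_\pp(t)=1$). It then suffices to expand the valuation-zero function $g$ as a power series with nonzero constant term and to multiply back by $t^m$; the whole content is thus the regular case together with the translation by a power of $t$.

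First I would assemble the structural inputs already available in the setup. The formal-expansion map $\iota_t\colon\O_\X\monomorphism\O_\pp$ factors through the stalk $\O_{\X,\pp}$, whose completion is $\O_\pp$; since $\X$ is integral, $\O_{\X,\pp}$ is a Noetherian local \emph{domain}, so \emph{Krull's intersection theorem} gives $\bigcap_n\cI_\pp^n=0$ and hence the completion map $\O_{\X,\pp}\monomorphism\O_\pp$ is injective. By the \emph{Cohen structure theorem} we have $\O_\pp\cong\C\dbrack{t}$, a DVR, with maximal ideal $(t)$ and with the $\cI_\pp$-adic topology coinciding with the $t$-adic topology. Taking fraction fields of the injection $\O_{\X,\pp}\monomorphism\O_\pp$ extends $\iota_t$ to an embedding $\cK_\X\monomorphism\operatorname{Frac}(\O_\pp)\cong\C\dparen{t}$; because this embedding identifies $v_\pp$ with the $t$-adic order, the $t$-adic order of $\iota_t(h)$ equals $v_\pp(h)$ for every $h\in\cK_\X$.

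With these facts in hand I would read off the coefficients. Since $v_\pp(g)=0$, the element $\iota_t(g)$ is a unit of the DVR $\O_\pp$, i.e. a power series $\sum_{k\ge0}b_kt^k$ with $b_0\ne0$; multiplying by $t^m$ yields $\iota_t(f)=\sum_{n\ge m}a_nt^n$ with $a_n:=b_{n-m}$, whose lowest nonzero index is exactly $m=v_\pp(f)$, as claimed. The convergence assertion is then essentially the definition of $\O_\pp$ as the completion $\varprojlim\O_\X/\cI_\pp^n$: for each $N$ the difference $\iota_t(g)-\sum_{k=0}^{N}b_kt^k$ lies in $\cI_\pp^{N+1}$, so the partial sums converge to $g$ in the $\cI_\pp$-adic topology, and multiplying through by $t^m$ recovers $f$.

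I expect the only genuine subtlety to be the bookkeeping around the pole when $m=v_\pp(f)<0$, where the series is Laurent rather than a power series and convergence must be read in the $v_\pp$-adic topology on $\cK_\X$ extending the $\cI_\pp$-adic topology on $\O_\pp$: the tails satisfy $v_\pp\bigl(f-\sum_{n=m}^{N}a_nt^n\bigr)=v_\pp\bigl(t^m\sum_{k>N-m}b_kt^k\bigr)\ge N+1\to\infty$, so they still tend to $0$. Everything else is standard commutative algebra, with the \emph{Cohen structure theorem} and \emph{Krull's intersection theorem} supplying the essential input.
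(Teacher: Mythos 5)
Your proof is correct and follows essentially the same route as the paper: the regular case is the formal-expansion embedding $\iota_t\colon\O_{\X}\monomorphism\O_{\pp}\cong\C\dbrack{t}$, and the rational case is obtained by passing to fraction fields (your multiplication by $t^{-v_\pp(f)}$ is just an explicit way of "taking fractional sections of the canonical embedding," as the paper puts it). The extra detail you supply — injectivity via Krull's intersection theorem, the identification of $v_\pp$ with the $t$-adic order, and the tail estimate for convergence when $v_\pp(f)<0$ — is all sound and merely fills in what the paper leaves implicit.
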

\begin{proof}
  When $f$ is regular on $U$, this is just a concrete way to spell out the embedding $\iota_{t}$, where each $\sum_{n=0}^{m}a_nt^n$ serves as a representative of the class $f+\cI_{\pp}^{m+1}$. 
  The rational case follows by taking the fractional sections on both sides of the canonical embedding.
\end{proof}

To connect this lemma with its analytic counterpart, we need the following notions.
\begin{definition}
  For a point $\pp$ of $\X$, a \concept{(germ of) $1$-cycle around $\pp$} is an element in the \emph{costalk} of the \emph{cosheaf} of punctured singular homology $U\mapsto H_1(U-\pp,\Z)$, which can be presented as a $1$-cycle on a sufficiently small punctured neighborhood of $\pp$.
  Such a $1$-cycle is called \concept{simple} if its winding number is $1$.
\end{definition}
\begin{definition}
  For any meromorphic $1$-form $\alpha$ on $\X$, its \concept{residue} $\Res_{\pp}\alpha$ at a point $\pp\in \X$ is the value of the integration $\frac{1}{2\pi\iu}\int_{\gamma}\alpha$, where $\gamma$ is a simple $1$-cycle around $\pp$. 
\end{definition}

By this definition, given any rational map $f\colon \X\to\PP^1$ and any meromorphic $1$-form $\alpha$ on $\PP^1$, we have
\begin{equation}\label{eq:InvRes}
  \Res_{\pp}f^{\ast}\alpha = 
  \fun{mult}_{f}(\pp)\cdot\Res_{f(\pp)}\alpha.
\end{equation}
Here $\fun{mult}_{f}(\pp)$ denotes the \concept{multiplicity} of $f$ at $\pp$, which is the $\kappa_{f(\pp)}$-dimension of the fiber of the direct image $f_{\ast}\O_{\X}$ at $f(\pp)$.

\begin{lemma}\label{lem:formalEXPcoef}
  The coefficient $a_n$ in \cref{lem:formalEXP} can be computed by the formula 
  \[
    a_n = \Res_{\pp}t^{-n-1}f\d{t}.
  \]
\end{lemma}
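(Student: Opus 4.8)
The plan is to pass from the purely formal ($\cI_\pp$-adic) statement of \cref{lem:formalEXP} to the analytic contour-integral definition of the residue, and then reduce to the elementary computation $\Res_{\pp}t^{k}\d{t}=\delta_{k,-1}$. First I would observe that, since $f$ is rational and hence (by \emph{GAGA}) meromorphic, the expansion $f=\sum_{n\ge v_{\pp}(f)}a_nt^n$ furnished by \cref{lem:formalEXP} is not merely $\cI_\pp$-adically convergent but genuinely convergent in the analytic sense: in the chart $(\pp,t)$ the local coordinate $t$ realizes a biholomorphism of a neighborhood of $\pp$ onto a disk about $0$, under which $f$ becomes a meromorphic function with a convergent Laurent expansion having finite principal part (it starts at $n=v_{\pp}(f)$) and with coefficients exactly the $a_n$. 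Thus the formal and the analytic expansions agree on some punctured disk $0<\abs{t}<\varepsilon$.

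Next I would unwind the definition of the residue. Since $t^{-n-1}f\d{t}$ is a meromorphic $1$-form, its residue at $\pp$ is $\Res_{\pp}t^{-n-1}f\d{t}=\frac{1}{2\pi\iu}\int_{\gamma}t^{-n-1}f\,\d{t}$ for a simple $1$-cycle $\gamma$ around $\pp$; by the very definition of a germ of $1$-cycle, $\gamma$ may be represented on an arbitrarily small punctured neighborhood, so I take it inside the annulus of convergence $0<\abs{t}<\varepsilon$. Substituting the convergent expansion and using that the Laurent series converges uniformly on the compact image of $\gamma$, I integrate term by term:
\begin{equation*}
  \Res_{\pp}t^{-n-1}f\,\d{t}
  =\sum_{m\ge v_{\pp}(f)}a_m\cdot\frac{1}{2\pi\iu}\int_{\gamma}t^{m-n-1}\,\d{t}.
\end{equation*}

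Finally I would compute each integral. For $k\ne -1$ the form $t^{k}\d{t}$ is exact, being the differential of $t^{k+1}/(k+1)$, so its integral over the closed cycle $\gamma$ vanishes; for $k=-1$ one has $\frac{1}{2\pi\iu}\int_{\gamma}\frac{\d{t}}{t}=1$, since $\gamma$ is simple, i.e. has winding number $1$. Hence $\frac{1}{2\pi\iu}\int_{\gamma}t^{m-n-1}\,\d{t}=\delta_{m,n}$, only the $m=n$ term survives, and $\Res_{\pp}t^{-n-1}f\,\d{t}=a_n$, as claimed. The one genuinely non-formal point — and the step I expect to be the main obstacle — is the first: identifying the $\cI_\pp$-adic expansion with an \emph{analytically} convergent Laurent expansion, which is what legitimizes both the contour integration and the term-by-term interchange. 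Once that bridge is in place, everything downstream is the classical residue calculus.
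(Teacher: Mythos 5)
Your proof is correct and follows essentially the same route as the paper: both reduce to the computation $\int_{\gamma}t^{k}\d{t}=2\pi\iu\,\delta_{k,-1}$ for a simple $1$-cycle $\gamma$ and then read off the coefficient term by term. The paper states this in two lines and leaves the formal-to-analytic identification and the term-by-term integration implicit (it records the absolute convergence of the expansion only as a corollary afterwards), whereas you spell those justifications out; the content is the same.
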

\begin{proof}
  Direct computation shows 
  \[
    \int_{\gamma}t^n\d{t}=
    \begin{cases*}
      2\pi\iu & if $n=-1$,\\
      0 & otherwise,
    \end{cases*}
  \]
  where $\gamma$ is a simple $1$-cycle around $\pp$. 
  Then, the statement follows.
\end{proof}
\begin{remark}
  Let $t$ be a local coordinate at $\pp$. The lemma shows the following: for any meromorphic function $f$, we have $\Res_{\pp}f\d{t} = \Res_t(\iota_{t}f)$.
\end{remark}
As a corollary of this lemma, the series in \cref{lem:formalEXP} converges absolutely on $U-\pp$ and defines a meromorphic function which can be expressed as the rational function $f$.

The following is a special case of \cite[Remark 9.2.10]{FBZ04}, originally credited to \cite{Tate}.
\begin{lemma}[Strong Residue Theorem]\label{lem:SRT}
  Let $\ptseq$ be distinct points. 
  Then a collection of formal series $\Set*{f_i\in\O_{\pp_i}}_{i=1,\cdots,n}$ has the property that 
  \[
    \sum_{i=1}^{n}\Res_{\pp_i}f\alpha = 0,\txforall\alpha\in\Gamma(\X-\Set{\ptseq},\Omega^1),
  \]
  if and only if $f_i$ can be extended to the \emph{same} regular function on $\X-\Set{\ptseq}$.
\end{lemma}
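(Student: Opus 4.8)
My plan is to read \cref{lem:SRT} as an instance of the nondegeneracy of the residue pairing between functions and $1$-forms that are regular away from $\Sigma:=\Set{\ptseq}$, and to prove it from the global residue theorem on the proper curve $\X$ together with the realizability of meromorphic $1$-forms having prescribed polar parts. I fix a local coordinate $t_i$ at each $\pp_i$ and abbreviate $R:=\Gamma(\X-\Sigma,\O_{\X})$; in the nontrivial direction I will produce a single $g\in R$ whose formal expansion at each $\pp_i$ (in the sense of \cref{lem:formalEXP}) equals $f_i$.

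The direction $(\Leftarrow)$ is immediate. If the $f_i$ are the expansions of one $g\in R$, then for every $\alpha\in\Gamma(\X-\Sigma,\Omega^1)$ the product $g\alpha$ is a meromorphic $1$-form on all of $\X$ whose poles lie in $\Sigma$. The residue theorem on the proper curve $\X$ gives $\sum_{\pp\in\X}\Res_{\pp}(g\alpha)=0$; since $g\alpha$ is regular off $\Sigma$, only the $\pp_i$ contribute, and $\Res_{\pp_i}(g\alpha)=\Res_{\pp_i}(f_i\alpha)$ by \cref{lem:formalEXPcoef}, which is the asserted vanishing.

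For $(\Rightarrow)$ assume $\sum_i\Res_{\pp_i}(f_i\alpha)=0$ for all $\alpha\in\Gamma(\X-\Sigma,\Omega^1)$. As each $f_i\in\O_{\pp_i}$ has vanishing polar part, it suffices to show the $f_i$ are one common constant, which then serves as $g$. Write $f_j=\sum_{k\ge0}c^{(j)}_k t_j^{k}$. The tool is the classical realizability statement: any assignment of polar parts at the points of $\Sigma$ whose residues sum to zero is the polar part of some $\alpha\in\Gamma(\X-\Sigma,\Omega^1)$, holomorphic elsewhere, the sole obstruction being the total residue (equivalently $H^1(\X,\Omega^1_{\X})\cong\C$). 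For fixed $j$ and $k\ge1$ the polar part $t_j^{-k-1}\d{t_j}$ at $\pp_j$ (and $0$ at the other points of $\Sigma$) has zero residue, so it is realized by some $\alpha_{j,k}$. A local residue computation gives $\Res_{\pp_i}(f_i\alpha_{j,k})=0$ for $i\ne j$ and $\Res_{\pp_j}(f_j\alpha_{j,k})=c^{(j)}_k$, whence the hypothesis forces $c^{(j)}_k=0$ for all $k\ge1$; thus each $f_j$ is the constant $c^{(j)}$. Realizing finally the polar part $t_j^{-1}\d{t_j}$ at $\pp_j$ together with $-t_1^{-1}\d{t_1}$ at $\pp_1$ (residues again summing to zero) and pairing against $(f_i)$ yields $c^{(j)}-c^{(1)}=0$; hence all $f_i$ equal one constant $c$, and $g\equiv c\in R$ expands to $f_i$ at every $\pp_i$.

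The only non-formal ingredient, and the main obstacle, is this realizability of $1$-forms with prescribed polar parts subject only to the residue constraint: it is exactly where properness of $\X$ and the global residue theorem enter (via Serre duality $H^1(\X,\Omega^1_{\X})\cong\C$), and it cannot be replaced by a purely local argument; everything else is bookkeeping of local residues as in \cref{lem:formalEXPcoef}. I expect the same scheme to give the full strong residue theorem when the $f_i$ are allowed poles (i.e.\ $f_i\in\operatorname{Frac}\O_{\pp_i}$): one first matches the polar parts of the $f_i$ by some $g_0\in R$---possible precisely because the hypothesis, restricted to global holomorphic forms $\alpha\in H^0(\X,\Omega^1_{\X})$, annihilates the obstruction in $H^1(\X,\O_{\X})^{\vee}\cong H^0(\X,\Omega^1_{\X})$---and then applies the holomorphic case above to $(f_i-\iota_{t_i}g_0)$.
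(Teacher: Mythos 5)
Your proof is correct, but note that the paper itself does not prove \cref{lem:SRT} at all: it is quoted as a special case of \cite[Remark 9.2.10]{FBZ04}, credited to \cite{Tate}, and used as a black box. So there is no in-paper argument to compare against; what you have done is supply the missing proof, and the route you take (Serre duality in the form of Mittag--Leffler realizability of $1$-forms with prescribed principal parts, the total residue being the sole obstruction since $H^1(\X,\Omega^1_{\X})\cong\C$) is the standard one and is sound. Both directions check out: the easy direction is the residue theorem on the proper curve plus the identification of analytic and formal residues from \cref{lem:formalEXPcoef}, and in the converse the test forms $\alpha_{j,k}$ with polar part $t_j^{-k-1}\d{t_j}$ do isolate the coefficients $c^{(j)}_k$ exactly as you claim.

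The one substantive point worth emphasizing is your (correct) observation that the lemma as literally stated, with $f_i\in\O_{\pp_i}\cong\C\dbrack{t_i}$, degenerates: a regular function on $\X-\Set{\ptseq}$ whose expansion at every $\pp_i$ is a power series is regular on all of $\X$, hence constant, so the literal statement only says the $f_i$ are one common constant. That version is useless for the way the lemma is actually invoked in \cref{prop:Jac-fun,prop:Jac-fun-tw} and in the construction of the $n$-point functions \labelcref{eq:F(rseq)}, where the local series genuinely have poles at the marked points; the intended statement (and the one in \cite{FBZ04}) takes $f_i$ in the Laurent/fraction field of $\O_{\pp_i}$. Your closing paragraph --- first kill the obstruction in $H^1(\X,\O_{\X})^{\vee}\cong H^0(\X,\Omega^1_{\X})$ by testing against global holomorphic forms to produce $g_0$ matching the principal parts, then apply the holomorphic case to $f_i-\iota_{t_i}g_0$ --- is exactly the right reduction and should be promoted from a remark to part of the proof, since it is that Laurent-series version that the rest of the paper needs.
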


\subsubsection*{The twisted projective line}
The algebraic curve we are concerned with is a \emph{$T$-twisted} version of the projective line. Abstractly, it is an \emph{stacky curve} obtained from $\PP^1$ modulo an action of a cyclic group of order $T$. 
We represent it as a smooth projective curve $\oC$ equipped with a ramified covering $\x\colon\oC\to\PP^1$, whose \emph{Galois group} is cyclic of order $T$. 

To make our expressions more explicit, we give the following \emph{ad-hoc} construction.
First, let $C$ be the smooth curve over $\C$ defined by the polynomial 
\[Y^T-X.\] 
For a point $\pp\in C$, we use $(X(\pp),Y(\pp))$ to denote its (global) coordinates in $\C^2$. 
We refer to the point with coordinates $(X,Y)=(0,0)$ as $0$.
Then, we have an isomorphism 
\begin{equation}\label{eq:isoC}
  \Gamma(C,\O_C)=\C[X,Y]/(Y^T-X)\cong\C[z^{\frac{1}{T}}]\colon X\mapsto z, Y\mapsto z^{\frac{1}{T}}.
\end{equation}
With the identification \labelcref{eq:isoC}, we have the following correspondences:
\begin{itemize}
  \item regular functions on $C$ $\longleftrightarrow$ $\C[z^{\frac{1}{T}}]$; and 
  \item regular functions on $C-\Set{0}$ $\longleftrightarrow$ $\C[z^{\pm\frac{1}{T}}]$.
\end{itemize}
Next, we introduce $C'$ as another copy of $C$, with coordinates written as $X'$ and $Y'$. 
We refer to the point with $(X',Y')=(0,0)$ as $\infty$. 
Then, we can identify $C-\Set{0}$ with $C'-\Set{\infty}$ through the isomorphism provided by $X^{-1}=X'$ and $Y^{-1}=Y'$. 
We call this domain $\Cc$. 
Gluing $C$ and $C'$ along $\Cc$ results a compactification of $C$, denoted by $\oC$. 
Then, \labelcref{eq:isoC} extends to an isomorphism $\oC\cong\Proj\C[z^{\frac{1}{T}}]$ and gives the following correspondence:
\begin{itemize}
  \item rational functions on $\oC$ $\longleftrightarrow$ $\C(z^{\frac{1}{T}})$.
\end{itemize}
Finally, the coordinate $X$ extends to a rational function $\x\colon\oC\to\PP^1$ provides the desired \emph{$T$-fold ramified covering} of the projective line $\PP^1$, with \emph{branch points} $0,\infty$ and \emph{unramified locus} $\Pc:=\PP^1\setminus\Set{0,\infty}$.  
It is straightforward to verify that its group of \emph{Deck transformations} is cyclic of order $T$. 

The inverse image $\x^{\ast}\O_{\PP^1}$ can be interpreted as the subsheaf of $\O_{\oC}$ consisting of regular functions factoring through $\x$. 
Then, the following correspondence is straightforward:
\begin{itemize}
  \item rational functions on $\oC$ that factor through $\x\colon\oC\to\PP^1$ $\longleftrightarrow$ $\C(z)\subset\C(z^{\frac{1}{T}})$.
\end{itemize}
On the other hand, the direct image $\x_{\ast}\O_{\oC}$ can be interpreted as an extension of $\O_{\PP^1}$.
Spelling out the stalks of $\x_{\ast}\O_{\oC}$, we see that its sections are multi-valued functions on $\PP^1$.

Our construction leads to a natural choice of local coordinates at $0$, $\infty$, and $\pp\in\Cc$. Refer to \cref{tab:EXP} for these coordinates, along with the established formal expansions provided by \cref{lem:formalEXP,lem:formalEXPcoef}.
\begin{table}[!h]
  \centering
    \renewcommand{\arraystretch}{2.5}
    \begin{tabular}{c|c|c}
     Local chart & Rational expansion & Coefficients in the expansion \\
     \hline
     $(0,Y)$ & 
     $\displaystyle f = \sum_{n=v_{0}(f)}^{\infty}a_{\frac{n}{T}}Y^n$ & 
     $a_{\frac{n}{T}}=\frac{1}{T}\Res_{0}Y^{-n}X^{-1}f\d{X}$ \\
     \hline
     $(\infty,Y^{-1})$ & 
     $\displaystyle f = \sum_{n=-\infty}^{-v_{\infty}(f)}a_{\frac{n}{T}}Y^n$ & 
     $a_{\frac{n}{T}}=-\frac{1}{T}\Res_{\infty}Y^{-n}X^{-1}f\d{X}$ \\
     \hline
     $(\pp,X-X(\pp))$ & 
     $\displaystyle f = \sum_{n=v_{\pp}(f)}^{\infty}a_n(X-X(\pp))^n$ & 
     $a_n=\Res_{\pp}(X-X(\pp))^{-n-1}f\d{X}$ \\
     \hline
    \end{tabular}
    \caption{Rational expansions at $\pp\in\oC$.}\label{tab:EXP}
\end{table}

The following corollary of \cref{lem:SRT} plays a crucial role in the present work. 
\begin{lemma}[Residue Sum Formula]\label{lem:ResidueSum}
  For any meromorphic $1$-form $\alpha$ on $\PP^1$, we have
  \begin{equation}\label{eq:ResidueSum}
    \tfrac{1}{T}\Res_{\pp=0}\x^{\ast}\alpha + \tfrac{1}{T}\Res_{\pp=\infty}\x^{\ast}\alpha + 
    \sum_{\qq}\Res_{\pp=\qq}\x^{\ast}\alpha = 0,
  \end{equation}
  where $\qq$ ranges over a branch of $\oC$. That is to say, for each singularity $\bar\qq\in\PP^1$ of $\alpha$, we only take one representative $\qq$ from $\x^{-1}\bar\qq$.
\end{lemma}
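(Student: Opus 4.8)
The plan is to derive the weighted identity \labelcref{eq:ResidueSum} from the ordinary residue theorem on the compact curve $\oC$, transported back to $\PP^1$ through the multiplicity formula \labelcref{eq:InvRes}. First I would record that the ordinary residue theorem on $\oC$ is itself a special case of the \emph{Strong Residue Theorem} (\cref{lem:SRT}): taking the distinguished points to be the (finitely many) poles of the meromorphic $1$-form $\x^{\ast}\alpha$ and the germ $f_i=1$ at each of them, these germs are all restrictions of the single globally regular function $1$, so the ``if'' direction of \cref{lem:SRT} applies to $\x^{\ast}\alpha\in\Gamma(\oC-\Set{\ptseq},\Omega^1)$ and yields
\[
  \sum_{\pp\in\oC}\Res_{\pp}\x^{\ast}\alpha = 0.
\]

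Next I would organize this finite sum fiberwise along $\x\colon\oC\to\PP^1$. By the construction of $\oC$, the covering $\x$ is totally ramified over the branch points $0$ and $\infty$, each having a single preimage of multiplicity $T$, and is unramified over $\Pc$, where each point $\bar\qq$ has exactly $T$ distinct preimages of multiplicity $1$; moreover $\x^{\ast}\alpha$ is regular away from $\x^{-1}$ of the poles of $\alpha$. Applying the multiplicity formula \labelcref{eq:InvRes} at the relevant points gives
\begin{align*}
  \Res_{\pp=0}\x^{\ast}\alpha &= T\cdot\Res_{0}\alpha, &
  \Res_{\pp=\infty}\x^{\ast}\alpha &= T\cdot\Res_{\infty}\alpha,
\end{align*}
while for every singularity $\bar\qq\in\Pc$ of $\alpha$ and each of its $T$ preimages $\qq$ we get $\Res_{\pp=\qq}\x^{\ast}\alpha=\Res_{\bar\qq}\alpha$; in particular the $T$ preimages in a common fiber over $\Pc$ contribute identically.

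Finally I would assemble the pieces. Grouping the vanishing sum $\sum_{\pp\in\oC}\Res_{\pp}\x^{\ast}\alpha=0$ according to the three types of fibers and substituting the identities above produces
\[
  T\Res_{0}\alpha + T\Res_{\infty}\alpha + T\sum_{\bar\qq}\Res_{\bar\qq}\alpha = 0,
\]
the last sum ranging over the singularities of $\alpha$ in $\Pc$. Dividing by $T$ and rewriting $\Res_{0}\alpha=\tfrac{1}{T}\Res_{\pp=0}\x^{\ast}\alpha$, $\Res_{\infty}\alpha=\tfrac{1}{T}\Res_{\pp=\infty}\x^{\ast}\alpha$, and $\Res_{\bar\qq}\alpha=\Res_{\pp=\qq}\x^{\ast}\alpha$ for a single chosen representative $\qq$ in each fiber then gives exactly \labelcref{eq:ResidueSum}.

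The argument is short, so the only genuinely delicate point is the ramification bookkeeping. One must check that $\x^{\ast}\alpha$ introduces no poles at $0$ or $\infty$ beyond those forced by poles of $\alpha$ there, which is a local computation using $z=Y^{T}$ and $\d{z}=TY^{T-1}\,\d{Y}$, and one must confirm that the multiplicity factor $T$ produced by \labelcref{eq:InvRes} at the two ramified fibers is precisely what converts the naive residue sum on $\oC$ into the $\tfrac{1}{T}$-weighted sum on $\PP^1$. The matching of ``one representative per fiber'' in the unramified part with the full fiber contribution on $\oC$ is the other point to watch, but it is settled by the equal-residue observation of the second step.
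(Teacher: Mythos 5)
Your proof is correct, but it runs the argument in the opposite direction from the paper. The paper starts from the classical residue sum formula \emph{downstairs} on $\PP^1$, namely $\sum_{\bar\qq\in\PP^1}\Res_{\bar\qq}\alpha=0$, and then rewrites each term as $\fun{mult}_{\x}(\qq)^{-1}\Res_{\qq}\x^{\ast}\alpha$ at a single chosen preimage via \labelcref{eq:InvRes}; the convention of one representative per fiber is thus built in from the outset, and the factors $\tfrac{1}{T}$ appear directly as inverse multiplicities at the two ramified fibers. You instead start \emph{upstairs}, invoking the residue theorem on the compact curve $\oC$ (which you correctly obtain from \cref{lem:SRT} by taking all germs equal to the constant $1$), and then compress the full sum over $\oC$ fiberwise; this requires the extra, easy observation that all $T$ points of an unramified fiber carry the same residue before you can divide by $T$. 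Both routes hinge on the same multiplicity formula \labelcref{eq:InvRes} and the same ramification data $\fun{mult}_{\x}(0)=\fun{mult}_{\x}(\infty)=T$ and $\fun{mult}_{\x}(\qq)=1$ for $\qq\in\Cc$, so the substance is the same; yours is marginally longer because of the fiberwise grouping, but it has the virtue of deriving the needed residue theorem on $\oC$ from a lemma already stated in the paper rather than quoting the classical fact on $\PP^1$. Your closing caveat about extra poles of $\x^{\ast}\alpha$ at the branch points is harmless but not actually needed: points where the pulled-back form has zero residue contribute nothing to either side of the identity.
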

Note that the summation is finite since $\alpha$ only has finitely many poles.
\begin{proof}
The \emph{residue sum formula} on $\PP^1$ indicates that the sum of residues of a meromorphic $1$-form $\alpha$ on $\PP^1$ is zero. 
  By \labelcref{eq:InvRes}, we have 
  \[
    0= \sum_{\bar\qq\in\PP^1}\Res_{\bar\qq}\alpha = \sum_{\qq}\fun{mult}_{\x}(\qq)^{-1}\Res_{\qq}\x^{\ast}\alpha,
  \]
  where $\qq$ ranges over a branch of $\oC$. 
  Then \labelcref{eq:ResidueSum} follows from the observation that $\fun{mult}_{\x}(0)=T$, $\fun{mult}_{\x}(\infty)=T$, and $\fun{mult}_{\x}(\qq)=1$ for $\qq\in\Cc$.
\end{proof}

In the rest of this paper, we will frequently express a rational function on $\oC$ in terms of the local coordinate at the given point. 
For simplicity, we introduce the following shorthand notations.
\begin{table}[!h]
  \centering
    \renewcommand{\arraystretch}{1.5}
    \begin{tabular}{c|c|c}
     variable of points & coordinate $X$ & coordinate $Y$ \\
     \hline
     $\pp$ & 
     $z$ & 
     $z^{\frac{1}{T}}$ \\
     \hline
     $\pp_i$ & 
     $z_i$ & 
     $z_i^{\frac{1}{T}}$ \\
     \hline
     $\qq$ & 
     $w$ & 
     $w^{\frac{1}{T}}$ \\
     \hline
    \end{tabular}
    \caption{Shorthand notations for coordinates.}\label{tab:SHnotation}
\end{table}

\subsubsection*{Expansions of two-variable functions}
Let $f(\pp,\qq)$ be a meromorphic function on $\oC\times\oC$ with possible poles at $0$, $\infty$, and the divisor $(z-w)$. 
We can write $f(\pp,\qq)$ in the form
\begin{equation*}
  f(\pp,\qq) = 
  \frac{g(z^{\frac{1}{T}},w^{\frac{1}{T}})}{z^{\frac{m}{T}}w^{\frac{n}{T}}(z-w)^l},
\end{equation*}
where $g(z^{\frac{1}{T}},w^{\frac{1}{T}})\in\C[z^{\frac{1}{T}},w^{\frac{1}{T}}]$. 
Fixing $\qq$ and varying $\pp$, we can assign the following three expansions to $f$:
\begin{itemize}[wide]
  \item $\iota_{\pp=\infty}f$, the formal expansion at the point $\infty$. This corresponds to the \emph{$\iota_{z,w}$-expansion} in formal calculus, reflecting that the series converges in the domain $\abs{z}>\abs{w}$. 
  By \cref{tab:EXP}, 
  \begin{equation}\label{eq:iota_zw}
    \iota_{\pp=\infty}f = \frac{g(z^{\frac{1}{T}},w^{\frac{1}{T}})}{z^{\frac{m}{T}}w^{\frac{n}{T}}}\sum_{i\ge 0}\binom{l+i-1}{i}z^{-l-i}w^i \in\C\dparen{z^{-\frac{1}{T}}}\dparen{w^{\frac{1}{T}}}.
  \end{equation}
  In particular, $\Res_z(\iota_{\pp=\infty}f) = - \frac{1}{T}\Res_{\pp=\infty}f\d z \in\C[w^{\pm\frac{1}{T}}]$.
  \item $\iota_{\pp=0}f$, the formal expansion at the point $0$. This corresponds to the \emph{$\iota_{w,z}$-expansion} in formal calculus, reflecting that the series converges in the domain $\abs{w}>\abs{z}$. 
  By \cref{tab:EXP}, 
  \begin{equation}\label{eq:iota_wz}
   \iota_{\pp=0}f = \frac{g(z^{\frac{1}{T}},w^{\frac{1}{T}})}{z^{\frac{m}{T}}w^{\frac{n}{T}}}\sum_{i\ge 0}\binom{-l}{i}(-w)^{-l-i}z^i \in\C\dparen{w^{-\frac{1}{T}}}\dparen{z^{\frac{1}{T}}}.
  \end{equation}
  In particular, $\Res_z(\iota_{\pp=0}f) = \frac{1}{T}\Res_{\pp=0}f\d z \in\C[w^{\pm\frac{1}{T}}]$.
  \item $\iota_{\pp=\qq}f$, the formal expansion at the point $\qq$. This corresponds to the \emph{$\iota_{w,z-w}$-expansion} in formal calculus, reflecting that the series converges in the domain $\abs{w}>\abs{z-w}$ with the argument range\footnote{Here $\Arg(a:b)\in\linterval{-\pi}{\pi}$ denotes the argument of the ray $[a:b]\in\PP^1$. This condition guarantees that $\pp$ and $\qq$ are in the same branch of $\oC$.} $\abs{\Arg(z^{\frac{1}{T}}:w^{\frac{1}{T}})}<\frac{\pi}{2T}$.  
  By \cref{tab:EXP}, 
  \begin{equation}\label{eq:iota_wz-w}
    \iota_{\pp=\qq}z^{\frac{m}{T}} = \sum_{i\ge 0}\binom{\frac{m}{T}}{i}w^{\frac{m}{T}-i}(z-w)^i\in\C\dparen{w^{-\frac{1}{T}}}\dparen{z-w}.
  \end{equation}
  In particular, $\Res_{z-w}(\iota_{\pp=\qq}f) = \Res_{\pp=\qq}f\d z \in\C[w^{\pm\frac{1}{T}}]$.
\end{itemize}

By \cref{prop:I-degree} and \labelcref{eq:twistedJac}, we have the following \emph{duality property}.
\begin{proposition}\label{prop:Jac-fun-tw}
  Let $M^1$ be an admissible untwisted module, and let $M^2$ and $M^3$ be admissible $g$-twisted modules. 
  Suppose $M^1$, $M^2$, and $M^3$ are of conformal weights $h_1$, $h_2$, and $h_3$ respectively. 
  For any $a\in V^r$, $v\in M^1$, $v_2\in M^2$, and $v'_3\in (M^3)'$, there exists a meromorphic function $f$ on $\oC\times\oC$ of the form (where $m,n,l\in\N$)
  \begin{equation}\label{eq:2ptFun}
    f(\pp,\qq) = 
    \frac{g(z,w^{\frac{1}{T}})}{z^{\frac{r}{T}}z^{m}w^{\frac{n}{T}}(z-w)^l}\qquad(g(z,w^{\frac{1}{T}})\in\C[z,w^{\frac{1}{T}}]),
  \end{equation}
  such that the following identities of formal Puiseux series hold:
  \begin{align*}
    &\braket*{v'_3}{Y_{M^3}(a,z)I(v,w)v_2} w^h 
    = \iota_{\pp=\infty}f,
    \\
    &\braket*{v'_3}{I(v,w)Y_{M^2}(a,z)v_2} w^h 
    = \iota_{\pp=0}f,
    \\
    &\braket*{v'_3}{I(Y_{M^1}(a,z-w)v,w)v_2} w^h 
    = \iota_{\pp=\qq}f.
  \end{align*}
\end{proposition}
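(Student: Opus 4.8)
The plan is to mirror the proof of \cref{prop:Jac-fun}, replacing the component form \labelcref{eq:Jac'} of the twisted Jacobi identity by its intertwining-operator analogue \labelcref{eq:twistedJac-comp}, and using the degree bounds of \cref{prop:I-degree} in place of the truncation estimate \cite[(3.5)]{DLM1}. By multilinearity it suffices to treat homogeneous $a\in V^r$, $v\in M^1$, $v_2\in M^2$, and a functional $v'_3$ supported on a single graded piece $M^3(\tfrac{k}{T})^{\ast}$; throughout I set $h=h_1+h_2-h_3$. The first task is to record the shape of the three matrix coefficients. Since $a\in V^r$, the operators $Y_{M^3}(a,z)$ and $Y_{M^2}(a,z)$ carry only powers $z^{-n-1}$ with $n\in\frac{r}{T}+\Z$, so after multiplying by $z^{\frac{r}{T}}$ the $z$-dependence becomes integral; by \cref{prop:I-degree}, $I(v,w)w^{h}$ carries powers of $w$ in $\frac{1}{T}\Z$. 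Together with the truncation properties and the lower-boundedness of the $\vo{L}{0}$-spectra, this places the first (resp.\ second) coefficient in $\C\dparen{z^{-\frac{1}{T}}}\dparen{w^{\frac{1}{T}}}$ (resp.\ $\C\dparen{w^{-\frac{1}{T}}}\dparen{z^{\frac{1}{T}}}$), matching the target spaces of the expansions \labelcref{eq:iota_zw} and \labelcref{eq:iota_wz}.

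Next I would produce the single meromorphic function $f$ via \emph{weak commutativity}. Applying $\Res_{z_1}$ to the delta-function identity \labelcref{eq:twistedJac} relates $Y_{M^3}(a,z)I(v,w)$ and $I(v,w)Y_{M^2}(a,z)$, while its right-hand side contributes only the finite sum over $j$ in \labelcref{eq:twistedJac-comp} (finite because $\vo{a}{j+l}v=0$ for $j\gg0$ by the truncation property of $M^1$). Choosing $l\in\N$ past this truncation bound yields
\[
  (z-w)^{l}\braket*{v'_3}{Y_{M^3}(a,z)I(v,w)v_2}z^{\frac{r}{T}}w^{h}
  =(z-w)^{l}\braket*{v'_3}{I(v,w)Y_{M^2}(a,z)v_2}z^{\frac{r}{T}}w^{h}
\]
once both sides are expanded as Laurent series. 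The degree bounds of \cref{prop:I-degree}, combined with the finite-dimensionality of each graded piece, force each side—after clearing the factors $z^{\frac{r}{T}+m}$, $w^{\frac{n}{T}}$, and $(z-w)^{l}$—to be a genuine element of $\C[z,w^{\frac{1}{T}}]$ with $m,n,l\in\N$. This produces a rational function $f$ of the form \labelcref{eq:2ptFun}, which is meromorphic on $\oC\times\oC$ by the identifications of \cref{sec:TheCurve}, and whose expansions in the two regions $\abs{z}>\abs{w}$ and $\abs{w}>\abs{z}$ are by uniqueness of Laurent expansions exactly the first two matrix coefficients. The first two identities then follow from \labelcref{eq:iota_zw} and \labelcref{eq:iota_wz}.

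For the third identity I would read off the \emph{associativity} half of \labelcref{eq:twistedJac}. The operator $z_2^{-1}\pfrac{w+z_1}{z_2}^{r/T}\del[\dfrac{w+z_1}{z_2}]$ implements the substitution $z=w+z_1$, i.e.\ the expansion around the diagonal $\pp=\qq$ in the variable $z_1=z-w$ and the region $\abs{w}>\abs{z-w}$; extracting the corresponding coefficient shows $\braket*{v'_3}{I(Y_{M^1}(a,z-w)v,w)v_2}w^{h}$ to be the diagonal expansion of the same $f$. The key point is that the fractional factor $\pfrac{w+z_1}{z_2}^{r/T}$ is precisely the power that reproduces the $\iota_{\pp=\qq}$-expansion \labelcref{eq:iota_wz-w} of $z^{\frac{r}{T}}$, so the branch of $z^{\frac{r}{T}}$ built into $f$ matches the one dictated by the twisted Jacobi identity, giving the third matrix coefficient as $\iota_{\pp=\qq}f$.

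The hard part is exactly this last compatibility of fractional powers. Unlike the untwisted case of \cref{prop:Jac-fun}, one must check that the three formal expansions, each living a priori in a different ring of Puiseux series, are restrictions of one honest meromorphic function on $\oC\times\oC$ rather than merely formal solutions of \labelcref{eq:twistedJac-comp}. Concretely, the delicate step is to verify that the branch of $z^{\frac{r}{T}}$ encoded by \labelcref{eq:iota_wz-w} in the sector $\abs{\Arg(z^{\frac{1}{T}}:w^{\frac{1}{T}})}<\frac{\pi}{2T}$ agrees with the formal expansion of $\pfrac{w+z_1}{z_2}^{r/T}$ appearing in \labelcref{eq:twistedJac}; this sector/branch bookkeeping, together with tracking the half-integer shifts through the factor $z^{\frac{r}{T}}$ in the denominator of \labelcref{eq:2ptFun}, is the principal new difficulty relative to the untwisted argument. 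Once it is pinned down, the finiteness statements $m,n,l\in\N$ and the equality of all three expansions follow from the component identity \labelcref{eq:twistedJac-comp} as in the classical case.
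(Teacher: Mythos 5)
Your argument is correct, but it takes a genuinely different route from the one the paper uses for this proposition. You run the classical FHL-style duality argument: weak commutativity obtained by multiplying \labelcref{eq:twistedJac} by $z_1^{l}$ and applying $\Res_{z_1}$ for $l$ beyond the truncation bound of $Y_{M^1}(a,z_1)v$, the grading bounds of \cref{prop:I-degree} to force the common value of $(z-w)^{l}z^{\frac{r}{T}+m}w^{\frac{n}{T}}$ times the matrix coefficients to be an honest polynomial in $z$ and $w^{\frac{1}{T}}$, and then the associativity half of \labelcref{eq:twistedJac} to identify the diagonal expansion. The paper instead deduces the statement in one step from the Strong Residue Theorem (\cref{lem:SRT}): the component identity \labelcref{eq:twistedJac-comp} is read as saying that the three formal series, regarded via \labelcref{eq:iota_zw,eq:iota_wz,eq:iota_wz-w} as expansions at $\pp=\infty$, $\pp=0$, and $\pp=\qq$ for fixed $\qq$, have vanishing total residue against every $1$-form $z^{m+\frac{r}{T}}(z-w)^{l}\d{z}$ regular away from those three points, so they glue to a single meromorphic function on $\oC$ with poles only there. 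The geometric route is shorter given the machinery of \cref{sec:TheCurve} and scales directly to the $n$-point functions and to the higher-genus twisted curves the authors announce; your route is more elementary and self-contained, and it makes the integrality of $m,n,l$ and the choice of branch of $z^{\frac{r}{T}}$ explicit, at the cost of the sector/branch bookkeeping you correctly single out as the delicate step --- which is resolved exactly as you indicate, since the factor $\pfrac{w+z_1}{z_2}^{r/T}$ in \labelcref{eq:twistedJac} reproduces precisely the expansion \labelcref{eq:iota_wz-w} of $z^{\frac{r}{T}}$ at $\pp=\qq$.
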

\begin{proof}
  With the formulas \labelcref{eq:iota_zw,eq:iota_wz,eq:iota_wz-w}, 
  the statement follows from the the \emph{twisted Jacobi identity} \labelcref{eq:twistedJac} by the \emph{\nameref{lem:SRT}}.
\end{proof}

Conversely, we have 
\begin{proposition}
  For any meromorphic function on $\oC\times\oC$ of the form \labelcref{eq:2ptFun}, we have
  \begin{equation*}
    \Res_z\left(\iota_{\pp=\infty}z^{\frac{r}{T}}f\right)-\Res_z \left(\iota_{\pp=0}z^{\frac{r}{T}}f\right)=\Res_{z-w}\left(\iota_{\pp=\qq}z^{\frac{r}{T}}f\right).
  \end{equation*} 
  In particular, the twisted Jacobi identity \labelcref{eq:twistedJac} of the intertwining operator $I$ among an admissible untwisted module $M^1$ and admissible $g$-twisted modules $M^2$ and $M^3$ is equivalent to the existence of a meromorphic function $f(\pp,\qq)$ satisfying \cref{prop:Jac-fun-tw}.
\end{proposition}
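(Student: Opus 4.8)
The plan is to derive both assertions from the residue theorem on the $z$-line $\PP^1$, the decisive observation being that multiplying $f$ by $z^{\frac{r}{T}}$ clears the fractional power in the first variable. For $f$ of the form \labelcref{eq:2ptFun},
\[
  z^{\frac{r}{T}}f=\frac{g(z,w^{\frac{1}{T}})}{z^{m}w^{\frac{n}{T}}(z-w)^{l}},
\]
which, regarding $w^{\frac{1}{T}}$ as a fixed scalar, is an honest rational function of $z$ whose only poles lie at $z=0$, $z=w$, and $z=\infty$. Since $g$ is polynomial in $z$, each of its residues is a Laurent polynomial in $w^{\frac{1}{T}}$.

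First I would prove the displayed identity. By \labelcref{eq:iota_wz,eq:iota_wz-w,eq:iota_zw}, the three expansions $\iota_{\pp=0}$, $\iota_{\pp=\qq}$, and $\iota_{\pp=\infty}$ are, in the variable $z$, the Laurent expansions of $z^{\frac{r}{T}}f$ at $z=0$, $z=w$, and $z=\infty$ respectively; hence $\Res_z(\iota_{\pp=0}z^{\frac{r}{T}}f)$ and $\Res_{z-w}(\iota_{\pp=\qq}z^{\frac{r}{T}}f)$ are the residues of the rational $1$-form $z^{\frac{r}{T}}f\,\d z$ at $z=0$ and $z=w$, while $\Res_z(\iota_{\pp=\infty}z^{\frac{r}{T}}f)$ is minus its residue at $z=\infty$. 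The residue sum formula on $\PP^1$ (as invoked in the proof of \cref{lem:ResidueSum}) forces the signed sum of these three residues to vanish, and rearranging yields exactly the claimed equation.

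For the ``in particular'' clause the forward implication is \cref{prop:Jac-fun-tw}, so it remains to prove the converse. Assume that for every $a\in V^{r}$, $v\in M^{1}$, $v_{2}\in M^{2}$, and $v_{3}'\in(M^{3})'$ there is a function $f$ of the form \labelcref{eq:2ptFun} whose three expansions coincide with the three matrix-coefficient series of \cref{prop:Jac-fun-tw}. I would apply the residue identity just established not to $z^{\frac{r}{T}}f$ itself but to $z^{\frac{r}{T}}f\cdot z^{m}(z-w)^{l}$, which is again rational in $z$, and then multiply by $w^{\frac{n}{T}}$ and take $\Res_{w}$. Because each expansion map $\iota_{\pp}$ is multiplicative, the three residues factor as the corresponding matrix-coefficient series—written via \cref{prop:I-degree} in terms of the modes $\vo{a}{\frac{r}{T}+k}$, $\vo{v}{\frac{p}{T}}$, and $\vo{(\vo{a}{j}v)}{\frac{q}{T}}$—multiplied by the appropriate expansion of $z^{m}(z-w)^{l}$, with $z^{\frac{r}{T}}z^{m}$ expanded jointly in powers of $z-w$ at $\qq$. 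Reading off the coefficient of $z^{-1}$ (resp.\ of $(z-w)^{-1}$) and then, after the shift by $w^{\frac{n}{T}}$, the coefficient of $w^{-1}$, reproduces precisely the three sums appearing in the component twisted Jacobi identity \labelcref{eq:twistedJac-comp}, paired against $v_{3}'$ and $v_{2}$. As $v_{3}'$ and $v_{2}$ are arbitrary, \labelcref{eq:twistedJac-comp} holds as an operator identity for all $m,n,l\in\Z$, which is equivalent to \labelcref{eq:twistedJac}.

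The main point requiring care is the bookkeeping of the expansion of $(z-w)^{l}$. For $l\ge 0$ it is a polynomial and its expansions at $\infty$ and at $0$ coincide, but the identity is needed for all $l\in\Z$, and it is exactly the domain-dependence of the binomial series for $(z-w)^{l}$ when $l<0$—expanded in $\abs{z}>\abs{w}$ at $\pp=\infty$ versus $\abs{w}>\abs{z}$ at $\pp=0$—that separates the first two terms of \labelcref{eq:twistedJac-comp} and produces the two opposite orderings of the modes of $a$ and of $v$. The remainder is routine: tracking the sign reversal of the residue at $z=\infty$, the shift $w^{\frac{n}{T}}$ needed to land on the $w^{-1}$ coefficient, and the matching of the index $m-j+\frac{r+n}{T}$ in the iterate term. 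None of this is deep once the reduction to the rational function $z^{\frac{r}{T}}f$ on $\PP^{1}$ has been made.
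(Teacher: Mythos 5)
Your proof of the displayed identity is correct and is essentially the paper's argument: multiplying by $z^{\frac{r}{T}}$ clears the fractional power so that $z^{\frac{r}{T}}f\,\d z$ descends to (equivalently, factors through) the $z$-line $\PP^1$, and the classical residue sum formula there — which is exactly the content of the \nameref{lem:ResidueSum} once the multiplicities $T,T,1$ are accounted for via \labelcref{eq:iota_zw,eq:iota_wz,eq:iota_wz-w} — gives the three-term relation. Your elaboration of the ``in particular'' converse (pairing with $z^{m}(z-w)^{l}w^{\frac{n}{T}}$ and extracting \labelcref{eq:twistedJac-comp}) is the standard reduction the paper leaves implicit, and your handling of the two expansions of $(z-w)^{l}$ for $l<0$ is the right point to flag.
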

\begin{proof}
  Fixing $\qq\in\oC$, the meromorphic $1$-form $z^{\frac{r}{T}}f\d{z}$ factors through the covering map $x\colon\oC\to\PP^1$. Hence, the \emph{\nameref{lem:ResidueSum}} applies and gives us
  \[
    \tfrac{1}{T}\Res_{\pp=0}z^{\frac{r}{T}}f\d z + 
    \tfrac{1}{T}\Res_{\pp=\infty}z^{\frac{r}{T}}f\d z + 
    \Res_{\pp=\qq}z^{\frac{r}{T}}f\d z = 0.
  \]
  Applying formulas \labelcref{eq:iota_zw,eq:iota_wz,eq:iota_wz-w} to the above, we obtain the desired identity.
\end{proof}

\subsection{Space of \texorpdfstring{$g$}{g}-twisted correlation functions}
In this subsection, we define the space of \emph{$g$-twisted correlation functions} for the \emph{$g$-twisted conformal blocks}.

Let $M^1$ be an admissible untwisted module, and let $M^2$ and $M^3$ be admissible $g$-twisted modules. Suppose $M^1$, $M^2$, and $M^3$ are of conformal weights $h_1$, $h_2$, and $h_3$, respectively. Suppose $I$ is a \emph{$g$-twisted intertwining operator} in $\Fusion$. 
For $v\in M^1$, $v_2\in M^2$, $v_3'\in (M^3)'$, and $\ainVseq$, consider the following $(n+1)$-variable formal Puiseux series: 
\begin{equation}\label{eq:npt-Fun}
  \braket*{v_3'}{
    Y_{M^3}(a^{1},z_{1})\cdots Y_{M^3}(a^{k-1},z_{k-1})
    I(v,w)
    Y_{M^2}(a^{k},z_{k})\cdots Y_{M^2}(a^{n},z_{n})v_2
    }w^{h},
\end{equation}  
Using a similar method as the proof of \cite[Proposition 3.5.1]{FHL}, by \cref{prop:Jac-fun,prop:Jac-fun-tw}, applying the \emph{\nameref{lem:SRT}}, we see that there is a rational function on $\oC^{n+1}$ of the form:
\begin{equation}\label{eq:F(rseq)}
  f(\ptseq,\qq)=
  \Dfrac{g(\zseq,w^{\frac{1}{T}})}
  {
    w^{\frac{n}{T}}
    \prod_{i=1}^n z_i^{\frac{r_i}{T}}z_i^{m_i}
    \prod_{j<k}(z_j-z_k)^{l_{jk}}
    \prod_{p=1}^n (z_p-w)^{l_p}},
\end{equation}
where $m_i,n,l_{kj},l_p\in\N$, and $g(\zseq,w^{\frac{1}{T}})\in \C[\zseq,w^{\frac{1}{T}}]$ such that the series \labelcref{eq:npt-Fun} is the formal expansion of $f(\ptseq,\qq)$ in the domain
\[ 
  \Set*{(\ptseq,\qq)\in\oC^{n+1}\given\infty>|z_1|>\cdots>|z_{k-1}|>|w|>|z_k|>\cdots>|z_n|>0}.
\]
We denote the space of functions of the form \labelcref{eq:F(rseq)} by $\cF(\rseq)$. 

Let $\Delta_n$ denote the \emph{divisor} 
\[
  \Delta_n:=
  (w\prod_{i=1}^nz_i
  \prod_{j<k}(z_j-z_k)
  \prod_{p=1}^n (z_p-w)).
\]
Then the space $\O(\infty\Delta_n)$ of meromorphic functions on $\oC^{n+1}$ with possible poles along the divisor $\Delta_n$, namely at the points where either $z_i=0$, $z_i=\infty$, $w=0$, $w=\infty$, $z_j=z_k$, or $z_p=w$, admits a decomposition
\[
  \O(\infty\Delta_n) = 
  \bigoplus_{0\le \rseq \le T-1}\cF(\rseq),
\]
Note that $\cF(\emptyset)=\O(\infty\Delta_0)=\Gamma(\Cc,\O_{\oC})=\C[w^{\pm\frac{1}{T}}]$.

Following \cite{Liu,Z} with slight modifications, we use the following notation to denote the function $f(\pp_1,\cdots,\pp_n,\qq)$ in \labelcref{eq:F(rseq)}:
\begin{equation*}
  S_I\bracket*{v_3'}{\cfbseq[1][k-1](v,\qq)\cfbseq[k][n]}{v_2}.
\end{equation*}
Then we obtain a system of linear maps $S_I=\Set*{(S_I)^n_{V\cdots M^1\cdots V}}_{n\in\N}$, where 
\begin{equation}\label{eq:SI}
  \begin{aligned}
    (S_I)_{V\cdots M^1\cdots V}^n\colon& (M^3)'\otimes V\otimes\cdots\otimes V\otimes M^{1}\otimes V\otimes \cdots V\otimes M^{2}\longrightarrow\O(\infty\Delta_n),\\
    & v_3'\otimes a^{1}\otimes\cdots\otimes a^{k-1}\otimes v\otimes a^{k}\otimes\cdots\otimes a^{n}\otimes v_2\\
    & \longmapsto
    S_I\bracket*{v_3'}{\cfbseq[1][k-1](v,\qq)\cfbseq[k][n]}{v_2}.
  \end{aligned}
\end{equation} 
By \cref{prop:Jac-fun,prop:Jac-fun-tw}, we have: 
\begin{itemize}
  \item each map $(S_I)_{V\cdots M^1\cdots V}^n$ factors through $(M^3)'\otimes \fun{Sym}[V,\cdots,V,M^1] \otimes M^{2}$; and 
  \item $(S_I)^n_{M^1V\cdots V}=(S_I)^n_{VM^1\cdots V}=\cdots =(S_I)^n_{V\cdots VM^1}$.
\end{itemize}
Hence we can always put the terms in the order $\cfbseq(v,\qq)$ and omit these terms unless we want to emphasize some of them.

Note that for homogeneous $\ainVseq$, the function 
\[
  S_I\bracket*{v_3'}{\cdots}{v_2}
  z_1^{\frac{r_1}{T}}\cdots z_n^{\frac{r_n}{T}}
\] 
factors through $(x,\cdots,x,\id)\colon\oC^{n+1}\to(\PP^1)^n\times\oC$, and thus can be viewed as a meromorphic function on $(\PP^1)^n\times\oC$. 
The following definition generalizes \cite[Definition 2.1]{Liu}:
\begin{definition}\label{def:genus-zero}
  Let $V$ be a VOA with an order $T$ automorphism $g$, and let $M^1$ (resp. $M^2$ and $N^3$) be an admissible \emph{untwisted} (resp. \emph{$g$-twisted} and \emph{$g^{-1}$-twisted}) module of conformal weight $h_1$ (resp. $h_2$ and $h_3$). 
  Put $h=h_1+h_2-h_3$. 
  A system of linear maps 
  $S=\Set*{S^n_{V\cdots M^1\cdots V}}_{n\in\N}$, 
  \begin{align*}
    S_{V\cdots M^1\cdots V}^n\colon& N^3\otimes V\otimes\cdots\otimes V\otimes M^{1}\otimes V\otimes \cdots V\otimes M^{2}\longrightarrow\O(\infty\Delta_n),\\
    & v_3\otimes a^{1}\otimes\cdots\otimes a^{k-1}\otimes v\otimes a^{k}\otimes\cdots\otimes a^{n}\otimes v_2\\
    & \longmapsto
    S\bracket*{v_3}{\cfbseq[1][k-1](v,\qq)\cfbseq[k][n]}{v_2},
  \end{align*}
  is said to satisfy the \concept{twisted genus-zero property associated to the datum} 
  \begin{equation}\label{eq:def:Sigma1}
      \Sigma_{1}(N^3, M^1, M^2):=(\x\colon\oC\to\PP^1, \infty, 1, 0, N^3, M^1, M^2)
  \end{equation}
  if it satisfies the following axioms for all $v_3\in N^3,v_2\in M^{2}$:
\begin{enumerate}
  \item \textbf{Truncation property}: For any fixed $v\in M^{1}$ and $v_2\in M^{2}$, there exists $N\in \N$ depending only on $v,v_2$, such that 
  $S\bracket*{v_3}{(v,\qq)}{v_2}w^{\frac{N}{T}}\in\C[w^{\frac{1}{T}}]$ for all $v_3\in N^3$. 
  \item \textbf{Locality}: The terms $\cfbseq(v,\qq)$ can be arbitrarily permuted:
  \begin{itemize}
    \item each map $S_{V\cdots M^1\cdots V}^n$ factors through $N^3\otimes \fun{Sym}[V,\cdots,V,M^1] \otimes M^{2}$; and 
    \item $S^n_{M^1V\cdots V}=S^n_{VM^1\cdots V}=\cdots =S^n_{V\cdots VM^1}$.
  \end{itemize}
  Hence we can always put the terms in the order $\cfbseq(v,\qq)$ and omit these terms unless we want to emphasize some of them.
  \item \textbf{Homogeneous property}: For homogeneous $\ainVseq$, we have 
  \begin{equation}\label{eq:homogeneous}
    S\bracket*{v_3}{\cdots}{v_2}\in\cF(\rseq).  
  \end{equation}
  \item \textbf{Vacuum property}:
  \begin{equation}\label{eq:vacuum}
    S\bracket*{v_3}{(\vac,\pp)\cdots}{v_2} =
    S\bracket*{v_3}{\cdots}{v_2}.
  \end{equation}
  \item \textbf{$\vo{L}{-1}$-derivative property}: 
  \begin{align}
    \pdv{z_{1}}S\bracket*{v_3}{(a^{1},\pp_1)\cdots}{v_2} &
     = S\bracket*{v_3}{(\vo{L}{-1}a^{1},\pp_{1})\cdots}{v_2}, \label{eq:L-derivative-z}\\
    \pdv{w}\left(S\bracket*{v_3}{\cdots(v,\qq)}{v_2}w^{-h}\right) &
     = S\bracket*{v_3}{\cdots(\vo{L}{-1}v,\qq)}{v_2}w^{-h}. \label{eq:L-derivative-w}
  \end{align}
  \item \textbf{Associativity}:
  \begin{align*}
    \Res_{\pp_1=\qq}S\bracket*{v_3}{(a^{1},\pp_1)\cdots(v,\qq)}{v_2}(z_{1}-w)^{k}\d{z_1}
    &= S\bracket*{v_3}{\cdots(\vo{a^{1}}{k}v,\qq)}{v_2}, \numberthis\label{eq:associativity_p1q} \\
    \Res_{\pp_1=\pp_2}S\bracket*{v_3}{(a^{1},\pp_1)(a^{2},\pp_2)\cdots}{v_2}(z_{1}-z_{2})^{k}\d{z_1}
    &= S\bracket*{v_3}{(\vo{a^{1}}{k}a^{2},\pp_{2})\cdots}{v_2}. \numberthis\label{eq:associativity_p1p2}
  \end{align*}
  \item \textbf{Generating property for $M^2$}:
  For any $a\in V^r$ and $m\in \Z$, we have:
  \begin{equation}\label{eq:generating_M2}
    \Res_{\pp=0}S\bracket*{v_3}{(a,\pp)\cdots}{v_2}z^{m+\frac{r}{T}}\d{z} = 
    TS\bracket*{v_3}{\cdots}{\vo{a}{m+\frac{r}{T}}v_2}.
  \end{equation}
  \item \textbf{Generating property for $N^3$}:
  For any $a\in V^r$ and $m\in \Z$, we have
  \begin{equation}\label{eq:generating_M3}
    \Res_{\pp=\infty}S\bracket*{v_3}{(a,\pp)\cdots}{v_2}z^{m+\frac{r}{T}}\d{z} =
    -TS\bracket*{\theta(\lo{a}{m+\frac{r}{T}})v_3}{\cdots}{v_2},
  \end{equation}
  where $\theta\colon\L_g(V)\to\L_{g^{-1}}(V)$ is the anti-isomorphism defined in \labelcref{eq:def:theta} and $\L_{g^{-1}}(V)$ acts on the admissible $g^{-1}$-twisted module $N^3$ via \cref{prop:repnOfLgV}.
\end{enumerate}
  The vector space consists of systems of linear maps $S=\Set*{S^n_{V\cdots M^1\cdots V}}_{n\in\N}$ satisfying the above axioms is called the \concept{space of $g$-twisted correlation functions associated to the datum $\Sigma_{1}(N^3, M^1, M^2)$}, we denote it by $\Cor[\Sigma_{1}(N^3, M^1, M^2)]$. 
  When $N^3$ is the contragridient module of an admissible $g$-twisted module $M^3$, we call this space \concept{the space of $g$-twisted correlation functions of type $\fusion$} and denote it by $\Cor\fusion$. 
\end{definition}
\begin{remark}
  Note that we do not initially require $N^3$ to be the contragridient module of an admissible $g$-twisted module $M^3$ in the definition above.  
\end{remark}

\begin{proposition}
  The system $S_I$ given by \labelcref{eq:SI} belongs to $\Cor\fusion$.
\end{proposition}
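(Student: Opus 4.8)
The plan is to verify that the system $S_I$ satisfies each of the eight axioms of \cref{def:genus-zero}, taking as given the construction preceding \labelcref{eq:F(rseq)}, which already ensures that every $(S_I)^n_{V\cdots M^1\cdots V}$ lands in a well-defined rational function $f(\ptseq,\qq)\in\cF(\rseq)\subset\O(\infty\Delta_n)$ whose expansion in the prescribed annular domain recovers the Puiseux series \labelcref{eq:npt-Fun}. Five of the axioms are then essentially immediate. The \textbf{Homogeneous property} holds because, for homogeneous $a^i\in V^{r_i}$, the function $f$ was built inside $\cF(\rseq)$. The \textbf{Vacuum property} follows from $Y_{M^2}(\vac,z)=\id_{M^2}$ and $Y_{M^3}(\vac,z)=\id_{M^3}$, so that inserting a factor $(\vac,\pp)$ leaves the series \labelcref{eq:npt-Fun}, hence $f$, unchanged. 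The \textbf{Locality} axiom is precisely the two bullet points recorded after \labelcref{eq:SI}, which follow from \cref{prop:Jac-fun,prop:Jac-fun-tw}: since the factors commute as rational functions, $(S_I)^n$ factors through the symmetric tensor and is insensitive to the position of the $M^1$-slot. The \textbf{Truncation property} is the $n=0$ statement $S_I\bracket*{v'_3}{(v,\qq)}{v_2}=\braket*{v'_3}{I(v,w)v_2}w^{h}$; by \cref{prop:I-degree} and the truncation axiom of $I$, the exponents of $w$ appearing here are bounded below by a quantity depending only on $v$ and $v_2$, and since the function lies in $\C[w^{\pm\frac1T}]=\cF(\emptyset)$ it is a Laurent polynomial in $w^{\frac1T}$, so multiplication by a suitable $w^{\frac{N}{T}}$ places it in $\C[w^{\frac1T}]$. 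Finally, the \textbf{$\vo{L}{-1}$-derivative property} transcribes $\frac{\d}{\d z}Y_M(a,z)=Y_M(\vo{L}{-1}a,z)$ for $M=M^2,M^3$ into \labelcref{eq:L-derivative-z}, and $I(\vo{L}{-1}v,w)=\frac{\d}{\d w}I(v,w)$ together with the factor $w^{-h}$ into \labelcref{eq:L-derivative-w}.

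The remaining three axioms are residue identities, for which the key tool is the dictionary of \cref{tab:EXP} together with \labelcref{eq:iota_zw,eq:iota_wz,eq:iota_wz-w}, converting a residue of $f$ on $\oC$ into extraction of a coefficient of the appropriate formal expansion. For \textbf{Associativity}, the identity \labelcref{eq:associativity_p1p2} at $\pp_1=\pp_2$ follows from the duality of the ordinary vertex operators $Y_{M^2}$, $Y_{M^3}$ (\cref{prop:Jac-fun}): expanding $f$ at $\pp_1=\pp_2$ and applying $\Res_{\pp_1=\pp_2}\big((z_1-z_2)^k\,\cdot\,\big)\d z_1=\Res_{z_1-z_2}\big((z_1-z_2)^k\,\cdot\,\big)$ reproduces the mode $\vo{a^1}{k}a^2$ fused into the $\pp_2$-insertion. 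Likewise \labelcref{eq:associativity_p1q} at $\pp_1=\qq$ follows from the third identity of \cref{prop:Jac-fun-tw}, namely $\braket*{v'_3}{I(Y_{M^1}(a^1,z_1-w)v,w)v_2}w^{h}=\iota_{\pp_1=\qq}f$, whose $\Res_{z_1-w}\big((z_1-w)^k\,\cdot\,\big)$ selects $\vo{a^1}{k}v$ inside $I$. For the \textbf{Generating property for $M^2$}, one expands at $\pp=0$ (the $\iota_{\pp=0}$, i.e. $\abs{w}>\abs{z}$, expansion, which orders $I(v,w)$ before $Y_{M^2}(a,z)$) and uses $\Res_z(\iota_{\pp=0}f)=\tfrac1T\Res_{\pp=0}f\,\d z$ from \labelcref{eq:iota_wz}; picking the coefficient via $\Res_z\big(\,\cdot\;z^{m+\frac rT}\big)$ from $Y_{M^2}(a,z)=\sum_{n}\vo{a}{n}z^{-n-1}$ yields $\vo{a}{m+\frac rT}v_2$, and the factor $T$ in \labelcref{eq:generating_M2} is exactly the ramification multiplier $\fun{mult}_{\x}(0)=T$ entering that residue comparison.

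The main obstacle is the \textbf{Generating property for $N^3$} in \labelcref{eq:generating_M3}, the only axiom whose module action lives on the dual factor $N^3=(M^3)'$. Expanding $f$ at $\pp=\infty$ (the $\abs{z}>\abs{w}$ expansion, ordering $Y_{M^3}(a,z)$ first) and using $\Res_z(\iota_{\pp=\infty}f)=-\tfrac1T\Res_{\pp=\infty}f\,\d z$ from \labelcref{eq:iota_zw}, the extraction $\Res_z\big(\,\cdot\;z^{m+\frac rT}\big)$ produces the \emph{left} action $\vo{a}{m+\frac rT}$ on $M^3$ inside the pairing $\braket*{v'_3}{\cdots}$. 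To recognize this as an action on $N^3$ one transposes it across the pairing: by the contragredient relation \labelcref{eq:contragredient'-comp}, $\braket*{v'_3}{\vo{a}{m+\frac rT}(\cdots)}=\braket*{\theta(\lo{a}{m+\frac rT})v'_3}{\cdots}$, where $\theta\colon\L_g(V)\to\L_{g^{-1}}(V)$ is the anti-isomorphism \labelcref{eq:def:theta} and the action on $N^3$ is via \cref{prop:repnOfLgV}. Combining the $-\tfrac1T$ residue factor with this transposition produces exactly the $-T$ and the $\theta$ of \labelcref{eq:generating_M3}. The delicate points to pin down here are the precise index matching $\vo{a}{m+\frac rT}\leftrightarrow\lo{a}{m+\frac rT}$ under $\theta$, the sign dictated by the orientation of the $1$-cycle at $\infty$ encoded in \labelcref{eq:iota_zw}, and the ramification factor $\fun{mult}_{\x}(\infty)=T$; once these are aligned, all eight axioms hold and $S_I\in\Cor\fusion$.
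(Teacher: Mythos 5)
Your proposal is correct and follows essentially the same route as the paper's own proof: locality and the homogeneous property are read off from the construction of $f(\ptseq,\qq)$, truncation, vacuum, and the $\vo{L}{-1}$-derivative property are inherited from the corresponding axioms of $I$ and $Y_{M^i}$, and the associativity and two generating properties are obtained by expanding the rational function at $\pp_1=\pp_2$, $\pp_1=\qq$, $\pp=0$, and $\pp=\infty$ via \cref{prop:Jac-fun,prop:Jac-fun-tw} and the residue dictionary of \cref{tab:EXP}, with the contragredient relation and the anti-isomorphism $\theta$ supplying the $-T$ and the $N^3$-action in \labelcref{eq:generating_M3}. The "delicate points" you flag at the end are exactly the ingredients \labelcref{eq:contragredient',eq:def:theta} that the paper invokes at the same step.
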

\begin{proof}
  We have already proven the \emph{locality} (2) and the \emph{homogeneous property} (3). 
  The properties (1), (4), and (5) follow from the \emph{truncation property}, the \emph{vacuum property}, and the \emph{$\vo{L}{-1}$-derivative property} of $I(\cdot,w)$ and $Y_{M^{i}}(\cdot,z)$.
  
  For the \emph{associativity} (6): by \cref{prop:Jac-fun,prop:Jac-fun-tw}, we have 
  \begin{align*}
    \MoveEqLeft
    \Res_{\pp_1=\qq}S_I\bracket*{v_3'}{(a^{1},\pp_1)\cdots(v,\qq)}{v_2}(z_{1}-w)^{k}\d{z_1} \\
    \overset{\iota_{\pp_1=\qq}}&{=} 
    \Res_{z_1-w}S_I\bracket*{v_3'}{\cdots(Y_{M^1}(a^{1},z_1)v,\qq)}{v_2}(z_{1}-w)^{k}
    =S_I\bracket*{v_3'}{\cdots(\vo{a^{1}}{k}v,\qq)}{v_2}, \\
    \MoveEqLeft
    \Res_{\pp_1=\pp_2}S_I\bracket*{v_3'}{(a^{1},\pp_1)(a^{2},\pp_2)\cdots}{v_2}(z_{1}-z_{2})^{k}\d{z_1} \\
    \overset{\iota_{\pp_1=\pp_2}}&{=} 
    \Res_{z_1-z_2}S_I\bracket*{v_3'}{(Y_{M^1}(a^{1},z_1)a^{2},\pp_2)\cdots}{v_2}(z_{1}-z_{2})^{k} 
    =S_I\bracket*{v_3'}{(\vo{a^{1}}{k}a^{2},\pp_{2})\cdots}{v_2}. 
  \end{align*}
  
  For the \emph{generating property} (7):
  \begin{align*}
    \MoveEqLeft
    \Res_{\pp=0}S_I\bracket*{v_3'}{(a,\pp)\cdots}{v_2}z^{m+\frac{r}{T}}\d{z} = 
    \Res_{\pp=0}S_I\bracket*{v_3'}{\cdots(a,\pp)}{v_2}z^{m+\frac{r}{T}}\d{z}\\
    &
    \overset{\iota_{\pp=0}}{=} 
    T\Res_{z}S_I\bracket*{v_3'}{\cdots}{Y_{M^2}(a,z)v_2}z^{m+\frac{r}{T}} = 
    TS_I\bracket*{v_3'}{\cdots}{\vo{a}{m+\frac{r}{T}}v_2}
  \end{align*}

  For the \emph{generating property} (8): using \labelcref{eq:contragredient'},
  \begin{align*}
    \MoveEqLeft
    \Res_{\pp=\infty}S_I\bracket*{v_3'}{(a,\pp)\cdots}{v_2}z^{m+\frac{r}{T}}\d{z} \\
    \overset{\iota_{\pp=\infty}}&{=} 
    -T\Res_{z}S_I\bracket*{Y_{(M^3)'}(e^{z\vo{L}{1}}(-z^{-2})^{\vo{L}{0}}a, z^{-1})v_3'}{\cdots}{v_2}z^{m+\frac{r}{T}}\d{z}\\
    \overset{\labelcref{eq:def:theta}}&{=}
    -TS_I\bracket*{\theta(\lo{a}{m+\frac{r}{T}})v_3'}{\cdots}{v_2}.
  \end{align*}
  Hence $S_I$ satisfies the twisted genus-zero property associated to $\fusion$.
\end{proof}

Now we have our first main theorem of this paper, which generalizes \cite[Theorem 2.5 and Corollary 2.6]{Liu}.
\begin{theorem}\label{thm:I=Cor}
  Let $M^1$ (resp. $M^2$ and $M^3$) be an admissible untwisted (resp. $g$-twisted) module of conformal weight $h_1$ (resp. $h_2$ and $h_3$). 
  Put $h=h_1+h_2-h_3$. 
  Then we have the following isomorphism of vector spaces:
  \begin{equation*}
    \Fusion\cong \Cor\fusion, \qquad I\longmapsto S_I.
  \end{equation*}
\end{theorem}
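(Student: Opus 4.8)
The plan is to show that the linear map $I\mapsto S_I$ (already well defined by the preceding proposition) is bijective by constructing an explicit inverse. For injectivity, note that the $n=0$ component of $S_I$ is precisely the one-point function $S_I\bracket*{v_3'}{(v,\qq)}{v_2}=\braket*{v_3'}{I(v,w)v_2}\,w^{h}$. Since the graded dual $(M^3)'=\bigoplus_n M^3(\tfrac{n}{T})^{\ast}$ separates the points of $M^3$, the vanishing of all these matrix coefficients forces $I(v,w)v_2=0$ for every $v$ and $v_2$, hence $I=0$; so the map is injective.

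For surjectivity, given $S\in\Cor\fusion$ I would define a candidate $I_S$ through its matrix coefficients,
\[
  \braket*{v_3'}{I_S(v,w)v_2}\,w^{h}:=S\bracket*{v_3'}{(v,\qq)}{v_2},
  \qquad v\in M^1,\ v_2\in M^2,\ v_3'\in(M^3)'.
\]
First one checks this is well defined, i.e.\ that for each power of $w$ the coefficient---a priori only a linear functional on $(M^3)'$---is represented by a vector of $M^3$. Taking $v,v_2,v_3'$ homogeneous, the homogeneous property forces $S\bracket*{v_3'}{(v,\qq)}{v_2}\in\C[w^{\pm\frac{1}{T}}]$, and degree counting confines each coefficient functional to a single graded dual piece $M^3(\tfrac{\ell}{T})^{\ast}$, while the truncation property bounds the powers of $w$; hence $I_S(v,w)v_2\in M^3\{w\}$. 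The truncation axiom and \labelcref{eq:L-derivative-w} then immediately give the truncation and $\vo{L}{-1}$-derivative properties of $I_S$.

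The heart of the argument is the twisted Jacobi identity for $I_S$. By the converse of \cref{prop:Jac-fun-tw} (the proposition immediately following it, whose proof rests on the \emph{\nameref{lem:ResidueSum}}), it suffices to produce, for each $a\in V^r$, a single meromorphic function $f(\pp,\qq)$ of the form \labelcref{eq:2ptFun} whose three expansions $\iota_{\pp=\infty}f$, $\iota_{\pp=0}f$, and $\iota_{\pp=\qq}f$ equal $\braket*{v_3'}{Y_{M^3}(a,z)I_S(v,w)v_2}\,w^{h}$, $\braket*{v_3'}{I_S(v,w)Y_{M^2}(a,z)v_2}\,w^{h}$, and $\braket*{v_3'}{I_S(Y_{M^1}(a,z-w)v,w)v_2}\,w^{h}$. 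I would take $f:=S\bracket*{v_3'}{(a,\pp)(v,\qq)}{v_2}$, which by the homogeneous property lies in $\cF(r)$ and hence has exactly the required pole structure. The identification of the three expansions is then the content of the remaining axioms: the generating property \labelcref{eq:generating_M2}, read through \labelcref{eq:iota_wz} and the definition of $I_S$, matches $\iota_{\pp=0}f$ with the $Y_{M^2}$-insertion; the generating property \labelcref{eq:generating_M3}, read through \labelcref{eq:iota_zw}, the contragredient formula \labelcref{eq:contragredient'}, and the definition \labelcref{eq:def:theta} of $\theta$, matches $\iota_{\pp=\infty}f$ with the $Y_{M^3}$-insertion; and the associativity axiom \labelcref{eq:associativity_p1q} matches $\iota_{\pp=\qq}f$ with the $I_S(Y_{M^1}(a,z-w)v,w)$-insertion. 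This yields \labelcref{eq:twistedJac}, so $I_S\in\Fusion$.

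Finally I would verify $S_{I_S}=S$. Both systems agree on one-point functions by construction of $I_S$, and both satisfy every axiom of \cref{def:genus-zero}; the plan is to show that these axioms determine all higher $n$-point functions recursively from the one-point functions, the generating properties stripping off the outermost $Y_{M^2}$ or $Y_{M^3}$ insertion and associativity fusing neighboring insertions, exactly as in \cite[Proposition 3.5.1]{FHL} and \cite[Theorem 2.5]{Liu}; matching the recursions gives $S_{I_S}=S$. I expect the main obstacle to be the Jacobi-identity step: because $\oC$ is ramified over $0$ and $\infty$, the fractional powers $z^{r/T}$ must be transported through every residue computation, the ordinary residue theorem must be replaced by the \emph{\nameref{lem:ResidueSum}} with the weights $\tfrac1T$ at $0$ and $\infty$, and one must keep the three local expansions \labelcref{eq:iota_zw,eq:iota_wz,eq:iota_wz-w} consistent across the branches of $\oC$---this branch bookkeeping is the delicate new feature absent in the untwisted case.
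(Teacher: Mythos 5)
Your proposal is correct and takes essentially the same route as the paper: the inverse is defined by the residues of the one-point functions exactly as in \labelcref{eq:IfromS}, and the twisted Jacobi identity for $I_S$ is deduced from the homogeneous property, the two generating properties, the associativity axiom, and the \emph{\nameref{lem:ResidueSum}} applied to $f=S\bracket*{v_3'}{(a,\pp)(v,\qq)}{v_2}$. The only difference is presentational: the paper verifies the component identity \labelcref{eq:twistedJac-comp} by a direct chain of residue manipulations rather than routing through the duality statement of \cref{prop:Jac-fun-tw} and its converse, and it leaves injectivity and the verification $S_{I_S}=S$ implicit, both of which you spell out.
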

\begin{proof}
  Given any $S\in\Cor\fusion$, we define 
  \[
    I_S(\cdot,w)\colon M^1\to\Hom(M^2,M^3)\dbrack{w^{\pm\frac{1}{T}}}w^{-h},\quad 
    v\mapsto
    I_S(v,w)=\sum_{n\in\Z}\vo{v}{\frac{n}{T}}w^{-\frac{n}{T}-1-h},
  \]
  where $\vo{v}{\frac{n}{T}}$ is determined by
  \begin{equation}\label{eq:IfromS}
    \braket*{v_3'}{\vo{v}{\frac{n}{T}}v_2} = 
    \tfrac{1}{T}\Res_{\qq=0}S\bracket*{v_3'}{(v,\qq)}{v_2}w^{\frac{n}{T}}\d{w} \in\C[w^{\pm\frac{1}{T}}],
  \end{equation}
  where $v_3'\in(M^3)'$ and $v_2\in M^2$.
  Then the \emph{truncation property} and the \emph{$\vo{L}{-1}$-derivative property} of $I_S$ follow from the axioms (1) and (5).
  It remains to show the following \emph{twisted Jacobi identity} of the twisted intertwining operator $I_S(\cdot,w)$:
  \[
  \begin{multlined}
    \sum_{i\ge 0} 
    \binom{l}{i}(-1)^i
    \vo{a}{\frac{r}{T}+m+l-i}\vo{v}{\frac{n}{T}+i}v_2
    -\sum_{i\ge 0} 
    \binom{l}{i}(-1)^{l+i}
    \vo{v}{\frac{n}{T}+l-i}\vo{a}{m+\frac{r}{T}+i}v_2\\
    =\sum_{j\ge 0}
    \binom{m+\frac{r}{T}}{j}
    \vo{(\vo{a}{j+l}v)}{m-j+\frac{r+n}{T}}v_2.
  \end{multlined}
  \]
  Note that the involved summations are finite due to the \emph{truncation property}.
  
  Indeed, by the \emph{generating properties} \labelcref{eq:generating_M3}, \labelcref{eq:IfromS}, and \labelcref{eq:contragredient'-comp}, we have
  \begin{align*}
    \MoveEqLeft
    \braket*{v_3'}{\sum_{i\ge 0} 
    \binom{l}{i}(-1)^i
    \vo{a}{\frac{r}{T}+m+l-i}\vo{v}{\frac{n}{T}+i}v_2} \\
    &= 
    \sum_{i\ge 0} 
    \binom{l}{i}(-1)^i
    \braket*{\theta(\lo{a}{\frac{r}{T}+m+l-i})v_3'}{\vo{v}{\frac{n}{T}+i}v_2}  \\
    &=-
    \sum_{i\ge 0} 
    \binom{l}{i}(-1)^i
    \tfrac{1}{T^2}
    \Res_{\qq=0}\Res_{\pp=\infty}
    S\bracket*{v_3'}{(a,\pp)(v,\qq)}{v_2}z^{m+l-i-\frac{r}{T}}w^{\frac{n}{T}+i}\d{z}\d{w}\\
    &=-
    \tfrac{1}{T^2}
    \Res_{\qq=0}\Res_{\pp=\infty}
    S\bracket*{v_3'}{(a,\pp)(v,\qq)}{v_2}
    (z-w)^lz^{m+\frac{r}{T}}w^{\frac{n}{T}}\d{z}\d{w}.
  \end{align*}
  On the other hand, by \labelcref{eq:generating_M2} and \labelcref{eq:IfromS}, we have
  \begin{align*}
    \MoveEqLeft
    \braket*{v_3'}{\sum_{i\ge 0} 
    \binom{l}{i}(-1)^{l+i}
    \vo{v}{\frac{n}{T}+l-i}\vo{a}{m+\frac{r}{T}+i}v_2} \\
    &= 
    \sum_{i\ge 0} 
    \binom{l}{i}(-1)^{l+i}
    \tfrac{1}{T^2}
    \Res_{\qq=0}\Res_{\pp=0}
    S\bracket*{v_3'}{(a,\pp)(v,\qq)}{v_2}z^{m+\frac{r}{T}+i}w^{\frac{n}{T}+l-i}\d{z}\d{w}\\
    &=
    \tfrac{1}{T^2}
    \Res_{\qq=0}\Res_{\pp=0}
    S\bracket*{v_3'}{(a,\pp)(v,\qq)}{v_2}
    (z-w)^lz^{m+\frac{r}{T}}w^{\frac{n}{T}}\d{z}\d{w}.
  \end{align*}
  Therefore, we have 
  \begin{align*}
    \MoveEqLeft
    \braket*{v_3'}{\sum_{i\ge 0} 
    \binom{l}{i}(-1)^i
    \vo{a}{\frac{r}{T}+m+l-i}\vo{v}{\frac{n}{T}+i}v_2
    -\sum_{i\ge 0} 
    \binom{l}{i}(-1)^{l+i}
    \vo{v}{\frac{n}{T}+l-i}\vo{a}{m+\frac{r}{T}+i}
    v_2} \\
    &=
    \tfrac{1}{T^2}\Res_{\qq=0}(-\Res_{\pp=\infty}-\Res_{\pp=0})
    S\bracket*{v_3'}{(a,\pp)(v,\qq)}{v_2}
    (z-w)^lz^{m+\frac{r}{T}}w^{\frac{n}{T}}\d{z}\d{w}\\
    \overset{\ast}&{=}
    \tfrac{1}{T}
    \Res_{\qq=0}\Res_{\pp=\qq}
    S\bracket*{v_3'}{(a,\pp)(v,\qq)}{v_2}
    (z-w)^lz^{m+\frac{r}{T}}w^{\frac{n}{T}}\d{z}\d{w}\\
    \overset{**}&{=}
    \tfrac{1}{T}
    \Res_{\qq=0}\sum_{j\ge  0}\binom{m+\frac{r}{T}}{j}
    \Res_{\pp=\qq}S\bracket*{v_3'}{(a,\pp)(v,\qq)}{v_2}
    (z-w)^{l+j}w^{m-j+\frac{r+n}{T}}\d{z}\d{w}\\
    \overset{\labelcref{eq:associativity_p1q}}&{=}
    \sum_{j\ge 0}\binom{m+\frac{r}{T}}{j}
    \tfrac{1}{T}
    \Res_{\qq=0}S\bracket*{v_3'}{(\vo{a}{l+j}v,\qq)}{v_2}w^{m-j+\frac{r+n}{T}}\d{w}\\
    \overset{\labelcref{eq:IfromS}}&{=}
    \sum_{j\ge  0}\binom{m+\frac{r}{T}}{j}
    \braket{v_3'}{\vo{(\vo{a}{l+j}v,\qq)}{m-j+\frac{r+n}{T}}v_2},
  \end{align*}
  where $*$ follows from the \emph{residue sum formula} and $**$ follows from applying $\iota_{\pp=\qq}$ to $z^{m+\frac{r}{T}}$. 
  Indeed, we have $S\bracket*{v_3'}{(a,\pp)(v,\qq)}{v_2}\in\cF(r)$ by the \emph{homogeneous property} of $S$. 
  Hence the meromorphic $1$-form $S\bracket*{v_3'}{(a,\pp)(v,\qq)}{v_2}(z-w)^lz^{m+\frac{r}{T}}w^{\frac{n}{T}}\d{z}$ factors through $\x$ with possible poles $0$, $\infty$, and $w$ on $\PP^1$. 
  Then the \emph{\nameref{lem:ResidueSum}} applies. 
  For the expansion $\iota_{\pp=\qq}$, note that the summation involved is finite, 
so it commutes with the integrals. Thus the equality follows.
\end{proof}

\section{Reconstructing \texorpdfstring{$g$}{g}-twisted correlation functions from restricted correlation functions}\label{sec3}
In this section, we introduce the space of \emph{correlation functions} associated to the datum $(\x\colon\oC\to\PP^1, \infty, 1, 0, U^3, M^1, U^2)$ where $U^2$ (resp. $U^3$) is a left (resp. right) $A_g(V)$-module. 
In our application, $U^2$ and $U^3$ are the \emph{lowest-weight} subspaces of $M^2$ and $(M^3)'$ respectively. 
In general, they are only considered as irreducible modules over the \emph{$g$-twisted Zhu's algebra} $A_g(V)$. 

Recall the definition of $A_g(V)$ in \cite{DLM1}. It is the quotient of $V$ modulo the subspace $O_g(V)$, which is spanned by  
\[
  a\circ _g b:=\Res_z \frac{(1+z)^{\wt a-1+\delta(r)+\frac{r}{T}}}{z^{1+\delta(r)}} Y(a,z)b,\quad a\in V^r,b\in V,
\] 
where $\delta(r)$ is defined by
$
  \delta(r)=
  \begin{cases*}
    1 & if $r=0$,\\
    0 & otherwise.
  \end{cases*}
$

By \cite[Lemma 2.1]{DLM1}, $V^r\subseteq O_g(V)$ for $r\neq 0$.
Define 
\begin{equation}\label{def:g-star-product}
  a\ast _g b:=
  \begin{cases*}
    \Res_z Y(a,z)b\frac{(1+z)^{\wt a}}{z}& 
    if $a\in V^0$,\\
    0& otherwise.
  \end{cases*}
\end{equation}
Denote the image of $a\in V$ in $A_g(V)$ by $[a]$. We have the following result:
\begin{lemma}[\cite{Z,DLM1}]
  The operation $\ast_g$ induces an associative algebra structure on $A_g(V)$ with $[\vac]$ as the identity, and $[\upomega]$ lying in its center. 
\end{lemma}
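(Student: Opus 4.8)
The plan is to follow the residue-calculus strategy of Zhu \cite{Z}, in the $g$-twisted refinement of Dong--Li--Mason \cite{DLM1}, reducing every assertion to manipulations of $\Res_z$ applied to expansions of $Y(a,z)b$ and governed by the VOA axioms (the $\vo{L}{-1}$-derivative property $Y(\vo{L}{-1}a,z)=\frac{d}{dz}Y(a,z)$, the creation property $Y(a,z)\vac=e^{z\vo{L}{-1}}a$, skew-symmetry, and the associativity and commutativity consequences of the twisted Jacobi identity \labelcref{eq:Jac}). The backbone of the argument is a single generalized vanishing statement: for homogeneous $a\in V^r$, any $b\in V$, and any integer $n\ge 0$,
\begin{equation*}
  \Res_z \frac{(1+z)^{\wt a-1+\delta(r)+\frac{r}{T}+n}}{z^{1+\delta(r)+n}}\, Y(a,z)b \in O_g(V),
\end{equation*}
which extends the defining relation $a\circ_g b\in O_g(V)$ (the case $n=0$). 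I would prove this by induction on $n$, writing the difference of consecutive residues as $\Res_z\frac{d}{dz}[\,\cdots\,]$, which vanishes, and using the $\vo{L}{-1}$-derivative property to feed the correction terms back into circle products of lower complexity.

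Next I would show that $O_g(V)$ is a two-sided ideal for $\ast_g$, which is exactly what lets $\ast_g$ descend to $A_g(V)$. Since $a\ast_g b=0$ whenever $a\notin V^0$, and since $V^r\subset O_g(V)$ for $r\neq 0$ by \cite[Lemma 2.1]{DLM1}, one may take $a\in V^0$ throughout; the only contributions to $O_g(V)\ast_g V$ that are not already visibly in $O_g(V)$ then arise from the $V^0$-component of a circle product $c\circ_g d$ with $c\in V^r$ and $d\in V^{T-r}$. For the right-ideal property $a\ast_g(b\circ_g d)\in O_g(V)$ with $a\in V^0$, I would expand the product of the two residues via the iterate (associativity) formula for vertex operators, re-express the outcome as a combination of the generalized residues above, and invoke the vanishing statement; the left-ideal property is handled symmetrically, using skew-symmetry to move the circle product into a position where the same lemma applies.

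The remaining claims are comparatively formal. For the unit, $\vac\ast_g a=\Res_z z^{-1}Y(\vac,z)a=a$ is immediate from $Y(\vac,z)=\id$, while $a\ast_g\vac=a$ follows from $Y(a,z)\vac=e^{z\vo{L}{-1}}a$ together with $\Res_z z^{k-1}(1+z)^{\wt a}=0$ for $k\ge 1$. Associativity $(a\ast_g b)\ast_g c\equiv a\ast_g(b\ast_g c)\pmod{O_g(V)}$ is obtained by expanding both sides into double residues in two variables, matching them through the associativity of vertex operators, and absorbing the discrepancy into $O_g(V)$ via the vanishing lemma. For centrality I would first establish the commutator formula $a\ast_g b-b\ast_g a\equiv\Res_z(1+z)^{\wt a-1}Y(a,z)b\pmod{O_g(V)}$ for $a\in V^0$; applying it to $a=\upomega$ (so $\wt\upomega=2$ and $Y(\upomega,z)=\sum_n\vo{L}{n}z^{-n-2}$) gives
\begin{equation*}
  \upomega\ast_g b-b\ast_g\upomega\equiv \Res_z(1+z)\,Y(\upomega,z)b=(\vo{L}{-1}+\vo{L}{0})b\pmod{O_g(V)},
\end{equation*}
and this lies in $O_g(V)$ because $\vo{L}{-1}b+(\wt b)b=b\circ_g\vac\in O_g(V)$ and $\vo{L}{0}b=(\wt b)b$ for homogeneous $b$; hence $[\upomega]$ is central.

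I expect the main obstacle to be the bookkeeping in the generalized vanishing lemma and in the two-sided ideal property: the fractional shift $\frac{r}{T}$ and the case split recorded by $\delta(r)$ must be tracked with care, since for $r\neq 0$ the pole order in $a\circ_g b$ drops and the binomial exponents become non-integral. Ensuring that the induction and the associativity-formula manipulations respect these shifts---and that the twisted components $c\circ_g d$ with $c\in V^r$, $d\in V^{T-r}$ reassemble into genuine elements of $O_g(V)$---is the delicate point; once the vanishing lemma is available in the stated generality, the unit, associativity, and centrality are routine.
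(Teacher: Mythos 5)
The paper gives no proof of this lemma (it is quoted from \cite{Z,DLM1}), and your outline is precisely the standard argument of those references: the generalized residue-vanishing lemma (\cite[Lemma 2.2]{DLM1}, correctly provable by the $\Res_z\frac{\d}{\d z}$ induction you describe, with the two forms of the lemma equivalent via expanding $((1+z)/z)^n$), the reduction of the ideal property to the genuinely twisted products $c\circ_g d$ with $c\in V^r$, $d\in V^{T-r}$, $r\neq 0$, and the commutator formula yielding $\upomega\ast_g b-b\ast_g\upomega\equiv(\vo{L}{-1}+\vo{L}{0})b=b\circ_g\vac\in O_g(V)$ for homogeneous $b\in V^0$. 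The argument is correct; the only quibble is terminological (what you call the ``right-ideal property,'' namely $a\ast_g(b\circ_g d)\in O_g(V)$, is the left-ideal property).
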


Let $M=\bigoplus_{n\in \N} M(\frac{n}{T})$ be an admissible $g$-twisted module. By \labelcref{eq:admissible}, its bottom level $M(0)$ is preserved by the \emph{zero mode operators} $o(a):=\vo{a}{\wt a-1}$ ($a\in V$). 
Note that the assignment $o\colon a\mapsto o(a)$ vanishes outside $V^0$.
Furthermore, we have
\begin{lemma}[\cite{Z,DLM1}]
  The bottom level $M(0)$ is an $A_g(V)$-module with the action given by $[a]\cdot v=o(a)v$ for $a\in V$ and $v\in M(0)$. More specifically, we have 
  \begin{align}
    o(a\circ_g b)v&=0, \label{eq:oab}\\
    o(a)o(b)v&=o(a\ast b)v, \label{eq:oaob}\\
    o(a)o(b)v-o(b)o(a)v&=\sum_{j\ge 0} \binom{\wt a-1}{j} o(\vo{a}{j}b)v
    \qquad\text{for } a\in V^0.\label{eq:oaob-oboa}
  \end{align}
\end{lemma}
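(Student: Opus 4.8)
The plan is to reduce all three identities to the component form of the twisted Jacobi identity (\cref{lem:Jac-comp}) together with the grading constraint \eqref{eq:admissible}, using repeatedly that $o(a)=\vo{a}{\wt a-1}$ vanishes on $M(0)$ unless $a\in V^0$. The governing principle throughout is that, for $v\in M(0)$ and homogeneous $c\in V$, the mode $\vo{c}{k}$ sends $M(0)$ into $M(\wt c-k-1)$; hence $\vo{c}{k}v=0$ whenever $\wt c-k-1<0$, while $\vo{c}{\wt c-1}v=o(c)v\in M(0)$. Since $V^0$ acts with integral modes and preserves the bottom level, the only surviving modes on $M(0)$ have index at most $\wt c-1$, and this truncation is what forces the various infinite sums to collapse to finite, identifiable expressions.

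For \eqref{eq:oaob-oboa} I would specialize \eqref{eq:Jac'} to $l=0$ and $r=s=0$ (legitimate since $o$ is supported on $V^0$), which reduces it to the plain commutator formula $[\vo{a}{m},\vo{b}{n}]=\sum_{j\ge 0}\binom{m}{j}\vo{(\vo{a}{j}b)}{m+n-j}$. Setting $m=\wt a-1$ and $n=\wt b-1$ and evaluating on $v\in M(0)$, the left-hand side becomes $o(a)o(b)v-o(b)o(a)v$, while on the right the mode index $m+n-j=\wt a+\wt b-2-j$ is exactly $\wt(\vo{a}{j}b)-1$, so each summand equals $o(\vo{a}{j}b)v$; this gives \eqref{eq:oaob-oboa} at once.

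For \eqref{eq:oaob} I would realize both $o(a)o(b)v$ and $o(a\ast_g b)v$ as the degree-zero (zero-mode) part extracted from the product $Y_M(a,z_1)Y_M(b,z_2)v$ and from the iterate $Y_M(Y(a,z_1-z_2)b,z_2)v$ respectively, and identify them using the duality of \cref{prop:Jac-fun}. Because $a\in V^0$ forces the nontrivial case $b\in V^0$ as well (otherwise both sides vanish by the support of $o$), no fractional powers occur and the computation is the untwisted one of Zhu and Frenkel--Zhu: the kernel $(1+z)^{\wt a}/z$ defining $\ast_g$ is tuned so that passing from the iterate to the product contributes only residues of modes that either raise the index above the top degree (annihilating $M(0)$) or lie in $O_g(V)$ (annihilated by \eqref{eq:oab}).

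The genuinely twisted and hardest identity is \eqref{eq:oab}. Here $a\circ_g b\in V^{r+s}$, so $o(a\circ_g b)$ is automatically zero unless $r+s\equiv 0\pmod T$; the two surviving cases are $r=0$ (where $\circ_g$ is Zhu's $\circ$) and $r\neq 0$ with $s=T-r$. In both cases I would expand $o(a\circ_g b)v=\sum_{i\ge 0}\binom{\wt a-1+\delta(r)+\frac{r}{T}}{i}\,o(\vo{a}{i-1-\delta(r)}b)v$ into a finite sum, convert each resulting zero mode $o(\vo{a}{k}b)$ into a combination of products of the form $\vo{a}{p}\vo{b}{q}$ and $\vo{b}{q}\vo{a}{p}$ via the associativity form of \eqref{eq:Jac'}, and check that the shifted exponent $\wt a-1+\delta(r)+\frac{r}{T}$ is precisely calibrated so that every surviving product involves an $a$- or $b$-mode whose index exceeds the top degree of the relevant graded piece, hence annihilates $v\in M(0)$. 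I expect this cancellation to be the main obstacle: the fractional shift $\frac{r}{T}$ must be matched against the fractional degrees in $\frac1T\N$ of the grading \eqref{eq:admissible}, so the binomial bookkeeping is more delicate than in the untwisted setting, and care is needed to ensure the finite sums telescope correctly. An alternative, more uniform route is to invoke \cref{prop:Jac-fun} directly: for $v$ and $u'$ in the bottom levels the two-variable correlation function is rational with controlled pole orders, and $o(a\circ_g b)v$ is a residue that vanishes by this pole structure; I would use whichever argument keeps the fractional-power bookkeeping most transparent.
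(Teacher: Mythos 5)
The paper does not prove this lemma --- it is recalled from \cite{Z,DLM1} with no argument given --- so there is nothing in the text to compare against; your proposal is in effect a reconstruction of the standard proofs in those references, and its overall strategy (reduce to the component twisted Jacobi identity \eqref{eq:Jac'} and use that $\vo{c}{k}M(0)\subset M(\wt c-k-1)$ vanishes when $\wt c-k-1<0$) is the right one. Your treatment of \eqref{eq:oaob-oboa} is complete and correct. For \eqref{eq:oab}, which you rightly single out as the crux but leave unexecuted, the calibration does work, and more cheaply than your term-by-term plan suggests: \eqref{eq:Jac'} cannot isolate a single iterate mode $\vo{(\vo{a}{k}b)}{\cdot}$ (its right-hand side is already a binomial sum over $j$), but for a suitable choice of $(l,m,n)$ that whole sum \emph{is} $o(a\circ_g b)$ on the nose. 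Concretely, for $a\in V^r$, $b\in V^{T-r}$ with $r\neq 0$, take $l=-1$, $m=\wt a-1$, $n=\wt b-1$: the right-hand side of \eqref{eq:Jac'} becomes $\sum_{j\ge 0}\binom{\wt a-1+\frac{r}{T}}{j}\vo{(\vo{a}{j-1}b)}{\wt a+\wt b-1-j}=o(a\circ_g b)$, while on the left every summand ends in $\vo{b}{\frac{s}{T}+\wt b-1+i}$ or $\vo{a}{\frac{r}{T}+\wt a-1+i}$ with $i\ge 0$, whose degree shift $-\frac{s}{T}-i$, resp.\ $-\frac{r}{T}-i$, is negative, so both infinite sums annihilate $M(0)$; the case $r=s=0$ is identical with $l=-2$, $m=\wt a$, $n=\wt b$. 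The same device with $l=-1$, $m=\wt a$, $n=\wt b-1$, $r=s=0$ proves \eqref{eq:oaob} directly (the only surviving product term is $i=0$, giving $o(a)o(b)v$), so your detour through \cref{prop:Jac-fun} and through \eqref{eq:oab} is unnecessary, though not wrong. In short: correct approach, one genuinely deferred computation, and that computation does go through.
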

Given an $A_g(V)$-module $U$, the dual space $U^{\ast}$ is a right module over $A_g(V)$, 
where $[a]$ acts on $u'\in U^{\ast}$ on the right by $\braket*{u'\cdot [a]}{u}=\braket*{u'}{[a]\cdot u}$ for $u\in U$. 
When $U=M(0)$ for some admissible $g$-twisted module $M$, we have the following formula that is dual to \labelcref{eq:oaob-oboa}: 
\begin{equation*}
  v'o(a)o(b)-v'o(b)o(a)=\sum_{j\ge 0} \binom{\wt a-1}{j} v'o(\vo{a}{j}b),\quad a,b\in V^0, v\in U,\label{eq:oaob-oboa_'}
\end{equation*}

\subsection{Space of \texorpdfstring{$g$}{g}-twisted restricted correlation functions}
To define the auxiliary space $\Cor[\Sigma_{1}(U^3, M^1, U^2)]$ of $g$-twisted restricted correlation functions, we need the following two-variable functions on $\oC$
\begin{equation}\label{Fpq}
  F_{n,i}(\pp,\qq):=
  \frac{z^{-n}}{i!}\left(\pdv{w}\right)^{i}\frac{w^{n}}{z-w},
\end{equation}
for all $n\in\tfrac{1}{T}\Z$. 
The following lemmas are evident.
\begin{lemma}\label{lem:ExpOfFpq}
  The expansions of $F_{n,i}(\pp,\qq)$ at $\pp=0$, $\pp=\infty$, and $\pp=\qq$ are
  \begin{align*}
    \iota_{\pp=0}F_{n,i} &= -\sum_{j\ge 0}\binom{n-j-1}{i}z^{j-n}w^{n-j-i-1},\\
    \iota_{\pp=\infty}F_{n,i} &= \sum_{j\ge 0}\binom{n+j}{i}z^{-n-j-1}w^{n+j-i},\\
    \iota_{\pp=\qq}F_{n, i} &= \sum_{l=0}^i \sum_{p\ge 0} \binom{n}{i-l} \binom{-n}{p} w^{-i+l-p}(z-w)^{p-l-1}.
  \end{align*}
\end{lemma}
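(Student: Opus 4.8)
The plan is to reduce all three expansions to the single elementary identity $\frac{1}{i!}\left(\pdv{w}\right)^{i}w^{\alpha}=\binom{\alpha}{i}w^{\alpha-i}$, valid for any $\alpha\in\frac{1}{T}\Z$, combined with the geometric and binomial expansions of $\frac{1}{z-w}$ and of $z^{-n}$ already recorded in \labelcref{eq:iota_zw,eq:iota_wz,eq:iota_wz-w}. The only analytic input needed is that in the expansions $\iota_{\pp=\infty}F_{n,i}$ and $\iota_{\pp=0}F_{n,i}$ the expansion variable is $\pp$ (that is, $z$), whereas the differentiation in \labelcref{Fpq} is in the other variable $w$; hence $\pdv{w}$ acts only on the coefficients and passes freely through the sum over powers of $z$.

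For the expansion at $\pp=\infty$ I would first expand only the factor $\frac{1}{z-w}$ appearing in \labelcref{Fpq}. In the region $\abs{z}>\abs{w}$ this is $\sum_{j\ge 0}w^{j}z^{-j-1}$, so that $\iota_{\pp=\infty}\frac{w^{n}}{z-w}=\sum_{j\ge 0}w^{n+j}z^{-j-1}$; applying $\frac{1}{i!}\left(\pdv{w}\right)^{i}$ termwise sends $w^{n+j}$ to $\binom{n+j}{i}w^{n+j-i}$, and multiplying by $z^{-n}$ produces exactly the asserted formula for $\iota_{\pp=\infty}F_{n,i}$. The computation at $\pp=0$ is identical, starting instead from $\iota_{\pp=0}\frac{1}{z-w}=-\sum_{j\ge 0}z^{j}w^{-j-1}$ (the $\abs{w}>\abs{z}$ expansion) and using $\frac{1}{i!}\left(\pdv{w}\right)^{i}w^{n-j-1}=\binom{n-j-1}{i}w^{n-j-i-1}$; the overall minus sign then gives the stated $\iota_{\pp=0}F_{n,i}$.

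For the expansion at $\pp=\qq$ the cleanest route is to evaluate the derivative in closed form \emph{before} expanding. Since $\frac{1}{k!}\left(\pdv{w}\right)^{k}(z-w)^{-1}=(z-w)^{-k-1}$ and $\frac{1}{l!}\left(\pdv{w}\right)^{l}w^{n}=\binom{n}{l}w^{n-l}$, the Leibniz rule yields
\[
  F_{n,i}(\pp,\qq)=z^{-n}\sum_{l=0}^{i}\binom{n}{l}w^{n-l}(z-w)^{l-i-1}.
\]
It then remains to expand the fractional power $z^{-n}$ as a series in $(z-w)$: writing $z=w+(z-w)$ and applying \labelcref{eq:iota_wz-w} with exponent $-n$ gives $\iota_{\pp=\qq}z^{-n}=\sum_{p\ge 0}\binom{-n}{p}w^{-n-p}(z-w)^{p}$. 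Substituting this and collecting powers of $w$ and of $(z-w)$ produces a double sum indexed by $l$ and $p$; the final step is the reindexing $l\mapsto i-l$, which converts the summand $\binom{n}{l}w^{-l-p}(z-w)^{l-i-1+p}$ into $\binom{n}{i-l}w^{-i+l-p}(z-w)^{p-l-1}$, matching the claimed form.

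I expect the only genuine bookkeeping—and hence the sole (minor) obstacle—to lie in this last expansion at $\pp=\qq$: one must remember to expand the fractional power $z^{-n}$ as a series in $(z-w)$ rather than treat it as a constant, and then carry out the reindexing that brings the double sum into the stated shape. The expansions at $\pp=0$ and $\pp=\infty$ are immediate once differentiation is pushed through the geometric series, which is why the lemma is indeed \emph{evident}.
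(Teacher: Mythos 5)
Your computation is correct in all three cases, and since the paper declares this lemma (together with \cref{lem:IndFpq}) ``evident'' and supplies no proof, your argument simply fills in the intended routine verification: term-by-term differentiation through the two geometric expansions of $(z-w)^{-1}$ for the branches at $\pp=\infty$ and $\pp=0$, and the Leibniz rule plus the binomial expansion \labelcref{eq:iota_wz-w} of $z^{-n}$ at $\pp=\qq$. The reindexing $l\mapsto i-l$ at the end checks out, so there is nothing to add or correct.
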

\begin{lemma}\label{lem:IndFpq}
  The functions $F_{n,i}(\pp,\qq)$ for successive $n$ have the following relation:
  \[
    F_{n,i}(\pp,\qq) - F_{n+1,i}(\pp,\qq) = \binom{n}{i}z^{-n-1}w^{n-i}.
  \]
\end{lemma}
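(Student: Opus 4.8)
The plan is to treat this as a direct computation, since both sides are explicit rational expressions in $z$ and $w$ and the variable $z$ is independent of $w$. First I would write out the two terms from the definition \eqref{Fpq},
\[
  F_{n,i}(\pp,\qq) = \frac{z^{-n}}{i!}\left(\pdv{w}\right)^{i}\frac{w^{n}}{z-w},
  \qquad
  F_{n+1,i}(\pp,\qq) = \frac{z^{-n-1}}{i!}\left(\pdv{w}\right)^{i}\frac{w^{n+1}}{z-w},
\]
and, since the factors $z^{-n}$ and $z^{-n-1}$ are constant in $w$, pull the common operator $\frac{1}{i!}\left(\pdv{w}\right)^{i}$ outside the difference.

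The crux of the argument is the cancellation of the simple pole along $z=w$. Combining the two fractions inside the bracket gives
\[
  z^{-n}\,\frac{w^{n}}{z-w}-z^{-n-1}\,\frac{w^{n+1}}{z-w}
  = \frac{z^{-n-1}w^{n}\,(z-w)}{z-w}
  = z^{-n-1}w^{n},
\]
so the pole disappears and the bracket reduces to the monomial $z^{-n-1}w^{n}$. I would record this telescoping identity as the key step, since it is precisely the point where the special shape of the denominator $z-w$ is used.

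It then remains to apply $\frac{1}{i!}\left(\pdv{w}\right)^{i}$ to $z^{-n-1}w^{n}$. Using the generalized power rule $\frac{1}{i!}\left(\pdv{w}\right)^{i}w^{n}=\binom{n}{i}w^{n-i}$, with $\binom{n}{i}=\frac{n(n-1)\cdots(n-i+1)}{i!}$, one obtains $\binom{n}{i}z^{-n-1}w^{n-i}$, which is the claimed right-hand side. I do not expect any genuine obstacle here: the only point requiring a moment's care is that $n$ ranges over $\tfrac{1}{T}\Z$ rather than $\N$, so $\binom{n}{i}$ must be read through the generalized binomial convention already in force throughout the paper (the same convention underlying \cref{lem:ExpOfFpq}); with that understood the identity is a one-line verification.
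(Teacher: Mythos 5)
Your computation is correct and is exactly the straightforward verification the paper has in mind: it declares \cref{lem:IndFpq} evident and gives no proof, and your telescoping of the two fractions over the common denominator $z-w$ followed by the generalized power rule $\frac{1}{i!}\left(\pdv{w}\right)^{i}w^{n}=\binom{n}{i}w^{n-i}$ is the intended one-line argument. Nothing further is needed.
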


We now give a definition that generalizes \cite[Definition 3.1]{Liu} to the $g$-twisted case. Note that there is an additional shifting in each recursive formula.

\begin{definition}\label{def:genus-zero-bottom}
  Let $M^1$ be an admissible untwisted module of conformal weight $h_1$, and $U^2$ (resp. $U^3$) a left (resp. right) $A_g(V)$-module where $[\upomega]$ acts as $h_2\id$ (resp. $h_3\id$). Put $h=h_1+h_2-h_3$. 
  A system of linear maps $S=\Set*{S^n_{V\cdots M^1\cdots V}}_{n\in\N}$, where
  \begin{align*}
    S_{V\cdots M^1\cdots V}^n\colon& U^3\otimes V\otimes\cdots\otimes V\otimes M^{1}\otimes V\otimes \cdots V\otimes U^2\longrightarrow\O(\infty\Delta_n),\\
    & u_3\otimes a^{1}\otimes\cdots\otimes a^{k-1}\otimes v\otimes a^{k}\otimes\cdots\otimes a^{n}\otimes u_2\\
    & \longmapsto
    S\bracket*{u_3}{\cfbseq[1][k-1](v,\qq)\cfbseq[k][n]}{u_2},
  \end{align*}
  is said to satisfy the \concept{twisted genus-zero property associated to the datum} 
  \begin{equation}\label{eq:def:Sigma1:bottom}
      \Sigma_{1}(U^3, M^1, U^2):=(\x\colon\oC\to\PP^1, \infty, 1, 0, U^3, M^1, U^2)
  \end{equation}
  if it satisfies the following axioms for all $u_2\in U^2$ and $u_3\in U^3$: 
  \begin{enumerate}
    \item Properties (2)--(6) in \cref{def:genus-zero}, with $u_3\in U^3$ and $u_2\in U^2$.
    \item \textbf{Monomial property}: There is a linear functional $\varphi\in (U^3\otimes M^1\o U^2)^{\ast}$ such that
     \begin{equation}\label{eq:monomial}
       S\bracket*{u_3}{(v,\qq)}{u_2}=
       \braket*{\varphi}{u_3\otimes w^{-L(0)+h_1}v\o u_2}.
     \end{equation}
    \item \textbf{Recursive formula about $U^3$ and $V$}: For any $a\in V^r$, we have
    \begin{equation}\label{eq:recursive_U3}
      \begin{aligned} 
        S\bracket*{u_3}{(a,\pp)\cdots}{u_2}
        &=  
        S\bracket*{u_3\cdot [a]}{\cdots}{u_2}z^{-\wt a}\\
        &\phantom{=}  +\sum_{k=1}^{n}\sum_{i\ge 0} 
        F_{\wt a-1+\delta(r)+\frac{r}{T},i}(\pp,\pp_{k}) 
        S\bracket*{u_3}{\cdots(\vo{a}{i}a^{k},\pp_{k})\cdots}{u_2} \\
        &\phantom{=}  +\sum_{i\ge 0} 
        F_{\wt a-1+\delta(r)+\frac{r}{T},i}(\pp,\qq) 
        S\bracket*{u_3}{\cdots(\vo{a}{i}v,\qq)}{u_2},
      \end{aligned}
    \end{equation}
  \item \textbf{Recursive formula about $U^2$ and $V$}: For any $a\in V^r$, we have 
  \begin{equation}\label{eq:recursive_U2}
    \begin{aligned}
      S\bracket*{u_3}{\cdots(a,\pp)}{u_2}
      &=  
      S\bracket*{u_3}{\cdots}{[a]\cdot u_2}z^{-\wt a}\\
      &\phantom{=}  +\sum_{k=1}^{n}\sum_{i\ge 0} 
      F_{\wt a-1+\frac{r}{T},i}(\pp,\pp_{k}) 
      S\bracket*{u_3}{\cdots(\vo{a}{i}a^{k},\pp_{k})\cdots}{u_2} \\
      &\phantom{=}  +\sum_{i\ge 0} 
      F_{\wt a-1+\frac{r}{T},i}(\pp,\qq) 
      S\bracket*{u_3}{\cdots(\vo{a}{i}v,\qq)}{u_2},
    \end{aligned}
  \end{equation}
  \end{enumerate}
  The vector space consists of systems of linear maps $S=\Set*{S^n_{V\cdots M^1\cdots V}}_{n\in\N}$ satisfying the above axioms is called the \concept{space of $g$-twisted restricted correlation functions associated to the datum $\Sigma_{1}(U^3, M^1, U^2)$} and is denoted by $\Cor[\Sigma_{1}(U^3, M^1, U^2)]$. 
\end{definition}

\begin{proposition}\label{prop:cor-restriction}
  Let $S$ be a system of $g$-twisted correlation functions associated to the datum $\Sigma_{1}(N^3, M^1, M^2)$ in \labelcref{eq:def:Sigma1}, then its restriction to the bottom levels of $N^3$ and $M^2$ gives a system  of $g$-twisted restricted correlation functions associated to the datum $\Sigma_{1}(N^3(0), M^1, M^2(0))$. 
\end{proposition}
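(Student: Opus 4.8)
The plan is to restrict every map $S^n_{V\cdots M^1\cdots V}$ in the system to the bottom levels $U^2:=M^2(0)$ and $U^3:=N^3(0)$ in its first and last tensor slots, and then to check the axioms of \cref{def:genus-zero-bottom}. Recall that $U^2$ and $U^3$ are a left and a right $A_g(V)$-module on which $[\upomega]$ acts as $h_2\id$ and $h_3\id$. Axioms (2)--(6) of \cref{def:genus-zero} are properties of the maps $S^n$ valid for arbitrary inputs, so restricting the two outer slots to $N^3(0)$ and $M^2(0)$ leaves them untouched; they are inherited verbatim. It therefore remains to establish the \emph{monomial property} \labelcref{eq:monomial} and the two recursive formulas \labelcref{eq:recursive_U2} and \labelcref{eq:recursive_U3}.

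For the monomial property, fix a homogeneous $v\in M^1$ and set $\Phi(w):=S\bracket*{u_3}{(v,\qq)}{u_2}$, which lies in $\cF(\emptyset)=\C[w^{\pm\frac1T}]$ by the homogeneous property. I would apply the \emph{\nameref{lem:ResidueSum}} to the $1$-form $S\bracket*{u_3}{(\upomega,\pp)(v,\qq)}{u_2}\,z\,\d{z}$, which factors through $\x$ since $\upomega\in V^0$. Evaluating $\tfrac1T\Res_{\pp=0}$ and $\tfrac1T\Res_{\pp=\infty}$ by the generating properties \labelcref{eq:generating_M2} and \labelcref{eq:generating_M3} replaces $\vo{\upomega}{1}=\vo{L}{0}$ acting on $u_2$ and on $u_3$ by the scalars $h_2$ and $h_3$ (this is exactly where $u_2,u_3\in U^2,U^3$ enters, through $\vo{L}{0}u_2=h_2u_2$ and the degree-zero action $\theta(\lo{\upomega}{1})u_3=h_3u_3$). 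Expanding $z=w+(z-w)$ and evaluating $\Res_{\pp=\qq}$ by associativity \labelcref{eq:associativity_p1q} (for $k=0,1$) together with the $\vo{L}{-1}$-derivative property \labelcref{eq:L-derivative-w} turns the residue identity into the first-order ODE $w\Phi'(w)+(\deg v)\Phi(w)=0$, whose only rational solutions are the monomials $\Phi(w)=c\,w^{-\deg v}$. Since $w^{-L(0)+h_1}v=w^{-\deg v}v$ for homogeneous $v$, defining $\braket*{\varphi}{u_3\otimes v\otimes u_2}:=c$ and extending linearly produces the required functional $\varphi$ and yields \labelcref{eq:monomial}.

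The heart of the proof is the recursive formula \labelcref{eq:recursive_U2} (the formula \labelcref{eq:recursive_U3} for $U^3$ being proved symmetrically, with the roles of $0$ and $\infty$ and of \labelcref{eq:generating_M2}, \labelcref{eq:generating_M3} interchanged, and the extra $\delta(r)$-shift dictated by the definition of $A_g(V)$). Fixing the remaining variables, both sides of \labelcref{eq:recursive_U2} are rational functions of $\pp$ on $\oC$, so by injectivity of the formal-expansion map $\iota_{\pp=0}$ on rational functions it suffices to show they have the same expansion at $\pp=0$. The generating property \labelcref{eq:generating_M2} gives
\[
  \iota_{\pp=0}S\bracket*{u_3}{(a,\pp)\cdots}{u_2}=\sum_{m\in\Z}S\bracket*{u_3}{\cdots}{\vo{a}{m+\frac{r}{T}}u_2}\,z^{-m-\frac{r}{T}-1}.
\]
The single most singular coefficient, carried by the zero mode $\vo{a}{\wt a-1}u_2=o(a)u_2=[a]\cdot u_2$ (nonzero only when $r=0$), reproduces the term $S\bracket*{u_3}{\cdots}{[a]\cdot u_2}z^{-\wt a}$. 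For every remaining (strictly less singular) coefficient $S\bracket*{u_3}{\cdots}{\vo{a}{m+\frac{r}{T}}u_2}$ I would apply the \emph{\nameref{lem:ResidueSum}} to the $1$-form $S\bracket*{u_3}{(a,\pp)\cdots}{u_2}\,z^{m+\frac{r}{T}}\,\d{z}$: the residue at $\infty$ contributes a term $S\bracket*{\theta(\lo{a}{m+\frac{r}{T}})u_3}{\cdots}{u_2}$ which \emph{vanishes}, because $\theta$ reverses the $\tfrac1T\Z$-grading (cf. \labelcref{eq:def:theta}) so that $\theta(\lo{a}{m+\frac{r}{T}})$ has strictly negative degree in this subleading range and annihilates the bottom vector $u_3$, while the residues at the $\pp_k$ and at $\qq$ are computed by associativity \labelcref{eq:associativity_p1p2} and \labelcref{eq:associativity_p1q}. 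Re-summing these contributions against $z^{-m-\frac{r}{T}-1}$ and recognizing, via \cref{lem:ExpOfFpq}, the resulting series as $\iota_{\pp=0}F_{\wt a-1+\frac{r}{T},i}(\pp,\pp_k)$ and $\iota_{\pp=0}F_{\wt a-1+\frac{r}{T},i}(\pp,\qq)$ matches precisely the $F$-sums in \labelcref{eq:recursive_U2}.

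I expect the main obstacle to be this final bookkeeping: tracking the non-integer shift $\frac{r}{T}$ produced by the ramification of $\x$ at $0$ and $\infty$, and checking that the index $\wt a-1+\frac{r}{T}$ of the reconstruction kernels $F_{\bullet,i}$ is exactly the one for which the residue computation at $\pp=0$ reproduces the binomial coefficients $\binom{m+\frac{r}{T}}{i}$ appearing in \cref{lem:ExpOfFpq}. The conceptual point, that the opposite-end residue drops out, is forced by the bottom-level hypotheses: the $\infty$-residue vanishes for \labelcref{eq:recursive_U2} because $u_3\in N^3(0)$, and symmetrically the $0$-residue vanishes for \labelcref{eq:recursive_U3} because $u_2\in M^2(0)$; this is precisely why the restriction lands in $\Cor[\Sigma_{1}(N^3(0),M^1,M^2(0))]$.
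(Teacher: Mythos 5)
Your proposal is correct and follows the same overall strategy as the paper: axioms (2)--(6) are inherited, the monomial property is obtained by inserting $\upomega$ and combining associativity, the $\vo{L}{-1}$-derivative property, the residue sum formula, and the scalar action of $\vo{L}{0}$ on the bottom levels (your ODE $w\Phi'+(\deg v)\Phi=0$ is exactly the paper's statement that $\partial_w\bigl(\Phi\,w^{\deg v}\bigr)=0$), and the recursive formulas come from expanding $S\bracket*{u_3}{(a,\pp)\cdots}{u_2}$ at a branch point, converting coefficients via the residue sum formula, and using the vanishing of non-zero modes on the bottom levels together with \cref{lem:ExpOfFpq}. The one place where your bookkeeping genuinely differs is the pairing of expansion points with recursions: you derive \labelcref{eq:recursive_U2} from the expansion at $\pp=0$ by keeping the zero-mode coefficient $S\bracket*{u_3}{\cdots}{o(a)u_2}z^{-\wt a}$ in place and routing only the subleading coefficients through the residue sum formula (where the $\infty$-residue dies on $N^3(0)$), whereas the paper routes \emph{all} coefficients, including the zero mode, through the residue sum formula, so that the leading term arises from the surviving $\infty$-residue $S\bracket*{u_3\cdot[a]}{\cdots}{u_2}z^{-\wt a}$ and the expansion at $\pp=0$ yields \labelcref{eq:recursive_U3} instead (with \labelcref{eq:recursive_U2} then coming from the expansion at $\infty$). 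Both versions are valid decompositions of the same rational function; I verified that your range of subleading modes $m+\tfrac{r}{T}\le\wt a-2+\tfrac{r}{T}$ matches the index $\wt a-1+\tfrac{r}{T}$ of \cref{lem:ExpOfFpq} exactly as claimed, and when $r=0$ the discrepancy between the two index shifts is precisely the zero-mode monomial of \cref{lem:IndFpq}. The only imprecision is attributing the $\delta(r)$-shift to "the definition of $A_g(V)$" — it really comes from the integrality constraint on which modes survive at each branch point — but this does not affect the argument.
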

\begin{proof}
  We first show the \emph{monomial property}. 
  Since we have $S\bracket*{u_3}{(v,\qq)}{u_2}\in\C[w^{\pm\frac{1}{T}}]$ by \emph{truncation property}, it suffics to show the $w$-derivative of $S\bracket*{u_3}{(w^{L(0)-h_1}v,\qq)}{u_2}$ vanishes for all $u_3\in N^3(0)$, $v\in M^1$, and $u_2\in M^2(0)$.
  Indeed, for homogeneous $v\in M^1$, we have 
  \begin{align*}
    \MoveEqLeft
    \pdv{w}\left(S\bracket*{u_3}{(v,\qq)}{u_2}w^{\deg v}\right) =\pdv{w}\left(S\bracket*{u_3}{(v,\qq)}{u_2}w^{-h}w^{\deg{v}+h}\right)\\
    &= \pdv{w}\left(S\bracket*{u_3}{(v,\qq)}{u_2}w^{-h}\right)w^{\deg{v}+h} + S\bracket*{u_3}{(v,\qq)}{u_2}w^{-h}\pdv{w}\left(w^{\deg{v}+h}\right), \\
    \shortintertext{by the \emph{$\vo{L}{-1}$-property} \labelcref{eq:L-derivative-w},}
    &=
    S\bracket*{u_3}{(\vo{L}{-1}v,\qq)}{u_2}w^{\deg{v}} + (\deg{v}+h)S\bracket*{u_3}{(v,\qq)}{u_2}w^{\deg{v}-1}, \\
    \shortintertext{by the \emph{associativity} \labelcref{eq:associativity_p1q},}
    &=
    \Res_{\pp=\qq}S\bracket*{u_3}{(\upomega,\pp)(v,\qq)}{u_2}w^{\deg{v}}\d{z} + (\deg{v}+h)S\bracket*{u_3}{(v,\qq)}{u_2}w^{\deg{v}-1} \\
    &=\Res_{\pp=\qq}S\bracket*{u_3}{(\upomega,\pp)(v,\qq)}{u_2}zw^{\deg{v}-1}\d{z} \\
    &\qquad - \Res_{\pp=\qq}S\bracket*{u_3}{(\upomega,\pp)(v,\qq)}{u_2}(z-w)w^{\deg{v}-1}\d{z} + \cdots,\\
    \shortintertext{applying the \emph{\nameref{eq:ResidueSum}} to $S\bracket*{u_3}{(\upomega,\pp)(v,\qq)}{u_2}zw^{\deg{v}-1}\d{z}$, and noticing that its possible poles on $\PP^1$ are $0$, $\infty$, and $\qq$,}
    &= -(\tfrac{1}{T}\Res_{\pp=0}+\tfrac{1}{T}\Res_{\pp=\infty})S\bracket*{u_3}{(\upomega,\pp)(v,\qq)}{u_2}zw^{\deg{v}-1}\d{z} \\
    &\qquad - \Res_{\pp=\qq}S\bracket*{u_3}{(\upomega,\pp)(v,\qq)}{u_2}(z-w)w^{\deg{v}-1}\d{z} + \cdots,\\
    \shortintertext{by the \emph{generating properties} \labelcref{eq:generating_M2,eq:generating_M3}, and the \emph{associativity} \labelcref{eq:associativity_p1q} again,}
    &= - S\bracket*{u_3}{(v,\qq)}{\vo{L}{0}u_2}w^{\deg{v}-1} + S\bracket*{\vo{L}{0}u_3}{(v,\qq)}{u_2}w^{\deg{v}-1} \\
    &\qquad - S\bracket*{u_3}{(\vo{L}{0}v,\qq)}{u_2}w^{\deg{v}-1} + (\deg{v}+h)S\bracket*{u_3}{(v,\qq)}{u_2}w^{\deg{v}-1} \\
    &= (-h_2+h_3-(\deg{v}+h_1)+(\deg{v}+h))S\bracket*{u_3}{(v,\qq)}{u_2}w^{\deg{v}-1} =0.
  \end{align*}
  
  It remains to prove the \emph{recursive formulas} \labelcref{eq:recursive_U3,eq:recursive_U2}.

  For any homogeneous $a\in V^r$, the function $S\bracket*{u_3}{(a,\pp)\cdots}{u_2}z^{m+\frac{r}{T}}$ factors through $\PP^1$ and has only $n+3$ possible poles: $0$, $
  \infty$, $\zseq$, and $w$. 
  Expanding it at $\pp=0$ and applying the \emph{\nameref{lem:ResidueSum}}, we obtain
  \begin{equation}\label{eq:ResSumForS(ap)}
    0=
    \left(\tfrac{1}{T}\Res_{\pp=0}+\tfrac{1}{T}\Res_{\pp=\infty}+\sum_{k=1}^{n}\Res_{\pp=\pp_k}+\Res_{\pp=\qq}\right)S\bracket*{u_3}{(a,\pp)\cdots}{u_2}z^{m+\frac{r}{T}}\d{z}.
  \end{equation}
  By \labelcref{eq:associativity_p1q,eq:associativity_p1p2,eq:generating_M2,eq:generating_M3}, and \labelcref{eq:iota_wz-w}, we have 
  \begin{align*}
    \MoveEqLeft
    \Res_{\pp=0}S\bracket*{u_3}{(a,\pp)\cdots}{u_2}z^{m+\frac{r}{T}}\d{z} =TS\bracket*{u_3}{\cdots}{\vo{a}{m+\frac{r}{T}}u_2}, \\
    \MoveEqLeft
    \Res_{\pp=\infty}S\bracket*{u_3}{(a,\pp)\cdots}{u_2}z^{m+\frac{r}{T}}\d{z} =-TS\bracket*{\theta(\lo{a}{m+\frac{r}{T}})u_3}{\cdots}{u_2}, \\
    \MoveEqLeft
    \Res_{\pp=\pp_k}S\bracket*{u_3}{(a,\pp)\cdots(a^{k},\pp_{k})\cdots}{u_2}z^{m+\frac{r}{T}}\d{z} \\
    \overset{\iota_{\pp=\pp_k}}&{=}\sum_{i\ge 0}\binom{m+\frac{r}{T}}{i}
    \Res_{\pp=\pp_k}S\bracket*{u_3}{(a,\pp)\cdots(a^{k},\pp_{k})\cdots}{u_2}z_k^{m+\frac{r}{T}-i}(z-z_k)^i\d{z}\\
    &=\sum_{i\ge 0}\binom{m+\frac{r}{T}}{i}S\bracket*{u_3}{\cdots(\vo{a}{i}a^{k},\pp_{k})\cdots}{u_2}z_k^{m+\frac{r}{T}-i}, \\
    \MoveEqLeft
    \Res_{\pp=\qq}S\bracket*{u_3}{(a,\pp)\cdots(v,\qq)}{u_2}z^{m+\frac{r}{T}}\d{z} \\
    \overset{\iota_{\pp=\qq}}&{=}\sum_{i\ge 0}\binom{m+\frac{r}{T}}{i}
    \Res_{\pp=\qq}S\bracket*{u_3}{(a,\pp)\cdots(v,\qq)}{u_2}w^{m+\frac{r}{T}-i}(z-w)^i\d{z}\\
    &=\sum_{i\ge 0}\binom{m+\frac{r}{T}}{i}S\bracket*{u_3}{\cdots(\vo{a}{i}v,\qq)}{u_2}w^{m+\frac{r}{T}-i},
  \end{align*}
  where the involved summation is finite since $\vo{a}{i}a^k=0$ and $\vo{a}{i}v=0$ for $i\gg 0$.
  Note that 
  \begin{align*}
      S\bracket*{u_3}{\cdots}{\vo{a}{m+\frac{r}{T}}u_2} = 0
      &\qquad\text{if}\qquad
      m+\tfrac{r}{T}>\wt{a}-1,\\
      S\bracket*{\theta(\lo{a}{m+\frac{r}{T}})u_3}{\cdots}{u_2} = 0
      &\qquad\text{if}\qquad
      m+\tfrac{r}{T}<\wt{a}-1.
  \end{align*}
Then it follows from \labelcref{eq:ResSumForS(ap)} and \cref{tab:EXP} that   
    \begin{align*}
      \MoveEqLeft\iota_{\pp=0}S\bracket*{u_3}{(a,\pp)\cdots}{u_2}\\
      &= 
      \sum_{m\le\floor{\wt{a}-1-\frac{r}{T}}}z^{-1-m-\frac{r}{T}}\bigg(S\bracket*{\theta(\lo{a}{m+\frac{r}{T}})u_3}{\cdots}{u_2}\\
      &\qquad -\sum_{k=1}^{n}\sum_{i\ge 0}\binom{m+\frac{r}{T}}{i}z_k^{m+\frac{r}{T}-i}S\bracket*{u_3}{\cdots(\vo{a}{i}a^{k},\pp_{k})\cdots}{u_2} \\
      &\qquad -\sum_{i\ge 0}\binom{m+\frac{r}{T}}{i}w^{m+\frac{r}{T}-i}S\bracket*{u_3}{\cdots(\vo{a}{i}v,\qq)}{u_2}\bigg) \\
      &= 
      S\bracket*{u_3o(a)}{\cdots}{u_2}z^{-\wt{a}} \\
      &\qquad - 
      \sum_{k=1}^{n}\sum_{i\ge 0}\sum_{m+\frac{r}{T}\le\wt{a}-1}\binom{m+\frac{r}{T}}{i}z^{-m-\frac{r}{T}-1}z_k^{m+\frac{r}{T}-i}S\bracket*{u_3}{\cdots(\vo{a}{i}a^{k},\pp_{k})\cdots}{u_2}\\
      &\qquad - \sum_{i\ge 0}\sum_{m+\frac{r}{T}\le\wt{a}-1}\binom{m+\frac{r}{T}}{i}z^{-m-\frac{r}{T}-1}w^{m+\frac{r}{T}-i}S\bracket*{u_3}{\cdots(\vo{a}{i}v,\qq)}{u_2}\\
      &=  
      S\bracket*{u_3o(a)}{\cdots}{u_2}z^{-\wt a} \\
      &\phantom{=}  +\sum_{k=1}^{n}\sum_{i\ge 0} 
      \iota_{\pp=0}F_{\wt a-1+\delta(r)+\frac{r}{T},i}(\pp,\pp_{k}) 
      S\bracket*{u_3}{\cdots(\vo{a}{i}a^{k},\pp_{k})\cdots}{u_2} \\
      &\phantom{=}  +\sum_{i\ge 0} 
      \iota_{\pp=0}F_{\wt a-1+\delta(r)+\frac{r}{T},i}(\pp,\qq) 
      S\bracket*{u_3}{\cdots(\vo{a}{i}v,\qq)}{u_2} \qquad\text{(by \cref{lem:ExpOfFpq})},
    \end{align*}
  which implies \labelcref{eq:recursive_U3} by the injectivity of $\iota_{\pp=0}$. 
  Finally, expanding $S\bracket*{u_3}{(a,\pp)\cdots}{u_2}$ at $\pp=\infty$ yields a proof of \labelcref{eq:recursive_U2}.
\end{proof}
\begin{remark}
  When $N^3$ is the contragridient module of $M^3$, the \emph{monomial property} follows from the \emph{truncation property} of intertwining operator $I_S$. However, in general we cannot assume that $N^3$ is a contragredient module of some $M^3$. See \cref{warn}.
\end{remark}

By \cref{prop:cor-restriction}, there exists a linear map \[\uppi: \Cor[\Sigma_{1}(N^3, M^1, M^2)]\longrightarrow \Cor[\Sigma_{1}(N^3(0), M^1, M^2(0)].\]
\begin{proposition}\label{prop:injectivity-from-cor-to-corbot}
  If $M^2$ and $N^3$ are lowest-weight modules, then $\uppi$ is injective.  
\end{proposition}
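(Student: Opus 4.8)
The plan is to prove that $\uppi$ has trivial kernel: if $S\in\Cor[\Sigma_{1}(N^3, M^1, M^2)]$ satisfies $\uppi(S)=0$, i.e. $S\bracket*{u_3}{\cfbseq(v,\qq)}{u_2}=0$ for all $u_3\in N^3(0)$, $u_2\in M^2(0)$, $v\in M^1$, and all field insertions $\aseq$, then $S$ vanishes on all of $N^3\otimes M^1\otimes M^2$. The mechanism is that the two generating properties \labelcref{eq:generating_M2} and \labelcref{eq:generating_M3} express a correlation function whose $M^2$- or $N^3$-slot has been acted on by a mode as a residue (at $0$, resp.\ $\infty$) of a correlation function with one extra field inserted and the un-acted slot vector. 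Since $M^2$ and $N^3$ are lowest-weight, their bottom levels generate everything under exactly these modes, so the vanishing forced at the bottom level propagates upward by a double induction on degree.

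First I would raise the $M^2$-slot. As $M^2$ is a lowest-weight module (\cref{def:lowest-weight}), $M^2=\U(\L_g(V)_+)M^2(0)$, so every graded piece $M^2(\tfrac{N+1}{T})$ with $N\ge 0$ is spanned by vectors $\vo{a}{m+\frac{r}{T}}v_2'$ with $\lo{a}{m+\frac{r}{T}}\in\L_g(V)_+$ and $v_2'$ of strictly smaller degree (using \cref{prop:repnOfLgV} to identify modes with the $\L_g(V)$-action). I would then prove by induction on $N$ the statement $P(N)$: $S\bracket*{u_3}{\cdots}{v_2}=0$ for every $u_3\in N^3(0)$, every $v_2\in\bigoplus_{k\le N}M^2(\tfrac{k}{T})$, and every finite insertion of fields. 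The base case $P(0)$ is exactly $\uppi(S)=0$. For the inductive step I write a degree-$(N+1)$ vector as $\sum_i\vo{a_i}{m_i+\frac{r_i}{T}}v_2^{(i)}$ with each $v_2^{(i)}$ of degree $\le\tfrac{N}{T}$ and apply \labelcref{eq:generating_M2}: each summand equals $\tfrac{1}{T}\Res_{\pp=0}$ of a correlation function with end vector $v_2^{(i)}$ and the enlarged insertion $(a_i,\pp),a^1,\dots,a^n$, which is identically zero by $P(N)$, hence so is its residue. It is crucial that $P(N)$ is phrased for an arbitrary number of insertions, so that adjoining the extra field $(a_i,\pp)$ keeps us inside the hypothesis.

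Next I would raise the $N^3$-slot by the mirror argument, now using \labelcref{eq:generating_M3} at $\pp=\infty$ together with the anti-isomorphism $\theta\colon\L_g(V)\to\L_{g^{-1}}(V)$ of \labelcref{eq:def:theta}. A degree count on \labelcref{eq:def:theta} shows that each term of $\theta(\lo{a}{m+\frac{r}{T}})$ has degree $-\deg\lo{a}{m+\frac{r}{T}}$, so $\theta$ reverses the grading and carries $\L_g(V)_-$ isomorphically onto $\L_{g^{-1}}(V)_+$; concretely the operators $\theta(\lo{a}{m+\frac{r}{T}})$ with $m+\tfrac{r}{T}>\wt a-1$ span the raising subalgebra $\L_{g^{-1}}(V)_+$ that generates $N^3$ from $N^3(0)$. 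Taking the conclusion of the previous paragraph (valid for all $v_2\in M^2$) as the base case, an induction on $\deg v_3$ — writing a degree-$(N+1)$ vector of $N^3$ as $\sum_i\theta(\lo{a_i}{m_i+\frac{r_i}{T}})v_3^{(i)}$ and using \labelcref{eq:generating_M3} to rewrite $S\bracket*{v_3}{\cdots}{v_2}$ as a $\Res_{\pp=\infty}$ of a correlation function with $v_3^{(i)}$ of lower degree and one extra insertion — forces $S$ to vanish on all of $N^3\otimes M^1\otimes M^2$. This yields $\uppi(S)=0\Rightarrow S=0$, so $\uppi$ is injective.

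I expect the only real subtlety, rather than a genuine obstacle, to be the bookkeeping of the double induction: each inductive hypothesis must already allow an arbitrary number of field insertions, precisely because the generating properties trade a raised slot vector for one additional insertion. The lowest-weight hypothesis is used exactly to guarantee the spanning statements $M^2=\U(\L_g(V)_+)M^2(0)$ and $N^3=\U(\L_{g^{-1}}(V)_+)N^3(0)$; without them the bottom-level data would not determine the whole system, in line with the cautionary \cref{warn}. Because injectivity only requires propagating vanishing, and not checking any consistency of a reconstruction, no compatibility among the residues needs to be verified, which keeps this direction clean.
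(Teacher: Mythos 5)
Your proof is correct, but it is organized rather differently from the paper's. The paper works first with three-point functions only: it introduces the subspace $M=\{v_2\in M^2 : S\bracket*{u_3}{(v,\qq)}{v_2}=0 \text{ for all } u_3\in N^3(0),\, v\in M^1\}$, shows that $M$ is a submodule containing $M^2(0)$ by combining the generating property \labelcref{eq:generating_M2} with the recursive formula \labelcref{eq:recursive_U3} (the latter is what re-expands the resulting four-point function back into three-point functions with the same $v_2$, so the vanishing closes up), concludes $M=M^2$ from the lowest-weight hypothesis, runs the mirror argument on the $N^3$ slot via \labelcref{eq:generating_M3} and \labelcref{eq:recursive_U2}, and only afterwards restores arbitrary field insertions by a separate induction on $n$ that uses the associativity axiom \labelcref{eq:associativity_p1q} and the injectivity of $\iota_{\pp=\qq}$. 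You instead build arbitrary insertions into the inductive statement from the outset and induct on the degree of the slot vector, so the extra insertion created by the generating property is absorbed by the hypothesis and you never invoke the recursive formulas or associativity at all. The trade-off is that your base case uses the full strength of $\uppi(S)=0$ (vanishing of every $S^n$ restricted to the bottom levels), whereas the paper only needs its $n=0$ part; in exchange your argument is shorter and makes transparent exactly which axioms are used (the two generating properties) and exactly where the lowest-weight hypothesis enters (the spanning statements $M^2=\U(\L_g(V)_+)M^2(0)$ and $N^3=\U(\L_{g^{-1}}(V)_+)N^3(0)$, the latter via the degree-reversing property of $\theta$, which you verify correctly). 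Both routes are valid.
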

\begin{proof}
    Let $S\in \Cor[\Sigma_{1}(N^3, M^1, M^2)]$ such that $\uppi(S)=0$. Then, $S\bracket*{u_3}{(v,\qq)}{u_2}=0$ for all $u_3\in N^3(0), v\in M^1$ and $u_2\in M^2(0)$. 
    Let $M$ be the subspace 
    \[
      M:=\Set*{v_2\in M^2\given S\bracket*{u_3}{(v,\qq)}{v_2}=0 \txforall[\ ] u_3\in N^3(0), v\in M^1 }
    \]
    Then $M^2(0)\subseteq M$. 
    For any $v_2\in M$, homogeneous $a\in V^r$, and $m\in \Z$, by \labelcref{eq:generating_M2,eq:recursive_U3}, we have 
    \begin{align*}
        \MoveEqLeft
        S\bracket*{u_3}{(v,\qq)}{\vo{a}{m+\frac{r}{T}}v_2}=\Res_{\pp=0}\uppi(S)\bracket*{u_3}{(a,\pp)(v,\qq)}{v_2}z^{m+\frac{r}{T}}\d{z}\\
        &=\Res_{\pp=0}\uppi(S)\bracket*{u_3o(a)}{(v,\qq)}{v_2}z^{m+\frac{r}{T}-\wt a}\d{z}\\
        &\quad +\sum_{i\ge 0}F_{\wt a-1+\delta(r)+\frac{r}{T},i}(\pp,\qq)\Res_{\pp=0}\uppi(S)\bracket*{u_3}{(\vo{a}{i}v,\qq)}{v_2}z^{m+\frac{r}{T}}\d{z}\\
        &=0.
    \end{align*}
    This implies that $M$ is a submodule of $M^2$ containing $M^2(0)$, hence $M=M^2$. Therefore
    \[S\bracket*{u_3}{(v,\qq)}{v_2}=0, \txforall[\qquad] u_3\in N^3(0), v\in M^1, v_2\in M^2.\]
    Similarly, using \labelcref{eq:generating_M3,eq:recursive_U2}, we can show
    \[S\bracket*{v_3}{(v,\qq)}{v_2}=0,\txforall[\qquad] v_3\in N^3, v\in M^1, v_2\in M^2.\]
    Suppose all the $(n+3)$-point functions in $S$ vanish. 
    Then, for any $a\in V$, by \labelcref{eq:associativity_p1q},
    \begin{align*}
        \MoveEqLeft
        \iota_{\pp=\qq}S\bracket*{v_3}{(a, \pp)\cdots(v,\qq)}{v_2}\\
        &=\sum_{k\in \Z}\left(\Res_{\pp=\qq}S\bracket*{v_3}{\cdots(a,\pp)(v,\qq)}{v_2}(z-w)^k\d{z}\right)(z-w)^{-k-1}\\
        &=\sum_{k\in \Z}S\bracket*{v_3}{\cdots(\vo{a}{k}v,\qq)}{v_2}(z-w)^{-k-1}=0,
    \end{align*}
    where the last equality follows from the inductive hypothesis. 
    Since $\iota_{\pp=\qq}$ is injective,   
    \[S\bracket*{v_3}{(a,\pp)\cdots(v,\qq)}{v_2}=0,\txforall[\qquad] v_3\in N^3, v\in M^1, v_2\in M^2.\]
    This shows the system of functions $S$ vanishes.
\end{proof}

\subsection{Extending restricted correlation functions from the bottom levels}
In this subsection, our objective is to establish an isomorphism between the spaces of correlation functions associated to the datum $\Sigma_{1}(U^3, M^1, U^2)$ and $\Sigma_{1}(\overline{M}(U^3), M^1, \overline{M}(U^2))$ respectively. Here $U^2$ (resp. $U^3$) is an irreducible left (resp. right) $A_g(V)$-module, and $\overline{M}(-)$ assigns an $A_g(V)$-module (or $A_{g^{-1}}(V)$-module) to the associated \emph{generalized Verma module}, as defined in \cite{DLM1}.

Recall that there is an epimorphism from $\L_g(V)_0$ to $A_g(V)$ as Lie algebras. Hence any $A_g(V)$-module $U$ can be regarded as an $\L_g(V)_0$-module, where $\L_g(V)$ is the twisted Lie algebra $\L_g(V)$ in \labelcref{eq:def:LgV}. Let $U$ be an $\L_g(V)_{-}+\L_g(V)_{0}$-module by letting $\L_g(V)_{-}$ act trivially and consider the following induced module 
\begin{equation*}
    \U\big(\L_g(V)\big)\otimes_{\U\big(\L_g(V)_{-}+\L_g(V)_{0}\big)}U.
\end{equation*}
Then the \concept{generalized Verma module} $\overline{M}(U)$ generated by $U$ is defined to be the quotient space of the above module modulo the submodule generated by all the coefficients of the \emph{twisted Jacobi identity}. 

\cref{prop:cor-restriction} shows that any system of correlation functions associated to the datum $\Sigma_{1}(\overline{M}(U^3), M^1, \overline{M}(U^2))$ restricts to one for the datum $\Sigma_{1}(U^3, M^1, U^2)$. 
In the rest of \cref{sec3}, we prove that the converse is also true by adopting a similar method as in \cite{Z,Liu}. 

\subsubsection*{Extending $U^2$}
We denote the tensor product space $T(\L_g(V))\otimes U$ by $M(U)$, where $T(\L_g(V))$ is the tensor algebra of the twisted Lie algebra $\L_g(V)$.
The space $M(U)$ is spanned by
\begin{equation}
  \lo{b^1}{\frac{r_1}{T}+m_1}\otimes\cdots\otimes\lo{b^p}{\frac{r_p}{T}+m_p} \otimes u,\quad b^i\in V^{r_i}, m_i\in \Z, u\in U, p\in \N.
\end{equation}

Given a system of correlation functions $S\colon U^3\otimes\fun{Sym}[V,\cdots,V,M^1]\otimes U^2\to\O(\infty\Delta_n)$, we first extend it to $U^3\otimes\fun{Sym}[V,\cdots,V,M^1]\otimes M(U^2)$ by
\begin{equation*}
  \begin{aligned}
    \MoveEqLeft
    S\bracket*{u_3}{\cdots}{\lo{b^1}{\frac{r_1}{T}+m_1}\otimes\cdots\otimes\lo{b^p}{\frac{r_p}{T}+m_p} \otimes u_2}\\
    &:=\tfrac{1}{T}\Res_{\pp_{+1}=0}\cdots\tfrac{1}{T}\Res_{\pp_{+p}=0}\\ 
    &\qquad\qquad 
    S\bracket*{u_3}{\cdots\cfbseqb}{u_2}
    z_{+1}^{\frac{r_1}{T}+m_1}\cdots z_{+p}^{\frac{r_p}{T}+m_p}
    \d{z_{+p}}\cdots\d{z_{+1}}.
  \end{aligned}
\end{equation*}
It is easy to show that such a family of functions $S$ is well-defined using the \emph{$\vo{L}{-1}$-derivative property} \labelcref{eq:L-derivative-z}.
Define the radical of the family $S$ by 
\begin{equation}\label{3.12}
  \begin{aligned}
    \Rad(S):=\Set*{v_2\in M(U^2)\given S\bracket*{u_3}{\cdots}{v_2}=0\text{ for all }u_3\in U^3}. 
  \end{aligned}
\end{equation}
Then define $\Rad(U^2):=\bigcap_S\Rad(S)$, where $S$ ranges over $\Cor[\Sigma_{1}(U^3, M^1, U^2)]$. 
It is easy to see that for any $v_2\in M(U^2)$, we have
\begin{equation}\label{eq:SfromU2toMU2}
  \begin{aligned}
    \MoveEqLeft
    S\bracket*{u_3}{\cdots}{\lo{b^1}{\frac{r_1}{T}+m_1}\otimes\cdots\otimes\lo{b^p}{\frac{r_p}{T}+m_p}\otimes v_2}\\
    &=\tfrac{1}{T}\Res_{\pp_{+1}=0}\cdots\tfrac{1}{T}\Res_{\pp_{+p}=0}\\ 
    &\qquad\qquad 
    S\bracket*{u_3}{\cdots\cfbseqb}{v_2}
    z_{+1}^{\frac{r_1}{T}+m_1}\cdots z_{+p}^{\frac{r_p}{T}+m_p}
    \d{z_{+p}}\cdots\d{z_{+1}}.
  \end{aligned}
\end{equation}
\begin{lemma}\label{lm3.5}
  Let $S\in \Cor[\Sigma_{1}(U^3, M^1, U^2)]$, and let $\varphi$ be as in \labelcref{eq:monomial}. If $\varphi=0$ then $S=0$. 
\end{lemma}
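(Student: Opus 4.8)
The plan is to argue by a straightforward induction on the number $n$ of vertex operator insertions from $V$, using the recursive formula \eqref{eq:recursive_U3} to reduce an $(n+1)$-point correlation function to a $\C$-combination of $n$-point correlation functions. The base case $n=0$ is immediate from the hypothesis: the \emph{monomial property} \eqref{eq:monomial} gives
\[
  S\bracket*{u_3}{(v,\qq)}{u_2}=\braket*{\varphi}{u_3\otimes w^{-L(0)+h_1}v\otimes u_2},
\]
which vanishes for all $u_3\in U^3$, $v\in M^1$, and $u_2\in U^2$ precisely because $\varphi=0$. Hence every one-point function in the system $S$ is identically zero.

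For the inductive step, assume that every correlation function in $S$ carrying $n$ insertions from $V$ vanishes identically as a rational function on $\oC^{n+1}$. Consider an arbitrary $(n+1)$-point function; by the locality axiom we may place the insertion to be eliminated in the leftmost slot, so it has the form $S\bracket*{u_3}{(a,\pp)\cdots}{u_2}$, and by multilinearity together with the decomposition $V=\bigoplus_r V^r$ we may take $a\in V^r$ homogeneous (so that $\wt a$ and the shift $\tfrac{r}{T}$ are well defined). Applying \eqref{eq:recursive_U3} rewrites this function as
\[
  S\bracket*{u_3\cdot[a]}{\cdots}{u_2}\,z^{-\wt a}
  +\sum_{k=1}^{n}\sum_{i\ge 0}F_{\wt a-1+\delta(r)+\frac{r}{T},\,i}(\pp,\pp_{k})\,S\bracket*{u_3}{\cdots(\vo{a}{i}a^{k},\pp_{k})\cdots}{u_2}
  +\sum_{i\ge 0}F_{\wt a-1+\delta(r)+\frac{r}{T},\,i}(\pp,\qq)\,S\bracket*{u_3}{\cdots(\vo{a}{i}v,\qq)}{u_2}.
\]
Each summand on the right is the product of an explicit rational function (a power of $z$, or one of the functions $F_{n,i}$ from \eqref{Fpq}) with a correlation function that carries exactly $n$ insertions from $V$: the first term drops $(a,\pp)$ while absorbing $[a]$ into $u_3$; the middle terms replace the two insertions $(a,\pp)$ and $(a^{k},\pp_{k})$ by the single insertion $(\vo{a}{i}a^{k},\pp_{k})$; and the last terms delete $(a,\pp)$ while modifying the $M^1$-entry to $(\vo{a}{i}v,\qq)$. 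These sums are finite because $\vo{a}{i}a^{k}=0$ and $\vo{a}{i}v=0$ for $i\gg0$, and every function appearing is an $n$-point function, hence vanishes by the inductive hypothesis. Therefore the original $(n+1)$-point function vanishes, and the induction gives $S=0$.

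I do not expect a serious obstacle here: the argument is a clean downward induction whose sole engine is the recursive formula \eqref{eq:recursive_U3}, which strictly lowers the number of $V$-insertions. The only points that require care are bookkeeping ones, namely checking that each of the three families of terms produced by the recursion genuinely carries one fewer $V$-insertion (so that the inductive hypothesis applies), invoking locality to normalize the position of the insertion being removed, and reducing to homogeneous $a$ by linearity. A minor but reassuring feature is that \eqref{eq:recursive_U3} is an identity of \emph{rational functions} rather than of formal expansions, so once the right-hand side vanishes term by term the left-hand side vanishes outright, with no appeal to injectivity of any expansion map.
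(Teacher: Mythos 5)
Your proof is correct and is essentially the paper's own argument: the base case follows from the monomial property \labelcref{eq:monomial} since $\varphi=0$, and the inductive step applies the recursive formula \labelcref{eq:recursive_U3} to express each function with $n+1$ insertions from $V$ as a finite combination of functions with $n$ insertions, which vanish by hypothesis. The additional remarks on locality, homogeneity, and the finiteness of the sums are sound bookkeeping that the paper leaves implicit.
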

\begin{proof}
  Clearly, $S\bracket*{u_3}{(v, \qq)}{u_2}=0$ for any $u_3\in U^3$ and $u_2\in U^2$. Suppose all the $(n+3)$-point functions $S$ vanish. For any homogeneous $a\in V^r$, by \labelcref{eq:recursive_U3},
  \begin{align*}
    S\bracket*{u_3}{(a,\pp)\cdots}{u_2}
    &=  
    S\bracket*{u_3\cdot [a]}{\cdots}{u_2}z^{-\wt a}\\
    &\phantom{=}  +\sum_{k=1}^{n}\sum_{i\ge 0} 
    F_{\wt a-1+\delta(r)+\tfrac{r}{T},i}(\pp,\pp_{k}) 
    S\bracket*{u_3}{\cdots(\vo{a}{i}a^{k},\pp_{k})\cdots}{u_2} \\
    &\phantom{=}  +\sum_{i\ge 0} 
    F_{\wt a-1+\delta(r)+\tfrac{r}{T},i}(\pp,\qq) 
    S\bracket*{u_3}{\cdots(\vo{a}{i}v,\qq)}{u_2},
  \end{align*}
  and the right-hand side is $0$ by the induction hypothesis. 
\end{proof}
\begin{lemma}\label{lem:Rad-prop}
  The following properties hold for $\Rad(U^2)$:
  \begin{enumerate}
    \item\label{lem:Rad-prop(1)} $\lo{b}{\frac{r}{T}+m}\otimes \Rad(U^2)\subset\Rad(U^2)$ for all $b\in V^r$ and $m\in \Z$.
    \item\label{lem:Rad-prop(2)} $U^2\cap\Rad(U^2)=0$, where $U^2$ is viewed as $\C\otimes U^2\subset M(U^2)$.
    \item\label{lem:Rad-prop(3)} $\lo{b}{\frac{r}{T}+m}\otimes u_2\in \Rad(U^2)$ for all $b\in V^r$ and $m\in \Z$ such that $\deg \lo{b}{\frac{r}{T}+m}<0$. 
    \item\label{lem:Rad-prop(4)} $\lo{b}{\wt b-1}\otimes u_2-1\otimes [b]\cdot u_2\in \Rad(U^2)$ for $b\in V^0$.
  \end{enumerate}
\end{lemma}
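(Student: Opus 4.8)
The plan is to handle (1), (3), (4) by direct residue computations built on the extension formula \labelcref{eq:SfromU2toMU2} and the recursive formula \labelcref{eq:recursive_U2}, and to treat (2) as the one genuinely structural statement. For property (1), fix $S$ and $v_2\in\Rad(U^2)$. Applying \labelcref{eq:SfromU2toMU2} with the single extra generator $\lo{b}{\frac{r}{T}+m}$ gives, for every $u_3$ and every configuration of the omitted insertions,
\[
  S\bracket*{u_3}{\cdots}{\lo{b}{\frac{r}{T}+m}\otimes v_2}
  = \tfrac{1}{T}\Res_{\pp_{+1}=0}
    S\bracket*{u_3}{\cdots(b,\pp_{+1})}{v_2}\,
    z_{+1}^{\frac{r}{T}+m}\,\d{z_{+1}}.
\]
Since $v_2\in\Rad(U^2)\subset\Rad(S)$, the integrand vanishes identically (it is one of the functions carrying $v_2$ in the $U^2$-slot), so the right-hand side is $0$. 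As $S$ was arbitrary, $\lo{b}{\frac{r}{T}+m}\otimes v_2\in\Rad(U^2)$.

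For (3) and (4) the element in the $U^2$-slot is $u_2\in U^2$, so the integrand $S\bracket*{u_3}{\cdots(b,\pp_{+1})}{u_2}$ is a bona fide restricted correlation function; I would expand it at $\pp_{+1}=0$ by the recursive formula \labelcref{eq:recursive_U2} with $a=b$, and then extract $\tfrac1T\Res_{\pp_{+1}=0}(\,\cdot\,)z_{+1}^{\frac rT+m}\d{z_{+1}}=\Res_{z_{+1}}\iota_{\pp_{+1}=0}(\,\cdot\,)z_{+1}^{\frac rT+m}$. Writing $N:=\wt b-1+\frac rT$, \cref{lem:ExpOfFpq} shows $\iota_{\pp_{+1}=0}F_{N,i}$ is supported on the powers $z_{+1}^{j-N}$ with $j\ge0$, while the leading term of \labelcref{eq:recursive_U2} contributes the single power $z_{+1}^{-\wt b}$. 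Multiplying by $z_{+1}^{\frac rT+m}$ and picking out the coefficient of $z_{+1}^{-1}$ is then pure degree bookkeeping: the $F$-terms can survive only if $j=\wt b-m-2\ge0$, i.e. $m\le\wt b-2$, and the leading term only if $r=0$ and $m=\wt b-1$. For (3) the hypothesis $\deg\lo{b}{\frac{r}{T}+m}=\wt b-\tfrac rT-m-1<0$ forces $m\ge\wt b-1$ (indeed $m\ge\wt b$ when $r=0$), ruling out both possibilities, so the residue vanishes for every $S$ and hence $\lo{b}{\frac{r}{T}+m}\otimes u_2\in\Rad(U^2)$. For (4) we have $b\in V^0$ and $\lo{b}{\wt b-1}$, i.e. $r=0$, $m=\wt b-1$, $N=\wt b-1$; now the $F$-terms would require $j=-1$ and drop out, while the leading term survives and yields exactly $S\bracket*{u_3}{\cdots}{[b]\cdot u_2}$, giving $S\bracket*{u_3}{\cdots}{\lo{b}{\wt b-1}\otimes u_2-1\otimes[b]\cdot u_2}=0$ for all $S$.

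Property (2) is the crux. First I would check that $U^2\cap\Rad(U^2)$ is an $A_g(V)$-submodule of $U^2$: for $b\in V^0$ and $u_2\in U^2\cap\Rad(U^2)$, property (1) gives $\lo{b}{\wt b-1}\otimes u_2\in\Rad(U^2)$ and property (4) gives $\lo{b}{\wt b-1}\otimes u_2-[b]\cdot u_2\in\Rad(U^2)$, so $[b]\cdot u_2=o(b)u_2\in U^2\cap\Rad(U^2)$. As $U^2$ is irreducible over $A_g(V)$, this submodule is either $0$ or all of $U^2$. To exclude the latter, note that $U^2\subset\Rad(U^2)$ would make every three-point function $S\bracket*{u_3}{(v,\qq)}{u_2}$ vanish, whence by the \emph{monomial property} \labelcref{eq:monomial} the functional $\varphi$ attached to each $S$ is zero, and \cref{lm3.5} then forces every $S=0$, i.e. $\Cor[\Sigma_{1}(U^3, M^1, U^2)]=0$. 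Thus property (2) reduces to exhibiting one nonzero system of restricted correlation functions, which I would obtain by restricting, via \cref{prop:cor-restriction}, a nonzero correlation function on the generalized Verma modules $\overline M(U^2)$ (an admissible $g$-twisted module with bottom level $U^2$) and $\overline M(U^3)$ (an admissible $g^{-1}$-twisted module with bottom level $U^3$) of \cite{DLM1}. I expect this existence step — producing a correlation function on the generalized Verma modules whose restriction to the bottom levels does not vanish — to be the main obstacle; the submodule-plus-irreducibility reduction above is routine, as are parts (1), (3), and (4).
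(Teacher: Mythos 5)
Your arguments for \labelcref{lem:Rad-prop(1)}, \labelcref{lem:Rad-prop(3)}, and \labelcref{lem:Rad-prop(4)} are correct and are, in substance, the paper's own: (1) is exactly why the paper calls it ``clear'', and for (3) and (4) the paper does the same residue bookkeeping (it happens to expand via \labelcref{eq:recursive_U3}, with shift $\wt b-1+\delta(r)+\frac{r}{T}$, for (3) and via \labelcref{eq:recursive_U2} for (4); your uniform use of \labelcref{eq:recursive_U2} changes the arithmetic of the exponents but not the vanishing conditions). For (2), your reduction is also essentially the paper's: you obtain $A_g(V)$-stability of $U^2\cap\Rad(U^2)$ from (1) and (4) and then invoke irreducibility, whereas the paper shows directly from \labelcref{eq:monomial} and \labelcref{eq:recursive_U2} that $\varphi$ kills $U^3\otimes M^1\otimes([a]\cdot u_2)$ and quotes $U^2=A(V^0)u_2$ before applying \cref{lm3.5}; both routes land on the same statement, namely that $U^2\cap\Rad(U^2)\neq 0$ forces every $S$ to vanish.

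The divergence is in how that conclusion is disposed of, and here your proposed completion would fail. The paper simply declares ``every $S=0$'' to be a contradiction; it never exhibits a nonzero $S$. You propose to manufacture one by restricting a nonzero correlation function on the generalized Verma modules, but no such function need exist: by \cref{coro:isomorphism-between-correlations} the space $\Cor[\Sigma_{1}(U^3,M^1,U^2)]$ computes a fusion rule, and \cref{sec7} exhibits vanishing ones (e.g. $\Nusion[V_{L+\frac{1}{2}\alpha}][V_L^{T_\chi}][V_L^{T_{\chi}}]=0$), in which case $\Rad(U^2)=M(U^2)$ and statement (2) is actually false as literally written. So the existence step you flag as ``the main obstacle'' is not merely hard; it is unattainable in general, and chasing it is the wrong move. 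The repair is to run the argument relative to a fixed nonzero $S$ --- the computation you and the paper both give proves $U^2\cap\Rad(S)=0$ for every $S\neq 0$, which is all that the construction of $\widetilde{M}(U^2)$ and \cref{thm:CorBottom} require; in the degenerate case $\Cor[\Sigma_{1}(U^3,M^1,U^2)]=0$ everything downstream collapses to zero and the extension theorem holds vacuously.
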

\begin{proof}
  \labelcref{lem:Rad-prop(1)} is clear. 
  For property \labelcref{lem:Rad-prop(2)}, suppose there exists a nonzero $u_2\in U^2\cap \Rad(U^2)$. Then for any system of correlation functions $S$ with a linear functional $\varphi$ as in \labelcref{eq:monomial} and any homogeneous $a\in V^r$, $u_3\in U^3$, $v\in M^1$, and $u_2\in U^2$, by \labelcref{eq:monomial,eq:recursive_U2}, we have 
  \begin{align*}
      \braket*{\varphi}{u_3\otimes v\otimes([a]\cdot u_2)}
      &=S\bracket*{u_3}{(v,\qq)}{[a]\cdot u_2}w^{\deg v}\\
      &=S\bracket*{u_3}{(a,\pp)(v,\qq)}{u_2}z^{\wt a}w^{\deg v}\\
      &\phantom{=}  -\sum_{i\ge 0} 
      F_{\wt a-1+\frac{r}{T},i}(\pp,\qq) 
      S\bracket*{u_3}{(\vo{a}{i}v,\qq)}{u_2}z^{\wt a}w^{\deg v}\\
      &=0.
  \end{align*} 
  On the other hand, by \cite[Lemma 2.1]{DLM1}, $A_g(V)$ is a quotient algebra of $A(V^0)$. Hence $U^2=A(V^0)u_2$, and $\varphi$ vanishes on all the entire $U^3\otimes M^1\otimes U^2$. This implies $S=0$ by \cref{lm3.5}, which is a contradiction.
  
  For property \labelcref{lem:Rad-prop(3)}, given any $u_2\in U^2$ and any homogeneous $b\in V^r$ with $\deg \lo{b}{\frac{r}{T}+m}<0$, by \labelcref{eq:recursive_U3,eq:SfromU2toMU2}, we have
  \begin{align*}
      \MoveEqLeft
      S\bracket*{u_3}{\cdots}{\lo{b}{\frac{r}{T}+m}\otimes u_2}\\
      &=
      \tfrac{1}{T}\Res_{\pp=0}S\bracket*{u_3}{\cdots(b,\pp)}{u_2}z^{\frac{r}{T}+m}\d{z}\\
      &=
      S\bracket*{u_3\cdot b}{\cdots}{u_2}\tfrac{1}{T}\Res_{\pp=0}z^{-\wt b+\frac{r}{T}+m}\d{z}\\
      &\phantom{=}  +\sum_{k=1}^{n}\sum_{i\ge 0} 
      S\bracket*{u_3}{\cdots(\vo{b}{i}a^{k},\pp_{k})\cdots}{u_2} 
      \tfrac{1}{T}\Res_{\pp=0}F_{\wt{b}-1+\delta(r)+\frac{r}{T},i}(\pp,\pp_{k})z^{\frac{r}{T}+m}\d{z} \\
      &\phantom{=}  +\sum_{i\ge 0} 
      S\bracket*{u_3}{\cdots(\vo{b}{i}v,\qq)}{u_2}
      \tfrac{1}{T}\Res_{\pp=0}F_{\wt{b}-1+\delta(r)+\frac{r}{T},i}(\pp,\qq)z^{\frac{r}{T}+m}\d{z}\\
      &=0,
  \end{align*}
  where the last equality follows from the fact that $F_{n,i}(\pp,\qq)z^n$ is holomorphic at $\pp=0$. 

For property \labelcref{lem:Rad-prop(4)}, given any $u_2\in U^2$ and any homogeneous $b\in V^0$, by \labelcref{eq:recursive_U2} we have
  \begin{align*}
      \MoveEqLeft
      S\bracket*{u_3}{\cdots}{\lo{b}{\wt b-1}\otimes u_2}\\
      &=
      \tfrac{1}{T}\Res_{\pp=0}S\bracket*{u_3}{\cdots(b,\pp)}{u_2}z^{\wt b-1}\d{z}\\
      &=
      S\bracket*{u_3}{\cdots}{[b]\cdot u_2}\tfrac{1}{T}\Res_{\pp=0}z^{-1}\d{z}\\
      &\phantom{=}  +\sum_{k=1}^{n}\sum_{i\ge 0} 
      S\bracket*{u_3}{\cdots(\vo{b}{i}a^{k},\pp_{k})\cdots}{u_2} 
      \tfrac{1}{T}\Res_{\pp=0}F_{\wt{b}-1,i}(\pp,\pp_{k})z^{\wt b-1}\d{z} \\
      &\phantom{=}  +\sum_{i\ge 0} 
      S\bracket*{u_3}{\cdots(\vo{b}{i}v,\qq)}{u_2}
      \tfrac{1}{T}\Res_{\pp=0}F_{\wt{b}-1,i}(\pp,\qq)z^{\wt b-1}\d{z}\\
      &=S\bracket*{u_3}{\cdots}{[b]\cdot u_2}.&\qedhere
  \end{align*}
\end{proof}
In the following lemma, we use the same notation for the elements in $M(U^2)$ and their images in $M(U^2)/\Rad(U^2)$.
\begin{lemma}\label{lmeq:oab}
  For any $a\in V^r$, $b\in V^s$, $v_2\in M(U^2)$, and $m,n,l\in \Z$, the following element of $M(U^2)/\Rad(U^2)$ vanishes:
  
    \begin{align*}
      &-\sum_{i\ge 0} 
      \binom{l}{i}(-1)^i
      \lo{a}{\frac{r}{T}+m+l-i}\otimes \lo{b}{\frac{s}{T}+n+i}\otimes v_2\\
      &+\sum_{i\ge 0}
      \binom{l}{i}(-1)^{l+i}
      \lo{b}{\frac{s}{T}+n+l-i}\otimes \lo{a}{\frac{r}{T}+m+i}\otimes v_2 \numberthis\label{eq:compJacU2} \\
      &+\sum_{j\ge 0}
      \binom{m+\frac{r}{T}}{j}
      \lo{(\vo{a}{j+l}b)}{\frac{r+s}{T}+m+n-j}\otimes v_2.
    \end{align*}
\end{lemma}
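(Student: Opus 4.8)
The plan is to show that the element \eqref{eq:compJacU2} lies in $\Rad(U^2)$ by pairing it against an arbitrary system of correlation functions. Since $\Rad(U^2)=\bigcap_S\Rad(S)$, it suffices to fix $S\in\Cor[\Sigma_{1}(U^3, M^1, U^2)]$, a vector $u_3\in U^3$, and arbitrary insertions in the slots $\cdots$, and to prove that $S$ annihilates \eqref{eq:compJacU2}. Write $G(\pp_1,\pp_2):=S\bracket*{u_3}{\cdots(a,\pp_1)(b,\pp_2)}{v_2}$ for the associated rational function on $\oC\times\oC$ (by \emph{locality} it is insensitive to the order of the two marked points $\pp_1,\pp_2$, and by the \emph{homogeneous property} $z_1^{\frac rT}z_2^{\frac sT}G$ factors through $\x\times\x$). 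First I would apply the extension formula \eqref{eq:SfromU2toMU2} to each of the three terms: the first two insert $a,b$ at two points tending to $0$ through the nested residues $\tfrac1{T^2}\Res_{\pp_1=0}\Res_{\pp_2=0}$, while the third inserts the single vector $\vo{a}{j+l}b$ through $\tfrac1T\Res_{\pp_2=0}$.

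Next I would collapse the internal summations. The binomial identities $\sum_{i\ge0}\binom{l}{i}(-1)^{i}z_1^{l-i}z_2^{i}=\iota_{z_1,z_2}(z_1-z_2)^{l}$ and $\sum_{i\ge0}\binom{l}{i}(-1)^{l+i}z_2^{l-i}z_1^{i}=\iota_{z_2,z_1}(z_1-z_2)^{l}$ convert the ordered tensors $\lo{a}{\frac rT+m+l-i}\otimes\lo{b}{\frac sT+n+i}\otimes v_2$ and $\lo{b}{\frac sT+n+l-i}\otimes\lo{a}{\frac rT+m+i}\otimes v_2$ into iterated residues of the single function $G\,z_1^{\frac rT+m}z_2^{\frac sT+n}(z_1-z_2)^{l}$, expanded respectively in the domains $\abs{z_1}>\abs{z_2}$ and $\abs{z_2}>\abs{z_1}$; these two domains are exactly the ones dictated by the nesting in \eqref{eq:SfromU2toMU2} (inner residue at $\pp_2$, resp.\ $\pp_1$). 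Call these $R_1$ and $R_2$. For the third term, expanding $z_1^{\frac rT+m}$ at the diagonal via \eqref{eq:iota_wz-w} and invoking \emph{associativity} \eqref{eq:associativity_p1p2} rewrites $\sum_{j\ge0}\binom{m+\frac rT}{j}\lo{(\vo{a}{j+l}b)}{\frac{r+s}{T}+m+n-j}\otimes v_2$ as the diagonal iterated residue $R_3:=\tfrac1T\Res_{\pp_2=0}\Res_{\pp_1=\pp_2}G\,z_1^{\frac rT+m}z_2^{\frac sT+n}(z_1-z_2)^{l}$. Thus $S$ evaluated on \eqref{eq:compJacU2} equals $-R_1+R_2+R_3$, and it remains to see this vanishes.

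The vanishing $-R_1+R_2+R_3=0$ is the residue-theoretic form of the component twisted Jacobi identity \eqref{eq:Jac'}. Because $z_1^{\frac rT}z_2^{\frac sT}G$ factors through $\x\times\x$, the integrand is the pullback of the rational function $\hat G\,z_1^{m}z_2^{n}(z_1-z_2)^{l}$ on $\PP^1\times\PP^1$ with $\hat G:=z_1^{\frac rT}z_2^{\frac sT}G$. Pushing the residues down through \eqref{eq:InvRes} — each $\Res_{\pp_i=0}$ contributes a factor $T$ that cancels a $\tfrac1T$, while the contact point $\pp_1=\pp_2$ is unramified and contributes the factor $1$ — turns $R_1,R_2,R_3$ into residues of $\hat G\,z_1^{m}z_2^{n}(z_1-z_2)^{l}$ at $z_1=0$ in the two expansion domains and at the diagonal $z_1=z_2$. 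The combination $-R_1+R_2+R_3$ then vanishes by the standard equivalence of \cref{prop:Jac-fun}, i.e.\ the \nameref{lem:SRT} for a rational function with poles along $z_1=0$, $z_2=0$, $z_1=z_2$. Hence $S$ kills \eqref{eq:compJacU2} for every $S$, so \eqref{eq:compJacU2} lies in $\Rad(U^2)$.

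The main obstacle is the bookkeeping at the ramified point $0$: one must verify that the two tensor orderings correspond precisely to the two expansion domains through the residue nesting in \eqref{eq:SfromU2toMU2}, and that the ramification factors $\tfrac1T$ versus $1$ assemble correctly when passing between $\oC\times\oC$ and $\PP^1\times\PP^1$ — in particular that the $T-1$ spurious sheets of $\x(\pp_1)=\x(\pp_2)$ never enter, which is why I would argue downstairs rather than sheet by sheet. A secondary technical point, relevant when $l<0$, is that the $i$-summations are a priori infinite in $M(U^2)$; they become finite, and may be interchanged with the residue operators, once paired with $S$, by the \emph{truncation property} (the expansion of $G$ at $0$ is a Puiseux series bounded below in each variable).
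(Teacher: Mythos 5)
Your proposal is correct and follows essentially the same route as the paper's proof: pair the element against an arbitrary $S$, collapse the binomial sums into $(z_1-z_2)^l$ expanded in the two domains dictated by the nesting of residues in \eqref{eq:SfromU2toMU2}, identify the third sum with the diagonal residue via \eqref{eq:iota_wz-w} and \eqref{eq:associativity_p1p2}, and conclude by the residue-interchange identity coming from the \nameref{lem:ResidueSum} applied to the function pushed down through $\x$. The only (harmless) difference is presentational: the paper justifies $R_1=R_2+R_3$ by comparing the full residue sums of the inner-integrated and non-integrated one-forms and cancelling the residues at the common poles $\pp_*\neq 0$, whereas you invoke the local contour-deformation form of \cref{prop:Jac-fun}; since the remaining poles stay bounded away from $0$ as $z_2\to 0$, this amounts to the same computation.
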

Note that the summations in \labelcref{eq:compJacU2} are finite by property \itemcref{lem:Rad-prop}{(3)}.
\begin{proof}
  Indeed, for any system of correlation functions $S$, we have
  \begin{align*}
    &
    S\bracket*{u_3}{\cdots}{\sum_{i\ge 0} 
    \binom{l}{i}(-1)^i
    \lo{a}{\frac{r}{T}+m+l-i}\otimes\lo{b}{\frac{s}{T}+n+i}\otimes v_2}\\
    &=\tfrac{1}{T}\Res_{\pp_{+1}=0}\tfrac{1}{T}\Res_{\pp_{+2}=0}\\
    &\qquad\qquad
    \sum_{i\ge 0}\binom{l}{i}(-1)^i 
    S\bracket*{u_3}{\cdots(a,\pp_{+1})(b,\pp_{+2})}{v_2}z_{+1}^{\frac{r}{T}+m+l-i}z_{+2}^{\frac{s}{T}+n+i}\d{z_{+2}}\d{z_{+1}}\\
    &=\tfrac{1}{T}\Res_{\pp_{+1}=0}\tfrac{1}{T}\Res_{\pp_{+2}=0}\\
    &\qquad\qquad
    S\bracket*{u_3}{\cdots(a,\pp_{+1})(b,\pp_{+2})}{v_2}(z_{+1}-z_{+2})^{l}z_{+1}^{\frac{r}{T}+m}z_{+2}^{\frac{s}{T}+n}\d{z_{+2}}\d{z_{+1}}\\
    \overset{\ast}&{=}
    \tfrac{1}{T}\Res_{\pp_{+2}=0}(\tfrac{1}{T}\Res_{\pp_{+1}=0}+\Res_{\pp_{+1}=\pp_{+2}})\\
    &\qquad\qquad
    S\bracket*{u_3}{\cdots(a,\pp_{+1})(b,\pp_{+2})}{v_2}(z_{+1}-z_{+2})^{l}z_{+1}^{\frac{r}{T}+m}z_{+2}^{\frac{s}{T}+n}\d{z_{+1}}\d{z_{+2}}\\
    &{=}
    \tfrac{1}{T}\Res_{\pp_{+2}=0}\tfrac{1}{T}\Res_{\pp_{+1}=0}\\
    &\qquad\qquad
    S\bracket*{u_3}{\cdots(b,\pp_{+2})(a,\pp_{+1})}{v_2}(z_{+1}-z_{+2})^{l}z_{+1}^{\frac{r}{T}+m}z_{+2}^{\frac{s}{T}+n}\d{z_{+1}}\d{z_{+2}}\\
    &\phantom{=} +
    \tfrac{1}{T}\Res_{\pp_{+2}=0}\Res_{\pp_{+1}=\pp_{+2}}\\
    &\qquad\qquad
    S\bracket*{u_3}{\cdots(a,\pp_{+1})(b,\pp_{+2})}{v_2}(z_{+1}-z_{+2})^{l}z_{+1}^{\frac{r}{T}+m}z_{+2}^{\frac{s}{T}+n}\d{z_{+1}}\d{z_{+2}}\\
    \overset{\ast\ast}&{=}
    \tfrac{1}{T}\Res_{\pp_{+2}=0}\tfrac{1}{T}\Res_{\pp_{+1}=0}\\
    &\qquad\qquad
    \sum_{i\ge 0}\binom{l}{i}(-1)^{l+i}
    S\bracket*{u_3}{\cdots(b,\pp_{+2})(a,\pp_{+1})}{v_2}z_{+2}^{\frac{s}{T}+n+l-i}z_{+1}^{\frac{r}{T}+m+i}\d{z_{+1}}\d{z_{+2}}\\
    &\phantom{=} +
    \tfrac{1}{T}\Res_{\pp_{+2}=0}\Res_{\pp_{+1}=\pp_{+2}}\\
    &\qquad\qquad
    \sum_{j\ge  0}\binom{m+\frac{r}{T}}{j}
    S\bracket*{u_3}{\cdots(a,\pp_{+1})(b,\pp_{+2})}{v_2}(z_{+1}-z_{+2})^{l+j}z_{+2}^{\frac{r+s}{T}+m+n-j}\d{z_{+1}}\d{z_{+2}}\\
    &=
    \sum_{i\ge 0}\binom{l}{i}(-1)^i
    S\bracket*{u_3}{\cdots}{\lo{b}{\frac{s}{T}+n+l-i}\otimes\lo{a}{\frac{r}{T}+m+i}\otimes v_2}\\ 
    &\phantom{=} +
    \tfrac{1}{T}\Res_{\pp_{+2}=0}
    S\bracket*{u_3}{\cdots(\vo{a}{l+j}b,\pp_{+2})}{v_2}z_{+2}^{\frac{r+s}{T}+m+n-j}\d{z_{+2}}\\
    &= 
    S\bracket*{u_3}{\cdots}{\sum_{i\ge 0}\binom{l}{i}(-1)^i\lo{b}{\frac{s}{T}+n+l-i}\otimes\lo{a}{\frac{r}{T}+m+i}\otimes v_2}\\ 
    &\phantom{=} +
    S\bracket*{u_3}{\cdots}{\sum_{j\ge 0}
    \binom{m+\frac{r}{T}}{j}
    \lo{(\vo{a}{j+l}b)}{\frac{r+s}{T}+m+n-j}\otimes v_2}.
  \end{align*}

Equality $\ast$ follows from the residue sum formula since both of the following functions
  \begin{align*}
    \pp_{+1}&\mapsto\tfrac{1}{T}\Res_{\pp_{+2}=0}S\bracket*{u_3}{\cdots(a,\pp_{+1})(b,\pp_{+2})}{v_2}(z_{+1}-z_{+2})^{l}z_{+1}^{\frac{r}{T}+m}z_{+2}^{\frac{s}{T}+n}\d{z_{+2}}\\
    \pp_{+1}&\mapsto S\bracket*{u_3}{\cdots(a,\pp_{+1})(b,\pp_{+2})}{v_2}(z_{+1}-z_{+2})^{l}z_{+1}^{\frac{r}{T}+m}z_{+2}^{\frac{s}{T}+n}
  \end{align*}
  factor through $\PP^1$, and the second fucntion has one extra possible pole at $z_{+1}=z_{+2}$. Furthermore, at any common pole $\pp_{\ast}\neq0$ of these functions, we have 
  \[
    \Res_{\pp_{+1}=\pp_{\ast}}\Res_{\pp_{+2}=0}\cdots = \Res_{\pp_{+2}=0}\Res_{\pp_{+1}=\pp_{\ast}}\cdots
  \]
  since $\pp_{\ast}$ is away from the divisor $\pp_{+1}=\pp_{+2}$. Equality $\ast\ast$ follows from expanding $(z_{+1}-z_{+2})^{l}$ at $\pp_{+1}=0$ and $z_{+1}^{\frac{r}{T}+m}$ at $\pp_{+1}=\pp_{+2}$, respectively.
\end{proof}
\begin{remark}
  In particular, taking $l=0$ in \cref{lmeq:oab}, we have 
  \begin{equation}
  \begin{aligned}
      \MoveEqLeft
      \lo{a}{m+\frac{r}{T}}\otimes \lo{b}{n+\frac{s}{T}}\otimes v_2-\lo{b}{n+\frac{s}{T}}\otimes \lo{a}{m+\frac{r}{T}}\otimes v_2\\
      &\qquad \equiv\sum_{j\ge 0}
      \binom{m+\frac{r}{T}}{j}
      \lo{(\vo{a}{j}b)}{\frac{r+s}{T}+m+n-j}\otimes v_2\pmod{\Rad(U^2)}.
  \end{aligned}
  \end{equation}
  Therefore, $M(U^2)/\Rad(U^2)$ is a $\L_g(V)$-module. 
  Furthermore, \itemcref{lem:Rad-prop}{(4)} allows us to unambiguously write $\lobmseq u_2$ for the image of $\lo{b^1}{\frac{r_1}{T}+m_1} \otimes\cdots\otimes\lo{b^p}{\frac{r_p}{T}+m_p}\otimes u_2$ in $M(U^2)/\Rad(U^2)$. Moreover, it is clear that $M(U^2)/\Rad(U^2)$ is spanned by the following elements: 
  \begin{equation}\label{spann}
  \lobmseq u_2,
  \end{equation}
  where $u_2\in U^2$, $b^i\in V^{r_i}$, $m_i\in \Z$ for all $i$, and $\deg \lo{b^1}{\frac{r_1}{T}+m_1}\ge \cdots\ge \deg \lo{b^p}{\frac{r_p}{T}+m_p}$.

\end{remark} 
\begin{definition}\label{df3.7}
  Denote $M(U^2)/\Rad(U^2)$ by $\widetilde{M}(U^2)$. 
  Define a vertex operator by
  \begin{equation}\label{3.27'}
 Y_{\widetilde{M}(U^2)}: V\rightarrow \End(\widetilde{M}(U^2))[[z,z^{-1}]],\quad    Y_{\widetilde{M}(U^2)}(a,z) = \sum_{n\in \Z} \lo{a}{\frac{n}{T}} z^{-\frac{n}{T}-1},
  \end{equation}
  where $a\in V$ and $a(\frac{n}{T})\in \L_g(V)$ for all $n\in \Z$.
\end{definition}
Furthermore, we introduce a gradation on $\widetilde{M}(U^2)$ by 
  \begin{equation}\label{3.24'}
    \deg \left(\lobmseq u_2\right) := 
    \sum_{i=1}^p \deg\lo{b^i}{\frac{r_i}{T}+m_i}, 
  \end{equation}
  where $b_i\in V^{r_i}$, $m_i\in \Z$, and $u_2\in U^2$. 
  Then $\widetilde{M}(U^2)=\bigoplus_{m\in \N}\widetilde{M}(U^2)(\frac{m}{T})$ by the type of its spanning elements \eqref{spann} and \itemcref{lem:Rad-prop}{(3)}.

\begin{proposition}\label{prop3.8}
  The pair $(\widetilde{M}(U^2),Y_{\widetilde{M}(U^2)})$ in \cref{df3.7} defines an admissible $g$-twisted module of conformal weight $h_2$. 
\end{proposition}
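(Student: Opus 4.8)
The plan is to check that $(\widetilde{M}(U^2),Y_{\widetilde{M}(U^2)})$ satisfies the four axioms of a weak $g$-twisted module listed in \cref{def:twistedmodule}, then to promote it to an admissible module through \cref{prop:repnOfLgV}, and finally to identify the conformal weight by computing the $\vo{L}{0}$-action. Two ingredients are already available and will carry most of the weight: the remark following \cref{lmeq:oab} equips $\widetilde{M}(U^2)$ with the structure of a module over the Lie algebra $\L_g(V)$, in which $\lo{a}{\frac{n}{T}}$ acts as the coefficient of $Y_{\widetilde{M}(U^2)}(a,z)$ appearing in \labelcref{3.27'}; and the gradation \labelcref{3.24'} exhibits $\widetilde{M}(U^2)=\bigoplus_{n\in\N}\widetilde{M}(U^2)(\frac{n}{T})$ as a $\frac{1}{T}\N$-graded space, concentrated in nonnegative degrees by the spanning set \labelcref{spann} together with \itemcref{lem:Rad-prop}{(3)}. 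Note also that $\widetilde{M}(U^2)$ is generated by $U^2$ over $\U(\L_g(V))$, since it is a quotient of $T(\L_g(V))\otimes U^2$.

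First I would dispatch the three elementary axioms. The \emph{index property} is immediate from \labelcref{3.27'}, because for $a\in V^r$ the class $\lo{a}{\frac{n}{T}}$ is nonzero in $\L_g(V)$ only when $n\equiv r\pmod{T}$, so the surviving $z$-powers lie in $-\frac{r}{T}-1+\Z$. The \emph{vacuum property} follows from \itemcref{lem:Rad-prop}{(4)} applied to $b=\vac$: since $\wt\vac=0$ and $[\vac]$ is the unit of $A_g(V)$, the class $\lo{\vac}{-1}$ acts as the identity on $U^2$; it is central in $\L_g(V)$ (its brackets vanish because $\vo{\vac}{j}b=0$ for $j\ge 0$), so by the generation of $\widetilde{M}(U^2)$ by $U^2$ it acts as the identity everywhere, while every other mode $\lo{\vac}{k}$ vanishes in $\L_g(V)$; hence $Y_{\widetilde{M}(U^2)}(\vac,z)=\id$. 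The \emph{truncation property} follows from the grading being bounded below: acting on a homogeneous vector of degree $d\ge 0$, the operator $\lo{a}{\frac{n}{T}}$ produces degree $d+\wt a-\frac{n}{T}-1$, which is negative for $n\gg0$, and negative-degree elements vanish.

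The hard part is the \emph{twisted Jacobi identity}, and this is exactly what \cref{lmeq:oab} supplies. Its conclusion \labelcref{eq:compJacU2}, read for all $a\in V^r$, $b\in V^s$, $v_2\in M(U^2)$, and $m,n,l\in\Z$, asserts precisely that the component form \labelcref{eq:Jac'} holds as an operator identity on all of $\widetilde{M}(U^2)$. Now \labelcref{eq:Jac'} is obtained from the generating-function identity \labelcref{eq:Jac} by multiplying with $z_1^{m+r/T}z_2^{n+s/T}z_0^{l}$ and extracting $\Res_{z_0}\Res_{z_1}\Res_{z_2}$, as in \cref{lem:Jac-comp}; this passage is reversible once truncation is known, since the three formal distributions in \labelcref{eq:Jac} are determined by the totality of their coefficients, so validity of \labelcref{eq:Jac'} for all $m,n,l$ is equivalent to \labelcref{eq:Jac} (this is the formal counterpart of the duality reformulation in \cref{prop:Jac-fun}). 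Therefore $\widetilde{M}(U^2)$ is a weak $g$-twisted module. Being a $\frac{1}{T}\N$-graded module over the graded Lie algebra $\L_g(V)$ via \labelcref{3.24'} and $\deg\lo{a}{\frac{m}{T}}=\wt a-\frac{m}{T}-1$, \cref{prop:repnOfLgV} promotes it to an admissible $g$-twisted module (equivalently, \labelcref{eq:admissible} is checked directly from this degree formula).

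Finally, for the conformal weight I would compute the $\vo{L}{0}$-action, realized by $\lo{\upomega}{1}$. Using $\vo{\upomega}{0}b=\vo{L}{-1}b$, $\vo{\upomega}{1}b=(\wt b)b$ for homogeneous $b$, and the $\nabla$-relation $\lo{(\vo{L}{-1}b)}{k+1}=-(k+1)\lo{b}{k}$, the bracket formula in $\L_g(V)$ gives
\[
  \Lie*{\lo{\upomega}{1}}{\lo{b}{k}}
  =\lo{(\vo{L}{-1}b)}{k+1}+(\wt b)\lo{b}{k}
  =(\wt b-k-1)\lo{b}{k}
  =(\deg\lo{b}{k})\lo{b}{k}.
\]
Since $\vo{L}{0}$ acts as $h_2$ on $U^2$ by \itemcref{lem:Rad-prop}{(4)} with $b=\upomega$ (giving $\lo{\upomega}{1}u_2=[\upomega]u_2=h_2u_2$), moving $\vo{L}{0}$ to the right through a spanning vector \labelcref{spann} shows it acts as the scalar $h_2+\sum_i\deg\lo{b^i}{\frac{r_i}{T}+m_i}$. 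Thus $\vo{L}{0}$ is diagonalizable, its eigenspace for $h_2+\frac{n}{T}$ is exactly $\widetilde{M}(U^2)(\frac{n}{T})$, and $\widetilde{M}(U^2)$ is an admissible $g$-twisted module of conformal weight $h_2$. The only genuinely delicate step is the equivalence between the component relations of \cref{lmeq:oab} and the full Jacobi identity; everything else is bookkeeping built on the $\L_g(V)$-module structure already established.
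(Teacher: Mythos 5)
Your proposal is correct and takes essentially the same route as the paper: truncation from the gradation \labelcref{3.24'} combined with \itemcref{lem:Rad-prop}{(3)}, the twisted Jacobi identity from \cref{lmeq:oab}, and the identification of the conformal weight from $[\upomega]$ acting as $h_2\id$ on $U^2$. The only difference is that you spell out explicitly (via the centrality of $\lo{\vac}{-1}$ and the commutator $\Lie*{\lo{\upomega}{1}}{\lo{b}{k}}=(\deg\lo{b}{k})\lo{b}{k}$) the vacuum property and the $\vo{L}{0}$-eigenvalue computation, which the paper delegates to the analogous untwisted arguments in \cite[Propositions 3.6 and 3.7]{Liu}.
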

\begin{proof}
  By \itemcref{lem:Rad-prop}{(1)}, the vertex operator $Y_{\widetilde{M}(U^2)}(-,z)$ is well-defined. 
  Given $a\in V^r$, by the definition of gradation \labelcref{3.24'}, we have 
  \[
    \lo{a}{\frac{r}{T}+n}\widetilde{M}(U^2)(\tfrac{m}{T})\subset\widetilde{M}(U^2)(\tfrac{m}{T}+\deg\lo{a}{\frac{r}{T}+n}).
  \] 
 Hence $\widetilde{M}(U^2)(\tfrac{m}{T})=0$ for $m<0$ by \itemcref{lem:Rad-prop}{(3)}. 
  This shows $\lo{a}{\frac{r}{T}+n}v_2=0$ for $n\gg 0$. The Jacobi identity of $Y_{\widetilde{M}(U^2)}$ follows from \cref{lmeq:oab}. 

  By adopting a similar argument as \cite[Proposition 3.6 and 3.7]{Liu}, together with the assumption that $[\upomega]$ acts as $h_2 \id$ on $U^2$, we can show that $Y_{\widetilde{M}(U^2)}$ satisfies the vacuum property, and $\widetilde{M}(U^2)=\bigoplus_{n\in\N}\widetilde{M}(U^2))(\frac{n}{T})$ with the bottom level $\widetilde{M}(U^2)(0)=U^2$. 
  Moreover, for any $a\in V^0$, its action on $\widetilde{M}(U^2)(0)=U^2$ agrees with the operator $o(a)=\vo{a}{\wt a-1}:=\Res_z z^{\wt a-1}Y_{\widetilde{M}(U^2)}(a,z)$, and each $\widetilde{M}(U^2)(\frac{n}{T})$ is an eigenspace of $\vo{L}{0}=o(\upomega)$ with the eigenvalue $\frac{n}{T}+h_2$. 
\end{proof}

So far, we have extended $S$ to $U^3\otimes\fun{Sym}[V,\cdots,V,M^1]\otimes\widetilde{M}(U^2)$. 
The last factor can be further extended to $\overline{M}(U^2)$ since $\widetilde{M}(U^2)$ is a quotient module of the generalized Verma module $\overline{M}(U^2)$.

Now let $M^{\op}(U^3)=U^3\otimes T(\L_g(V))$. By adopting a slight modification of the argument in this subsection, we can extend $S$ to $M^{\op}(U^3)\otimes\fun{Sym}[V,\cdots,V,M^1]\otimes \widetilde{M}(U^2)$ by
\begin{equation*}
  \begin{aligned}
    \MoveEqLeft
    S\bracket*{u_3\otimes\lo{b^p}{\frac{r_p}{T}+m_p}\otimes\cdots\otimes\lo{b^1}{\frac{r_1}{T}+m_1}}{\cdots}{v_2}\\
    &:=(-\tfrac{1}{T}\Res_{\pp_{+1}=\infty})\cdots(-\tfrac{1}{T}\Res_{\pp_{+p}=\infty})\\ 
    &\qquad\qquad 
    S\bracket*{u_3}{\cfbseqb\cdots}{u_2}
    z_{+1}^{\frac{r_1}{T}+m_1}\cdots z_{+p}^{\frac{r_p}{T}+m_p}
    \d{z_{+p}}\cdots\d{z_{+1}}.
  \end{aligned}
\end{equation*}
Define $\Rad^{\op}(U^3)$ as the intersection of all $\Rad^{\op}(S)$, where 
\begin{equation}
  \begin{aligned}
    \Rad^{\op}(S):=\Set*{v'_3\in M^{\op}(U^3)\given S\bracket*{v'_3}{\cdots}{v_2}=0\text{ for all }v_2\in\widetilde{M}(U^2)}. 
  \end{aligned}
\end{equation}
Then $\widetilde{M}^{\op}(U^3):=M^{\op}(U^3)/\Rad^{\op}(U^3)$ is a right $\L_g(V)$-module, and so a left $\L_{g^{-1}}(V)$-module via the pushout along $\theta\colon\L_{g}(V)\to\L_{g^{-1}}(V)$. Namely, $\lo{a}{\frac{m}{T}} v_3:=v_3\cdot\theta(\lo{a}{\frac{m}{T}})$, where the anti-isomorphism $\theta$ is defined in \labelcref{eq:def:theta}. 
Furthermore, $\widetilde{M}^{\op}(U^3)$ is an admissible $g^{-1}$-twisted module of conformal weight $h_3$ whose vertex operator is given by
\begin{equation}
  Y_{\widetilde{M}^{\op}(U^3)}(a,z)v'_3 = 
  \sum_{n\in \Z} \lo{a}{\frac{n}{T}}v'_3 z^{-\frac{n}{T}-1} = 
  \sum_{n\in \Z} v'_3\cdot\theta(\lo{a}{\frac{n}{T}}) z^{-\frac{n}{T}-1},
\end{equation}
where $a\in V$ and $v'_3\in\widetilde{M}^{\op}(U^3)$. 
Then $S$ is extended to $\widetilde{M}^{\op}(U^3)\otimes\fun{Sym}[V,\cdots,V,M^1]\otimes \widetilde{M}(U^2)$. 
The first factor can be further extended to \emph{generalized Verma module} $\overline{M}(U^3)$.

\begin{proposition}\label{prop:cor-extension}
    The resulting system of correlation functions $S$ satisfies the {twisted genus-zero property} associated to the datum $\Sigma_{1}(\widetilde{M}^{\op}(U^3), M^1, \widetilde{M}(U^2))$.
\end{proposition}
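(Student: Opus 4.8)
The module structures the statement presupposes are already available: $\widetilde{M}(U^2)$ is an admissible $g$-twisted module of conformal weight $h_2$ by \cref{prop3.8}, $\widetilde{M}^{\op}(U^3)$ is an admissible $g^{-1}$-twisted module of conformal weight $h_3$ by the analogous statement recorded just above, and the mode actions descend to these quotients thanks to \cref{lem:Rad-prop}. The plan is therefore to verify the eight axioms of \cref{def:genus-zero} for the extended system $S$ attached to $\Sigma_{1}(\widetilde{M}^{\op}(U^3), M^1, \widetilde{M}(U^2))$, organizing them according to how they interact with the defining residues of the extension. The single structural observation that drives everything is that a correlation function with end vectors $v_2\in\widetilde{M}(U^2)$ and $v_3\in\widetilde{M}^{\op}(U^3)$ is, by construction, a multi-residue over auxiliary variables $\pp_{+j}$ — taken at $\pp=0$ for the $\widetilde{M}(U^2)$-modes and at $\pp=\infty$ for the $\widetilde{M}^{\op}(U^3)$-modes — of a \emph{restricted} correlation function carrying extra $V$-insertions and bottom-level end vectors $u_2\in U^2$, $u_3\in U^3$.

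First I would dispatch the two generating properties \labelcref{eq:generating_M2} and \labelcref{eq:generating_M3}, which are the conceptual core and hold essentially by construction. By \cref{df3.7} and \cref{prop:repnOfLgV}, the operator $\vo{a}{m+\frac{r}{T}}$ on $\widetilde{M}(U^2)$ is exactly the image of $\lo{a}{m+\frac{r}{T}}\in\L_g(V)$, i.e. the mode is prepended in $M(U^2)$ and pushed to $M(U^2)/\Rad(U^2)$, which is legitimate by \itemcref{lem:Rad-prop}{(1)}. Feeding $\lo{a}{m+\frac{r}{T}}\otimes v_2$ into the extension formula \labelcref{eq:SfromU2toMU2} realizes this new outermost mode as $\tfrac{1}{T}\Res_{\pp=0}S\bracket*{v_3}{\cdots(a,\pp)}{v_2}z^{m+\frac{r}{T}}\d{z}$, with $(a,\pp)$ sitting in the slot adjacent to $\widetilde{M}(U^2)$; using locality of the extended system (itself inherited from $S$, see below) to move $(a,\pp)$ to the leftmost slot gives precisely \labelcref{eq:generating_M2}. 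The companion \labelcref{eq:generating_M3} is identical after replacing the extension at $0$ by the extension at $\infty$; the factor $-\tfrac1T$ in that construction and the anti-isomorphism $\theta\colon\L_g(V)\to\L_{g^{-1}}(V)$ converting the right $\L_g(V)$-action on $\widetilde{M}^{\op}(U^3)$ into the left $\L_{g^{-1}}(V)$-action together account for the sign $-T$ and the appearance of $\theta$ on the right-hand side.

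Next I would establish the free-field axioms — locality, the homogeneous property, the vacuum property, the $\vo{L}{-1}$-derivative property — and the truncation property. Each of these concerns only the fields $(a^k,\pp_k)$ and $(v,\qq)$, whose relations are untouched by the extension residues at $0$ and $\infty$; passing to the multi-residue representation above, each reduces to the corresponding axiom for the restricted $S$ with extra $V$-insertions, which holds by hypothesis, because the operations involved (permutation of the free fields, the expansion types, the derivatives $\partial_{z_i}$ and $\partial_w$, insertion of $\vac$) act only on the variables $\pp_i,\qq$ and hence commute with $\Res_{\pp_{+j}=0}$ and $\Res_{\pp_{+j}=\infty}$. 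For truncation one first invokes the homogeneous property, which confines the two-point function to $\cF(\emptyset)=\C[w^{\pm\frac{1}{T}}]$, a finite Laurent polynomial in $w^{\frac{1}{T}}$; the conformal-weight grading of $\widetilde{M}(U^2)$ and $\widetilde{M}^{\op}(U^3)$, both bounded below, then pins the lowest power of $w$ to the bottom-level contribution and yields an $N$ depending only on $v$ and $v_2$.

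The main obstacle is associativity \labelcref{eq:associativity_p1q}--\labelcref{eq:associativity_p1p2}, where the residue $\Res_{\pp_1=\qq}$ (resp. $\Res_{\pp_1=\pp_2}$) of the axiom must be interchanged with the extension residues at $0$ and $\infty$ before the inner associativity of $S$ can be applied. This is exactly the contour manipulation already carried out in the proof of \cref{lmeq:oab}: for generic $\qq$ and $\pp_k$ the collision divisors $(z_1-w)$ and $(z_1-z_2)$ are disjoint from $\{0,\infty\}$ and from the auxiliary points $\pp_{+j}$, so Fubini for residues supported at distinct points lets the two families of residues commute, and wherever poles threaten to meet one redistributes residues by \cref{lem:ResidueSum}, precisely as in the steps marked $\ast$ and $\ast\ast$ there. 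The only additional bookkeeping is that both ends are extended at once, so one must also check that the $0$-residues and the $\infty$-residues commute with each other; this is automatic since they are supported at distinct points, and the same fact guarantees that \labelcref{eq:generating_M2} persists for $v_3\in\widetilde{M}^{\op}(U^3)$ and \labelcref{eq:generating_M3} for $v_2\in\widetilde{M}(U^2)$, so that the two generating properties are mutually consistent.
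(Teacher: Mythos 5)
Your overall plan is sound and, for most of the axioms, matches what the paper does implicitly: the generating properties are built into the extension formulas, the free-field axioms and associativity pass through the extension residues by the contour manipulations already carried out in \cref{lmeq:oab}, and the required module structures come from \cref{lem:Rad-prop} and \cref{prop3.8}. The paper's own proof in fact treats all of this as settled and opens with ``it suffices to show the truncation property'' --- and that is exactly where your argument has a genuine gap.

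For truncation you argue that the homogeneous property puts the two-point function in $\C[w^{\pm\frac{1}{T}}]$ and that the boundedness below of the gradings then ``pins the lowest power of $w$'', yielding an $N$ depending only on $v$ and $v_2$. The first half gives finiteness of the Laurent expansion for each \emph{fixed} $v_3$, but no uniformity over $v_3\in\widetilde{M}^{\op}(U^3)$, which is what the axiom demands; the second half assumes what must be proved. The assertion that the pole order of $S\bracket*{v_3}{(v,\qq)}{v_2}$ at $w=0$ is bounded by $\deg v+\deg v_2$ uniformly in $v_3$ is the analogue of \cref{prop:I-degree} for the reconstructed system, and it does not follow formally from $\widetilde{M}(U^2)(\tfrac{m}{T})=0$ for $m<0$: one must track how each application of a mode $\lo{a}{\frac{r}{T}+m}$ to $u_2$ (a residue at $\pp=0$) or of $\theta(\lo{a}{\frac{r}{T}+m})$ to $u_3$ (a residue at $\pp=\infty$) shifts the powers of $w$. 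The paper does this by an induction whose step combines the recursive formulas \labelcref{eq:recursive_U3,eq:recursive_U2}, the explicit expansions of $F_{n,i}$ from \cref{lem:ExpOfFpq}, and the monomial property \labelcref{eq:monomial}, showing for instance that the residue at $\infty$ contributes powers $w^{\frac{r}{T}+m+n-i}$ with $\deg(\vo{a}{i}v)+\deg v_2\le \frac{r}{T}+m+n-i$, and similarly at $0$. Without this computation (or an equivalent one) the truncation property is not established, so your proof is incomplete at the one point the paper identifies as the substantive content of the proposition.
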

\begin{proof}
    It suffices to show the \emph{truncation property}.
    We first show that $S\bracket*{u_3}{(v,\qq)}{v_2}w^{n}$ is holomorphic at $\qq=0$ for all $u_3\in U^3$, when $n\ge \deg v + \deg v_2$. 
   Since $\widetilde{M}(U^2)$ is a $V$-module by \cref{prop3.8}, we may assume $v_2=\lo{a}{\frac{r}{T}+m}u_2$ for some homogeneous $a\in V^{r}$, $m\in\Z$, $u_2\in U^2$, and $\deg v_2\ge 0$. Then 
    \begin{align*}
      \MoveEqLeft
      S\bracket*{u_3}{(v,\qq)}{\lo{a}{\frac{r}{T}+m}u_2}w^n\\
      &=\tfrac{1}{T}\Res_{\pp=0}S\bracket*{u_3}{(a,\pp)(v,\qq)}{u_2}z^{\frac{r}{T}+m}w^n\d{z}\\
      &=\tfrac{1}{T}S\bracket*{u_3\cdot [a]}{(v,\qq)}{u_2}w^n
      \Res_{\pp=0}z^{\frac{r}{T}+m-\wt a}\d{z}\\
      &\phantom{=}  + 
      \tfrac{1}{T}\sum_{i\ge 0}S\bracket*{u_3}{(\vo{a}{i}v,\qq)}{u_2}w^n
      \Res_{\pp=0}F_{\wt a-1+\delta(r)+\tfrac{r}{T}, i}(\pp, \qq)z^{\frac{r}{T}+m}\d{z}\\
      \overset{\labelcref{lem:ExpOfFpq}}&{=}
      \tfrac{1}{T}S\bracket*{u_3\cdot [a]}{(v,\qq)}{u_2}w^n\Res_{\pp=0}z^{\frac{r}{T}+m-\wt a}\d{z}\\
      &\phantom{=}  - 
      \tfrac{1}{T}\sum_{i\ge 0}\binom{\frac{r}{T}+m}{i}S\bracket*{u_3}{(\vo{a}{i}v,\qq)}{u_2}w^{\frac{r}{T}+m+n-i}\\
      &=\tfrac{1}{T}\braket*{\varphi}{(u_3\cdot [a])\otimes v\otimes u_2}w^{n-\deg v}\Res_{\pp=0}z^{\frac{r}{T}+m-\wt a}\d{z}\\
      &\phantom{=}  - 
      \tfrac{1}{T}\sum_{i\ge 0}\binom{\frac{r}{T}+m}{i}\braket*{\varphi}{u_3\otimes(\vo{a}{i}v)\otimes u_2}w^{n-\deg v_2 -\deg v},
    \end{align*}
    which is holomorphic at $\pp=0$ if $n\ge\deg v+\deg v_2$.

    It remains to show $S\bracket*{v'_3}{(v,\qq)}{v_2}w^{n}$ is holomorphic at $\qq=0$ for all $v'_3\in\widetilde{M}^{\op}(U^3)$, when $n\ge \deg v + \deg v_2$.
    We may assume $v'_3=\theta(\lo{a}{\frac{r}{T}+m})u_3$ for some homogeneous $a\in V^{r}$, $m\in\Z$, $u_3\in U^3$, and $-\wt a+\frac{r}{T}+m+1\ge 0$. Then 
    \begin{align*}
      \MoveEqLeft
      S\bracket*{\theta(\lo{a}{\frac{r}{T}+m})u_3}{(v,\qq)}{v_2}w^n\\
      &=-\tfrac{1}{T}\Res_{\pp=\infty}S\bracket*{u_3}{(a,\pp)(v,\qq)}{v_2}z^{\frac{r}{T}+m}w^n\d{z}\\
      &=-\tfrac{1}{T}S\bracket*{u_3\cdot [a]}{(v,\qq)}{v_2}w^n
      \Res_{\pp=\infty}z^{\frac{r}{T}+m-\wt a}\d{z}\\
      &\phantom{=}  - 
      \tfrac{1}{T}\sum_{i\ge 0}S\bracket*{u_3}{(\vo{a}{i}v,\qq)}{v_2}w^n
      \Res_{\pp=\infty}F_{\wt a-1+\delta(r)+\tfrac{r}{T}, i}(\pp, \qq)z^{\frac{r}{T}+m}\d{z}\\
      \overset{\labelcref{lem:ExpOfFpq}}&{=}
      -\tfrac{1}{T}S\bracket*{u_3\cdot [a]}{(v,\qq)}{v_2}w^n\Res_{\pp=\infty}z^{\frac{r}{T}+m-\wt a}\d{z}\\
      &\phantom{=}  + 
      \tfrac{1}{T}\sum_{i\ge 0}\binom{\frac{r}{T}+m}{i}S\bracket*{u_3}{(\vo{a}{i}v,\qq)}{v_2}w^{\frac{r}{T}+m+n-i}.
    \end{align*}
  The first term is holomorphic at $\qq=0$ since $u_3\cdot [a]\in U^3$ and $n\ge\deg v+\deg v_2$. The second term is holomorphic at $\qq=0$ since $\deg \vo{a}{i}v+\deg v_2 = \wt a + \deg v -i -1 +\deg v_2 \le \frac{r}{T}+m + n - i$.
\end{proof}
By \cref{prop:cor-extension}, the extended system of $g$-twisted correlation functions $S\in \overline{M}(U^3)$ $\otimes\fun{Sym}[V,\cdots,V,M^1]\otimes \overline{M}(U^2)$ also satisfies the \emph{twisted genus-zero property} associated to the datum $\Sigma_{1}(\overline{M}(U^3), M^1, \overline{M}(U^2))$.

Now we have our main theorem in this section: 
\begin{theorem}\label{thm:CorBottom}
  Let $M^1$ be an admissible untwisted module of conformal weight $h_1$, and let $U^2$ (resp. $U^3$) be a left (resp. right) $A_g(V)$-module with $[\upomega]$ acting as $h_2$ (resp. $h_3$). Put $h=h_1+h_2-h_3$. 
  Then any system of $g$-twisted restricted correlation functions associated to the datum $\Sigma_{1}(U^3, M^1, U^2)$ can be extended to one associated to the datum $\Sigma_{1}(\widetilde{M}^{\op}(U^3), M^1, \widetilde{M}(U^2))$ and one associated to the datum $\Sigma_{1}(\overline{M}(U^3), M^1, \overline{M}(U^2))$. Moreover, we have 
  \begin{equation*}
      \Cor[\Sigma_{1}(U^3, M^1, U^2)]
      \cong\Cor[\Sigma_{1}(\widetilde{M}^{\op}(U^3), M^1, \widetilde{M}(U^2))]
      \cong\Cor[\Sigma_{1}(\overline{M}(U^3), M^1, \overline{M}(U^2))].
  \end{equation*}
\end{theorem}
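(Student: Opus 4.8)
The plan is to realize the extension construction just completed and the restriction map of \cref{prop:cor-restriction} as mutually inverse bijections, from which both displayed isomorphisms drop out. Write $\uppi$ for restriction to bottom levels and $E$ for the extension. By \cref{prop:cor-extension} the construction yields a well-defined linear map
\[
  E\colon \Cor[\Sigma_{1}(U^3, M^1, U^2)] \longrightarrow \Cor[\Sigma_{1}(\widetilde{M}^{\op}(U^3), M^1, \widetilde{M}(U^2))],
\]
and, since $\widetilde{M}(U^2)$ is a quotient of $\overline{M}(U^2)$ (and likewise for $U^3$), composing with the pullback along $\overline{M}(U^i)\twoheadrightarrow\widetilde{M}(U^i)$ produces the corresponding map into $\Cor[\Sigma_{1}(\overline{M}(U^3), M^1, \overline{M}(U^2))]$. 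In the reverse direction, $\widetilde{M}(U^2)(0)=U^2$ and $\widetilde{M}^{\op}(U^3)(0)=U^3$ by \cref{prop3.8} and its opposite analogue, and similarly $\overline{M}(U^2)(0)=U^2$, $\overline{M}(U^3)(0)=U^3$, so \cref{prop:cor-restriction} supplies the restriction maps $\uppi$ in each case.

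First I would verify that $\uppi\circ E=\id$. This is immediate from the defining formulas of the extension: evaluating an extended system on $u_2\in U^2\subset\widetilde{M}(U^2)$ and $u_3\in U^3\subset\widetilde{M}^{\op}(U^3)$ amounts to taking no $\L_g(V)$-factors at all (the case $p=0$), so the extension reduces to the original system $S$. Because $U^2\cap\Rad(U^2)=0$ by \itemcref{lem:Rad-prop}{(2)}, the bottom level $U^2$ genuinely embeds in $\widetilde{M}(U^2)$, so $\uppi(E(S))=S$; the same computation works verbatim for the $\overline{M}$-version.

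Next I would invoke injectivity. The modules $\widetilde{M}(U^2)$, $\widetilde{M}^{\op}(U^3)$, $\overline{M}(U^2)$, and $\overline{M}(U^3)$ are each generated by their bottom levels $U^2$, $U^3$, which are irreducible over $A_g(V)$ and hence, via the epimorphism $\L_g(V)_0\twoheadrightarrow A_g(V)$, irreducible over $\L_g(V)_0$; thus each is a lowest-weight module in the sense of \cref{def:lowest-weight}. Consequently \cref{prop:injectivity-from-cor-to-corbot} applies and shows that $\uppi$ is injective in every case. Together with the surjectivity furnished by $\uppi\circ E=\id$, this makes $\uppi$ a bijection with inverse $E$, so also $E\circ\uppi=\id$. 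This yields the chain
\[
  \Cor[\Sigma_{1}(U^3, M^1, U^2)]\cong\Cor[\Sigma_{1}(\widetilde{M}^{\op}(U^3), M^1, \widetilde{M}(U^2))]\cong\Cor[\Sigma_{1}(\overline{M}(U^3), M^1, \overline{M}(U^2))],
\]
the second isomorphism being induced by the pullback along $\overline{M}(U^i)\twoheadrightarrow\widetilde{M}(U^i)$, which is compatible with both restrictions to the common bottom space $\Cor[\Sigma_{1}(U^3, M^1, U^2)]$.

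The substantive work has already been absorbed into \cref{prop:cor-extension} (checking that the extended system obeys every genus-zero axiom, with the truncation property being the delicate one) and into \cref{prop:injectivity-from-cor-to-corbot}, whose proof shows that the recursive formulas \labelcref{eq:recursive_U3,eq:recursive_U2} force any system of correlation functions to be pinned down by its bottom-level restriction. The one conceptual point I would guard against is that $\overline{M}(U^2)$ is strictly larger than $\widetilde{M}(U^2)$, so one might worry that correlation functions on $\overline{M}(U^2)$ carry strictly more data; this is precisely what injectivity of $\uppi$ excludes, since both spaces embed into the single space $\Cor[\Sigma_{1}(U^3, M^1, U^2)]$ and are therefore compelled to coincide.
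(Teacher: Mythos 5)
Your proposal is correct and follows essentially the same route as the paper, whose proof of this theorem is simply the citation of \cref{prop:cor-restriction} and \cref{prop:cor-extension}; you have fleshed out the implicit bookkeeping, namely that $\uppi\circ E=\id$ (using \itemcref{lem:Rad-prop}{(2)} so that $U^2$ embeds in $\widetilde{M}(U^2)$) together with the injectivity of $\uppi$ from \cref{prop:injectivity-from-cor-to-corbot}, which is exactly the role that proposition plays in the paper. The only point worth noting is that \cref{prop:injectivity-from-cor-to-corbot} is stated for lowest-weight modules (irreducible bottom level), so your appeal to it tacitly uses the section's standing irreducibility assumption on $U^2$ and $U^3$ — though its proof really only needs generation by the bottom level, which $\widetilde{M}(U^2)$, $\widetilde{M}^{\op}(U^3)$, and the generalized Verma modules all satisfy.
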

\begin{proof}
    It follows from \cref{prop:cor-restriction} and \cref{prop:cor-extension}.
\end{proof}
\begin{corollary}\label{coro:isomorphism-between-correlations}
    Let $M^2$ and $M^3$ be admissible $g$-twisted $V$-modules of conformal weight $h_2$ and $h_3$, and assume $M^2$ and $(M^3)'$ are generalized Verma modules. 
    Then
    \[\Cor[\Sigma_{1}(M^3(0)^\ast, M^1, M^2(0))]\cong\Cor\fusion\cong\Fusion.\]
\end{corollary}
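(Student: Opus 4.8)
The plan is to obtain both isomorphisms by directly combining the two main theorems of the preceding sections, with the hypothesis that $M^2$ and $(M^3)'$ are generalized Verma modules entering only to identify the relevant data. Set $U^2:=M^2(0)$, a left $A_g(V)$-module, and $U^3:=M^3(0)^\ast=(M^3)'(0)$, a right $A_g(V)$-module. Since $M^2$ and $M^3$ have conformal weights $h_2$ and $h_3$, the operator $\vo{L}{0}=o(\upomega)$ acts as $h_2$ on $U^2$ and as $h_3$ on $U^3$, so $[\upomega]$ acts as $h_2\id$ and $h_3\id$ respectively. Thus the datum $\Sigma_1(U^3,M^1,U^2)$ meets the hypotheses of \cref{thm:CorBottom}.

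First I would apply \cref{thm:CorBottom} to obtain
\[
\Cor[\Sigma_1(M^3(0)^\ast,M^1,M^2(0))]\cong\Cor[\Sigma_1(\overline{M}(U^3),M^1,\overline{M}(U^2))],
\]
where $\overline{M}(U^2)$ is the $g$-twisted generalized Verma module built on the bottom level of $M^2$ and $\overline{M}(U^3)$ is the $g^{-1}$-twisted generalized Verma module built on the bottom level of $(M^3)'$. The hypothesis that $M^2$ is a generalized Verma module means precisely $M^2\cong\overline{M}(M^2(0))=\overline{M}(U^2)$; likewise the hypothesis that $(M^3)'$, viewed as a $g^{-1}$-twisted module with bottom level $M^3(0)^\ast$, is a generalized Verma module means $(M^3)'\cong\overline{M}(M^3(0)^\ast)=\overline{M}(U^3)$. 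Substituting these identifications into the right-hand side and recalling that $\Cor\fusion$ is by definition $\Cor[\Sigma_1((M^3)',M^1,M^2)]$ yields
\[
\Cor[\Sigma_1(M^3(0)^\ast,M^1,M^2(0))]\cong\Cor\fusion.
\]
Finally, \cref{thm:I=Cor} supplies $\Cor\fusion\cong\Fusion$ via $I\mapsto S_I$, closing the chain.

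The argument is essentially bookkeeping, so the only genuine point requiring care — and what I regard as the main obstacle — is verifying the identification $\overline{M}(M^3(0)^\ast)\cong(M^3)'$ on the $N^3$-side. One must check that the $g^{-1}$-twisted generalized Verma module produced by the extension procedure of \cref{sec3} (passing through $\widetilde{M}^{\op}(U^3)$, whose module structure is transported along the anti-isomorphism $\theta\colon\L_g(V)\to\L_{g^{-1}}(V)$ of \labelcref{eq:def:theta}) is compatible with the contragredient structure $Y_{(M^3)'}$ defined in \labelcref{eq:def:contragredient}, so that the phrase ``$(M^3)'$ is a generalized Verma module'' indeed matches the module appearing in \cref{thm:CorBottom}. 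Once the conventions for the right action, the passage from $g$ to $g^{-1}$, and the contragredient are aligned, no further computation is needed: the conformal weights automatically agree, since $\overline{M}(U^2)$ and $\overline{M}(U^3)$ have bottom-level $\vo{L}{0}$-eigenvalues $h_2$ and $h_3$ by construction.
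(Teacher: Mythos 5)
Your proposal is correct and follows exactly the paper's route: the paper's proof is the one-line combination of \cref{thm:CorBottom} and \cref{thm:I=Cor}, with the generalized-Verma hypothesis serving precisely to identify $\overline{M}(M^2(0))$ with $M^2$ and $\overline{M}(M^3(0)^\ast)$ with $(M^3)'$ as you describe. Your additional remark about checking compatibility of the $\theta$-transported right action with the contragredient structure is a reasonable point of care, but it is already built into the conventions of \cref{sec3} and requires no new argument.
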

\begin{proof}
    It follows from \cref{thm:CorBottom} and \cref{thm:I=Cor}.
\end{proof}

\section{Reconstructing \texorpdfstring{$g$}{g}-twisted restricted correlation functions}\label{sec4}

\subsection{\texorpdfstring{$g$}{g}-twisted restricted conformal blocks (first definition)}
We introduce the following notion of space of restricted coinvariants and conformal blocks: 
\begin{definition}\label{def:resCfb}
  Let $M^1$ be an admissible untwisted module of conformal weight $h_1$, and $U^2$ (resp. $U^3$) a left (resp. right) $A_g(V)$-module on which $[\upomega]$ acts as $h_2 \id$ (resp. $h_3 \id$). Put $h=h_1+h_2-h_3$. 
  Let $J$ be the subspace of $U^3\otimes M^1\otimes U^2$ spanned by the elements 
 \begin{align}
    &u_3\otimes (\vo{L}{-1}+\vo{L}{0}-h_1+h)v\otimes u_2,\label{f-relation1}\\
    &u_3\cdot [a]\otimes v\otimes u_2-\sum_{j\ge 0}\binom{\wt a}{j} u_3\otimes \vo{a}{j-1}v\otimes u_2,\quad a\in V^0,\label{f-relation2}\\
    &u_3\otimes v\otimes [a]\cdot u_2-\sum_{j\ge 0}\binom{\wt a-1}{j}u_3\otimes \vo{a}{j-1}v\otimes u_2,\quad a\in V^0,\label{f-relation3}\\
    &\sum_{j\ge 0} \binom{\wt a-1+\frac{r}{T}}{j} u_3\otimes \vo{a}{j-1}v\otimes u_2,\quad a\in V^r, r\neq 0,\label{f-relation4}
  \end{align}
  where $u_3\in U^3$, $v\in M^1$, and $u_2\in U^2$. 
  We call the quotient space $(U^3\otimes M^1\otimes U^2)/J$ the \concept{space of $g$-twisted restricted coinvariants} and a linear functional $\varphi$ vanishing on $J$ a \concept{$g$-twisted restricted conformal block} associated to $U^3, M^1,$ and $ U^2$. We denote the vector space of $g$-twisted restricted conformal blocks by $\Cfb[U^3, M^1, U^2]$.
\end{definition}
\begin{remark}
The relations \labelcref{f-relation1,f-relation2,f-relation3,f-relation4} are obtained from our later calculation of the twisted correlation functions. In fact, these relations are also compatible with the definitions of the usual space of (twisted) coinvariants and conformal blocks of VOAs associated to the datum $(\mathbb{P}^1_{\C}, 0,1,\infty, M^2, M^1,M^3)$ in \cite{Z94,FBZ04,FS04,NT}. We can obtain relations \labelcref{f-relation1,f-relation2,f-relation3,f-relation4} by restricting $M^2$ and $M^3$ to their bottom levels. We will give a more general definition of \emph{twisted (restricted) conformal block} and discuss it in more detail in a subsequent paper \cite{PartII}. 
\end{remark}

\begin{remark}
Observe that $\sum_{j\ge 0}\binom{\wt a}{j} \vo{a}{j-1}v=a\ast v$ and $\sum_{j\ge 0} \binom{\wt a-1}{j} \vo{a}{j-1}v=v\ast a$ for $a\in V^0$ and $v\in M^1$, where $a\ast v$ and $v\ast a$ are the $A(V^0)$-bimodule actions defined in \cite{FZ}. 
Later on, we will show that the vector space $\Cfb[U^3, M^1, U^2]$ is indeed dual to $U^3\otimes_{A_g(V)} B_{g, \lambda}(M^1)\otimes_{A_g(V)}U^2$, where $B_{g, \lambda}(M^1)$ is a quotient of $A_g(M^1)$ constructed in \cite{JJ} that generalizes $B_{\lambda}(M^1)$ in \cite{Liu} and $\lambda=h_2-h_3$. 
\end{remark}

\begin{proposition}\label{prop:rCor->rCfb}
  Any systme of correlation functions $S\in \Cor[\Sigma_{1}(U^3, M^1, U^2)]$ gives rise to a meromorphic family of linear functionas $\Set{\varphi_S(\qq)}_{\qq\in\Cc}$ in $\Cfb[U^3, M^1, U^2]$.
\end{proposition}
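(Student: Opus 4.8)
The plan is to read off from $S$ the single functional furnished by the \emph{monomial property} \eqref{eq:monomial} and to verify that it is a restricted conformal block. Explicitly, for homogeneous $v$ I would set $\braket*{\varphi_S(\qq)}{u_3\otimes v\otimes u_2}:=S\bracket*{u_3}{(v,\qq)}{u_2}\,w^{\deg v}$ and extend by linearity; by \eqref{eq:monomial} this coincides with $\braket*{\varphi}{u_3\otimes v\otimes u_2}$, so $\qq\mapsto\varphi_S(\qq)$ is manifestly meromorphic (in fact constant) in $\qq$ and takes values in $(U^3\otimes M^1\otimes U^2)^\ast$. Everything then reduces to checking that $\varphi_S(\qq)$ annihilates each of the four families of generators \eqref{f-relation1}--\eqref{f-relation4} of the subspace $J$ in \cref{def:resCfb}.

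Relation \eqref{f-relation1} and the twisted relation \eqref{f-relation4} are the two extremes of the argument. For \eqref{f-relation1} I would feed $\vo{L}{-1}v$ through the $\vo{L}{-1}$-derivative property \eqref{eq:L-derivative-w} and $\vo{L}{0}v=(h_1+\deg v)v$ through the monomial property; the $\vo{L}{-1}$-derivative property forces $\braket*{\varphi}{u_3\otimes\vo{L}{-1}v\otimes u_2}=-(\deg v+h)\braket*{\varphi}{u_3\otimes v\otimes u_2}$, which exactly cancels the scalar contribution $(\deg v+h)\braket*{\varphi}{u_3\otimes v\otimes u_2}$ of $(\vo{L}{0}-h_1+h)v$ after the degree normalization. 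For \eqref{f-relation4}, with $a\in V^r$ and $r\neq0$, the point is that $V^r\subseteq O_g(V)$ by \cite[Lemma 2.1]{DLM1}, so $[a]=0$ and both recursive formulas \eqref{eq:recursive_U3}, \eqref{eq:recursive_U2} express $G(\pp):=S\bracket*{u_3}{(a,\pp)(v,\qq)}{u_2}$ purely through the functions $F_{\wt a-1+\frac{r}{T},i}$, with no leading $z^{-\wt a}$ term. Recognizing $\sum_{j\ge0}\binom{\wt a-1+\frac{r}{T}}{j}\vo{a}{j-1}v=a\circ_g v$ and using associativity \eqref{eq:associativity_p1q}, I would present $\braket*{\varphi_S(\qq)}{u_3\otimes(a\circ_g v)\otimes u_2}$, up to a power of $w$, as $\Res_{\pp=\qq}$ of the $1$-form $\omega=G\,z^{\,\wt a-1+\frac{r}{T}}(z-w)^{-1}\,\d z$. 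Since $G\in\cF(r)$ by the homogeneous property, $z^{\frac{r}{T}}G$ is rational, so $\omega$ factors through $\x\colon\oC\to\PP^1$ and \cref{lem:ResidueSum} applies; feeding the expansions of \cref{lem:ExpOfFpq} into $\Res_{\pp=0}\omega$ and $\Res_{\pp=\infty}\omega$, one checks that no $z^{-1}$-coefficient survives at either branch point, whence $\Res_{\pp=\qq}\omega=0$ and \eqref{f-relation4} follows.

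For the untwisted generators \eqref{f-relation2}, \eqref{f-relation3} ($a\in V^0$) I would proceed in two steps. First, equating the two expansions of $G$ given by \eqref{eq:recursive_U3} and \eqref{eq:recursive_U2} and using $F_{\wt a-1,i}-F_{\wt a,i}=\binom{\wt a-1}{i}z^{-\wt a}w^{\wt a-1-i}$ from \cref{lem:IndFpq}, I obtain the single combined identity that $\varphi_S(\qq)$ kills $u_3\cdot[a]\otimes v\otimes u_2-u_3\otimes v\otimes[a]\cdot u_2-u_3\otimes(a\ast v-v\ast a)\otimes u_2$, where $a\ast v-v\ast a=\sum_{i\ge0}\binom{\wt a-1}{i}\vo{a}{i}v$; this is precisely the difference \eqref{f-relation2}$-$\eqref{f-relation3}. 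Second, to split it I would pin down \eqref{f-relation3} by the same residue technique: rewrite $a\ast v$ via \eqref{eq:associativity_p1q} as $\Res_{\pp=\qq}$ of $\omega=G\,z^{\wt a}(z-w)^{-1}\,\d z$ and apply \cref{lem:ResidueSum}; now $\Res_{\pp=0}\omega$ again vanishes, while $\Res_{\pp=\infty}\omega$, read off from \eqref{eq:recursive_U2} through \cref{lem:ExpOfFpq}, reproduces $\braket*{\varphi_S(\qq)}{u_3\otimes v\otimes[a]\cdot u_2}$ together with the $(a\ast v-v\ast a)$-sum, so that after rearrangement $\braket*{\varphi_S(\qq)}{u_3\otimes v\otimes[a]\cdot u_2}=\braket*{\varphi_S(\qq)}{u_3\otimes(v\ast a)\otimes u_2}$. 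Combining this with the difference identity yields \eqref{f-relation2}.

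The main obstacle will be the fractional-power residue bookkeeping that is new to the twisted setting: the shifts by $\tfrac{r}{T}$ and the discrete correction $\delta(r)$ inside $F_{\wt a-1+\delta(r)+\frac{r}{T},i}$ must be tracked exactly, one must verify in each case that the auxiliary $1$-form $G\,z^{N}(z-w)^{-1}\,\d z$ genuinely descends along $\x$ (so that \cref{lem:ResidueSum} is legitimate), and one must check that the binomial sums produced by $\Res_{\pp=0}$ and $\Res_{\pp=\infty}$ assemble precisely into the operations $\ast_g,\circ_g$ defining $A_g(V)$ and into the bimodule products $a\ast v$, $v\ast a$. Relation \eqref{f-relation4} is the genuinely twisted phenomenon — its vanishing is forced by $V^r\subseteq O_g(V)$ and has no counterpart in the untwisted theory — and making the powers of $w^{1/T}$ cancel there is where I expect the delicate computation to lie.
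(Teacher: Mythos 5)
Your proposal is correct and follows essentially the same strategy as the paper: read $\varphi_S(\qq)$ off the monomial property (so the family is constant, hence meromorphic), kill \labelcref{f-relation1} with the $\vo{L}{-1}$-derivative property \labelcref{eq:L-derivative-w}, and verify \labelcref{f-relation2}--\labelcref{f-relation4} by converting $\sum_{j}\binom{\wt a-1+\delta(r)+\frac{r}{T}}{j}\vo{a}{j-1}v$ into a residue of $S\bracket*{u_3}{(a,\pp)(v,\qq)}{u_2}$ via the associativity \labelcref{eq:associativity_p1q} and the recursive formulas. The only real divergence is in how that residue is evaluated: the paper computes $\Res_{\pp=\qq}$ directly from the $\iota_{\pp=\qq}$-expansion of $F_{n,i}$ in \cref{lem:ExpOfFpq} and the vanishing binomial sum $\sum_{j}\binom{n}{j}\binom{-n}{l+1-j}=0$, which yields \labelcref{f-relation2} and \labelcref{f-relation4} in a single computation (and \labelcref{f-relation3} by symmetry), whereas you transfer the residue to $\pp=0$ and $\pp=\infty$ via \cref{lem:ResidueSum} and then recover \labelcref{f-relation2} from \labelcref{f-relation3} combined with the difference of the two recursive formulas --- the latter being exactly the computation of \cref{lem:locality:4-point}. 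The two evaluations are equivalent (I checked that your residues at $0$ and $\infty$ do vanish for \labelcref{f-relation4}, that $\omega$ descends along $\x$ because $z^{\frac{r}{T}}G$ is rational, and that the $\frac{r}{T}$- and $\delta(r)$-shifts are tracked consistently), so your route buys nothing and loses nothing beyond a slightly longer detour through the locality identity for the $V^0$ case.
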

\begin{proof}
    Given any $S\in \Cor[\Sigma_{1}(U^3, M^1, U^2)]$, we define a meromorphic family of  linear functionas $\varphi_S(\qq)$ on $U^3\otimes M^1\otimes U^2$ by
    \[
    \varphi_S(\qq)\colon u_3\o v\o u_2\in U^3\otimes M^1\otimes U^2
    \longmapsto
    S\bracket*{u_3}{(w^{L(0)-h_1}v,\qq)}{u_2}.
    \]
    We show $\varphi_S(\qq)$ vanishes on $J$. Vanishing of $\varphi_S(\qq)$ on \labelcref{f-relation1} follows from the \emph{$\vo{L}{-1}$-derivative property} \labelcref{eq:L-derivative-w}. 
    For homogeneous $a\in V^r$ in $J$, we have 
    \begin{align*}
        &\sum_{j\ge 0}\binom{\wt a-1+\delta(r)+\frac{r}{T}}{j}\braket*{\varphi_S(\qq)}{u_3\o \vo{a}{j-1}v\o u_2}\\
        ={}&\sum_{j\ge 0}\binom{\wt a-1+\delta(r)+\frac{r}{T}}{j}S\bracket*{u_3}{(\vo{a}{j-1}v,\qq)}{u_2}w^{\deg v+\wt a-j}\\
        ={}&\sum_{j\ge 0}\binom{\wt a-1+\delta(r)+\frac{r}{T}}{j}\Res_{\pp=\qq}S\bracket*{u_3}{(a,\pp)(v,\qq)}{u_2}w^{\deg v+\wt a-j}(z-w)^{j-1}\d{z}\\
        ={}&\sum_{j\ge 0}\binom{\wt a-1+\delta(r)+\frac{r}{T}}{j}\Res_{\pp=\qq}S\bracket*{u_3\cdot[a]}{(v,\qq)}{u_2}w^{\deg v+\wt a-j}(z-w)^{j-1}z^{-\wt a}\d{z}\\
        &+\sum_{j\ge 0}\binom{\wt a-1+\delta(r)+\frac{r}{T}}{j}\Res_{\pp=\qq}\sum_{i\ge 0}F_{\wt a-1+\delta(r)+\frac{r}{T},i}S\bracket*{u_3}{(\vo{a}{i}v,\qq)}{u_2}\\
        &\phantom{=}  \cdot w^{\deg v+\wt a-j}(z-w)^{j-1}z^{-\wt a}\d{z}\\
        ={}&\braket*{\varphi_S(\qq)}{u_3\cdot [a]\o v\o u_2}\\
        &+\sum_{j\ge 0}\sum_{i\ge 0}\sum_{l=0}^{i}\binom{\wt a-1+\delta(r)+\frac{r}{T}}{j}\binom{\wt a-1+\delta(r)+\frac{r}{T}}{i-l}\binom{-\wt a+1-\delta(r)-\frac{r}{T}}{l+1-j}\\
        &\phantom{=}  \cdot \braket*{\varphi_S(\qq)}{u_3\o\vo{a}{i}v\o u_2}\\
        ={}&\braket*{\varphi_S(\qq)}{u_3\cdot [a]\o v\o u_2}.
    \end{align*}
    The last equality follows from the identity $\sum_{j\ge 0}\binom{\wt a-1+\delta(r)+\frac{r}{T}}{j}\binom{-\wt a+1-\delta(r)-\frac{r}{T}}{l+1-j}=0$ for $l\ge 0$. 
    This shows that $\varphi_S(\qq)$ vanishes on \labelcref{f-relation2} (resp. \labelcref{f-relation4}) when $r=0$ (resp. $r\neq 0$). The vanishing of $\varphi_S(\qq)$ on \labelcref{f-relation3} can be proved by a similar method using the other recursive formula. 
    Hence $\varphi_S(\qq)$ is in $\Cfb[U^3, M^1, U^2]$ for all $\qq\in \Cc$. 

    By the \emph{monomial property} of $S$, the family $\Set{\varphi_S(\qq)}_{\qq\in\Cc}$ is constant. In particular, it is meromorphic.
\end{proof}

\begin{theorem}\label{thm:iso-restrictcfb-bottomcorrelation}
    $\Cfb[U^3, M^1, U^2]\cong \Cor[\Sigma_{1}(U^3, M^1, U^2)].$
\end{theorem}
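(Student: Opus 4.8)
The plan is to realize the isomorphism through the pair of mutually inverse maps $S\mapsto\varphi_S$ and $\varphi\mapsto S_\varphi$, reducing the whole statement to a reconstruction problem. One map is already constructed: by \cref{prop:rCor->rCfb} every $S\in\Cor[\Sigma_{1}(U^3,M^1,U^2)]$ produces a block $\varphi_S\in\Cfb[U^3,M^1,U^2]$, and this assignment is visibly linear. Its injectivity is exactly \cref{lm3.5}: if $\varphi_S=0$ then all three-point functions $S\bracket*{u_3}{(v,\qq)}{u_2}$ vanish, and the recursive formulas \labelcref{eq:recursive_U3,eq:recursive_U2} propagate this vanishing to every $(n+3)$-point function, forcing $S=0$. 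Since both spaces are vector spaces, it therefore suffices to prove that $S\mapsto\varphi_S$ is surjective, i.e. to reconstruct, from an arbitrary block $\varphi\in\Cfb[U^3,M^1,U^2]$, a system $S_\varphi\in\Cor[\Sigma_{1}(U^3,M^1,U^2)]$ with $\varphi_{S_\varphi}=\varphi$.

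First I would pin down the three-point function by the \emph{monomial property}, setting $S_\varphi\bracket*{u_3}{(v,\qq)}{u_2}:=\braket*{\varphi}{u_3\otimes w^{-L(0)+h_1}v\otimes u_2}$, and then define the $(n+3)$-point functions inductively by using the recursive formula \labelcref{eq:recursive_U3} (equivalently \labelcref{eq:recursive_U2}) to strip off one $V$-insertion at a time. Each step lowers the number of $V$-insertions by one, so the recursion terminates at the three-point function and yields a candidate family $S_\varphi$. The substance of the proof — carried out in \cref{sec4} (locality) and \cref{sec5} (associativity and the remaining axioms) — is to check that this candidate genuinely lies in $\Cor[\Sigma_{1}(U^3,M^1,U^2)]$: that each recursive step outputs a rational function on $\oC^{n+1}$ with poles confined to $\Delta_n$ (so that the values land in $\O(\infty\Delta_n)$), that the result is independent of the order in which insertions are removed and is symmetric in them (locality), and that it satisfies associativity together with the vacuum, homogeneous, truncation, and generating properties. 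The defining relations \labelcref{f-relation1,f-relation2,f-relation3,f-relation4} of the subspace $J$ are precisely the compatibility conditions that make the base case consistent under the two recursions \labelcref{eq:recursive_U3,eq:recursive_U2}, and they feed directly into these verifications.

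Once $S_\varphi$ is known to be a bona fide element of $\Cor[\Sigma_{1}(U^3,M^1,U^2)]$, surjectivity is immediate: by construction $\varphi_{S_\varphi}(\qq)\colon u_3\otimes v\otimes u_2\mapsto S_\varphi\bracket*{u_3}{(w^{L(0)-h_1}v,\qq)}{u_2}=\braket*{\varphi}{u_3\otimes v\otimes u_2}$, so $\varphi_{S_\varphi}=\varphi$. A linear map that is both injective and surjective is an isomorphism, which gives $\Cfb[U^3,M^1,U^2]\cong\Cor[\Sigma_{1}(U^3,M^1,U^2)]$; as a cross-check, $S_{\varphi_S}=S$ follows from applying \cref{lm3.5} to the difference $S-S_{\varphi_S}\in\Cor[\Sigma_{1}(U^3,M^1,U^2)]$, whose associated functional is $\varphi_S-\varphi_{S_{\varphi_S}}=0$.

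The hard part is the well-definedness and locality of $S_\varphi$ in \cref{sec4}, namely showing that the formal expansions produced by iterating \labelcref{eq:recursive_U3,eq:recursive_U2} assemble into a single rational function on the ramified product $\oC^{n+1}$, and that removing the $V$-insertions in different orders gives the same function. The genuinely new difficulty relative to the untwisted reconstruction in \cite{Liu} is the non-integer shift $\tfrac{r}{T}$ in the coefficient functions $F_{\wt a-1+\delta(r)+\frac{r}{T},\,i}(\pp,\qq)$: it must be tracked through the expansions $\iota_{\pp=0}$, $\iota_{\pp=\infty}$, $\iota_{\pp=\qq}$ of \cref{lem:ExpOfFpq} and through the \cref{lem:ResidueSum} whenever residues at $0$, $\infty$, and $\qq$ are compared, since the ramification of $\x\colon\oC\to\PP^1$ at $0$ and $\infty$ weights these residues by $\tfrac{1}{T}$.
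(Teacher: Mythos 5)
Your proposal is correct and follows essentially the same route as the paper: the forward map is \cref{prop:rCor->rCfb}, the inverse is the reconstruction of $S_\varphi$ carried out in \cref{sec4,sec5}, and the only (harmless) difference is that you deduce $S_{\varphi_S}=S$ from injectivity via \cref{lm3.5} rather than verifying both composite identities directly. The identification of the non-integer shift in $F_{\wt a-1+\delta(r)+\frac{r}{T},i}$ as the genuinely new difficulty relative to the untwisted case matches the paper's own emphasis.
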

\begin{proof}
    Given any $S\in \Cor[\Sigma_{1}(U^3, M^1, U^2)]$, by \cref{prop:rCor->rCfb}, we have a constant family $\Set{\varphi_S(\qq)}_{\qq\in\Cc}$ of elements of $\Cfb[U^3, M^1, U^2]$.

    The rest of \cref{sec4} and the whole \cref{sec5} are dedicated to proving the opposite direction, i.e., given any $g$-twisted restricted conformal block $\varphi$, one can reconstruct a system of $g$-twisted correlation functions $S_\varphi$, such that $\varphi_{S_\varphi}=\varphi$ and $S_{\varphi_S}=S$.
\end{proof}
\subsection{Constructions of 3-point, 4-point, and 5-point functions}
In this subsection, we construct the $g$-twisted $3$-point, $4$-point, and $5$-point functions based on a given restricted conformal block $\varphi\in \Cfb[U^3, M^1, U^2]$. The general $(n+3)$-point functions can be inductively defined using the recursive formulas. 
\begin{construction}\label{def:3-point}
  Define the \emph{$3$-point function} $S_M\colon U^3\otimes M^{1}\otimes U^2\to \C[w^{\pm\frac{1}{T}}]$ by the formula:   
    \begin{equation}\label{eq:def:3-point}
    u_3\otimes v\otimes u_2 \longmapsto S_{M}\bracket*{u_3}{(v,\qq)}{u_2}:=\braket*{\varphi}{u_3\otimes w^{h_1-L(0)}v\otimes u_2}.
    \end{equation}
\end{construction}

Next, we can define the $4$-point functions $S_{VM}^{L}: U^3\otimes V\otimes M^{1}\otimes U^2\rightarrow \O(\infty\Delta_1)$ and $S_{MV}^{R}: U^3\otimes M^{1}\otimes V\otimes U^2\rightarrow \O(\infty\Delta_1)$ as follows:
\begin{construction}\label{def:4-point}
    For homogeneous $a\in V^r$, $v\in M^1$, $u_3\in U^3$ and $u_2\in U^2$, define the 4-point functions by
    \begin{align*}
   S_{VM}^{L}\bracket*{u_3}{(a,\pp)(v,\qq)}{u_2}:&=S_{M}\bracket*{u_3\cdot [a]}{(v,\qq)}{u_2}z^{-\wt a} \\
   &+\sum_{i\ge 0}F_{\wt a-1+\delta(r)+\tfrac{r}{T}, i} (\pp,\qq)S_{M}\bracket*{u_3}{(\vo{a}{i}v,\qq)}{u_2}, \numberthis\label{4.7}\\
   S_{MV}^{R}\bracket*{u_3}{(v,\qq)(a,\pp)}{u_2}:&=S_{M}\bracket*{u_3}{(v,\qq)}{[a]\cdot u_2}z^{-\wt a}\\
   &+\sum_{i\ge 0}F_{\wt a-1+\tfrac{r}{T}, i} (\pp,\qq)S_{M}\bracket*{u_3}{(\vo{a}{i}v,\qq)}{u_2}.\numberthis\label{4.8}
\end{align*}
\end{construction}

Before we move on to construct $5$-point functions,  we first prove the following lemma which states that the $4$-point functions we constructed satisfy the \emph{locality}:
\begin{lemma}\label{lem:locality:4-point}
    $S_{VM}^{L}\bracket*{u_3}{(a,\pp)(v,\qq)}{u_2}=S_{MV}^{R}\bracket*{u_3}{(v,\qq)(a,\pp)}{u_2}$.
\end{lemma}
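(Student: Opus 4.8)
The plan is to compute the difference
\[
  D:=S_{VM}^{L}\bracket*{u_3}{(a,\pp)(v,\qq)}{u_2}-S_{MV}^{R}\bracket*{u_3}{(v,\qq)(a,\pp)}{u_2}
\]
directly from the defining formulas \labelcref{4.7} and \labelcref{4.8} and to show it vanishes. By linearity I may assume $a\in V^r$ and $v,u_2,u_3$ are homogeneous; for homogeneous $v$ the $3$-point function \labelcref{eq:def:3-point} reads $S_{M}\bracket*{u_3}{(v,\qq)}{u_2}=\braket*{\varphi}{u_3\otimes v\otimes u_2}\,w^{-\deg v}$, since $\vo{L}{0}$ acts on $M^1$ by $h_1+\deg$. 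Subtracting the two constructions, $D$ splits into a ``$z^{-\wt a}$ part'' coming from the bimodule actions of $[a]$ and an ``$F$-part'' coming from the coefficient functions.

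First I would dispose of the case $r\neq0$. Here $\delta(r)=0$, so the two families $F_{\wt a-1+\delta(r)+\frac{r}{T},i}$ and $F_{\wt a-1+\frac{r}{T},i}$ coincide and the $F$-parts cancel outright. Moreover $V^r\subseteq O_g(V)$ for $r\neq0$ by \cite[Lemma 2.1]{DLM1}, so $[a]=0$ in $A_g(V)$, whence $u_3\cdot[a]=0$ and $[a]\cdot u_2=0$ and the two $z^{-\wt a}$ terms vanish as well. Thus $D=0$ in this case without using $\varphi$ at all; this is consistent with relation \labelcref{f-relation4}, which encodes precisely the $r\neq0$ behaviour.

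The substantive case is $r=0$, where $\delta(r)=1$ and the two coefficient functions differ. Here I would apply \cref{lem:IndFpq} with $n=\wt a-1$ to rewrite $F_{\wt a,i}-F_{\wt a-1,i}=-\binom{\wt a-1}{i}z^{-\wt a}w^{\wt a-1-i}$, and combine it with the degree shift $\deg(\vo{a}{i}v)=\deg v+\wt a-i-1$ so that every summand carries the common factor $z^{-\wt a}w^{-\deg v}$. After factoring this out, $D$ equals $z^{-\wt a}w^{-\deg v}$ times the bracket
\[
  \braket*{\varphi}{u_3\cdot[a]\otimes v\otimes u_2}-\braket*{\varphi}{u_3\otimes v\otimes [a]\cdot u_2}-\sum_{i\ge0}\binom{\wt a-1}{i}\braket*{\varphi}{u_3\otimes \vo{a}{i}v\otimes u_2}.
\]
Now I would invoke that $\varphi$ vanishes on $J$: relations \labelcref{f-relation2} and \labelcref{f-relation3} turn the first two pairings into $\sum_{j\ge0}\binom{\wt a}{j}\braket*{\varphi}{u_3\otimes \vo{a}{j-1}v\otimes u_2}$ and $\sum_{j\ge0}\binom{\wt a-1}{j}\braket*{\varphi}{u_3\otimes \vo{a}{j-1}v\otimes u_2}$ respectively. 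Their difference, via Pascal's rule $\binom{\wt a}{j}-\binom{\wt a-1}{j}=\binom{\wt a-1}{j-1}$ and the reindexing $i=j-1$ (the $j=0$ term dropping since $\binom{\wt a-1}{-1}=0$), is exactly $\sum_{i\ge0}\binom{\wt a-1}{i}\braket*{\varphi}{u_3\otimes \vo{a}{i}v\otimes u_2}$, which cancels the third sum. Hence the bracket is $0$ and $D=0$.

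I expect the main obstacle to be precisely this last matching step: getting the bookkeeping of $w$-powers right so that the $F$-difference produces the coefficient $\binom{\wt a-1}{i}$, and then recognizing that the same binomial emerges from the two bimodule relations after Pascal's identity. Everything else is routine substitution; the content of the lemma is that the monomial produced by \cref{lem:IndFpq} matches, after the conformal-weight bookkeeping, the Pascal-difference of the left and right $A_g(V)$-actions encoded in \labelcref{f-relation2,f-relation3}.
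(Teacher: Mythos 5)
Your proposal is correct and follows essentially the same route as the paper's own proof: split into the cases $r\neq 0$ (where $[a]=0$ and the $F$-families coincide) and $r=0$ (where \cref{lem:IndFpq} converts the difference $F_{\wt a,i}-F_{\wt a-1,i}$ into the monomial $\binom{\wt a-1}{i}z^{-\wt a}w^{\wt a-1-i}$, which after the degree bookkeeping cancels against the Pascal-difference of \labelcref{f-relation2} and \labelcref{f-relation3}). The only difference is that you make the Pascal's-rule step explicit where the paper simply cites the two relations.
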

\begin{proof}
    If $r\neq 0$, then $[a]=0$ and $\delta(r)=0$. Hence $S_{VM}^{L}\bracket*{u_3}{(a,\pp)(v,\qq)}{u_2}$ agrees with $S_{MV}^{R}\bracket*{u_3}{(a,\pp)(v,\qq)}{u_2}$. 
    Now suppose $a\in V^0$. Recalling \cref{lem:IndFpq}, 
    \[
      F_{\wt a-1,i}(\pp,\qq) - F_{\wt a,i}(\pp,\qq) = \binom{\wt a-1}{i}z^{-\wt a}w^{\wt a-i-1},
    \]
    we thus have
    \begin{align*}
        \MoveEqLeft
        S_{VM}^{L}\bracket*{u_3}{(a,\pp)(v,\qq)}{u_2}-S_{MV}^{R}\bracket*{u_3}{(v,\qq)(a,\pp)}{u_2}\\
        ={}&S_{M}\bracket*{u_3\cdot[a]}{(v,\qq)}{u_2}z^{-\wt a}-S_{M}\bracket*{u_3}{(v,\qq)}{[a]\cdot u_2}z^{-\wt a}\\
        &+\sum_{i\ge 0}\left(F_{\wt a-1, i} (\pp,\qq)-F_{\wt a, i} (\pp,\qq)\right)S_{M}\bracket*{u_3}{(\vo{a}{i}v,\qq)}{u_2}\\
        ={}&\braket*{\varphi}{u_3\cdot[a]\otimes v\otimes u_2}z^{-\wt a}w^{-\deg v}-\braket*{\varphi}{u_3\otimes v\otimes [a]\cdot u_2}z^{-\wt a}w^{-\deg v}\\
        &+\sum_{i \ge 0}\binom{\wt a-1}{i}z^{-\wt a}w^{\deg v}\braket*{\varphi}{u_3\otimes \vo{a}{i}v\otimes u_2}\\
        ={}&0.
        \end{align*}
    The last equality follows from \labelcref{f-relation2} and \labelcref{f-relation3}.
\end{proof}

\begin{construction}\label{def:5-point}
    For $5$-point functions, we define $S_{VVM}^{L}$, $S_{VMV}^{L}$ by 
\begin{align*}
    \MoveEqLeft
    S_{VVM}^{L}\bracket*{u_3}{(a^1,\pp_1)(a^2,\pp_2)(v,\qq)}{u_2}=S_{VMV}^{L}\bracket*{u_3}{(a^1,\pp_1)(v,\qq)(a^2, \pp_2)}{u_2}\\
    &:=S\bracket*{u_3\cdot [a^1]}{(a^2, \pp_2)(v,\qq)}{u_2}z^{-\wt a^1} \\
    &+\sum_{i\ge 0}F_{\wt a^1-1+\delta(r)+\tfrac{r}{T},i} (\pp_1,\pp_2)S\bracket*{u_3}{(\vo{a^1}{i}a^2,\pp_2)(v,\qq)}{u_2}\numberthis\label{ExpansionFromLeft}\\
    &+\sum_{i\ge 0}F_{\wt a^1-1+\delta(r)+\tfrac{r}{T},i} (\pp_1,\qq)S\bracket*{u_3}{(a^2, \pp_2)(\vo{a^1}{i}v,\qq)}{u_2}, 
\end{align*}
and $S_{MVV}^{R}$, $S_{VMV}^{R}$ by
\begin{align*}
   \MoveEqLeft
    S_{VMV}^{R}\bracket*{u_3}{(a^2,\pp_2)(v,\qq)(a^1, \pp_1)}{u_2}=S_{MVV}^{R}\bracket*{u_3}{(v,\qq)(a^2,\pp_2)(a^1,\pp_1)}{u_2}\\
    &:=S\bracket*{u_3}{(a^2, \pp_2)(v,\qq)}{[a^1]\cdot u_2}z^{-\wt a^1} \\
    &+\sum_{i\ge 0}F_{\wt a^1-1+\tfrac{r}{T},i} (\pp_1,\pp_2)S\bracket*{u_3}{(\vo{a^1}{i}a^2,\pp_2)(v,\qq)}{u_2}\numberthis\label{ExpansionFromRight}\\
    &+\sum_{i\ge 0}F_{\wt a^1-1+\tfrac{r}{T},i} (\pp_1,\qq)S\bracket*{u_3}{(a^2, \pp_2)(\vo{a^1}{i}v,\qq)}{u_2}, 
\end{align*}
where $a^1\in V^r$, $a^2\in V^s$ are homogeneous, $v\in M^1$, $u_3\in U^3$, $u_2\in U^2$, and $S$ is the $4$-point function defined in \cref{def:4-point}.
\end{construction}
For the well-definedness of the 5-point functions, we need to show that 
\begin{equation}\label{Well-definedness}
    S_{VMV}^{L}\bracket*{u_3}{(a^1,\pp_1)(v,\qq)(a^2, \pp_2)}{u_2}=S_{VMV}^{R}\bracket*{u_3}{(a^1,\pp_1)(v,\qq)(a^2, \pp_2)}{u_2},
\end{equation}
and for the proof of \emph{locality} of the 5-point functions, we need to show that 
\begin{align}
    &S_{VMV}^{L}\bracket*{u_3}{(a^1,\pp_1)(v,\qq)(a^2, \pp_2)}{u_2}=S_{VMV}^{R}\bracket*{u_3}{(a^2,\pp_2)(v,\qq)(a^1, \pp_1)}{u_2},\numberthis\label{com1}\\
    &S_{VVM}^{L}\bracket*{u_3}{(a^1,\pp_1)(a^2, \pp_2)(v,\qq)}{u_2}=S_{VVM}^{L}\bracket*{u_3}{(a^2,\pp_2)(a^1, \pp_1)(v,\qq)}{u_2},\numberthis\label{com2}\\
    &S_{MVV}^{R}\bracket*{u_3}{(v,\qq)(a^2,\pp_2)(a^1, \pp_1)}{u_2}=S_{MVV}^{R}\bracket*{u_3}{(v,\qq)(a^1,\pp_1)(a^2, \pp_2)}{u_2}\numberthis\label{com3}.
\end{align}
\begin{proposition}\label{well-definedness-com}
    Assume \labelcref{com1} and \labelcref{com2} hold, then \labelcref{Well-definedness} holds. Assume \labelcref{com1} and \labelcref{Well-definedness} hold, then \labelcref{com3} holds.
\end{proposition}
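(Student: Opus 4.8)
The plan is to prove both implications by \emph{purely formal} manipulation: each is a short chain of equalities assembled from the definitional identities built into \cref{def:5-point} together with the hypothesised localities, with no residue computation or analytic input. The only content is keeping track of which insertion point is being expanded (the $L$-construction expands the leftmost $V$-insertion through the $U^3$-recursion, the $R$-construction the rightmost through the $U^2$-recursion) and of the relabelling symmetry that interchanges $(a^1,\pp_1)$ with $(a^2,\pp_2)$ (and correspondingly $r$ with $s$).

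First I would record the two equalities that hold \emph{by definition} in \cref{def:5-point}. On the left side,
\[
  S_{VVM}^{L}\bracket*{u_3}{(a^1,\pp_1)(a^2,\pp_2)(v,\qq)}{u_2}
  =S_{VMV}^{L}\bracket*{u_3}{(a^1,\pp_1)(v,\qq)(a^2,\pp_2)}{u_2},
\]
and on the right side,
\[
  S_{VMV}^{R}\bracket*{u_3}{(a^2,\pp_2)(v,\qq)(a^1,\pp_1)}{u_2}
  =S_{MVV}^{R}\bracket*{u_3}{(v,\qq)(a^2,\pp_2)(a^1,\pp_1)}{u_2};
\]
both are valid verbatim and after the relabelling $1\leftrightarrow2$, and they merely express that sliding $v$ past an adjacent $V$-insertion does not alter an $L$- (resp.\ $R$-) expanded five-point function.

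For the first implication I would start from $S_{VMV}^{L}\bracket*{u_3}{(a^1,\pp_1)(v,\qq)(a^2,\pp_2)}{u_2}$, rewrite it via the left definitional identity as $S_{VVM}^{L}\bracket*{u_3}{(a^1,\pp_1)(a^2,\pp_2)(v,\qq)}{u_2}$, apply \eqref{com2} to interchange the two $V$-insertions, use the left definitional identity with $1\leftrightarrow2$ to return to $S_{VMV}^{L}\bracket*{u_3}{(a^2,\pp_2)(v,\qq)(a^1,\pp_1)}{u_2}$, and finally apply \eqref{com1} with $1\leftrightarrow2$ to land on $S_{VMV}^{R}\bracket*{u_3}{(a^1,\pp_1)(v,\qq)(a^2,\pp_2)}{u_2}$; this is exactly \eqref{Well-definedness}. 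For the second implication I would start from $S_{MVV}^{R}\bracket*{u_3}{(v,\qq)(a^2,\pp_2)(a^1,\pp_1)}{u_2}$, rewrite it via the right definitional identity as $S_{VMV}^{R}\bracket*{u_3}{(a^2,\pp_2)(v,\qq)(a^1,\pp_1)}{u_2}$, read \eqref{com1} from right to left to pass to $S_{VMV}^{L}\bracket*{u_3}{(a^1,\pp_1)(v,\qq)(a^2,\pp_2)}{u_2}$, apply \eqref{Well-definedness} to obtain $S_{VMV}^{R}\bracket*{u_3}{(a^1,\pp_1)(v,\qq)(a^2,\pp_2)}{u_2}$, and close with the right definitional identity relabelled by $1\leftrightarrow2$ to reach $S_{MVV}^{R}\bracket*{u_3}{(v,\qq)(a^1,\pp_1)(a^2,\pp_2)}{u_2}$; this is \eqref{com3}.

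The sole point requiring care --- and hence the only ``obstacle'' --- is the bookkeeping of the relabelling $(a^1,\pp_1)\leftrightarrow(a^2,\pp_2)$ and of which insertion is expanded at each step, so that the two endpoints of each chain match the asserted identities precisely. Because $L$ and $R$ expand \emph{different} insertions through \emph{different} recursions (with the $\delta(r)$-shift present on the left but absent on the right), one cannot simply permute symbols; each rewriting must be justified either as one of the two definitional identities of \cref{def:5-point} (possibly relabelled) or as a direct invocation of \eqref{com1}, \eqref{com2}, or \eqref{Well-definedness}. Once this tracking is done consistently, no further argument is needed.
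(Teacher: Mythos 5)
Your proof is correct and follows essentially the same route as the paper: the second implication is exactly the paper's chain (right definitional identity, \labelcref{com1}, \labelcref{Well-definedness}, right definitional identity again), merely traversed from the opposite end. For the first implication the paper only cites \cite[Proposition 4.12]{Liu}, but your explicit chain (left definitional identity, \labelcref{com2}, left definitional identity relabelled, \labelcref{com1}) is the standard argument being referenced there, and your bookkeeping of the $1\leftrightarrow 2$ relabellings checks out.
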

\begin{proof}
    The proof of the first part is similar to \cite[Proposition 4.12]{Liu}.
    Now assume \labelcref{com1} and \labelcref{Well-definedness} hold, then 
    \begin{align*}
       \MoveEqLeft[-1.4] S_{MVV}^{R}\bracket*{u_3}{(v,\qq)(a^1,\pp_1)(a^2, \pp_2)}{u_2}=S_{VMV}^{R}\bracket*{u_3}{(a^1,\pp_1)(v,\qq)(a^2, \pp_2)}{u_2}\\
       &=S_{VMV}^{L}\bracket*{u_3}{(a^2,\pp_2)(v,\qq)(a^1, \pp_1)}{u_2}=S_{VMV}^{R}\bracket*{u_3}{(a^2,\pp_2)(v,\qq)(a^1, \pp_1)}{u_2}\\
       &=S_{MVV}^{R}\bracket*{u_3}{(v,\qq)(a^2,\pp_2)(a^1, \pp_1)}{u_2},
    \end{align*}
    where the first and last equality follow from \labelcref{ExpansionFromRight}, the second equality follows from \labelcref{com1}, and the third equality follows from \labelcref{Well-definedness}. Thus \labelcref{com3} holds.
\end{proof}
By \cref{well-definedness-com}, to show $S^L_{VVM}, S^L_{VMV},S^R_{MVV},$ and $S^R_{VMV}$ above satisfies \emph{locality}, it suffices to show \labelcref{com1} and \labelcref{com2} hold. 
\begin{lemma}\label{lem:com}
    For homogeneous $a^1\in V^r$, $a^2\in V^s$, $v\in M^1$, $u_2\in U^2$, and $u_3\in U^3$, \labelcref{com1} and \labelcref{com2} hold.
\end{lemma}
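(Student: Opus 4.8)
The plan is to reduce everything to the $3$-point functions $S_M$ of \cref{def:3-point}. Both \labelcref{com1,com2} concern the $5$-point functions of \cref{def:5-point}, each of which is defined recursively through the $4$-point functions \labelcref{4.7,4.8}, which in turn are built from $S_M\bracket*{u_3}{(v,\qq)}{u_2}=\braket*{\varphi}{u_3\otimes w^{h_1-L(0)}v\otimes u_2}$. I would therefore expand both sides of each identity completely into finite sums (finite because $\vo{a}{i}b=0$ and $\vo{a}{i}v=0$ for $i\gg0$) of products of the coefficient functions $F_{n,i}(\pp,\qq)$, powers $z_1^{-\wt a^1}$, $z_2^{-\wt a^2}$, and values $\braket*{\varphi}{\cdots}$, and then match the two collections term by term. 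This mirrors the untwisted argument of \cite{Liu}, the new feature being the fractional shifts $\tfrac{r}{T},\tfrac{s}{T}$ and the correction terms $\delta(r),\delta(s)$.

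For \labelcref{com2}, I would apply \labelcref{ExpansionFromLeft} to the left-hand side, pulling out $(a^1,\pp_1)$, and then \labelcref{4.7} to each resulting $4$-point function; the same done to the right-hand side pulls out $(a^2,\pp_2)$. Each side thus splits into three groups of $3$-point terms according to whether $a^1$ (resp. $a^2$) acts as a zero mode on $U^3$, contracts with the other field, or contracts with $v$. The terms in which $a^1$ and $a^2$ act on disjoint factors agree after reindexing. The genuine discrepancies are: (i) the term where both zero modes act on $u_3$, which differs by the $A_g(V)$-commutator of $[a^1]$ and $[a^2]$; (ii) the two families of field--field contractions, producing $\vo{a^1}{i}a^2$ on one side and $\vo{a^2}{j}a^1$ on the other; and (iii) the families where one field contracts with $v$, giving $\vo{a^2}{j}\vo{a^1}{i}v$ versus $\vo{a^1}{j}\vo{a^2}{i}v$. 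I would reconcile (i) via \labelcref{eq:oaob-oboa}, (ii) via the Jacobi identity in $V$, and (iii) via the commutator formula for the genuine $V$-module $M^1$, moving zero-mode actions onto $M^1$ where needed through \labelcref{f-relation2,f-relation3,f-relation4}; in each case the bridge between products of $F_{n,i}$'s and the Jacobi coefficients $\binom{m+r/T}{j}$ is supplied by \cref{lem:ExpOfFpq,lem:IndFpq}.

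For \labelcref{com1}, both sides pull out the same insertion $(a^1,\pp_1)$, but the left-hand side uses the left-expansion \labelcref{ExpansionFromLeft} (shift $\wt a^1-1+\delta(r)+\tfrac{r}{T}$, zero mode on $U^3$) while the right-hand side uses the right-expansion \labelcref{ExpansionFromRight} (shift $\wt a^1-1+\tfrac{r}{T}$, zero mode on $U^2$). When $r\ne0$ we have $\delta(r)=0$ and $[a^1]=0$ (since $V^r\subset O_g(V)$), so the two schemes coincide termwise and the identity is immediate. When $r=0$ the shifts differ by $1$, and \cref{lem:IndFpq} rewrites each difference $F_{\wt a^1,i}-F_{\wt a^1-1,i}$ as the monomial $-\binom{\wt a^1-1}{i}z_1^{-\wt a^1}w^{\wt a^1-1-i}$. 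Collecting these, the required identity reduces to expressing $S\bracket*{u_3\cdot[a^1]}{(a^2,\pp_2)(v,\qq)}{u_2}$ minus $S\bracket*{u_3}{(a^2,\pp_2)(v,\qq)}{[a^1]\cdot u_2}$ as the sum of contractions of $a^1$ with $a^2$ and with $v$, which follows by expanding the remaining $4$-point functions into $3$-point functions and running the same computation that proves \cref{lem:locality:4-point}, one insertion higher, using \labelcref{f-relation2,f-relation3}.

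The hardest part will be the bookkeeping in cases (ii)--(iii) of \labelcref{com2}: organizing the nested sums so that the double products $F_{\wt a^1-1+\delta(r)+r/T,\,i}(\pp_1,\pp_2)\,F_{\cdots,\,j}(\pp_2,\qq)$ recombine into a single Jacobi-type sum carrying the correct half-integer exponent $\tfrac{r+s}{T}$, and confirming that the residual piece matches the $A_g(V)$-commutator with no leftover. Verifying that all sums stay finite and that the fractional powers of $w$, $z_1$, and $z_2$ line up across the swap $a^1\leftrightarrow a^2$ is precisely where the twisted case departs from \cite{Liu}, and I expect the $\delta(r),\delta(s)$ shifts to force a separate treatment of the subcases $r=0$, $s=0$, and $r,s\ne0$.
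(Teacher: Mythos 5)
Your overall strategy---expand both sides all the way down to the $3$-point functions, match the resulting finite sums, and reconcile the discrepancies via the commutator formula on $M^1$, the Jacobi identity in $V$, and the relations \labelcref{f-relation2,f-relation3,f-relation4}---is essentially the route the paper takes. The paper packages the coefficient bookkeeping in generating-function form (rewriting \labelcref{4.7} as \labelcref{recursive-other-form} and working with $\Res_x$ of kernels like $(w+x)^{\wt a-1+\delta(r)+r/T}/(z-w-x)$), and for \labelcref{com1} in the mixed case $r=0$, $s\neq 0$ the cancellation $A+B+C=0$ is driven by the weighted commutator identity \labelcref{4.21} on $M^1$; your plan names the same ingredients, so up to presentation this is the same proof.

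There is, however, one concrete place where your plan as stated does not close: the subcase of \labelcref{com2} with $r,s\neq 0$ and $r+s=T$. Your list of discrepancies assumes the zero-mode terms on $u_3$ come only from $[a^1]$ and $[a^2]$ themselves (item (i), to be handled by \labelcref{eq:oaob-oboa}); but when $r,s\neq 0$ both $[a^1]$ and $[a^2]$ vanish, and instead the field--field contraction $\vo{a^1}{i}a^2$ lands in $V^{0}$ precisely when $r+s=T$, so the \emph{next} application of the recursive formula produces genuinely new zero-mode terms $S\bracket*{u_3\cdot[Y(a^1,x)a^2]}{(v,\qq)}{u_2}$ on one side and $S\bracket*{u_3\cdot[Y(a^2,x)a^1]}{(v,\qq)}{u_2}$ on the other. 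These are matched neither by \labelcref{eq:oaob-oboa} (vacuous here) nor by the Jacobi identity alone, but by the skew-symmetry of zero modes in $A_g(V)$, namely $o(Y(a,x)b)=o\big((1+x)^{-\wt a-\wt b}Y(b,-x/(1+x))a\big)$, which follows from $o(\vo{L}{-1}c+\vo{L}{0}c)=0$; this is exactly how the paper identifies its terms $(F1)$ and $(G1)$. Your proposed case split ($r=0$, $s=0$, $r,s\neq 0$) also misses the further distinction $r+s=T$ versus $r+s\neq T$ within $r,s\neq 0$, which is where this extra ingredient is forced. Everything else in your outline is sound, but without this identity the $r+s=T$ case leaves an unmatched residue.
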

\begin{proof}
    The proof will be given at the end of this subsection.
\end{proof}


Next we show the \emph{$\vo{L}{-1}$-derivative property} of the twisted $3$-point, $4$-point and $5$-point functions. 
\begin{proposition}\label{Pro:L(-1)-derivative}
  For $a^1, a^2\in V$, we have: 
  \begin{align}
  S\bracket*{u_3}{(\vo{L}{-1}v,\qq)}{u_2}w^{-h}&=\pdv {w} \left(S\bracket*{u_3}{(v,\qq)}{u_2}w^{-h}\right)\label{3point-L(-1)v}\\
  S\bracket*{u_3}{(\vo{L}{-1}a^1,\pp_1)(v,\qq)}{u_2}&=\pdv{z_1}S\bracket*{u_3}{(a^1,\pp_1)(v,\qq)}{u_2}, \label{4point-L(-1)a}\\
    S\bracket*{u_3}{(a^1,\pp_1)(\vo{L}{-1}v,\qq)}{u_2}w^{-h}&=\pdv {w} \left(S\bracket*{u_3}{(a^1,\pp_1)(v,\qq)}{u_2}w^{-h}\right) ,\label{4point-L(-1)v}\\
  S\bracket*{u_3}{(\vo{L}{-1}a^1,\pp_1)(a^2,\pp_2)(v,\qq)}{u_2}&=\pdv{z_1} S\bracket*{u_3}{(a^1,\pp_1)(a^2,\pp_2)(v,\qq)}{u_2},\label{5point-L(-1)a}\\ 
  S\bracket*{u_3}{(a^1,\pp_1)(a^2,\pp_2)(\vo{L}{-1}v,\qq)}{u_2}w^{-h}&=\pdv{w} \left(S\bracket*{u_3}{(a^1,\pp_1)(a^2,\pp_2)(v,\qq)}{u_2}w^{-h} \right). \label{5point-L(-1)v}
  \end{align} 
  \end{proposition}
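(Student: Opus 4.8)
The plan is to establish the five identities in the order \labelcref{3point-L(-1)v}, then \labelcref{4point-L(-1)a} together with \labelcref{4point-L(-1)v}, and finally \labelcref{5point-L(-1)a} together with \labelcref{5point-L(-1)v}, each case feeding the next. Before starting I would record two elementary differentiation rules for the functions of \labelcref{Fpq}, valid for every index $n\in\tfrac1T\Z$: since $F_{n,i}=\tfrac{z^{-n}}{i!}\bigl(\pdv{w}\bigr)^i\tfrac{w^n}{z-w}$, differentiating under $\partial_w^i$ gives $\pdv{w}F_{n,i}=(i+1)F_{n,i+1}$ at once, while expanding $\partial_w^{i+1}\tfrac{w^{n+1}}{z-w}=(n+1)\partial_w^i\tfrac{w^n}{z-w}+\partial_w^i\tfrac{w^{n+1}}{(z-w)^2}$ and simplifying gives $\pdv{z}F_{n,i}=-(i+1)F_{n+1,i+1}$. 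These are the differential companions of \cref{lem:IndFpq}, and they hold verbatim for the non-integer value $n=\wt a-1+\delta(r)+\tfrac rT$, because $\vo{L}{-1}$ preserves each $V^r$ and hence shifts this index by the integer $1$.

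Identity \labelcref{3point-L(-1)v} is the base case and the only place the relations of \cref{def:resCfb} enter. Reducing to homogeneous $v$, \cref{def:3-point} gives $S\bracket*{u_3}{(v,\qq)}{u_2}=\braket*{\varphi}{u_3\o v\o u_2}w^{-\deg v}$; since $\vo{L}{-1}v$ is homogeneous of degree $\deg v+1$, both sides of \labelcref{3point-L(-1)v} reduce to the scalar identity $\braket*{\varphi}{u_3\o\vo{L}{-1}v\o u_2}=-(\deg v+h)\braket*{\varphi}{u_3\o v\o u_2}$, which is precisely the vanishing of $\varphi$ on the generator \labelcref{f-relation1} combined with $\vo{L}{0}v=(h_1+\deg v)v$.

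For the two ``$\vo{L}{-1}$ on $a^1$'' identities \labelcref{4point-L(-1)a} and \labelcref{5point-L(-1)a} I would substitute $a^1\mapsto\vo{L}{-1}a^1$ directly into the defining formulas \labelcref{4.7} and \labelcref{ExpansionFromLeft} and match term by term. Three inputs do the work: $\wt(\vo{L}{-1}a^1)=\wt a^1+1$, which turns $z^{-\wt a^1}$ into $z^{-\wt a^1-1}$ and raises the $F$-index from $n$ to $n+1$; the component derivative property $(\vo{L}{-1}a^1)_{(i)}=-i\vo{a^1}{i-1}$; and the rule $\pdv{z}F_{n,i}=-(i+1)F_{n+1,i+1}$. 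After reindexing $i\mapsto i+1$, each interaction summand becomes $\pdv{z_1}$ of the corresponding summand on the right, and the $z_1$-independent zero-mode term matches because $\pdv{z_1}$ acts only on the power $z_1^{-\wt a^1}$. The Zhu zero-mode class is handled by $[\vo{L}{-1}a^1]=-\wt(a^1)[a^1]$ in $A_g(V)$ --- automatic when $a^1\in V^r$ with $r\neq0$ since both classes vanish, and for $a^1\in V^0$ a consequence of $\vo{L}{-1}a^1+\vo{L}{0}a^1\in O_g(V)$ via the fact that $A_g(V)$ is a quotient of $A(V^0)$. Because the $4$-point functions on the right of \labelcref{ExpansionFromLeft} carry no $z_1$-dependence, the $5$-point case \labelcref{5point-L(-1)a} is the same computation applied separately to the $F(\pp_1,\pp_2)$ and $F(\pp_1,\qq)$ families.

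For the two ``$\vo{L}{-1}$ on $v$'' identities \labelcref{4point-L(-1)v} and \labelcref{5point-L(-1)v} I would substitute $v\mapsto\vo{L}{-1}v$ instead, and the decisive move is the module commutator $\vo{a}{i}\vo{L}{-1}=\vo{L}{-1}\vo{a}{i}+i\vo{a}{i-1}$ on $M^1$. Each relevant summand then splits into a $\vo{L}{-1}(\vo{a}{i}v)$ piece, to which the already-proved lower identity (\labelcref{3point-L(-1)v} for the $4$-point case, \labelcref{4point-L(-1)v} for the $5$-point case) applies and produces a $\pdv{w}$, plus a correction $i\,\vo{a}{i-1}v$; the correction is absorbed exactly by the $w$-derivative of the $F$-prefactor through $\pdv{w}F_{n,i}=(i+1)F_{n,i+1}$ and a reindexing. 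Here one uses that the $z_1^{-\wt a^1}$ and $F(\pp_1,\pp_2)$ prefactors are $w$-independent, so $\pdv{w}$ passes through them and only the $F(\pp_1,\qq)$ family generates a correction. I expect the main obstacle to be bookkeeping rather than anything conceptual: the three structurally distinct summands --- the zero-mode term, the $V$--$V$ term weighted by $F(\pp_i,\pp_j)$, and the $V$--$M^1$ term weighted by $F(\pp_i,\qq)$ --- must be kept aligned through the index shifts, and one must confirm at each stage that the two $F$-differentiation rules survive the non-integer index $\wt a-1+\delta(r)+\tfrac rT$. The only genuinely computational point is $\pdv{z}F_{n,i}=-(i+1)F_{n+1,i+1}$, which I would isolate as a short lemma alongside the trivial $\pdv{w}$-rule before the term-by-term matching.
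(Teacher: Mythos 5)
Your proposal is correct and follows essentially the same route as the paper's proof: the two differentiation rules $\pdv{w}F_{n,i}=(i+1)F_{n,i+1}$ and $\pdv{z}F_{n,i}=-(i+1)F_{n+1,i+1}$ (the paper's \labelcref{derF}), the relations $[\vo{L}{-1}a+\vo{L}{0}a]=0$ and $\vo{(\vo{L}{-1}a)}{i}=-i\vo{a}{i-1}$ for the $a^1$-identities, the vanishing of $\varphi$ on \labelcref{f-relation1} for the base case, and the commutator $\vo{a}{i}\vo{L}{-1}=\vo{L}{-1}\vo{a}{i}+i\vo{a}{i-1}$ for the $v$-identities. The only cosmetic difference is that the paper obtains \labelcref{4point-L(-1)a} from \labelcref{5point-L(-1)a} by setting $a^2=\vac$, while you treat each case by direct substitution into its defining recursive formula.
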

\begin{proof}
We first show \labelcref{5point-L(-1)a}. By \labelcref{Fpq}, It is straightforward to show that
\begin{equation}\label{derF}
\pdv{w}F_{n, i}(\pp_1,\qq)=(i+1) F_{n,i+1}(\pp_1,\qq), \quad \pdv{z_1} F_{n,i}(\pp_1,\qq)= -(i+1)F_{n+1,i+1}(\pp_1,\qq). 
\end{equation}
Note that $[\vo{L}{-1}a+\vo{L}{0}a]=0$, and $\vo{(\vo{L}{-1}a)}{i}=-i\vo{a}{i-1}$ for $a\in V$ or $a\in M^1$. Suppose $a^1\in V^r$, we have
\begin{align*}
    \MoveEqLeft
    S\bracket*{u_3}{(\vo{L}{-1}a^1,\pp_1)(a^2,\pp_2)(v,\qq)}{u_2}\\
    ={}&S\bracket*{u_3}{(a^2,\pp_2)(v,\qq)}{[\vo{L}{-1}a^1]\cdot u_2}z_1^{-\wt a^1-1}\\
    &+\sum_{i\ge 0}F_{\wt a^1+\tfrac{r}{T},i}(\pp_1, \qq)S\bracket*{u_3}{(\vo{(\vo{L}{-1}a^1)}{i}a^2, \pp_2)(v, \qq)}{u_2}\\
    &+\sum_{i\ge 0}F_{\wt a^1+\tfrac{r}{T},i}(\pp_1, \qq)S\bracket*{u_3}{(a^2, \pp_2)(\vo{(\vo{L}{-1}a^1)}{i}v, \qq)}{u_2}\\
    ={}&-\wt a^1S\bracket*{u_3}{(a^2,\pp_2)(v,\qq)}{[a^1]\cdot u_2}z_1^{-\wt a^1-1}\\
    &-\sum_{i\ge 0}iF_{\wt a^1+\tfrac{r}{T},i}(\pp_1, \qq)S\bracket*{u_3}{(\vo{a^1}{i-1}a^2, \pp_2)(v, \qq)}{u_2}\\
    &-\sum_{i\ge 0}iF_{\wt a^1+\tfrac{r}{T},i}(\pp_1, \qq)S\bracket*{u_3}{(a^2, \pp_2)(\vo{a^1}{i-1}v, \qq)}{u_2}\\
    ={}&\pdv{z_1}S\bracket*{u_3}{(a^1,\pp_1)(a^2,\pp_2)(v,\qq)}{u_2}.
\end{align*}
Thus \labelcref{5point-L(-1)a} holds. It's easy to see that the $4$-point and $5$-point functions we defined satisfy the \emph{vacuum property} in \cref{def:genus-zero-bottom}. Letting $a^2=\vac$ in \labelcref{5point-L(-1)a}, we get \labelcref{4point-L(-1)a}.

For \labelcref{3point-L(-1)v}, note that $\braket*{\varphi}{u_3\otimes \vo{L}{-1}v\otimes u_2}=-\braket*{\varphi}{u_3\otimes (\deg v+h)v\otimes u_2},$ hence
\begin{align*}
  &  S\bracket*{u_3}{(\vo{L}{-1}v,\qq)}{u_2}w^{-h}=\braket*{\varphi}{u_3\otimes \vo{L}{-1}v\otimes u_2}w^{-\deg v-1-h}\\
    &=-\braket*{\varphi}{u_3\otimes (\deg v+h)v\otimes u_2}w^{-\deg v-1-h}=\pdv{w}\left(S\bracket*{u_3}{(v,\qq)}{u_2}w^{-h}\right).
\end{align*}

For \labelcref{4point-L(-1)v}, note that $\vo{a^1}{i}\vo{L}{-1}v=\vo{L}{-1}\vo{a^1}{i}v+i\vo{a^1}{i-1}v.$ Therefore, 
\begin{align*}
    \MoveEqLeft
    S\bracket*{u_3}{(a^1,\pp_1)(\vo{L}{-1}v,\qq)}{u_2}w^{-h}=S\bracket*{u_3}{(\vo{L}{-1}v,\qq)}{[a^1]\cdot u_2}w^{-h}\\
    &+\sum_{i\ge 0}F_{\wt a^1-1+\tfrac{r}{T},i}(\pp_1, \qq)S\bracket*{u_3}{(\vo{a^1}{i}\vo{L}{-1}v, \qq)}{u_2}w^{-h}\\
    ={}&\pdv{w}\left(S\bracket*{u_3}{(a^1,\pp_1)(v,\qq)}{u_2}w^{-h}\right)\\
    &+\sum_{i\ge 0}F_{\wt a^1-1+\tfrac{r}{T},i}(\pp_1, \qq)\pdv{w}\left(S\bracket*{u_3}{(\vo{a^1}{i}v, \qq)}{u_2}w^{-h}\right)\\
    &+\sum_{i\ge 0}\pdv{w}\left(F_{\wt a^1-1+\tfrac{r}{T},i}(\pp_1, \qq)\right)S\bracket*{u_3}{(\vo{a^1}{i}v, \qq)}{u_2}w^{-h}\\
    ={}&\pdv{w}\left(S\bracket*{u_3}{(a^1,\pp_1)(v,\qq)}{u_2}w^{-h}\right).
\end{align*}
\labelcref{5point-L(-1)v} can be proved in a similar way.
\end{proof}

We conclude this subsection by giving a proof of \cref{lem:com}. 
\begin{proof}[Proof of \cref{lem:com}]
    We first show \labelcref{com1}. Suppose $a^1\in V^r$ and $ a^2\in V^s$. If $r=s=0$, the proof follows the same suit as the argument in Section 4.2 in \cite{Liu}. Note that the only property of $\varphi$ we need to use to prove \labelcref{com1} is the equality
    \[\braket*{\varphi}{u_3\cdot[a]\o v\o u_2}-\braket*{\varphi}{u_3\o v\o [a]\cdot u_2}=\sum_{j\ge 0} \binom{\wt a-1}{j} \braket*{\varphi}{u_3\o \vo{a}{j-1}v\o u_2},\]
    which, in our case, follows from \labelcref{f-relation2}
 and \labelcref{f-relation3}.
 
    If $r\neq 0$, then \labelcref{com1} holds by \labelcref{ExpansionFromLeft} and \labelcref{ExpansionFromRight}. So it suffices to deal with the case where $r=0$ and $s\neq 0$. 
    Similar to Lemma 4.13 in \cite{Liu}, we have the following formula on module $M^1$: 
    \begin{equation}\label{4.21}
        \begin{aligned}
        &\sum_{i,j\ge 0} \binom{\wt a^1-1}{j} \binom{\tfrac{s}{T}+\wt a^2-1+n}{i} \left(\vo{a^1}{j}\vo{a^2}{i}-\vo{a^2}{i}\vo{a^1}{j}\right)v\\
        &= \sum_{i,j\ge 0} \binom{\wt a^1-1}{j} \binom{\tfrac{s}{T}+\wt a^1-j-2+\wt a^2+n}{i} \vo{(\vo{a^1}{j}a^2)}{i}v, 
        \end{aligned}
    \end{equation}
    where $a^1\in V^0,a^2\in V^s$, and $n\in \N$. For $u_3\in U^3$, $u_2\in U^2$, and $v\in M^1$, we write 
    \begin{align}
        A&:=S\bracket*{u_3\cdot [a^{1}]}{(v,\qq)(a^{2},\pp_{2})}{u_2}z_{1}^{-\wt a^{1}}-S\bracket*{u_3}{(a^{2},\pp_{2})(v,\qq)}{[a^{1}]\cdot u_2}z_{1}^{-\wt a^{1}},\label{A}\\
        B&:=\sum_{j\ge 0}(F_{\wt a^{1},j}(\pp_1,\qq)-F_{\wt a^{1}-1,j}(\pp_{1},\qq))S\bracket{u_3}{(\vo{a^{1}}{j}v,\qq)(a^{2},\pp_{2})}{u_2},\label{B}\\
        C&:=\sum_{j\ge 0}(F_{\wt a^{1},j}(\pp_1,\pp_2)-F_{\wt a^{1}-1,j}(\pp_{1},\pp_2))S\bracket{u_3}{(v,\qq)(\vo{a^{1}}{j}a^{2},\pp_{2})}{u_2}. \label{C}
    \end{align} 
    By \cref{lem:IndFpq}, \labelcref{f-relation2}, \labelcref{f-relation3}, \labelcref{eq:def:3-point}, \labelcref{ExpansionFromLeft}, and \labelcref{ExpansionFromRight}, we can derive the following expressions for $A$, $B$, and $C$:
   \begin{align*}
        A&=\sum_{i,j\ge 0}\binom{\wt a^1-1}{j}F_{\wt a^2-1+\tfrac{s}{T}, i}(\pp_2,\qq) \braket*{\varphi}{u_3\otimes \vo{a^1}{j}\vo{a^2}{i}v\otimes u_2} z_1^{-\wt a^1} w^{-\wt a^2+i+1-\deg v},\\
        B&=-\sum_{i,j\ge 0}\binom{\wt a^1-1}{j} F_{\wt a^2-1+\tfrac{s}{T},i}(\pp_2,\qq) 
        \braket*{\varphi}{u_3\otimes \vo{a^2}{i}\vo{a^1}{j}v\otimes u_2} z_1^{-\wt a^1} w^{-\wt a^2+i+1-\deg v},\\
        C&=-\sum_{j,i\ge 0}\binom{\wt a^1-1}{j} F_{\wt (\vo{a^1}{j}a^2)-1+\tfrac{s}{T},i}(\pp_2,\qq) 
        \braket*{\varphi}{u_3\otimes \vo{(\vo{a^1}{j}a^2)}{i}v\otimes u_2} z_1^{-\wt a^1}z_2^{\wt a^1-j-1}\\
        &\phantom{=}  \cdot w^{-\wt a^2-\wt a^1+j+1+i+1-\deg v}.
 \end{align*} 
  Then, by \labelcref{4.21}
  \begin{align*}
      \MoveEqLeft
      \iota_{\pp_2=\infty}(A+B+C)\\
      ={}&\sum_{i,j\ge 0}\sum_{n\ge 0} \binom{\wt a^1-1}{j} \binom{\tfrac{s}{T}+\wt a^2-1+n}{i} \braket*{\varphi}{u_3\otimes (\vo{a^1}{j}\vo{a^2}{i}v-\vo{a^2}{i}\vo{a^1}{j}v)\otimes u_2} \\
      &\phantom{=}  \cdot z_1^{-\wt a^1} z_2^{-\wt a^2-\tfrac{s}{T}-n} w^{\tfrac{s}{T}+n-\deg v}\\
     &-\sum_{i,j\ge 0}\sum_{n\ge 0} \binom{\wt a^1-1}{j} \binom{\tfrac{s}{T}+\wt a^1-j-2+\wt a^2+n}{i} \braket*{\varphi}{u_3\otimes \vo{(\vo{a^1}{j}a^2)}{i}v\otimes u_2} \\
     &\phantom{=}  \cdot z_1^{-\wt a^1} z_2^{-\wt a^2-\tfrac{s}{T}-n} w^{\tfrac{s}{T}+n-\deg v}\\
     ={}&0.
  \end{align*}
Thus $A+B+C=0$, and \labelcref{com1} holds.  

Now we show \labelcref{com2}. Again, the case $r=s=0$ has been dealt with in \cite{Liu}. It suffices to show that \labelcref{com2} holds for the following three cases: (1) $r=0, s\neq 0$, (2) $r\neq 0, s\neq 0,$ and $ r+s\neq T$, or (3) $r\neq 0, s\neq 0,$ and $ r+s=T$. 

Similar to (2.2.10) in \cite{Z}, we can rewrite the recursive formula \labelcref{4.7} as follows: 
   \begin{align*}
        &S\bracket{u_3}{(a,\pp)(v,\qq)}{u_2}=S\bracket*{u_3\cdot[a]}{(v,\qq)}u_2)z^{-\wt a}\\
        &+\Res_{x}\left(\frac{z^{-\wt a+1-\delta(r)-\frac{r}{T}}(w+x)^{\wt a-1+\delta(r)+\frac{r}{T}}}{z-w-x} S\bracket*{u_3}{(Y_{M^1}(a,x)v,\qq)}{u_2}\right), \numberthis\label{recursive-other-form}
   \end{align*} 
   for $a\in V^r$ homogeneous. 
   
   For the case where $r=0$ and $ s\neq 0$, by \cref{lem:locality:4-point} and \labelcref{recursive-other-form}, we can express the left-hand side of \labelcref{com2} as follows: 
\begin{align*}
    \MoveEqLeft
    S^L_{VVM}\bracket*{u_3}{(a^1,\pp_1)(a^2,\pp_2)(v,\qq)}{u_2}=\underbrace{S\bracket*{u_3\cdot[a^1]}{(v,\qq)(a^2,\pp_2)}{u_2}z_1^{-\wt a^1}}_{(D1)}\\
	&+\underbrace{\Res_{x_1} \left(\frac{z_1^{-\wt a^1}(w+x_1)^{\wt a^1}}{z_1-w-x_1}\right) S\bracket*{u_3}{(Y_{M^1}(a^1,x_1)v,\qq) (a^2,\pp_2)}{u_2)}}_{(D2)}\\
	&+\underbrace{\Res_{x_0} \left(\frac{z_1^{-\wt a^1}(z_2+x_0)^{\wt a^1}}{z_1-z_2-x_0}\right) S\bracket*{u_3}{(v,\qq) (Y(a^1,x_0)a^2,\pp_2)}{u_2}}_{(D3)}\\
	={}&(D1)+(D2)+(D3). 
\end{align*}
By \labelcref{4.8}, \labelcref{recursive-other-form}, and the Jacobi identity of $Y_{M^1}$, we can rewrite $(D1),(D2),$ and $(D3)$ as follows: 
\begin{align*}
    (D1)={}&\Res_{x_2} \left(\frac{z_2^{-\wt a^2+1-\frac{s}{T}}(w+x_2)^{\wt a^2+\frac{s}{T}-1}}{z_2-w-x_2}\right) S\bracket{u_3\cdot[a^1]}{(Y_M(a^2,x_2)v,\qq)}{u_2} z_1^{-\wt a^1}\\
    (D2)={}&\Res_{x_1}\Res_{x_2} \left(\frac{z_1^{-\wt a^1}(w+x_1)^{\wt a^1}}{z_1-w-x_1} \cdot \frac{z_2^{-\wt a^2+1-\frac{s}{T}}(w+x_2)^{\wt a^2+\frac{s}{T}-1}}{z_2-w-x_2}\right) \\
    &\qquad \cdot S\bracket*{u_3}{(Y_{M^1}(a^2,x_2)Y_{M^1}(a^1,x_1)v,\qq)}{u_2}\\
    (D3)={}&\Res_{x_0}\Res_{x_2} \sum_{n\in \Z} \frac{z_1^{-\wt a^1}(z_2+x_0)^{\wt a^1}}{z_1-z_2-x_0}\cdot \frac{(w+x_2)^{\wt a^1-n-1+\wt a^2+\frac{s}{T}-1}}{z_2-w-x_2}\\
    &\qquad \cdot z_2^{-\wt a^1+n+1-\wt a^2+1-\frac{s}{T}}x_0^{-n-1}S\bracket*{u_3}{(Y_{M^1}(\vo{a^1}{n}a^2,x_2)v,\qq)}{u_2}\\
    ={}&\Res_{x_1,x_2} \frac{z_2^{-\wt a^2+1-\frac{s}{T}+1}z_1^{-\wt a^1}(w+x_2)^{\wt a^2+\frac{s}{T}-1}(w+x_1)^{\wt a^1}}{(z_2-w-x_2)(z_1(w+x_2)-z_2(w+x_1))}\\
    &\qquad \cdot S\bracket*{u_3}{(Y_{M^1}(a^1,x_1)Y_{M^1}(a^2,x_2)v,\qq)}{u_2}\\
    -&\Res_{x_1,x_2} \frac{z_2^{-\wt a^2+1-\frac{s}{T}+1}z_1^{-\wt a^1}(w+x_2)^{\wt a^2+\frac{s}{T}-1}(w+x_1)^{\wt a^1}}{(z_2-w-x_2)(z_1(w+x_2)-z_2(w+x_1))}\\
    &\qquad \cdot S\bracket*{u_3}{(Y_{M^1}(a^2,x_2)Y_{M^1}(a^1,x_1)v,\qq)}{u_2}.
\end{align*}
On the other hand, by \labelcref{ExpansionFromLeft}, we can write the right-hand side of \labelcref{com2} as
\begin{align*}
    \MoveEqLeft
    S_{VVM}^{L}\bracket*{u_3}{(a^2,\pp_1)(a^1,\pp_2)(v,\qq)}{u_2}\\
    ={}&\underbrace{\Res_{x_2} \left(\frac{z_2^{-\wt a^2+1-\frac{s}{T}}(w+x_2)^{\wt a^2+\frac{s}{T}-1}}{z_2-w-x_2}\right) S\bracket*{u_3}{(Y_{M^1}(a^2,x_2)v,\qq)(a^1,\pp_1)}{u_2}}_{(E1)}\\
    &+\underbrace{\Res_{x_0} \left(\frac{z_2^{-\wt a^2+1-\frac{s}{T}}(z_1+x_0)^{\wt a^2+\frac{s}{T}-1}}{z_2-z_1-x_0}\right) S\bracket*{u_3}{(v,\qq)(Y(a^2,x_0)a^1,\pp_1)}{u_2}}_{(E2)}\\
    ={}&(E1)+(E2). 
\end{align*}
By \labelcref{4.7}, \labelcref{recursive-other-form}, and the Jacobi identity, we can rewrite $(E1)$ and $(E2)$ as
\begin{align*}
    (E1)={}&\Res_{x_2} \left(\frac{z_1^{-\wt a^1}z_2^{-\wt a^2+1-\frac{s}{T}}(w+x_2)^{\wt a^2+\frac{s}{T}-1}}{z_2-w-x_2}\right) S\bracket*{u_3\cdot[a^1]}{(Y_{M^1}(a^2,x_2)v,\qq)}{u_2}\\
    &+\Res_{x_2}\Res_{x_1} \left(\frac{z_2^{-\wt a^2+1-\frac{s}{T}}(w+x_2)^{\wt a^2+\frac{s}{T}-1}}{z_2-w-x_2} \cdot\frac{z_1^{-\wt a^1}(w+x_1)^{\wt a^1}}{z_1-w-x_1} \right)\\
    &\qquad \cdot S\bracket*{u_3}{(Y_{M^1}(a^1,x_1)Y_{M^1}(a^2,x_2)v,\qq)}{u_2},\\
    (E2)={}&\Res_{x_0} \Res_{x_2} \sum_{n\in \Z} \frac{z_2^{-\wt a^2+1-\frac{s}{T}}(z_1+x_0)^{\wt a^2+\frac{s}{T}-1}}{z_2-z_1-x_0}\cdot \frac{(w+x_2)^{\wt a^1-n-1+\wt a^2+\frac{s}{T}-1}}{z_1-w-x_2}\\
    &\qquad \cdot z_1^{-\wt a^1+n+1-\wt a^2+1-\frac{s}{T}}S\bracket*{u_3}{(Y_{M^1}(\vo{a^2}{n}a^1,x_2)v,w)}{u_2}x_0^{-n-1}\\
    ={}&\Res_{x_2,x_1}\frac{z_1^{-\wt a^1+1}z_2^{-\wt a^2+1-\frac{s}{T}}(w+x_1)^{\wt a^1}(w+x_2)^{\wt a^2+\frac{s}{T}-1}}{(z_1-w-x_1)(z_2(w+x_1)-z_1(w+x_2))}\\
    &\qquad \cdot S\bracket*{u_3}{(Y_{M^1}(a^2,x_2)Y_{M^1}(a^1,x_1)v,\qq)}{u_2}\\
    &-\Res_{x_2,x_1}  \frac{z_1^{-\wt a^1+1}z_2^{-\wt a^2+1-\frac{s}{T}}(w+x_1)^{\wt a^1}(w+x_2)^{\wt a^2+\frac{s}{T}-1}}{(z_1-w-x_1)(z_2(w+x_1)-z_1(w+x_2))}\\
    &\qquad \cdot S\bracket*{u_3}{(Y_{M^1}(a^1,x_1)Y_{M^1}(a^2,x_2)v,\qq)}{u_2}.
\end{align*}
Thus we have 	
\begin{align*}
    &(D1)+(D2)+(D3)-(E1)-(E2)\\
    ={}&\Res_{x_1,x_2} \frac{z_1^{-\wt a^1}z_2^{-\wt a^2+1-\frac{s}{T}}(w+x_1)^{\wt a^1}(w+x_2)^{\wt a^2+\frac{s}{T}-1}}{z_1(w+x_2)-z_2(w+x_1)}\\
    &\qquad\cdot \left(\frac{z_2}{z_2-w-x_2}-\frac{z_1}{z_1-w-x_1}\right)S\bracket*{u_3}{(Y_{M^1}(a^1,x_1)Y_{M^1}(a^2,x_2)v,\qq)}{u_2}\\
    -&\Res_{x_1,x_2} \frac{z_1^{-\wt a^1}z_2^{-\wt a^2+1-\frac{s}{T}}(w+x_1)^{\wt a^1}(w+x_2)^{\wt a^2+\frac{s}{T}-1}}{z_1(w+x_2)-z_2(w+x_1)}\\
    &\qquad \cdot \left(\frac{z_2}{z_2-w-x_2}-\frac{z_1}{z_1-w-x_1}\right)
    S\bracket*{u_3}{(Y_{M^1}(a^2,x_2)Y_{M^1}(a^1,x_1)v,\qq)}{u_2}\\
    +&\Res_{x_1}\Res_{x_2} \left(\frac{z_1^{-\wt a^1}(w+x_1)^{\wt a^1}}{z_1-w-x_1} \cdot \frac{z_2^{-\wt a^2+1-\frac{s}{T}}(w+x_2)^{\wt a^2+\frac{s}{T}-1}}{z_2-w-x_2}\right)\\
    &\qquad \cdot S\bracket*{u_3}{(Y_{M^1}(a^2,x_2)Y_{M^1}(a^1,x_1)v,\qq)}{u_2}\\
    -&\Res_{x_2}\Res_{x_1} \left(\frac{z_2^{-\wt a^2+1-\frac{s}{T}}(w+x_2)^{\wt a^2+\frac{s}{T}-1}}{z_2-w-x_2} \cdot\frac{z_1^{-\wt a^1}(w+x_1)^{\wt a^1}}{z_1-w-x_1} \right)\\
    &\qquad \cdot S\bracket*{u_3}{(Y_{M^1}(a^1,x_1)Y_{M^1}(a^2,x_2)v,\qq)}{u_2}\\
    ={}&0.
\end{align*}
The proof of \labelcref{com2} for the case when $r\neq 0, s\neq 0$, and $r+s\neq T$ is similar to the case when $r=0$ and $ s\neq 0$, we omit it. 

Now, we show \labelcref{com2} for the case $r\neq 0, s\neq 0$, and $ r+s=T$. By adopting a similar computation as above, using the fact that $r/T=1-s/T$, we can express the left-hand side of \labelcref{com2} as follows: 
\begin{align*}
    \MoveEqLeft
    S_{VVM}^{L}\bracket*{u_3}{(a^1,\pp_1)(a^2,\pp_2)(v,\qq)}{u_2}\\
    ={}&\Res_{x_1} \left(\frac{z_1^{-\wt a^1+1-\frac{r}{T}}z_2^{-\wt a^2-\frac{s}{T}+1}(1+x_1)^{\wt a^1+\frac{r}{T}-1}}{z_1-z_2-z_2x_1}\right)S\bracket*{u_3\cdot[Y(a^1,x_1)a^2]}{(v,\qq)}{u_2}\\
    &\quad +\Res_{x_1,x_2} \frac{z_1^{-\wt a^1+1-\frac{r}{T}}(w+x_1)^{\wt a^1+\frac{r}{T}-1} z_2^{-\wt a^2-\frac{s}{T}+1}(w+x_2)^{\frac{s}{T}+\wt a^2} }{((w+x_2)z_1-(w+x_1)z_2)(z_2-w-x_2)}\\ &\qquad \cdot S\bracket*{u_3}{(Y_{M^1}(a^1,x_1)Y_{M^1}(a^2,x_2)v,\qq)}{u_2}\\
    &\quad -\Res_{x_1,x_2} \frac{z_1^{-\wt a^1+1-\frac{r}{T}}(w+x_1)^{\wt a^1+\frac{r}{T}-1} z_2^{-\wt a^2-\frac{s}{T}+1}(w+x_2)^{\frac{s}{T}+\wt a^2} }{((w+x_2)z_1-(w+x_1)z_2)(z_2-w-x_2)}\\  &\qquad \cdot S\bracket*{u_3}{(Y_{M^1}(a^2,x_2)Y_{M^1}(a^1,x_1)v,\qq)}{u_2}\\
    &\quad +\Res_{x_2}\Res_{x_1} \left(\frac{z_1^{-\wt a^1+1-\frac{r}{T}}(w+x_1)^{\wt a^1+\frac{r}{T}-1}}{z_1-w-x_1}\cdot \frac{z_2^{-\wt a^2+1-\frac{s}{T}}(w+x_2)^{\wt a^2+\frac{s}{T}-1}}{z_2-w-x_2}\right) \\
    &\qquad \cdot S\bracket*{u_3}{(Y_{M^1}(a^2,x_2)Y_{M^1}(a^1,x_1)v,\qq)}{u_2}\\
    ={}&(F1)+(F2)+(F3)+(F4),
\end{align*}		
where $(F1)-(F4)$ are the corresponding terms on the right-hand side. On the other hand, we can express the right-hand side of \labelcref{com2} as follows: 
\begin{align*}
    \MoveEqLeft
    S_{VVM}^{L}\bracket*{u_3}{(a^2,\pp_2)(a^1,\pp_1)(v,\qq)}{u_2}\\
    ={}&\Res_{x_2} \left(\frac{z_2^{-\wt a^2+1-\frac{s}{T}}z_1^{-\wt a^1-\frac{r}{T}+1}(1+x_2)^{\wt a^2+\frac{s}{T}-1}}{z_2-z_1-z_1x_2}\right) S\bracket*{u_3\cdot[Y(a^2,x_2)a^1]}{(v,\qq)}{u_2}\\
    &+ \Res_{x_2,x_1} \frac{z_2^{-\wt a^2+1-\frac{s}{T}}(w+x_1)^{\wt a^1+\frac{r}{T}} z_1^{-\wt a^1-\frac{r}{T}+1}(w+x_2)^{\wt a^2+\frac{s}{T}-1} }{((w+x_1)z_2-(w+x_2)z_1)(z_1-w-x_1)}\\
    &\qquad \cdot S\bracket*{u_3}{(Y_{M^1}(a^2,x_2)Y_{M^1}(a^1,x_1)v,\qq)}{u_2}\\
    &-\Res_{x_2,x_1} \frac{z_2^{-\wt a^2+1-\frac{s}{T}}(w+x_1)^{\wt a^1+\frac{r}{T}} z_1^{-\wt a^1-\frac{r}{T}+1}(w+x_2)^{\wt a^2+\frac{s}{T}-1} }{((w+x_1)z_2-(w+x_2)z_1)(z_1-w-x_1)} \\
    &\qquad \cdot S\bracket*{u_3}{(Y_{M^1}(a^1,x_1)Y_{M^1}(a^2,x_2)v,\qq)}{u_2}\\
    &+ \Res_{x_1}\Res_{x_2}\left(\frac{z_2^{-\wt a^2+1-\frac{s}{T}}(w+x_2)^{\wt a^2+\frac{s}{T}-1}}{z_2-w-x_2}\cdot \frac{z_1^{-\wt a^1+1-\frac{r}{T}}(w+x_1)^{\wt a^1+\frac{r}{T}-1}}{z_1-w-x_1}\right) \\
    &\qquad \cdot S\bracket*{u_3}{(Y_{M^1}(a^1,x_1)Y_{M^1}(a^2,x_2)v,\qq)}{u_2}\\
    ={}&(G1)+(G2)+(G3)+(G4). 
\end{align*}
It is easy to see that 
\begin{align*}
    \MoveEqLeft
    (F2)-(G3)\\
    ={}&\Res_{x_1,x_2} \frac{z_1^{-\wt a^1+1-\frac{r}{T}}(w+x_1)^{\wt a^1+\frac{r}{T}-1} z_2^{-\wt a^2-\frac{s}{T}+1}(w+x_2)^{\wt a^2+\frac{s}{T}-1} }{((w+x_2)z_1-(w+x_1)z_2)}\\
    &\qquad \cdot \left(\frac{w+x_2}{z_2-w-x_2}-\frac{w+x_1}{z_1-w-x_1}\right)S\bracket*{u_3}{(Y_{M^1}(a^1,x_1)Y_{M^1}(a^2,x_2)v,\qq)}{u_2}\\
    ={}&\Res_{x_1,x_2} \frac{z_1^{-\wt a^1+1-\frac{r}{T}}(w+x_1)^{\wt a^1+\frac{r}{T}-1} z_2^{-\wt a^2-\frac{s}{T}+1}(w+x_2)^{\wt a^2+\frac{s}{T}-1} }{(z_2-w-x_2)(z_1-w-x_1)}\\
    &\qquad \cdot S\bracket*{u_3}{(Y_{M^1}(a^1,x_1)Y_{M^1}(a^2,x_2)v,\qq)}{u_2}\\
    ={}&(G4).
\end{align*}
On the other hand, 
\begin{align*}
    \MoveEqLeft
    (F3)-(G2)\\
    ={}&\Res_{x_1,x_2} \frac{z_1^{-\wt a^1+1-\frac{r}{T}}(w+x_1)^{\wt a^1+\frac{r}{T}-1} z_2^{-\wt a^2-\frac{s}{T}+1}(w+x_2)^{\frac{s}{T}+\wt a^2-1} }{((w+x_1)z_2-(w+x_2)z_1)(z_2-w-x_2)}\\
    &\qquad \cdot \left(\frac{w+x_2}{z_2-w-x_2}-\frac{w+x_1}{z_1-w-x_1}\right)S\bracket*{u_3}{(Y_{M^1}(a^2,x_2)Y_{M^1}(a^1,x_1)v,\qq)}{u_2}\\
    ={}&-\Res_{x_1,x_2} \frac{z_1^{-\wt a^1+1-\frac{r}{T}}(w+x_1)^{\wt a^1+\frac{r}{T}-1} z_2^{-\wt a^2-\frac{s}{T}+1}(w+x_2)^{\frac{s}{T}+\wt a^2-1} }{(z_1-w-x_1)(z_2-w-x_2)}\\
    &\qquad \cdot S\bracket*{u_3}{(Y_{M^1}(a^2,x_2)Y_{M^1}(a^1,x_1)v,\qq)}{u_2}\\
    ={}&-(F4).
\end{align*}
Thus, $(F2)+(F3)+(F4)-(G2)-(G3)-(G4)=0$. Finally, recall that $o(\vo{L}{-1}a+\vo{L}{0}a)=0$, which implies that
$o(Y(a,x)b)=o((1+x)^{-\wt a-\wt b}Y\left(b,-x/(1+x)\right)a)$ for $a, b\in V$. Then, we have
\begin{align*}
    \MoveEqLeft
    (F1)=
    \Res_{x_1} \left(\frac{z_1^{-\wt a^1+1-\frac{r}{T}}z_2^{-\wt a^2-\frac{s}{T}+1}(1+x_1)^{\wt a^1+\frac{r}{T}-1}}{z_1-z_2-z_2x_1}\right)\\
    &\qquad \cdot S\bracket*{u_3\cdot[(1+x_1)^{-\wt a^1-\wt a^2}Y(a^2,\frac{-x_1}{1+x_1})a^1]}{(v,\qq)}{u_2}\\
    ={}&\Res_{x_2} \frac{z_1^{-\wt a^1+1-\frac{r}{T}}z_2^{-\wt a^2-\frac{s}{T}+1}\left(\frac{1}{1+x_2}\right)^{\wt a^1+\frac{r}{T}-1}}{z_1-z_2+z_2\left(\frac{x_2}{1+x_2}\right)}\cdot \frac{-1}{(1+x_2)^2}\cdot \left(\frac{1}{1+x_2}\right)^{-\wt a^1-\wt a^2}\\
    &\qquad S\bracket*{u_3\cdot[Y(a^2,x_2)a^1]}{(v,\qq)}{u_2}\\
    ={}&\Res_{x_2} \left(\frac{z_1^{-\wt a^1+1-\frac{r}{T}}z_2^{-\wt a^2-\frac{s}{T}+1}(1+x_2)^{\wt a^2-\frac{r}{T}}}{z_2-z_1-z_1x_2} \right)S\bracket*{u_3\cdot[Y(a^2,x_2)a^1]}{(v,\qq)}{u_2}\\
    ={}&(G1).
\end{align*}
Therefore, $(F1)+(F2)+(F3)+(F4)-(G1)-(G2)-(G3)-(G4)=0$. 

The proof of \cref{lem:com} is completed.
\end{proof}

\subsection{The \texorpdfstring{$(n+3)$}{(n+3)}-point correlation functions}

We can use a similar induction argument as in \cite[Section 4.3]{Liu} to construct the $(n+3)$-point function $S$, with the well-defineness and locality of the $3$-point, $4$-point, and $5$-point functions from the previous subsection as the base case. Note that the only property involving the $A(V)$-modules $U^2$ and $U^3$ we used for the induction process in \cite{Liu} was
\begin{align*}
    \MoveEqLeft
    S\bracket*{u_3\cdot[a^1][a^2]}{(a^3,\pp_3)\cdots (a^n,\pp_n)(v,\qq)}{u_2}\\
    &-S\bracket*{u_3\cdot[a^2][a^1]}{(a^3,\pp_3)\cdots (a^n,\pp_n)(v,\qq)}{u_2}\\
    ={}&\sum_{j\ge 0}\binom{\wt a^1-1}{j} S\bracket{u_3\cdot[\vo{a^1}{j}a^2]}{(a^3,\pp_3)\cdots (a^n,\pp_n)(v,\qq)}{u_2},
\end{align*}
which is also true when $U^2$ and $U^3$ are modules over $A_g(V)$. We omit the rest of the details for the induction. 

Thus, we have a well-defined system of $(n+3)$-point functions for $n\ge 3$.
\begin{equation}
\begin{aligned}
S_{V\cdots M\cdots V}\colon U^3\otimes V\otimes\cdots\otimes M^{1}&\otimes\cdots V\otimes U^2\rightarrow \O(\infty\Delta_n)\\
u_3\otimes a^{1}\otimes \cdots\otimes v\cdots\otimes a^{n}\otimes u_2&\mapsto S\bracket*{u_3}{(a_{1},\pp_{1})\cdots (v,\qq)\cdots(a^{n},\pp_{n})}{u_2},
\end{aligned}
\end{equation}
where $u_3\in U^3$, $a^1,\cdots a^n\in V$, $v\in M^1$, and $u_2\in U^2$. Note that $S$ satisfies the recursive formulas \labelcref{eq:recursive_U2,eq:recursive_U3}, and the \emph{locality} in \cref{def:genus-zero-bottom}.

\section{Associativity of the reconstructed correlation functions}\label{sec5}
In this section, we show that the system of $(n+3)$-point functions $S$ we constructed in \cref{sec4} is contained in $\Cor[\Sigma_{1}(U^3, M^1, U^2)]$.    

By our construction in \cref{sec4}, it remains to show the \emph{associativity} in \cref{def:genus-zero-bottom}. Since the recursive formulas for the correlation functions in \cref{def:genus-zero-bottom} are different from the ones in \cite{Liu}, there are $5$ new cases arise in our case. Recall there are two formulas for the associativity: for any $k\in \Z$,
\begin{align}
    \Res_{\pp_1=\qq}S\bracket*{u_3}{(a^{1},\pp_1)\cdots(v,\qq)}{u_2}(z_{1}-w)^{k}\d{z_1} &
    = S\bracket*{u_3}{\cdots(\vo{a^{1}}{k}v,\qq)}{u_2},\numberthis\label{5.1} \\
    \Res_{\pp_1=\pp_2}S\bracket*{u_3}{(a^{1},\pp_1)(a^{2},\pp_2)\cdots}{u_2}(z_{1}-z_{2})^{k}\d{z_1} &
    = S\bracket*{u_3}{(\vo{a^{1}}{k}a^{2},\pp_{2})\cdots}{u_2}. \numberthis\label{5.2}
\end{align}
\subsection{Associativity for one algebra element and one module element}

We first prove \labelcref{5.1} for the $4$-point functions. The general $(n+3)$-point functions case can be proved in a similar way, so we omit the details. 
\begin{proposition}\label{pro:associativity-4-point} 
  For $a^1\in V$, $u_3\in U^3$, $u_2\in U^2$, $v\in M^1$ homogeneous, and $k\in \Z$, we have 
\begin{equation}
  \Res_{\pp_1=\qq}S\bracket*{u_3}{(a^{1},\pp_{1})(v,\qq)}{u_2}(z_{1}-w)^{k} \d{z_{1}}=S\bracket*{u_3}{(\vo{a^1}{k}v,\qq)}{u_2}.
  \end{equation}
\end{proposition}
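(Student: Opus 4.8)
The plan is to reduce to homogeneous $a=a^1\in V^r$ by linearity and then insert the explicit two-term formula for the $4$-point function from \cref{def:4-point},
\[
  S\bracket*{u_3}{(a,\pp_1)(v,\qq)}{u_2}=S_M\bracket*{u_3\cdot[a]}{(v,\qq)}{u_2}\,z_1^{-\wt a}+\sum_{i\ge0}F_{N,i}(\pp_1,\qq)\,S_M\bracket*{u_3}{(\vo{a}{i}v,\qq)}{u_2},
\]
where $N:=\wt a-1+\delta(r)+\tfrac{r}{T}$. Since $\Res_{\pp_1=\qq}\bigl[g(z_1)\,(z_1-w)^k\,\d{z_1}\bigr]$ is the coefficient of $(z_1-w)^{-k-1}$ in the expansion $\iota_{\pp_1=\qq}g$, the statement amounts to showing that this coefficient equals $s_k:=S_M\bracket*{u_3}{(\vo{a}{k}v,\qq)}{u_2}$ for every $k\in\Z$. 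The argument naturally splits into the three ranges $k\ge0$, $k=-1$, and $k\le-2$.

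For $k\ge0$ the first term contributes nothing: $z_1^{-\wt a}$ is holomorphic at $\pp_1=\qq$ (as $w\ne0$) and $S_M\bracket*{u_3\cdot[a]}{(v,\qq)}{u_2}$ is independent of $z_1$, so after multiplying by $(z_1-w)^{k}$ with $k\ge0$ the product is holomorphic at $\qq$ and has no residue. For the second term I would substitute $\iota_{\pp_1=\qq}F_{N,i}$ from \cref{lem:ExpOfFpq}, read off the coefficient of $(z_1-w)^{-1}$, and collapse the resulting binomial sum by Vandermonde's identity to $\binom{0}{i-k}=\delta_{i,k}$; the sum over $i$ then reduces to $s_k$. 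This range uses nothing about $\varphi$ beyond its appearance inside $S_M$.

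The crux is $k=-1$, and this is where I expect the main obstacle. Here the first term survives, contributing $S_M\bracket*{u_3\cdot[a]}{(v,\qq)}{u_2}\,w^{-\wt a}$ when $a\in V^0$ (and $0$ when $r\ne0$, since then $[a]=0$), while the same Vandermonde computation reduces the second term to the ``tail'' $-\sum_{i\ge0}\binom{N}{i+1}s_i$. Writing everything through $\varphi$ and cancelling the common power of $w$, the desired equality becomes, for $a\in V^0$,
\[
  \sum_{j\ge0}\binom{\wt a}{j}\braket*{\varphi}{u_3\otimes\vo{a}{j-1}v\otimes u_2}=\braket*{\varphi}{u_3\cdot[a]\otimes v\otimes u_2},
\]
and, for $a\in V^r$ with $r\ne0$,
\[
  \sum_{j\ge0}\binom{\wt a-1+\tfrac{r}{T}}{j}\braket*{\varphi}{u_3\otimes\vo{a}{j-1}v\otimes u_2}=0.
\]
These are exactly the defining relations \labelcref{f-relation2} and \labelcref{f-relation4} of the restricted conformal block $\varphi\in\Cfb[U^3,M^1,U^2]$; the twisting enters precisely through the shift $\tfrac{r}{T}$ inside $N$ and through \labelcref{f-relation4}. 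The delicate point is to match the Vandermonde tail against the correct relation while keeping the fractional shift $\delta(r)+\tfrac{r}{T}$ bookkept consistently between the two terms.

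For $k\le-2$ I would argue by downward induction on $k$, using the $\vo{L}{-1}$-derivative property \labelcref{4point-L(-1)a}. Since $S\bracket*{u_3}{(\vo{L}{-1}a,\pp_1)(v,\qq)}{u_2}=\partial_{z_1}S\bracket*{u_3}{(a,\pp_1)(v,\qq)}{u_2}$ and the residue of a total $z_1$-derivative vanishes, integration by parts gives
\[
  \Res_{\pp_1=\qq}S\bracket*{u_3}{(\vo{L}{-1}a,\pp_1)(v,\qq)}{u_2}(z_1-w)^{k+1}\,\d{z_1}=-(k+1)\Res_{\pp_1=\qq}S\bracket*{u_3}{(a,\pp_1)(v,\qq)}{u_2}(z_1-w)^{k}\,\d{z_1}.
\]
As $\vo{L}{-1}a\in V^r$ is homogeneous of weight $\wt a+1$ and $\vo{(\vo{L}{-1}a)}{k+1}v=-(k+1)\vo{a}{k}v$, the induction hypothesis at level $k+1$ identifies the left-hand side with $-(k+1)\,s_k$; since $k+1\ne0$, dividing yields the claim, with base case $k=-1$ (valid for all homogeneous algebra elements) supplied above. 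Finally, the general $(n+3)$-point case follows the same three-range scheme, with the recursive formulas \labelcref{eq:recursive_U3,eq:recursive_U2} playing the role of the explicit $4$-point formula, so I would only indicate the modifications rather than repeat the computation.
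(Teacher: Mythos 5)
Your proposal is correct and follows essentially the same route as the paper: the same split into $k\ge 0$ (Vandermonde collapse of the $\iota_{\pp_1=\qq}$-expansion of $F_{N,i}$ to $\delta_{i,k}$), $k=-1$ (matching the binomial tail against the defining relations of $J$), and $k\le -2$ (downward induction via the $\vo{L}{-1}$-derivative property, which the paper only sketches by reference to Zhu). The only cosmetic difference is that you insert the left recursive form \labelcref{4.7} and invoke \labelcref{f-relation2}, whereas the paper uses the right form \labelcref{4.8} and \labelcref{f-relation3}; these agree by \cref{lem:locality:4-point}, so both are valid.
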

\begin{proof}  
    Suppose $a^1\in V^r$. When $k\ge 0$, by \cref{lem:ExpOfFpq,def:3-point,def:4-point}, 
    \begin{align*}
        \MoveEqLeft
        \Res_{\pp_1=\qq} S\bracket*{u_3}{(a^1,\pp_1)(v,\qq)}{u_2}(z_1-w)^k \d{z_1}\\
        ={}&\Res_{\pp_1=\qq} \braket*{\varphi}{u_3 \otimes {v}\otimes [a^1]\cdot u_2} w^{-\deg v} z_1^{-\wt a^1} (z_1-w)^k \d{z_1} \\
        &+\Res_{\pp_1=\qq} \sum_{i\ge 0}F_{\wt a^1-1+\frac{r}{T},i}(\pp_1,\qq) \braket*{\varphi}{u_3\otimes {\vo{a^1}{i}v}\otimes u_2} w^{-(\wt a^1-i-1+\deg v)} (z_1-w)^k \d{z_1}\\
        ={}&\Res_{\pp_1=\qq} \sum_{j\ge 0} \binom{-\wt a^1}{j} w^{-\deg v-\wt a^1-j} (z_1-w)^{j+k}\braket*{\varphi}{u_3 \otimes {v}\otimes [a^1]\cdot u_2} \\
        &+\Res_{\pp_1=\qq} \sum_{i\ge 0} \sum_{l=0}^i \sum_{p\ge 0} \binom{\wt a^1-1+\frac{r}{T}}{i-l}\binom{-\wt a^1+1-\frac{r}{T}}{p} \frac{w^{l-p-\wt a^1+1-\deg v}}{(z_1-w)^{l+1-p-k}} \\
        &\qquad \cdot \braket*{\varphi}{u_3\otimes {\vo{a^1}{i}v}\otimes  u_2}\\
        ={}&\sum_{i\ge 0}\left( \sum_{l=k}^i \binom{\wt a^1-1+\frac{r}{T}}{i-l}\binom{-\wt a^1+1-\frac{r}{T}}{l-k}\right)w^{k-\wt a^1 +1-\deg v} \braket*{\varphi}{u_3\otimes {\vo{a^1}{i}v}\otimes u_2}\\
        ={}&\sum_{i\ge 0}\left(\sum_{s=0}^{i-k} \binom{\wt a^1-1+\frac{r}{T}}{i-k-s} \binom{-\wt a^1+1-\frac{r}{T}}{s}\right) w^{k-\wt a^1+1-\deg v} \braket*{\varphi}{u_3\otimes {\vo{a^1}{i}v}\otimes u_2}\\
        ={}&\braket*{\varphi}{u_3\otimes {\vo{a^1}{k}v}\otimes u_2} w^{k-\wt a^1+1-\deg v}\\
        ={}&S\bracket*{u_3}{(\vo{a^1}{k}v,\qq)}{u_2},
    \end{align*}
    where we used the fact that $\sum_{s=0}^{i-k} \binom{\wt a^1-1+\tfrac{r}{T}}{i-k-s} \binom{-\wt a^1+1-\tfrac{r}{T}}{s}$ is the coefficient of the term $x^{i-k}$ in $(1+x)^{\wt a^1-1+\frac{r}{T}}(1+x)^{-\wt a^1+1-\frac{r}{T}}=1$. 
  
    When $k=-1$, we have
    \begin{align*}
        &\Res_{\pp_1=\qq} S\bracket*{u_3}{(a^1,\pp_1)(v,\qq)}{u_2}(z_1-w)^{-1} \d{z_1}\\
        ={}&\braket*{\varphi}{u_3\otimes {v}\otimes [a^1]\cdot u_2} w^{-\deg v-\wt a^1}\numberthis\label{5.4}\\
        &+\sum_{i\ge 0} \left(\sum_{l=0}^i \binom{\wt a^1-1+\frac{r}{T}}{i-l}\binom{-\wt a^1+1-\frac{r}{T}}{l+1}\right)  w^{-\wt a^1-\deg v} \braket*{\varphi}{u_3\otimes {\vo{a^1}{i}v}\otimes u_2}\\
        ={}&\braket*{\varphi}{u_3\otimes {v}\otimes [a^1]\cdot u_2} w^{-\deg v-\wt a^1}\\
        &-\sum_{i\ge 0} \binom{\wt a^1-1+\frac{r}{T}}{i+1}w^{-\wt a^1-\deg v}  \braket*{\varphi}{u_3\otimes {\vo{a^1}{i}v}\otimes u_2}.
    \end{align*}
    If $r=0$, by \labelcref{f-relation3} we have \[\labelcref{5.4}=\binom{\wt a^1-1}{0}\braket*{\varphi}{u_3\otimes {\vo{a^1}{-1}v}\otimes u_2}w^{-\deg v-\wt a^1}=S\bracket*{v'_3}{(\vo{a^1}{-1}v,\qq)}{u_2}.\] 
    If $r\neq 0$, since $[a^1]=0$, by \labelcref{f-relation4} we have
    \[\labelcref{5.4}=\binom{\wt a^1-1+\frac{r}{T}}{0}w^{-\wt a^1-\deg v}  \braket*{\varphi}{u_3\otimes {\vo{a^1}{-1}v}\otimes u_2}=S\bracket*{v'_3}{(\vo{a^1}{-1}v,\qq)}{u_2}.\]
    When $k<-1$, the proof is similar to the proof of \cite[(2.2.9)]{Z}, using the \emph{$\vo{L}{-1}$-derivative property} in \cref{Pro:L(-1)-derivative}, we omit the details. 
\end{proof}

\subsection{Associativity for two algebra elements}
In this subsection, we prove \labelcref{5.2} for the $5$-point functions. The general $(n+3)$-point functions case can be proved similarly.
First, we show that the kernel $J$ of conformal blocks in \cref{def:resCfb} also contains some information about a generalized version of $O(M^1)$ in \cite{FZ, Liu}. We give the following definition with the notations in \cite{DLM1,JJ}:

\begin{definition}\label{def:bimodule}
Let $(M,Y_M)$ be an untwisted module, and $\lambda$ be a complex number. Introduce a bilinear operator $\circ_g:V\otimes M\longrightarrow M$ by letting 
    \begin{align}
        a\circ_g v:&=\Res_x \frac{(1+x)^{\wt a-1+\delta(r)+\frac{r}{T}}}{x^{1+\delta(r)}} Y_M(a,x)v, \label{def:circle-g}
    \end{align}
    where $0\le r\le T-1$, $a\in V^r$ homogeneous, and $v\in M$. Let
    \begin{equation}\label{def:O-quotient}
        O_{g,\lambda}(M):=\spn\Set*{a\circ_g u,\ \vo{L}{-1}u+(\vo{L}{0}+\lambda)u: a\in V, u\in M},
    \end{equation}
    and $B_{g,\lambda}(M):=M/O_{g,\lambda}(M)$. 
\end{definition}
We will show that $B_{g,\lambda}(M)$ is a bimodule over the $g$-twisted Zhu's algebra $A_g(V)$ in the next Section. The following property is necessary for the proof of associativity \labelcref{5.2}.
\begin{lemma}\label{lem:contracted-space-and-bimodule}
  Let $J$ be the subspace spanned by elements of the form \labelcref{f-relation1}-\labelcref{f-relation4} in \cref{def:resCfb}. Then, $U^3\otimes O_{g,h_2-h_3}(M^1)\otimes U^2\subseteq J$. 
\end{lemma}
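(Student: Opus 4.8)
The plan is to reduce the inclusion to checking it on the two families of generators of $O_{g,h_2-h_3}(M^1)$ listed in \cref{def:bimodule}, since $U^3\otimes(\,\cdot\,)\otimes U^2$ is linear in the middle factor. Put $\lambda=h_2-h_3$; because $h=h_1+h_2-h_3$ we have $h-h_1=\lambda$, so for the first family $\vo{L}{-1}u+(\vo{L}{0}+\lambda)u=(\vo{L}{-1}+\vo{L}{0}-h_1+h)u$, and hence $u_3\otimes\bigl(\vo{L}{-1}u+(\vo{L}{0}+\lambda)u\bigr)\otimes u_2$ is exactly a spanning element of type \labelcref{f-relation1}. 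Thus this family lies in $J$ with no further work, and it remains to analyze the generators $a\circ_g u$ for homogeneous $a\in V^r$; I would split into the cases $r\neq0$ and $r=0$.

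For $r\neq0$ we have $\delta(r)=0$, so expanding the residue in \labelcref{def:circle-g} gives $a\circ_g u=\sum_{j\ge0}\binom{\wt a-1+\frac{r}{T}}{j}\vo{a}{j-1}u$. Consequently $u_3\otimes(a\circ_g u)\otimes u_2$ is precisely the spanning element \labelcref{f-relation4}, and this case is immediate.

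The substance is the case $a\in V^0$, where $\delta(r)=1$ and $a\circ_g u=\Res_x\frac{(1+x)^{\wt a}}{x^2}Y_{M^1}(a,x)u$ has a second-order pole that matches none of the relations directly. Here I would use the decomposition $\frac{(1+x)^{\wt a}}{x^2}=-\frac{d}{dx}\!\left(\frac{(1+x)^{\wt a}}{x}\right)+\wt a\,\frac{(1+x)^{\wt a-1}}{x}$, together with integration by parts for formal residues, $\Res_x\bigl(\tfrac{d}{dx}F\bigr)G=-\Res_x F\,\tfrac{d}{dx}G$, and the $\vo{L}{-1}$-derivative property $\frac{d}{dx}Y_{M^1}(a,x)=Y_{M^1}(\vo{L}{-1}a,x)$. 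This produces the identity $a\circ_g u=u*(\vo{L}{-1}a)+\wt a\,(u*a)$, where $u*b:=\sum_{j\ge0}\binom{\wt b-1}{j}\vo{b}{j-1}u$ is the right action recorded in the remark after \cref{def:resCfb} and appearing in \labelcref{f-relation3}. Applying \labelcref{f-relation3} to the pairs $(\vo{L}{-1}a,u)$ and $(a,u)$ then rewrites $u_3\otimes(a\circ_g u)\otimes u_2$ modulo $J$ as $u_3\otimes u\otimes\bigl([\vo{L}{-1}a]+\wt a\,[a]\bigr)\cdot u_2$.

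To finish I would invoke the $A_g(V)$-identity $[\vo{L}{-1}a]=-\wt a\,[a]$ for $a\in V^0$, which I would establish by evaluating $a\circ_g\vac=\wt a\,a+\vo{L}{-1}a\in O_g(V)$ directly from the defining residue. This makes the bracketed operator annihilate $u_2$, so $u_3\otimes(a\circ_g u)\otimes u_2\in J$, completing the proof. The main obstacle is exactly this $r=0$ case: the residue defining $a\circ_g u$ has a higher-order pole than the relations \labelcref{f-relation2} and \labelcref{f-relation3} supply, so it cannot be matched term by term; the integration-by-parts rewriting in terms of the right action, together with the auxiliary Zhu-algebra identity, is what bridges this gap.
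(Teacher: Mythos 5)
Your proof is correct and follows essentially the same route as the paper: the $r\neq 0$ and $\vo{L}{-1}$-generators are matched directly to \labelcref{f-relation4} and \labelcref{f-relation1}, and the $r=0$ case is handled by expressing $a\circ_g u$ through the $\ast$-actions and invoking $[\vo{L}{-1}a+\vo{L}{0}a]=0$ in $A_g(V)$. The only (immaterial) difference is that you route the computation through \labelcref{f-relation3} and the right action $u\ast(\vo{L}{-1}a)+\wt a\,(u\ast a)$, whereas the paper uses \labelcref{f-relation2} and the left action $(\vo{L}{-1}a+\vo{L}{0}a)\ast u$ — the same integration-by-parts identity underlies both.
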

\begin{proof}
    By \labelcref{f-relation1}, \labelcref{f-relation4}, and \labelcref{def:circle-g}, it is clear that $u_3\otimes (\vo{L}{-1}u+(\vo{L}{0}+h_2-h_3)u)\otimes u_2\in J$, and $u_3\o(b\circ_g u) \otimes u_2\in J$, where $b\in V^r$ with $1\le t\le T-1$. Now let $a\in V^0$. Since $[\vo{L}{-1}a+\vo{L}{0}a]=0$, it follows from \labelcref{f-relation2} that
    \begin{align*}
        0 &= u_3\cdot[\vo{L}{-1}+\vo{L}{0}a]\otimes u\otimes u_2\\
        &\equiv -\sum_{j\ge 0} \binom{\wt a+1}{j} u_3 \otimes \vo{(\vo{L}{-1}a)}{j-1}u\otimes u_2-\sum_{j\ge 0} \binom{\wt a}{j} u_3 \otimes \vo{(\vo{L}{0}a)}{j-1}u\otimes u_2 \\
        &\equiv -u_3 \otimes \left((\vo{L}{-1}a+\vo{L}{0}a)\ast u\right)\otimes u_2      \equiv u_3 \otimes (a\circ_g u) \otimes u_2 \pmod{J}.  
    \end{align*}
  Thus $u_3\otimes O_{g,h_2-h_3}(M^1)\otimes u_2\subseteq J$, in view of \labelcref{def:O-quotient}. 
  \end{proof}
By \cref{lem:contracted-space-and-bimodule}, together with the \emph{$\vo{L}{-1}$-derivative property} of module $M^1$, it is easy to show the following fact (see \cite[Lemma 2.1.2]{Z}):
\begin{equation}\label{eq:O-relation}
u_3\otimes \left(\Res_x \frac{(1+x)^{\wt a-1+\delta(r)+\frac{r}{T}+i}}{x^{j+1+\delta(r)}}Y_{M^1}(a,x)v\right)\otimes u_2\in J,\quad j\ge i\ge 0, i, j\in \N.
\end{equation}

\begin{lemma}\label{lem:S-O-relation}
Let $S$ be the $3$-point function in \cref{def:3-point}. For $a\in V^r$ homogeneous, $v\in M^1$, $u_2\in U^2$, $u_3\in U^3$, and $j\in \N$, we have
    \begin{equation}\label{eq:S-O-relation}
        \Res_x \frac{(w+x)^{\wt a-1+\delta(r)+\frac{r}{T}}}{x^{j+\delta(r)}} S\bracket*{u_3}{(Y_{M^1}(a,x)v,\qq)}{u_2}=
        \begin{dcases*}
        S\bracket*{u_3\cdot[a]}{(v,\qq)}{u_2} & if $r,j=0$,\\
        0& if $j\ge 1$. 
        \end{dcases*}
    \end{equation}
\end{lemma}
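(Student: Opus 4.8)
The plan is to reduce the left-hand side of \labelcref{eq:S-O-relation} to a pairing of $\varphi$ against an element that either lies in the subspace $J$ (forcing the pairing to vanish) or is identified by the relation \labelcref{f-relation2} with $u_3\cdot[a]\o v\o u_2$. By linearity I may assume $v$ is homogeneous, so that every coefficient $\vo{a}{n}v$ of $Y_{M^1}(a,x)v$ is homogeneous.

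First I would unwind \cref{def:3-point}. Since $\varphi$ is insensitive to the scaling variable $w$ and $w^{h_1-L(0)}$ acts coefficientwise, I would use the standard conformal conjugation $w^{-L(0)}Y_{M^1}(a,x)w^{L(0)}=w^{-\wt a}Y_{M^1}(a,w^{-1}x)$ (valid for homogeneous $a$, as $\Lie*{L(0)}{\vo{a}{n}}=(\wt a-n-1)\vo{a}{n}$ on $M^1$) to obtain
\[
  S\bracket*{u_3}{(Y_{M^1}(a,x)v,\qq)}{u_2}
  = w^{-\wt a-\deg v}\braket*{\varphi}{u_3\o Y_{M^1}(a,w^{-1}x)v\o u_2}.
\]
Then I would substitute $x=wx'$, under which $(w+x)^{E}=w^{E}(1+x')^{E}$ with $E=\wt a-1+\delta(r)+\frac{r}{T}$, while $x^{j+\delta(r)}=w^{j+\delta(r)}(x')^{j+\delta(r)}$ and $\Res_x=w\Res_{x'}$. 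Collecting the powers of $w$, which combine to $w^{-\deg v+\frac{r}{T}-j}$, the left-hand side of \labelcref{eq:S-O-relation} becomes
\[
  w^{-\deg v+\frac{r}{T}-j}\,
  \braket*{\varphi}{u_3\o\left(\Res_{x'}\frac{(1+x')^{\wt a-1+\delta(r)+\frac{r}{T}}}{(x')^{j+\delta(r)}}Y_{M^1}(a,x')v\right)\o u_2}.
\]

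Next I would match the bracketed residue against the two cases. When $j\ge 1$ the denominator exponent is $j+\delta(r)=(j-1)+1+\delta(r)$ with $j-1\ge 0$, so the residue is exactly the element in \labelcref{eq:O-relation} with $i=0$ and the index $j-1\ge i$; hence $u_3\o(\cdots)\o u_2\in J$ and the pairing vanishes, uniformly for $r=0$ and $r\neq 0$. When $r=j=0$ we have $\delta(r)=1$, so the residue is $\Res_{x'}\frac{(1+x')^{\wt a}}{x'}Y_{M^1}(a,x')v=\sum_{l\ge 0}\binom{\wt a}{l}\vo{a}{l-1}v=a\ast v$, the prefactor is $w^{-\deg v}$, and \labelcref{f-relation2} gives $\braket*{\varphi}{u_3\o(a\ast v)\o u_2}=\braket*{\varphi}{u_3\cdot[a]\o v\o u_2}$; reassembling with the prefactor yields $S\bracket*{u_3\cdot[a]}{(v,\qq)}{u_2}$, as \cref{def:3-point} records.

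The step I expect to be the main obstacle is purely the bookkeeping in the second paragraph: tracking the powers of $w$ correctly through the conjugation and the substitution $x=wx'$, and checking that the index constraint $j\ge i\ge 0$ in \labelcref{eq:O-relation} is met after rewriting $j+\delta(r)$ as $(j-1)+1+\delta(r)$ (only $i=0$ is ever needed, so the binding inequality is precisely $j-1\ge 0$, matching the hypothesis $j\ge 1$). No deep input is required beyond the conformal-block relations already recorded: the analytic content is entirely in \labelcref{eq:O-relation} and \labelcref{f-relation2} together with the vanishing of $\varphi$ on $J$.
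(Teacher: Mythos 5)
Your proof is correct and follows essentially the same route as the paper's: both reduce the residue to one in the rescaled variable $x/w$ (you via the conjugation $w^{-L(0)}Y_{M^1}(a,x)w^{L(0)}=w^{-\wt a}Y_{M^1}(a,w^{-1}x)$, the paper by expanding the series and recognizing $Y_{M^1}(a,x/w)$ directly), and then conclude by \labelcref{eq:O-relation} for $j\ge 1$ and by \labelcref{f-relation2} for $r=j=0$. Your bookkeeping of the $w$-powers and of the index shift $j+\delta(r)=(j-1)+1+\delta(r)$ needed to invoke \labelcref{eq:O-relation} with $i=0$ is accurate.
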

\begin{proof}
By \cref{def:3-point} and the change of variable formula, we have
    \begin{align*}
        \MoveEqLeft
        \Res_x \frac{(w+x)^{\wt a-1+\delta(r)+\frac{r}{T}}}{x^{j+\delta(r)}} S\bracket*{u_3}{(Y_{M^1}(a,x)v,\qq)}{u_2}\\
        &=\Res_{x} \frac{(w+x)^{\wt a-1+\delta(r)+\frac{r}{T}}}{x^{j+\delta(r)}} \sum_{n\in \Z} \braket*{\varphi}{u_3\otimes \vo{a}{n+\frac{r}{T}}v\otimes u_2} x^{-n-\frac{r}{T}-1}w^{-\wt a+n+\frac{r}{T}+1-\deg v}\\
        &=\Res_x \frac{1}{w}\frac{(1+x/w)^{\wt a-1+\delta(r)+\frac{r}{T}}}{(x/w)^{j+\delta(r)}} \braket*{\varphi}{u_3\otimes Y_{M^1}(a,x/w)v\otimes u_2} w^{-j-\deg v-\frac{r}{T}}\\
        &=\Res_z \frac{(1+z)^{\wt a-1+\delta(r)+\frac{r}{T}}}{z^{j+\delta(r)}} \braket*{\varphi}{u_3\otimes Y_{M^1}(a,z)v\otimes u_2} w^{-j-\deg v-\frac{r}{T}}.
    \end{align*}
By \labelcref{eq:O-relation}, the last term is $0$ if $j\ge 1$. On the other hand, if $r, j=0$, by \labelcref{f-relation2} we have 
    \begin{align*}
        \MoveEqLeft
        \Res_z \frac{(1+z)^{\wt a}}{z} \braket*{\varphi}{u_3\otimes Y_{M^1}(a,z)v\otimes u_2} w^{-\deg v}\\
        &=\braket*{\varphi}{u_3 \otimes \sum_{i\ge 0} \binom{\wt a}{i} \vo{a}{i-1}v\otimes u_2} w^{-\deg v}\\
        &=\braket*{\varphi}{u_3\cdot [a]\otimes v\otimes u_2} w^{-\deg v}=S\bracket*{u_3\cdot [a]}{(v,\qq)}{u_2}.
    \end{align*}
This proves \labelcref{eq:S-O-relation}. 
\end{proof}

\begin{proposition}\label{pro:associativity-5-point}
  For any $u_3\in U^3$, $u_2\in U^2$, $v\in M^1$, $a^1,a^2\in V$, and $k\in \Z$, we have
  \begin{equation}\label{eq:associativity-5-point}
  \Res_{\pp_1=\pp_2}S\bracket*{u_3}{(a^{1},\pp_{1})(a^{2},\pp_{2})(v,\qq)}{u_2}(z_{1}-z_{2})^{k} \d{z_{1}}=S\bracket*{u_3}{(\vo{a^{1}}{k}a^{2},\pp_{2})(v,\qq)}{u_2}.
  \end{equation} 
\end{proposition}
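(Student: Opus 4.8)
The plan is to reduce to homogeneous $a^1\in V^r$ and $a^2\in V^s$ by bilinearity, and then to treat three regimes of the index $k$ in the order $k\ge 0$, then $k<-1$ by descent, and finally the single hard value $k=-1$. Throughout I would compute the left-hand side of \labelcref{eq:associativity-5-point} from the explicit formula \labelcref{ExpansionFromLeft} for $S^L_{VVM}$, writing $n_1:=\wt a^1-1+\delta(r)+\tfrac{r}{T}$, and noting that of its three summands only the middle one (the $F_{n_1,i}(\pp_1,\pp_2)$ term) is singular along $\pp_1=\pp_2$: the first summand is $z_1^{-\wt a^1}$ times a function independent of $z_1$, and the $F_{n_1,i}(\pp_1,\qq)$ in the third summand are regular at $\pp_1=\pp_2$ since $\pp_2\ne\qq$.

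For $k\ge 0$ I would evaluate the single relevant residue directly from \cref{lem:ExpOfFpq}: expanding $\iota_{\pp_1=\pp_2}F_{n_1,i}$ and extracting the coefficient of $(z_1-z_2)^{-1}$ forces $p=l-k$, and the resulting finite sum $\sum_{s\ge 0}\binom{n_1}{i-k-s}\binom{-n_1}{s}$ is the coefficient of $x^{i-k}$ in $(1+x)^{n_1}(1+x)^{-n_1}=1$, hence equals $\delta_{i,k}$. Thus $\Res_{\pp_1=\pp_2}F_{n_1,i}(\pp_1,\pp_2)(z_1-z_2)^{k}\d{z_1}=\delta_{i,k}$, only the $i=k$ term of the middle summand survives, and it produces exactly $S\bracket*{u_3}{(\vo{a^1}{k}a^2,\pp_2)(v,\qq)}{u_2}$. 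This is the same Vandermonde computation already used in \cref{pro:associativity-4-point}.

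For $k<-1$ I would argue by descent on $k$, using $k=-1$ (together with $k\ge0$) as the base. Writing $G:=S\bracket*{u_3}{(a^1,\pp_1)(a^2,\pp_2)(v,\qq)}{u_2}$ and using $(z_1-z_2)^k=\tfrac{1}{k+1}\partial_{z_1}(z_1-z_2)^{k+1}$ together with the vanishing of the residue of an exact form, integration by parts gives
\[
  \Res_{\pp_1=\pp_2}G\,(z_1-z_2)^{k}\d{z_1}
  =\tfrac{-1}{k+1}\Res_{\pp_1=\pp_2}(\partial_{z_1}G)(z_1-z_2)^{k+1}\d{z_1}.
\]
By the $\vo{L}{-1}$-derivative property \labelcref{5point-L(-1)a}, $\partial_{z_1}G=S\bracket*{u_3}{(\vo{L}{-1}a^1,\pp_1)(a^2,\pp_2)(v,\qq)}{u_2}$, and the inductive hypothesis at index $k+1$ applied to $\vo{L}{-1}a^1$ (still homogeneous in $V^r$) converts the right-hand side into $\tfrac{-1}{k+1}S\bracket*{u_3}{(\vo{(\vo{L}{-1}a^1)}{k+1}a^2,\pp_2)(v,\qq)}{u_2}$. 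Since $\vo{(\vo{L}{-1}a^1)}{k+1}=-(k+1)\vo{a^1}{k}$, the prefactor cancels and the claim for $k$ follows.

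The crux, and the main obstacle, is the base case $k=-1$. Here all three summands of \labelcref{ExpansionFromLeft} contribute, and the target $S\bracket*{u_3}{(\vo{a^1}{-1}a^2,\pp_2)(v,\qq)}{u_2}$ must be re-expanded by the recursive formula \labelcref{4.7}; the difficulty is that $\vo{a^1}{-1}a^2$ lies in $V^{(r+s)\bmod T}$, so its intrinsic shift $\delta\!\big((r+s)\bmod T\big)+\tfrac{(r+s)\bmod T}{T}$ need not match the shift $n_1$ carried by the $F_{n_1,i}$ on the left. I would therefore split into the cases $r=0$; $r,s\ne0$ with $r+s\ne T$; and $r,s\ne0$ with $r+s=T$, paralleling the analysis already used for \labelcref{com2} in \cref{lem:com}. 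In each case I plan to rewrite the left-hand side via the generating-function form \labelcref{recursive-other-form}, collapse the $z_1$-residue using the Vandermonde identities from \cref{lem:ExpOfFpq}, and match the outcome against \labelcref{4.7} by invoking \cref{lem:S-O-relation} (which turns the $a^1$-contractions against $v$ and $u_2$ into the $A_g(V)$-actions $u_3\cdot[a^1]$ and $[\,\cdot\,]\cdot u_2$) together with the relations \labelcref{f-relation2,f-relation3,f-relation4} furnished by \cref{lem:contracted-space-and-bimodule}. The genuinely new phenomenon is the case $r+s=T$, where the zero-mode $[\vo{a^1}{-1}a^2]$ becomes nonzero and the $\delta$-shift reappears: the fractional exponents $\tfrac{r}{T}$ and $\tfrac{s}{T}$ must recombine into the integer shift $\delta(0)=1$ demanded by \labelcref{4.7}, exactly as the identity $\tfrac{r}{T}=1-\tfrac{s}{T}$ was exploited in \cref{lem:com}.
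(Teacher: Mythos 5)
Your route is the paper's own: reduce to homogeneous $a^1\in V^r$, $a^2\in V^s$; for $k\ge 0$ observe that only the $F_{\wt a^1-1+\delta(r)+\frac{r}{T},i}(\pp_1,\pp_2)$ summand of \labelcref{ExpansionFromLeft} is singular along $z_1=z_2$ and collapse its residue by the Vandermonde identity to $\delta_{i,k}$; treat $k<-1$ by descent from $k=-1$; and isolate $k=-1$ as the crux. The $k\ge 0$ computation is correct and identical to the paper's, and your integration-by-parts descent for $k<-1$ (using \labelcref{5point-L(-1)a} and $\vo{(\vo{L}{-1}a^1)}{k+1}=-(k+1)\vo{a^1}{k}$) is a correct, explicit version of a step the paper merely asserts.

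The gap is that the $k=-1$ case, which carries essentially all of the content of \labelcref{eq:associativity-5-point}, is left as a plan rather than executed. You have correctly identified the ingredients the paper uses --- rewriting via \labelcref{recursive-other-form}, \cref{lem:S-O-relation} to convert the $a^1$-contractions into $A_g(V)$-actions, the relations \labelcref{f-relation2,f-relation3,f-relation4}, and the mismatch between the shift $\delta(r)+\frac{r}{T}$ on the left and $\delta(\overline{r+s})+\frac{\overline{r+s}}{T}$ carried by $\vo{a^1}{-1}a^2$ --- but the actual verification is a substantial cancellation that your outline does not perform: one must first check that the three ``zero-mode'' terms cancel using only that $U^3$ is a right $A_g(V)$-module (separately for $r\neq 0$, for $r=0,s\neq 0$, and for $r=s=0$, where associativity of $\ast_g$ enters), and then regroup the five remaining terms with the Jacobi identity of $Y_{M^1}$ before \cref{lem:S-O-relation} applies. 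In addition, your case list for $k=-1$ ($r=0$; $r,s\neq 0$ with $r+s\neq T$; $r,s\neq 0$ with $r+s=T$) omits the case $r\neq 0$, $s=0$. That omission is harmless for the symmetric identity \labelcref{com2}, where it follows by exchanging $a^1$ and $a^2$, but \labelcref{eq:associativity-5-point} is not symmetric in $a^1$ and $a^2$, so this case needs its own (routine but separate) treatment --- this is precisely why the paper enumerates six cases rather than three.
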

\begin{proof}
It suffices to prove \cref{pro:associativity-5-point} for homogeneous $a^1\in V^r$ and $ a^2\in V^{s}$, where $0\le r, s<T$. Note that $\vo{a^1}{n}a^2\in V^{\overline{r+s}}$ for any $n\in \Z$, where $\overline{r+s}$ denotes the residue of $r+s$ modulo $T$.
  
When $k\ge 0$, by \labelcref{ExpansionFromLeft}, we have
    \begin{align*}
        \MoveEqLeft
        \Res_{\pp_1=\pp_2} S\bracket*{u_3}{(a^1,\pp_1)(a^2,\pp_2)(v,\qq)}{u_2}(z_1-z_2)^k \d{z_1}\\
        ={}&\Res_{\pp_1=\pp_2} S\bracket*{u_3\cdot [a^1]}{(a^2,\pp_2)(v,\qq)}{u_2}z_1^{-\wt a^1} (z_1-z_2)^k \d{z_1}\\
        &+ \Res_{\pp_1=\pp_2} \sum_{i\ge 0}F_{\wt a^1-1+\delta(r)+\frac{r}{T}}(\pp_1,\pp_2) S\bracket*{u_3}{(\vo{a^1}{i}a^2,\pp_2)(v,\qq)}{u_2}(z_1-z_2)^k  \d{z_1}\\
        &+\Res_{\pp_1=\pp_2} \sum_{i\ge 0} F_{\wt a^1-1+\delta(r)+\frac{r}{T}} (\pp_1,\qq) S\bracket*{u_3}{(a^2,\pp_2)(\vo{a^1}{i}v,\qq)}{u_2}(z_1-z_2)^k \d{z_1}\\
        ={}&0+\Res_{\pp_1=\pp_2} \sum_{i\ge 0}\sum_{l=0}^i \sum_{p\ge 0}      \binom{\wt a^1-1+\delta(r)+\frac{r}{T}}{i-l}\binom{-\wt a^1+1-         \delta(r)-\frac{r}{T}}{p} \\
        &\cdot \frac{z_2^{-i+l-p}}{(z_1-z_2)^{l+1-p-k}}S\bracket*{u_3}{(\vo{a^1}{i}a^2,\pp_2)(v,\qq)}{u_2}+0\\
        &= \sum_{i\ge 0} \left(\sum_{l=k}^i\binom{\wt a^1-1+\delta(r)+\frac{r}{T}}{i-l} \binom{-\wt a^1+1-\delta(r)-\frac{r}{T}}{l-k} \right)\\
        &\cdot S\bracket*{u_3}{(\vo{a^1}{i}a^2,\pp_2)(v,\qq)}{u_2}z_2^{-i+k} \\
        &= S\bracket*{u_3}{(\vo{a^1}{k}a^2,\pp_2)(v,\qq)}{u_2}.
    \end{align*}
  
Now consider the case where $k=-1$. Similar to the case $k\ge 0$, we have
    \begin{align*}
        &\Res_{\pp_1=\pp_2} S\bracket*{u_3}{(a^1,\pp_1)(a^2,\pp_2)(v,\qq)}{u_2}(z_1-z_2)^{-1} \d{z_1}-S\bracket*{u_3}{(\vo{a^1}{-1}a^2,\pp_2)(v,\qq)}{u_2}\\
        ={}&\Res_{\pp_1=\pp_2} S\bracket*{u_3\cdot [a^1]}{(a^2,\pp_2)(v,\qq)}{u_2}z_1^{-\wt a^1} (z_1-z_2)^{-1}\d{z_1}\\
        &+\Res_{\pp_1=\pp_2} \sum_{i\ge 0} F_{\wt a^1-1+\delta(r)+\frac{r}{T}, i}(\pp_1,\pp_2) S\bracket*{u_3}{\vo{(a^1}{i}a^2,\pp_2)(v,\qq)}{u_2}(z_1-z_2)^{-1}\d{z_1}\\
        &+\Res_{\pp_1=\pp_2}\Res_{x_1} \frac{z_1^{-\wt a^1+1-\delta(r)-\frac{r}{T}}(w+x_1)^{\wt a^1-1+\delta(r)+\frac{r}{T}}}{z_1-w-x_1} (z_1-z_2)^{-1}\d{z_1}\\
        &\cdot S\bracket*{u_3}{(a^2,\pp_2)(Y_{M^1}(a^1, x_1)v,\qq)}{u_2}\\
        &-S\bracket*{u_3}{(\vo{a^1}{-1}a^2,\pp_2)(v,\qq)}{u_2}\\
        ={}&S((u_3\cdot [a^1])\cdot[a^2], (v, \qq)u_2) z_2^{-\wt a^2-\wt a^1}\\
        &+\Res_{\pp_1=\pp_2} \sum_{i\ge 0} F_{\wt a^2-1+\delta(s)+\frac{s}{T},i}(\pp_2,\qq) S\bracket*{u_3\cdot [a^1]}{(\vo{a^2}{i}v,\qq)}{u_2}z_1^{-\wt a^1}(z_1-z_2)^{-1}\d{z_1}\\
        &-\sum_{i\ge 0}\binom{\wt a^1-1+\delta(r)+\frac{r}{T}}{i+1} S\bracket*{u_3}{(\vo{a^1}{i}a^2, z_2)(v, \qq)}{u_2} z_2^{-i-1}\\
        &+\Res_{x_1} \frac{z_2^{-\wt a^1+1-\delta(r)-\frac{r}{T}}(w+x_1)^{\wt a^1-1+\delta(r)+\frac{r}{T}}}{z_2-w-x_1} S\bracket*{u_3}{(a^2,\pp_2)(Y_{M^1}(a^1, x_1)v,\qq)}{u_2}\\
        &-S\bracket*{u_3}{(\vo{a^1}{-1}a^2,\pp_2)(v, \qq)}{u_2}\\
        ={}&\underbrace{S\bracket*{(u_3\cdot [a^1])\cdot[a^2]}{(v, \qq)}{u_2}z_2^{-\wt a^2-\wt a^1}}_{(A1)}\\
        &+\underbrace{\Res_{x_2}\frac{z_2^{-\wt a^2+1-\delta(s)-\frac{s}{T}-\wt a^1}(w+x_2)^{\wt a^2-1+\delta(s)+\frac{s}{T}}}{z_2-w-x_2} S\bracket*{u_3\cdot [a^1]}{(Y_{M^1}(a^2,x_2)v,\qq)}{u_2}}_{(B1)} \\
        &\underbrace{-\sum_{i\ge 0} \binom{\wt a^1-1+\delta(r)+\frac{r}{T}}{i+1} S\bracket*{u_3\cdot[\vo{a^1}{i}a^2]}{(v, \qq)}{u_2}z_2^{-\wt a^1-\wt a^2}}_{(A2)}\\
        &\underbrace{-\sum_{i\ge 0} \binom{\wt a^1-1+\delta(r)+\frac{r}{T}}{i+1} \Res_{x_2}\frac{(w+x_2)^{\wt a^1-i-1+\wt a^2-1+\delta(\overline{r+s})+\frac{\overline{r+s}}{T}}}{z_2-w-x_2}}_{(B_2)}\\
        &\underbrace{\quad \cdot z_2^{-\wt a^1-\wt a^2+1-\delta(\overline{r+s})-\frac{\overline{r+s}}{T}}S\bracket*{u_3}{(Y_{M^1}(\vo{a^1}{i}a^2,x_2)v,\qq)}{u_2}}_{(B2)}\\
        &+\underbrace{\Res_{x_1} \frac{z_2^{-\wt a^1+1-\delta(r)-\frac{r}{T}}(w+x_1)^{\wt a^1-1+\delta(r)+\frac{r}{T}}}{z_2-w-x_1} S\bracket*{u_3\cdot[a^2]}{(Y_{M^1}(a^1,x_1)v,\qq)}{u_2}z_2^{-\wt a^2}}_{(B3)}\\
        &+\underbrace{\Res_{x_1}\Res_{x_2}\frac{(w+x_1)^{\wt a^1-1+\delta(r)+\frac{r}{T}}}{z_2-w-x_1}\cdot \frac{(w+x_2)^{\wt a^2-1+\delta(s)+\frac{s}{T}}}{z_2-w-x_2}}_{(B4)}\\
        &\phantom{=}  \cdot \underbrace{z_2^{-\wt a^1-\wt a^2+2-\delta(r)-\delta(s)-\frac{s+r}{T}}S\bracket*{u_3}{(Y_{M^1}(a^2,x_2)Y_{M^1}(a^1,x_1)v,\qq)}{u_2}}_{(B4)}\\
        &\underbrace{-S\bracket*{u_3\cdot[\vo{a^1}{-1}a^2]}{(v, \qq)}{u_2}z_2^{-\wt a^1-\wt a^2}}_{(A3)}\\
        &\underbrace{-\Res_{x_2}\frac{z_2^{-\wt a^1-\wt a^2+1-\delta(\overline{r+s})-\frac{\overline{r+s}}{T}}(w+x_2)^{\wt a^1+\wt a^2-1+\delta(\overline{r+s})+\frac{\overline{r+s}}{T}}}{z_2-w-x_2}}_{(B5)}\\
        &\phantom{=}  \cdot \underbrace{S\bracket*{u_3}{(Y_{M^1}(\vo{a^1}{-1}a^2,x_2)v,\qq)}{u_2}}_{(B5)}\\
        ={}&(A1)+(A2)+(A3)+(B1)+(B2)+(B3)+(B4)+(B5). 
    \end{align*}
By \labelcref{eq:def:3-point} and \labelcref{f-relation2}, we have 
    \begin{align*}
        \MoveEqLeft
        (A1)+(A2)+(A3)\\
        &=S\bracket*{(u_3\cdot [a^1])\cdot[a^2]}{(v, \qq)}{u_2}z_2^{-\wt a^2-\wt a^1}\\
        &-\sum_{i\ge 0} \binom{\wt a^1-1+\delta(r)+\frac{r}{T}}{i+1} S\bracket*{u_3\cdot[\vo{a^1}{i}a^2}{(v, \qq)}{u_2}z_2^{-\wt a^1-\wt a^2}\\
        &-S\bracket*{u_3\cdot[\vo{a^1}{-1}a^2]}{(v, \qq)}{u_2}z_2^{-\wt a^1-\wt a^2}\\
        ={}&\braket*{\varphi}{(u_3\cdot [a^1])\cdot[a^2]\otimes v\otimes u_2- \sum_{j\ge 0}\binom{\wt a^1-1+\delta(r)+\frac{r}{T}}{j}u_3\cdot[\vo{a^1}{j-1}a^2]\otimes v\otimes u_2}\\
        &\phantom{=}  \cdot z_{2}^{\wt a^1-\wt a^2} w^{-\deg v}\\   
        ={}&0.
    \end{align*}
The last equality follows from the fact that $U^3$ is a right module over $A_g(V)$. More precisely, if $r\neq 0$, we have $[a^1]=0$, and $\sum_{j\ge 0} \binom{\wt a^1-1+\frac{r}{T}}{j}[\vo{a^1}{j-1}a^2]=0$; if $r=0$ and $s\neq 0$, the last equality holds since $[a^2]=[\vo{a^1}{j-1}a^2]=0$ for all $j\ge 0$; if $r=s=0 $, the last equality holds since $(u_3\cdot [a^1])\cdot[a^2]=u_3([a^1]\ast_g [a^2])$. 

On the other hand, by the Jacobi identity, we can express $(B2)+(B5)$ as follows: 
    \begin{align*}
        \MoveEqLeft
        (B2)+(B5)\\
        ={}&-\sum_{j\ge 0}\Res_{x_2}\Res_{x_1-x_2} (x_1-x_2)^{j-1} \binom{\wt a^1-1+\delta(r)+\frac{r}{T}}{j} (x_1+x_2)^{-j}\\ 
        &\cdot \frac{z_2^{-\wt a^1-\wt a^2+1-\delta(\overline{r+s})-\frac{\overline{r+s}}{T}}(w+x_2)^{\wt a^1+\wt a^2-1+\delta(\overline{r+s})+\frac{\overline{r+s}}{T}}}{z_2-w-x_2}\\
        &\cdot S\bracket*{u_3}{(Y_{M^1}(Y(a^1,x_1-x_2)a^2,x_2)v,\qq)}{u_2}\\
        ={}&-\Res_{x_2}\Res_{x_1-x_2} \frac{1}{x_1-x_2}\left(1+\frac{x_1-x_2}{w+x_2}\right)^{\wt a^1-1+\delta(r)+\frac{r}{T}}z_2^{-\wt a^1-\wt a^2+1-\delta(\overline{r+s})-\frac{\overline{r+s}}{T}} \\
        &\cdot \frac{(w+x_2)^{\wt a^1+\wt a^2-1+\delta(\overline{r+s})+\frac{\overline{r+s}}{T}}}{z_2-w-x_2}
        S\bracket{u_3}{(Y_{M^1}(Y(a^1,x_1-x_2)a^2,x_2)v,\qq)}{u_2}\\
        ={}& \underbrace{-\Res_{x_1,x_2}\left(\frac{1}{x_1-x_2}\right) \frac{ (w+x_1)^{\wt a^1-1+\delta(r)+\frac{r}{T}}(w+x_2)^{\wt a^2+\delta(\overline{r+s})+\frac{\overline{r+s}}{T}-\delta(r)-\frac{r}{T}}}{z_2-w-x_2}}_{(C1)}\\
        &\phantom{=}  \underbrace{\cdot z_2^{-\wt a^1-\wt a^2+1-\delta(\overline{r+s})-\frac{\overline{r+s}}{T}}S\bracket*{u_3}{(Y_{M^1}(a^1,x_1)Y_{M^1}(a^2,x_2)v,\qq)}{u_2}}_{(C1)}\\
        &+\underbrace{\Res_{x_1,x_2}\left(\frac{1}{-x_2+x_1}\right) \frac{ (w+x_1)^{\wt a^1-1+\delta(r)+\frac{r}{T}}(w+x_2)^{\wt a^2+\delta(\overline{r+s})+\frac{\overline{r+s}}{T}-\delta(r)-\frac{r}{T}}}{z_2-w-x_2}}_{(C2)}\\
        &\phantom{=}  \underbrace{\cdot z_2^{-\wt a^1-\wt a^2+1-\delta(\overline{r+s})-\frac{\overline{r+s}}{T}} S\bracket*{u_3}{(Y_{M^1}(a^2,x_2)Y_{M^1}(a^1,x_1)v,\qq)}{u_2}}_{(C2)}\\
        ={}&(C1)+(C2).
  \end{align*}
By \cref{lem:S-O-relation}, we have
    \begin{align*}
        (C1)&=-\sum_{j\ge 0} \Res_{x_2}\frac{z_2^{-\wt a^1-\wt a^2+1-\delta(\overline{r+s})-\frac{\overline{r+s}}{T}}(w+x_2)^{\wt a^2+\delta(\overline{r+s})+\frac{\overline{r+s}}{T}-\delta(r)-\frac{r}{T}}x_2^j}{z_2-w-x_2}\\
        &\phantom{=}  \cdot S\bracket*{u_3}{(\Res_{x_1} \frac{(w+x_1)^{\wt a^1-1+\delta(r)+\frac{r}{T}}}{x_1^{j+1}}Y_{M^1}(a^1,x_1)Y_{M^1}(a^2,x_2)v,\qq)}{u_2} \\
        &=-\Res_{x_2} \frac{z_2^{-\wt a^1-\wt a^2+1-\delta(\overline{r+s})-\frac{\overline{r+s}}{T}}(w+x_2)^{\wt a^2+\delta(\overline{r+s})+\frac{\overline{r+s}}{T}-\delta(r)-\frac{r}{T}}x_2^j}{z_2-w-x_2}\\
        &\phantom{=}  \cdot S\bracket*{u_3\cdot [a^1]}{(Y_{M^1}(a^2,x_2)v,\qq)}{u_2}\\
        &=-(B1). 
  \end{align*}
The last equality holds since both $(B1)$ and $(C1)$ are equal to $0$ when $r\neq0$.
 
For $(C2)+(B4)$, using \cref{lem:S-O-relation} again, we have 
    \begin{align*}
        &(C2)+(B4)\\
        ={}&\Res_{x_1,x_2}\left(\frac{1}{-x_2+x_1}\right) \frac{ (w+x_1)^{\wt a^1-1+\delta(r)+\frac{r}{T}}(w+x_2)^{\wt a^2+\delta(\overline{r+s})+\frac{\overline{r+s}}{T}-\delta(r)-\frac{r}{T}}}{z_2-w-x_2}\\
        &\phantom{=}  \cdot  z_2^{-\wt a^1-\wt a^2+1-\delta(\overline{r+s})-\frac{\overline{r+s}}{T}} S\bracket*{u_3}{(Y_{M^1}(a^2,x_2)Y_{M^1}(a^1,x_1)v,\qq)}{u_2} \\
        &\phantom{=}  +\Res_{x_1}\Res_{x_2} \frac{(w+x_1)^{\wt a^1-1+\delta(r)+\frac{r}{T}}}{z_2-w-x_1}\cdot \frac{(w+x_2)^{\wt a^2-1+\delta(s)+\frac{s}{T}}}{z_2-w-x_2}\\
        &\phantom{=}  \cdot z_2^{-\wt a^1-\wt a^2+2-\delta(r)-\delta(s)-\frac{r+s}{T}}S\bracket*{u_3}{(Y_{M^1}(a^2,x_2)Y_{M^1}(a^1,x_1)v,\qq)}{u_2}.
    \end{align*}
Note that $(C2)+(B4)$ varies when $r$ and $ s$ take different values. There are 6 cases in total: (1) $r=s=0$, (2) $r=0$ and $ s\neq 0$, (3) $r\neq 0$ and $ s=0$, (4) $r+s=T$, (5) $r, s\neq 0$, and $r+s<T$, and (6) $r+s>T$. We only present the proof for the case $r=s=0$ and the case $r+s=T$. The proof of other cases are similar, we omit the details. 
 
 Case (1): $r=s=0$. In this case, 
 \begin{align*}
     &(C2)+(B4)\\
     ={}&\Res_{x_1}\Res_{x_2} \frac{ (w+x_1)^{\wt a^1}(w+x_2)^{\wt a^2}}{z_2-w-x_2}\left(\frac{1}{-x_2+x_1}+\frac{1}{z_2-w-x_1}\right)z_2^{-\wt a^1-\wt a^2}\\
     &\phantom{=}  \cdot S\bracket*{u_3}{(Y_{M^1}(a^2,x_2)Y_{M^1}(a^1,x_1)v,\qq)}{u_2}\\
     ={}&\Res_{x_1}\Res_{x_2} \frac{ (w+x_1)^{\wt a^1}(w+x_2)^{\wt a^2}}{z_2-w-x_2}\left(\frac{z_2-w-x_2}{(-x_2+x_1)(z_2-w-x_1)}\right)z_2^{-\wt a^1-\wt a^2}\\
     &\phantom{=}  \cdot S\bracket*{u_3}{(Y_{M^1}(a^2,x_2)Y_{M^1}(a^1,x_1)v,\qq)}{u_2}\\
     =-&\Res_{x_1} \frac{ (w+x_1)^{\wt a^1}}{z_2-w-x_1}\Res_{x_2}\sum_{j\ge 0}\left(\frac{(w+x_2)^{\wt a^2}}{x_2^{1+j}}\right)x_1^jz_2^{-\wt a^1-\wt a^2}\\
     &\phantom{=}  \cdot S\bracket*{u_3}{(Y_{M^1}(a^2,x_2)Y_{M^1}(a^1,x_1)v,\qq)}{u_2}\\
     ={}&-(B3),
 \end{align*}
 where the last equality follows from \cref{lem:S-O-relation}.

 Case (4): $r+s=T$. Since $r,s\neq 0$, we have
 \begin{align*}
     &(C2)+(B4)\\
     ={}&\Res_{x_1}\Res_{x_2} \frac{ (w+x_1)^{\wt a^1-1+\frac{r}{T}}(w+x_2)^{\wt a^2+\frac{s}{T}}}{z_2-w-x_2}\left(\frac{1}{-x_2+x_1}\right)z_2^{-\wt a^1-\wt a^2}\\
     &\phantom{=}  \cdot S\bracket*{u_3}{(Y_{M^1}(a^2,x_2)Y_{M^1}(a^1,x_1)v,\qq)}{u_2}\\
     &+\Res_{x_1}\Res_{x_2} \frac{ (w+x_1)^{\wt a^1-1+\frac{r}{T}}(w+x_2)^{\wt a^2-1+\frac{s}{T}}}{z_2-w-x_2}\left(\frac{1}{z_2-w-x_1}\right)z_2^{-\wt a^1-\wt a^2+1}\\
     &\phantom{=}  \cdot S\bracket*{u_3}{(Y_{M^1}(a^2,x_2)Y_{M^1}(a^1,x_1)v,\qq)}{u_2}\\
     ={}&\Res_{x_1}\Res_{x_2} \frac{ (w+x_1)^{\wt a^1-1+\frac{r}{T}}(w+x_2)^{\wt a^2-1+\frac{s}{T}}}{z_2-w-x_2}\left(\frac{w+x_2}{-x_2+x_1}+\frac{z_2}{z_2-w-x_1}\right)\\
     &\phantom{=}  \cdot z_2^{-\wt a^1-\wt a^2}S\bracket{u_3}{(Y_{M^1}(a^2,x_2)Y_{M^1}(a^1,x_1)v,\qq)}{u_2}\\
     ={}&\Res_{x_1}\Res_{x_2} \frac{ (w+x_1)^{\wt a^1+\frac{r}{T}}(w+x_2)^{\wt a^2-1+\frac{s}{T}}}{z_2-w-x_1}\left(\frac{1}{-x_2+x_1}\right)z_2^{-\wt a^1-\wt a^2}\\
     &\phantom{=}  \cdot S\bracket{u_3}{(Y_{M^1}(a^2,x_2)Y_{M^1}(a^1,x_1)v,\qq)}{u_2}\\
     ={}&-\Res_{x_1}\frac{ (w+x_1)^{\wt a^1+\frac{r}{T}}}{z_2-w-x_1}\sum_{j\ge0}\Res_{x_2}\left(\frac{(w+x_2)^{\wt a^2-1+\frac{s}{T}}}{x_2^{1+j}}\right)x_1^jz_2^{-\wt a^1-\wt a^2}\\
     &\phantom{=}  \cdot S\bracket{u_3}{(Y_{M^1}(a^2,x_2)Y_{M^1}(a^1,x_1)v,\qq)}{u_2}\\
     ={}&0=-(B3).
 \end{align*}
Therefore, we have $(B1)+(B2)+(B3)+(B4)+(B5)=((B1)+(C1))+((C2)+(B4)+(B3))=0$, and so
\[\Res_{\pp_1=\pp_2} S\bracket*{u_3}{(a^1, \pp_1)(a^2,\pp_2)(v, \qq)}{u_2} (z_1-z_2)^{-1} \d{z_1} =S\bracket*{u_3}{(\vo{a^1}{-1}a^2,\pp_2)(v, \qq)}{u_2}.\]
This proves \labelcref{5.2} for $k=-1$. When $k<-1$, it can be proved inductively using the \emph{$\vo{L}{-1}$-derivative property}, we omit the details. 
\end{proof}

\begin{proposition}\label{prop:Cbf-to-Cor}
The system of $(n+3)$-point functions $S=\Set*{S_{V\cdots M\cdots V}}$ constructed from the given $\varphi\in\Cfb[U^3, M^1, U^2]$ in \cref{sec4} lies in $\Cor[\Sigma_{1}(U^3, M^1, U^2)]$. 
\end{proposition}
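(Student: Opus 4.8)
The plan is to verify, one at a time, the axioms of \cref{def:genus-zero-bottom} for the reconstructed system $S$, drawing on the constructions and propositions of \cref{sec4,sec5}. Several axioms are already secured: the \emph{monomial property} is immediate from \cref{def:3-point}; the two \emph{recursive formulas} \labelcref{eq:recursive_U3,eq:recursive_U2} hold by \cref{def:4-point,def:5-point} together with the inductive construction of the $(n+3)$-point functions at the end of \cref{sec4}; and the \emph{locality} (property (2) of \cref{def:genus-zero}) follows from \cref{lem:locality:4-point}, \cref{lem:com}, and that same inductive argument. Thus it remains to check properties (3)--(6) of \cref{def:genus-zero} — the homogeneous, vacuum, $\vo{L}{-1}$-derivative, and associativity properties — now imposed with $u_3\in U^3$ and $u_2\in U^2$.

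The first three are routine. For the \emph{vacuum property} I would set $a=\vac$ in \labelcref{eq:recursive_U3}: since $[\vac]$ acts as the identity, $\wt\vac=0$, and $\vo{\vac}{i}=0$ for every $i\ge 0$, the only surviving term is $S\bracket*{u_3}{\cdots}{u_2}$, which is \labelcref{eq:vacuum}. For the \emph{homogeneous property} I would induct on $n$: one application of a recursive formula multiplies lower-point functions by $z^{-\wt a}$ or by the coefficient functions $F_{\wt a-1+\delta(r)+\frac{r}{T},i}(\pp,\pp_k)$ and $F_{\wt a-1+\delta(r)+\frac{r}{T},i}(\pp,\qq)$, and by \cref{lem:ExpOfFpq} these carry precisely the fractional powers in $z$ (and in $z_k$, $w$) and the pole structure — poles only along $0$, $\infty$, $z_j=z_k$, and $z_p=w$ — that define $\cF(\rseq)$. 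The \emph{$\vo{L}{-1}$-derivative property} for general $n$ extends \cref{Pro:L(-1)-derivative} by the same induction, using the differentiation identities \labelcref{derF} for $F_{n,i}$.

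The substantive axiom is the \emph{associativity}, i.e. \labelcref{5.1,5.2} for every $(n+3)$-point function. The base cases are already in hand: \cref{pro:associativity-4-point} gives \labelcref{5.1} for the $4$-point functions and \cref{pro:associativity-5-point} gives \labelcref{5.2} for the $5$-point functions. For general $n$ I would argue by induction, following \cite[Section 4.3]{Liu}: one expands the leftmost insertion $(a^1,\pp_1)$ through \labelcref{eq:recursive_U3}, and the residue over $\pp_1$ then distributes over the finitely many resulting terms (the sums over $i$ terminate because $\vo{a}{i}a^k=0$ and $\vo{a}{i}v=0$ for $i\gg0$), each of which is an $(n+2)$-point function to which the inductive hypothesis applies. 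Crucially, the recursive formulas here differ from those of \cite{Liu} only through the non-integer shifts $\delta(r)+\frac{r}{T}$ living inside the coefficient functions $F_{n,i}$; these shifts have already been absorbed into the base-case computations, so the inductive step is formally identical to the untwisted one, and the residue manipulations are justified because the pole of $(z_1-z_2)^{k}$ (resp. $(z_1-w)^{k}$) at the relevant diagonal is disjoint from the poles created by the recursive expansion of the other variables.

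The hardest part will be carrying out the induction so that the delicate cancellations of the base case survive the presence of the spectator insertions $a^3,\dots,a^n$. The most demanding instance is $k=-1$ (and, via the $\vo{L}{-1}$-derivative property, $k<-1$) of \labelcref{5.2}: here $\Res_{\pp_1=\pp_2}$ of the recursively expanded function picks up not only the $z_1^{-\wt a^1}$- and $F(\pp_1,\pp_2)$-contributions — whose cancellation is governed by the residue class of $r+s$ modulo $T$ exactly as in the six-case analysis of \cref{pro:associativity-5-point} — but also, through the pole of $(z_1-z_2)^{-1}$, spectator contributions $F(\pp_2,\pp_j)$ and $F(\pp_2,\qq)$ that must be reassembled into the right-hand side $S\bracket*{u_3}{(\vo{a^1}{-1}a^2,\pp_2)\cdots}{u_2}$ by invoking the associativity of the $(n+2)$-point functions. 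Confirming that these pieces fit together, with all sums finite and all residue interchanges justified by the separation of poles, is the crux of the argument; once \labelcref{5.1,5.2} hold for all $n$, every axiom of \cref{def:genus-zero-bottom} is met and $S\in\Cor[\Sigma_{1}(U^3, M^1, U^2)]$.
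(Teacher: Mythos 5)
Your proposal is correct and follows essentially the same route as the paper: the paper's own proof simply cites the construction of $S$ from \cref{sec4} together with \cref{Pro:L(-1)-derivative,pro:associativity-4-point,pro:associativity-5-point}, leaving the general $(n+3)$-point extension of associativity to the same induction on \cite[Section 4.3]{Liu} that you describe. Your write-up is a more explicit account of the same verification, with the identification of associativity (especially the $k=-1$ case of \labelcref{5.2}) as the substantive step matching the paper's emphasis.
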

\begin{proof}
    It follows from the construction of $S$, \cref{Pro:L(-1)-derivative,pro:associativity-4-point,pro:associativity-5-point}.
\end{proof}
Now the proof of \cref{thm:iso-restrictcfb-bottomcorrelation} is complete. 

\section{Fusion rules characterized by \texorpdfstring{$A_g(V)$}{Ag(V)}-bimodules}\label{sec6}
In \cref{def:bimodule}, we constructed a quotient space $B_{g,\lambda}(M)=M/O_{g,\lambda}(M)$ associated to an untwisted module $M$ and a complex number $\lambda$. 

In this section, we will show that $B_{g,\lambda}(M)=M/O_{g,\lambda}(M)$ is in fact a bimodule over $A_g(V)$, and the space of coinvariants $(U^3\o M^1\o U^2)/J$ in \cref{def:resCfb}  is isomorphic to the tensor product space $U^3\otimes_{A_g(V)}B_{g,\lambda}(M^1)\otimes_{A_g(V)}U^2$, where $\lambda=h_2-h_3$. Moreover, we will show that the tensor products $U^3\otimes_{A_g(V)} B_{g,\lambda}(M^1)\otimes _{A_g(V)} U^2$ is isomorphic to the tensor product $U^3\otimes_{A_g(V)} A_g(M^1)\otimes _{A_g(V)} U^2$, which gives us two ways to compute fusion rules using $A_g(V)$-bimodules. Finally, we will show that the $g$-twisted fusion rules are all finite when the VOA $V$ is $g$-rational and $C_2$-cofinite. 

\subsection{The \texorpdfstring{$A_g(V)$}{Ag(V)}-bimodules \texorpdfstring{$B_{g,\lambda}(M^1)$}{Bglambda(M1)}}
\begin{lemma}\label{lem:bimodule}
Let $M$ be an untwisted module of conformal weight $h_1$, and let $\lambda$ be a complex number. For $a\in V^r$ and $u\in M$, define 
    \begin{equation}\label{eq:left-right-actions}
        \begin{aligned}
        a\ast_g u:= &\begin{cases}\Res_z Y_M(a,z)u\frac{(1+z)^{\wt a}}{z}& \mathrm{if}\ r=0\\
        0& \mathrm{if}\ r\neq 0  \end{cases},\\
        u\ast_g a:= &\begin{cases}\Res_z Y_M(a,z)u\frac{(1+z)^{\wt a-1}}{z}& \mathrm{if}\ r=0\\
        0& \mathrm{if}\ r\neq 0 \end{cases}.
        \end{aligned}
    \end{equation}
Then, we have $b\ast_g O_{g,\lambda}(M)\subseteq O_{g,\lambda}(M)$ and $O_{g,\lambda}(M)\ast _g b\subseteq O_{g,\lambda}(M)$ for any $b\in V$. Moreover, $B_{g,\lambda}(M)=M/O_{g,\lambda}(M)$ is a bimodule over $A_g(V)$, with respect to the products $a\ast_g u$ and $u\ast _g a$ in \labelcref{eq:left-right-actions}.
\end{lemma}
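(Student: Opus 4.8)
The plan is to follow the strategy of Frenkel--Zhu \cite{FZ} for the untwisted bimodule $A(M)$ and its twisted refinements in \cite{DLM1,JJ}, adapting every step to the fractional shift $\delta(r)+\frac{r}{T}$ that distinguishes $O_{g,\lambda}(M)$ from the untwisted $O(M)$. The first reduction is immediate: since both $a\ast_g u$ and $u\ast_g a$ vanish whenever $a\notin V^0$, all the stability and bimodule identities are trivial unless the multiplying element $b$ lies in $V^0$. Hence throughout we may assume $b\in V^0$, so that $b\ast_g u=\Res_z Y_M(b,z)u\frac{(1+z)^{\wt b}}{z}$ and $u\ast_g b=\Res_z Y_M(b,z)u\frac{(1+z)^{\wt b-1}}{z}$ are the familiar Frenkel--Zhu products, and the only twisting enters through the elements $a\circ_g u$ with $a\in V^r$, $r\neq 0$, that we must keep inside $O_{g,\lambda}(M)$.

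The technical core is a degree-shifting lemma, the $O_{g,\lambda}(M)$-analogue of \labelcref{eq:O-relation}: for homogeneous $a\in V^r$, $u\in M$, and integers $j\ge i\ge 0$,
\begin{equation*}
  \Res_x \frac{(1+x)^{\wt a-1+\delta(r)+\frac{r}{T}+i}}{x^{j+1+\delta(r)}}Y_M(a,x)u\in O_{g,\lambda}(M).
\end{equation*}
I would prove this directly by induction, taking $a\circ_g u\in O_{g,\lambda}(M)$ (the case $i=j=0$, by \labelcref{def:circle-g,def:O-quotient}) as the base case and using the relation $\vo{L}{-1}u+(\vo{L}{0}+\lambda)u\in O_{g,\lambda}(M)$ together with $[\vo{L}{-1},Y_M(a,x)]=\odv{x}Y_M(a,x)$ to raise the order of the pole at $x=0$; this is exactly the computation of \cite[Lemma 2.1.2]{Z} carried through with the extra $\frac{r}{T}$ in the exponent. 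With this lemma in hand, stability of $O_{g,\lambda}(M)$ under $b\ast_g(\placeholder)$ and $(\placeholder)\ast_g b$ for $b\in V^0$ follows by expanding $b\ast_g(a\circ_g u)$ via the associativity (Borcherds) identity for $Y_M$, reorganizing the result into a finite sum of terms each of the form covered by the degree-shifting lemma (or again of $a\circ_g$ type). Stability of the $\vo{L}{-1}+\vo{L}{0}+\lambda$ part is a separate short computation: one commutes $Y_M(b,z)$ past $\vo{L}{-1}$ and integrates by parts to show $b\ast_g(\vo{L}{-1}u+(\vo{L}{0}+\lambda)u)$ is again of the $\vo{L}{-1}+\vo{L}{0}+\lambda$ form modulo $O_{g,\lambda}(M)$.

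Once stability is established the products descend to $B_{g,\lambda}(M)$, and it remains to verify the bimodule axioms: that $O_g(V)\ast_g M$ and $M\ast_g O_g(V)$ lie in $O_{g,\lambda}(M)$ (so the actions factor through $A_g(V)$), the associativities $(a\ast_g b)\ast_g u\equiv a\ast_g(b\ast_g u)$ and $u\ast_g(a\ast_g b)\equiv (u\ast_g a)\ast_g b$, and the compatibility $(a\ast_g u)\ast_g b\equiv a\ast_g(u\ast_g b)$, all modulo $O_{g,\lambda}(M)$, for $a,b\in V^0$. Each of these is a residue manipulation identical in shape to the untwisted Frenkel--Zhu identities, because the active vertex operators $Y_M(a,x)$, $Y_M(b,x)$ only involve $a,b\in V^0$ and thus carry integral powers; the fractional shifts appear solely inside the $O_{g,\lambda}(M)$-corrections, which are absorbed by the degree-shifting lemma. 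I expect the main obstacle to be precisely the bookkeeping in the degree-shifting lemma and in the expansion of $b\ast_g(a\circ_g u)$ when $a\in V^r$ with $r\neq 0$: here one must track the fractional exponent $\frac{r}{T}$ and the $\delta(r)$-discontinuity through the associativity identity and the binomial expansions, and confirm that the surviving sum still satisfies the $j\ge i$ constraint of the degree-shifting lemma. The twisted sectors otherwise simplify the argument, since the left and right actions annihilate $V^r$ for $r\neq 0$, removing the case analysis on $r+s \bmod T$ that complicated \cref{pro:associativity-5-point}.
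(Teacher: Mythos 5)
Your proposal is correct in substance, but it takes a much more self-contained route than the paper. The paper's proof is essentially a two-step reduction: it writes $O_{g,\lambda}(M)=O_g(M)+\spn\Set*{\vo{L}{-1}u+(\vo{L}{0}+\lambda)u\given u\in M}$, quotes Jiang--Jiao \cite{JJ} for the facts that $O_g(M)$ is stable under both products and that $A_g(M)=M/O_g(M)$ is already an $A_g(V)$-bimodule, and then handles the extra summand by introducing the intermediate subspace $O^0_{g,\lambda}(M)$ (built only from $a\in V^0$) so that the untwisted Lemmas 4.3--4.5 of \cite{Liu} apply verbatim; the bimodule axioms then descend from $A_g(M)$ to the quotient $B_{g,\lambda}(M)$ for free. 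The paper also records (in the proof of \cref{pro:iso-between-different-bimodule-tensors}) the slickest version of this: $O_{g,\lambda}(M)/O_g(M)$ is generated by $[\upomega]\ast_g[u]-[u]\ast_g[\upomega]+\lambda[u]$, so it is a sub-bimodule simply because $[\upomega]$ is central in $A_g(V)$. You instead rebuild the Frenkel--Zhu/Zhu machinery from scratch: your degree-shifting lemma is exactly the Jiang--Jiao analogue of \cite[Lemma 2.1.2]{Z} (and can alternatively be obtained without the $\lambda$-relation, by replacing $a$ with $\vo{L}{-1}a$ in \labelcref{def:circle-g} and integrating by parts), and your stability and associativity computations reproduce what \cite{JJ} and \cite{Liu} already supply. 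What your approach buys is independence from those references; what it costs is all the binomial bookkeeping the paper deliberately avoids.

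One point in your last paragraph is stated too optimistically. To see that the actions factor through $A_g(V)$ you must show $(a\circ_g b)\ast_g u\in O_{g,\lambda}(M)$ for $a\in V^r$, $b\in V^{T-r}$ with $r\neq 0$, since $a\circ_g b$ then lies in $V^0$ and acts nontrivially even though neither factor does. This step is \emph{not} a purely integral-power Frenkel--Zhu identity: the associativity expansion reintroduces $Y_M(a,x)$ with $a\notin V^0$ and hence the fractional exponent $\frac{r}{T}$ at the bimodule-axiom stage, not only at the stability stage. It is still covered by your degree-shifting lemma (this is precisely the computation carried out in \cite{JJ}), but it deserves to be flagged as part of the "main obstacle" rather than dismissed by the remark that the active vertex operators only involve $V^0$.
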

\begin{proof}
In \cite{JJ}, Jiang and Jiao introduced a quotient space: 
\begin{equation}\label{6.2'}
A_g(M)=M/ O_g(M),
\end{equation}
where $O_g(M)$ is spanned by $a\circ_g u$ for all $a\in V, u\in M$, and $a\circ _g u$ is given by \labelcref{def:circle-g}. They proved that $A_g(M)$ is an $A_g(V)$-bimodule with left and right actions given by \labelcref{eq:left-right-actions}. In particular, $b\ast_g O_g(M)\subseteq O_g(M)$ and $O_g(M)\ast_g b\subseteq O_g(M)$ for all $b\in V$. Since $a\circ_g u=a\circ u$ for $a\in V^0$, we introduce an intermediate subspace 
\[
  O^0_{g,\lambda}(M):= \spn\Set*{ a\circ_g u,\vo{L}{-1}u+(\vo{L}{0}+\lambda)u:a\in V^0, u\in M }\subseteq O_{g,\lambda}(M).
\]
By \cite[Lemmas 4.3, 4.4 and 4.5]{Liu}, together with the fact that $b\ast_g u=u\ast _g b=0$ for $b\in V^r$ with $r>0$, we have $b\ast_g O^0_{g,\lambda}(M)\subseteq O^0_{g,\lambda}(M)$ and $O^0_{g,\lambda}(M)\ast_g b\subseteq O^0_{g,\lambda}(M)$ for all $b\in V$. In particular, for any $u\in M$, we have
\[b\ast_g (\vo{L}{-1}u+(\vo{L}{0}+\lambda)u), \ (\vo{L}{-1}u+(\vo{L}{0}+\lambda)u)\ast_g b\in O^0_{g,\lambda}(M)\subseteq O_{g,\lambda}(M).\]
Then the conclusion follows from 
\[
  O_{g,\lambda}(M)=O_g(M)+
  \spn\Set*{ \vo{L}{-1}u+(\vo{L}{0}+\lambda)u:u\in M},
\] in view of \labelcref{def:O-quotient}.
By \cref{lem:bimodule} and \cite[Theorem 3.4]{JJ}, $B_{g,\lambda}(M)=M/O_{g,\lambda}(M)$ is a bimodule over $A_g(V)$ with respect to the products $a\ast_g u$ and $u\ast _g a$ in \labelcref{eq:left-right-actions}. 
\end{proof}

Since there is an epimorphism of associative algebras $A(V^0)\longrightarrow A_g(V)$ (\cite{DLM1}), $B_{g,\lambda}(M)$ and $A_g(M)$ are also bimodules over $A(V^0)$ with actions $[a]\cdot[u]=[a\ast _g u]=[a\ast u]$ and $[u]\cdot[a]=[u\ast_g a]=[u\ast a]$, where $a\in V^0$ and $[u]\in B_{g,\lambda}(M)$ or $A_g(M)$, in view of \labelcref{eq:left-right-actions}. 

\begin{proposition}\label{pro:iso-between-tensor}
Let $M^1$ be an untwisted module of conformal weight $h_1$, $U^2$ (resp. $U^3$) be a left (resp. right) irreducible $A_g(V)$-modules on which $[\upomega]$ acts as $h_2\id$ (resp. $h_3\id$). Then we have an isomorphism of vector spaces
    \begin{equation}\label{eq:iso-between-tensors}
        \begin{aligned}
            (U^3\otimes M^1\otimes U^2)/J\cong U^3\otimes_{A_g(V)}B_{g,\lambda}(M^1)\otimes_{A_g(V)}U^2\cong U^3\o_{A(V^0)} B_{g,\lambda}(M^1)\o_{A(V^0)}U^2,
        \end{aligned}
    \end{equation}
    where $J$ is given by \labelcref{f-relation1}--\labelcref{f-relation4}, and $\lambda=h_2-h_3$. 
\end{proposition}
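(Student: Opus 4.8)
The plan is to realise both sides of the first isomorphism as quotients of a single vector space $W:=U^3\o M^1\o U^2$ and to prove that the two subspaces one divides by coincide. By \cref{lem:bimodule}, $B:=B_{g,\lambda}(M^1)=M^1/O_{g,\lambda}(M^1)$ is an $A_g(V)$-bimodule, so the balanced triple tensor product $U^3\o_{A_g(V)}B\o_{A_g(V)}U^2$ is defined; unwinding it, it is the quotient of $U^3\o B\o U^2=W\big/\big(U^3\o O_{g,\lambda}(M^1)\o U^2\big)$ by the balancing relations $u_3\cdot[a]\o[v]\o u_2-u_3\o[a\ast_g v]\o u_2$ and $u_3\o[v\ast_g a]\o u_2-u_3\o[v]\o[a]\cdot u_2$ for $[a]\in A_g(V)$. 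Pulling these back along the quotient map $W\longepi U^3\o B\o U^2$, I present $U^3\o_{A_g(V)}B\o_{A_g(V)}U^2\cong W/K$, where $K$ is the subspace of $W$ spanned by $U^3\o O_{g,\lambda}(M^1)\o U^2$ together with the elements $u_3\cdot[a]\o v\o u_2-u_3\o(a\ast_g v)\o u_2$ and $u_3\o(v\ast_g a)\o u_2-u_3\o v\o[a]\cdot u_2$ for $a\in V^0$; the well-definedness of $K$ (independence of the representative $a$) is again \cref{lem:bimodule}. For $a\in V^r$ with $r\neq0$ one has $[a]=0$ in $A_g(V)$ and $a\ast_g v=v\ast_g a=0$ by \labelcref{eq:left-right-actions}, so those generators contribute nothing.

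Next I would compare $K$ with $J$ by rewriting the four families \labelcref{f-relation1,f-relation2,f-relation3,f-relation4} generating $J$ in the notation of \cref{def:bimodule}. By the remark following \cref{def:resCfb}, $\sum_{j\ge0}\binom{\wt a}{j}\vo{a}{j-1}v=a\ast_g v$ and $\sum_{j\ge0}\binom{\wt a-1}{j}\vo{a}{j-1}v=v\ast_g a$ for $a\in V^0$; and for $a\in V^r$ with $r\neq0$ (so $\delta(r)=0$) one has $\sum_{j\ge0}\binom{\wt a-1+\frac{r}{T}}{j}\vo{a}{j-1}v=a\circ_g v$ by \labelcref{def:circle-g}. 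Hence \labelcref{f-relation2} and \labelcref{f-relation3} are exactly the $a\in V^0$ balancing generators of $K$; the relation \labelcref{f-relation1} equals $u_3\o\big((\vo{L}{-1}+\vo{L}{0}+\lambda)v\big)\o u_2$ (using $-h_1+h=\lambda$); and \labelcref{f-relation4} equals $u_3\o(a\circ_g v)\o u_2$ with $a\in V^r$, $r\neq0$. Since both $(\vo{L}{-1}+\vo{L}{0}+\lambda)v$ and $a\circ_g v$ lie in $O_{g,\lambda}(M^1)$ by \labelcref{def:O-quotient}, the generators \labelcref{f-relation1,f-relation4} lie in $U^3\o O_{g,\lambda}(M^1)\o U^2\subset K$, whence $J\subset K$. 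Conversely, $U^3\o O_{g,\lambda}(M^1)\o U^2\subset J$ is exactly \cref{lem:contracted-space-and-bimodule}, and the $a\in V^0$ balancing generators of $K$ are \labelcref{f-relation2,f-relation3}; thus $K\subset J$. Therefore $J=K$, and the first isomorphism follows: $(U^3\o M^1\o U^2)/J=W/K\cong U^3\o_{A_g(V)}B_{g,\lambda}(M^1)\o_{A_g(V)}U^2$.

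For the second isomorphism I would use the epimorphism of associative algebras $A(V^0)\longepi A_g(V)$ from \cite{DLM1}. The $A(V^0)$-module structures on $U^2$, $U^3$, and $B_{g,\lambda}(M^1)$ are the restrictions along this surjection: indeed, for $a\in V^0$ the products $a\ast_g v=a\ast v$ and $v\ast_g a=v\ast a$ agree with the $A(V^0)$-actions, as noted in the discussion following \cref{lem:bimodule}. In general, given a surjection $R\longepi S$ of $\C$-algebras, a right $S$-module $A$, an $S$-bimodule $B$, and a left $S$-module $C$ (all regarded as $R$-modules by restriction), the relations defining the balanced tensor products over $R$ and over $S$ coincide, since every $s\in S$ is the image of some $r\in R$ acting exactly as $s$; consequently $A\o_R B\o_R C=A\o_S B\o_S C$. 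Taking $R=A(V^0)$ and $S=A_g(V)$ yields the second isomorphism.

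The conceptual content has already been discharged in \cref{lem:bimodule,lem:contracted-space-and-bimodule}, so no genuinely new difficulty arises; the step that demands the most care is the matching carried out in the first two paragraphs, where one must line up the balancing relations of the tensor product with the defining relations of $J$ and, in particular, track the generators indexed by $a\in V^r$ with $r\neq0$ — these vanish on the tensor-product side (as $[a]=0$ in $A_g(V)$) while on the $J$-side they reproduce precisely the $O_{g,\lambda}$-relation \labelcref{f-relation4}. I would also note that the irreducibility of $U^2$ and $U^3$ is not needed for this vector-space isomorphism; it is recorded because it is the hypothesis under which the result is afterwards applied to the computation of fusion rules.
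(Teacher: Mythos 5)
Your proof is correct and is essentially the paper's argument: the paper constructs the explicit mutually inverse maps $\bar\phi\colon u_3\otimes v\otimes u_2+J\mapsto u_3\otimes[v]\otimes u_2$ and $\psi$, and the well-definedness checks it performs (that \labelcref{f-relation1,f-relation4} die in $B_{g,\lambda}(M^1)$, that \labelcref{f-relation2,f-relation3} are the balancing relations, and that $U^3\otimes O_{g,\lambda}(M^1)\otimes U^2\subset J$ via \cref{lem:contracted-space-and-bimodule}) are exactly your verification that $J=K$, with the second isomorphism obtained in both cases from the surjection $A(V^0)\longepi A_g(V)$ and the compatibility $a\ast_g v=a\ast v$, $v\ast_g a=v\ast a$ for $a\in V^0$. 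The only difference is presentational — comparing two subspaces of $U^3\otimes M^1\otimes U^2$ rather than writing down inverse maps — so no further comment is needed.
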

\begin{proof}
    Define a linear map 
    \begin{align*}
        \phi: U^3\otimes M^1 \otimes U^2&\longrightarrow U^3\otimes_{A_g(V)}B_{g,\lambda}(M^1)\otimes_{A_g(V)}U^2,\\
        \phi(u_3\o v\o u_2):&=u_3\o [v]\o u_2,\quad u_3\in U^3, v\in M^1, u_2\in U^2,
    \end{align*}
    where $[v]$ is the image of $v\in M^1$ in $B_{g,\lambda}(M^1)$. By \cref{def:bimodule}, it is straightforward to see that $\phi$ factors through $(U^3\otimes M^1\otimes U^2)/J$. Denote the induced map by $\overline{\phi}$.
  
    Conversely, we consider the following linear map: 
    \begin{equation*}
      \psi: U^3\o_{A_g(V)}B_{g,\lambda}(M^1)\o_{A_g(V)}U^2\longrightarrow (U^3\o_\C M^1\o_\C U^2)/J,\ \psi(u_3\o [v]\o u_2):=u_3 \o v\o u_2+J.
    \end{equation*}
	Indeed, by \cref{lem:contracted-space-and-bimodule}, we have $\psi(u_3\o [O_{g,\lambda}(M^1)]\o u_2)=u_3 \o O_{g,\lambda}(M^1)\o u_2+J=0$. Furthermore, recall that $A_g(V)$ is a quotient of $A(V^0)$ \cite{DLM1}, and so $U^2$ and $U^3$ are also left and right modules over $A(V^0)$, respectively. Let $a\in V^0$, by \labelcref{f-relation2}, \labelcref{f-relation3}, and \labelcref{eq:left-right-actions}, we have 
	\begin{align*}
	\psi(u_3[a]\o [v]\o u_2)=\psi(u_3\o [a\ast_g v]\o u_2),\ \psi(u_3\o [v]\o [a] u_2)=\psi(u_3\o [v\ast_g a]\o u_2).
	\end{align*}
	Hence $\psi$ is well-defined. It is clear that $\psi$ is an inverse of $\bar{\phi}$. Observe that $[a\ast _g v]=[a\ast v]$ and $[v\ast_g a]=[v\ast a]$ for $a\in V^0$, then by adopting a similar argument, we can also show that $(U^3\o_\C M^1\o_\C U^2)/J\cong U^3\o_{A(V^0)} B_{g,\lambda}(M^1)\o_{A(V^0)}U^2$. 
\end{proof}

Although the $A_g(V)$-bimodules $B_{g,h_2-h_3}(M^1)$ and $A_g(M^1)$ are not isomorphic in general, the tensor products $U^3\otimes_{A_g(V)}B_{g,h_2-h_3}(M^1)\o _{A_g(V)}U^2$ and $U^3\otimes_{A_g(V)}A_{g}(M^1)\o _{A_g(V)}U^2$ are in fact isomorphic. This isomorphism was proved in \cite{Liu1} for the untwisted case under the assumption that $A(V)$ is semi-simple. Now we drop the semi-simplicity condition. 

\begin{proposition}\label{pro:iso-between-different-bimodule-tensors}
With the aforementioned assumptions, we have a linear isomorphism:  
\begin{equation}\label{eq:iso-between-different-bimodule-tensors}
    \begin{aligned}
	&U^3\otimes_{A_g(V)} B_{g,h_2-h_3}(M^1)\otimes _{A_g(V)} U^2\cong U^3\otimes_{A_g(V)} A_g(M^1)\otimes _{A_g(V)} U^2\\
	&\cong U^3\otimes_{A(V^0)} B_{g,h_2-h_3}(M^1)\otimes _{A(V^0)} U^2\cong U^3\otimes_{A(V^0)} A_g(M^1)\otimes _{A(V^0)} U^2.
	\end{aligned}
\end{equation}
\end{proposition}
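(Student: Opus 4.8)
The plan is to derive every isomorphism in \eqref{eq:iso-between-different-bimodule-tensors} from two observations: the comparison between the $A_g(V)$- and $A(V^0)$-tensor products is purely formal, while the comparison of $A_g(M^1)$ with $B_{g,\lambda}(M^1)$ (here $\lambda=h_2-h_3$) rests on a single commutator computation with the conformal vector. I would first record that the inclusion $O_g(M^1)\subseteq O_{g,\lambda}(M^1)$ coming from \labelcref{def:O-quotient} yields a surjection of $A_g(V)$-bimodules $p\colon A_g(M^1)\twoheadrightarrow B_{g,\lambda}(M^1)$ whose kernel $\overline{O}$ is the image in $A_g(M^1)=M^1/O_g(M^1)$ of the span of the elements $(\vo{L}{-1}+\vo{L}{0}+\lambda)u$, $u\in M^1$. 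Since $O_g(M^1)$ and $O_{g,\lambda}(M^1)$ are both sub-bimodules (the former by \cite{JJ}, the latter by \cref{lem:bimodule}), $0\to\overline O\to A_g(M^1)\to B_{g,\lambda}(M^1)\to 0$ is a short exact sequence of bimodules. Applying the right-exact functor $U^3\otimes_{A_g(V)}(-)\otimes_{A_g(V)}U^2$ reduces the first isomorphism to showing that the image of $U^3\otimes_{A_g(V)}\overline O\otimes_{A_g(V)}U^2$ in the middle term vanishes.

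The crux is then a residue computation in $A_g(M^1)$ using \labelcref{eq:left-right-actions} with $\wt\upomega=2$, which gives $\upomega\ast_g u=(\vo{L}{-2}+2\vo{L}{-1}+\vo{L}{0})u$ and $u\ast_g\upomega=(\vo{L}{-2}+\vo{L}{-1})u$, hence the identity $[\upomega]\cdot[u]-[u]\cdot[\upomega]=[(\vo{L}{-1}+\vo{L}{0})u]$ holds in $A_g(M^1)$ (indeed already in $M^1$ before passing to the quotient). Because $[\upomega]$ acts as $h_3\id$ on the right $A_g(V)$-module $U^3$ and as $h_2\id$ on the left $A_g(V)$-module $U^2$, I can slide $[\upomega]$ onto the outer tensor factors:
\begin{equation*}
  u_3\otimes[(\vo{L}{-1}+\vo{L}{0})u]\otimes u_2
  = (u_3\cdot[\upomega])\otimes[u]\otimes u_2 - u_3\otimes[u]\otimes([\upomega]\cdot u_2)
  = (h_3-h_2)\,u_3\otimes[u]\otimes u_2.
\end{equation*}
Adding $\lambda\,u_3\otimes[u]\otimes u_2$ and using $\lambda=h_2-h_3$ shows $u_3\otimes[(\vo{L}{-1}+\vo{L}{0}+\lambda)u]\otimes u_2=0$, so $\overline O$ dies in the triple tensor product and $p$ induces the first isomorphism in \eqref{eq:iso-between-different-bimodule-tensors}.

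For the two remaining isomorphisms I would invoke the epimorphism $A(V^0)\twoheadrightarrow A_g(V)$ from \cite{DLM1}. For any right $A_g(V)$-module $N$ and left $A_g(V)$-module $P$, every balancing relation $ns\otimes p=n\otimes sp$ with $s\in A_g(V)$ already holds in $N\otimes_{A(V^0)}P$, since $s$ lifts to $A(V^0)$ and the $A(V^0)$-actions factor through $A_g(V)$; hence the canonical map $N\otimes_{A(V^0)}P\to N\otimes_{A_g(V)}P$ is an isomorphism. Applying this to each of the two tensor slots, with middle bimodule $B_{g,\lambda}(M^1)$ and then $A_g(M^1)$, identifies the $A(V^0)$-tensor products with their $A_g(V)$-counterparts and closes the chain of isomorphisms. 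The only non-formal ingredient is the commutator identity $[\upomega]\cdot[u]-[u]\cdot[\upomega]=[(\vo{L}{-1}+\vo{L}{0})u]$, so the main obstacle is verifying the residue bookkeeping in \labelcref{eq:left-right-actions} and confirming that the bimodule structure on $A_g(M^1)$ constructed in \cite{JJ} is exactly the one making this identity hold and compatible with the quotient by $O_g(M^1)$; everything else is standard homological algebra together with the surjectivity of $A(V^0)\to A_g(V)$.
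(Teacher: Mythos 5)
Your proposal is correct and follows essentially the same route as the paper: identify the kernel of $A_g(M^1)\twoheadrightarrow B_{g,\lambda}(M^1)$ with the span of $[\upomega]\ast_g[u]-[u]\ast_g[\upomega]+(h_2-h_3)[u]$ via the computation $\upomega\ast_g u-u\ast_g\upomega=(\vo{L}{-1}+\vo{L}{0})u$, apply right-exactness of the triple tensor product, and kill the kernel by sliding $[\upomega]$ onto $U^3$ and $U^2$ where it acts by $h_3$ and $h_2$. Your handling of the $A(V^0)$ versus $A_g(V)$ tensor products via surjectivity of $A(V^0)\to A_g(V)$ is a slightly more direct packaging of what the paper extracts from \cref{pro:iso-between-tensor}, but it is not a genuinely different argument.
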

\begin{proof}
  By \labelcref{def:O-quotient} and \cref{lem:bimodule}, $B_{g,h_2-h_3}(M^1)= A_g(M^1)/I$, where 
  \[I=\spn\Set*{[(\vo{L}{-1}+\vo{L}{0}+h_2-h_{3})u]\given u\in M^1}=O_{g,h_2-h_3}(M^1)/O_{g}(M^1)\] is a sub-bimodule of $A_g(M^1)$.
 
  Observe that 
  $\upomega \ast _gu-u\ast_g \upomega+(h_2-h_3)u =(\vo{L}{-1}+\vo{L}{0}+h_2-h_3)u$ for any $u\in M^1$, where $\upomega \in V$ is the conformal vector. Thus,
  \begin{equation} 
  I=\spn\Set*{ [\upomega]\ast_g [u]-[u]\ast_g [\upomega]+(h_{2}-h_{3})[u]\given u\in M^{1}}\subset A_g(M^1).
  \end{equation}
  Recall that $[\upomega]\in A_g(V)$ is a central element \cite{DLM1,Z}, $I$ is a sub-bimodule of $A_g(M^1)$. (This gives an alternative proof of \cref{lem:bimodule}.) Denote the inclusion map $I\monomorphism A_g(M^1)$ by $\iota$, and $A_g(V)$ by $A$ for short, then by the right exactness of tensor functor, we have a right exact sequence: 
  \begin{equation}\label{eq:right-exact-sequence}
    U^3\otimes_{A}I \otimes_{A}U^2\overset{1\o \iota\o 1}{\longrightarrow} 
    U^3\otimes_{A} A_g(M^1) \otimes_{A}U^2 \to
    U^3\otimes_{A}(A_g(M^1)/I) \otimes_{A}U^2 \to 0.
  \end{equation}
  We claim that $(1\o \iota\o1)(U^3\otimes_{A} I\otimes_{A} U^2)=0$ in $U^3\otimes_{A} A_g(M^{1})\otimes_{A}U^2$. Indeed, for any $u_3\in U^3$, $u_2\in U^2$, and $u\in M^1$, we have
  \begin{align*}
  &(1\o \iota\o1)(u_3\otimes ([\upomega]\ast [u]-[u]\ast [\upomega]+(h_{2}-h_{3})[u])\otimes u_2)\\
  &=u_3([\upomega]-h_{3})\otimes [u]\otimes v_2-u_3\otimes [u]\otimes ([\upomega]-h_{2})u_2\\
  &=0.
  \end{align*}
in $U^3\otimes_{A} A_g(M^{1})\otimes_{A}U^2$. Then, the first isomorphism in \labelcref{eq:iso-between-different-bimodule-tensors} follows from \labelcref{eq:right-exact-sequence}. The last two isomorphism follows from \cref{pro:iso-between-tensor} and its proof.
\end{proof}

\begin{remark}  
In the last remark of \cite{Liu}, the author made a false claim that the isomorphism \labelcref{eq:iso-between-different-bimodule-tensors} does not hold in general for the untwisted case. This was due to a mistake in Example 4.22 in \cite{Liu}. We make a correction here:

In the isomorphism $B_{h}(M(c,h_{1}))\otimes_{A(M_{c})} M_{c}(0)\cong \C[t_{0}]\otimes _{\C[t]} M_{c}(0)\cong M_{c}(0)$, the left $A(M(c,0))=\C[t]$-action is given by 
\begin{align*}
t.(1\o v_{c,0})=(t_0+h_2)\o v_{c,0}=1.t\o v_{c,0}+h_2\o v_{c,0}=1\o \vo{L}{0}v_{c,0}+h_2\o v_{c,0}=h_2\o v_{c,0}.
\end{align*}
Thus the left $\C[t]$-module $\C[t_{0}]\otimes _{\C[t]} M_{c}(0)\cong M_{c}(0)$ is isomorphic to $M(c,h_2)(0)=\C v_{c,h_2}$, and so 
$((M(c,h_{2})(0))^{\ast}\otimes _{A(M_{c})}B_{h}(M(c,h_{1}))\otimes _{A(M_{c})}M_{c}(0))^{\ast}\cong \Hom_{A(M_{c})}( M_{c}(0),M(c,h_{2})(0))$ is $1$-dimensional, same as  $(M(c,h_{2})(0)^{\ast}\otimes _{A(M_{c})}A(M(c,h_{1}))\otimes _{A(M_{c})}M_{c}(0))^{\ast}$. 
\end{remark}

\begin{theorem}\label{thm:twisted-fusion-rules-theorem}
  Let $M^1$ be an untwisted lowest weight module of conformal weight $h_1$, $M^2$ (resp. $M^3$) be a lowest weight $g$-twisted module of conformal weight $h_2$ (resp. $h_3$) with bottom level $U_2$ (resp. $U_3$). Further, assume $M^2$ and $(M^3)'$ are generalized Verma module. Then, we have isomorphisms:
  \begin{equation}\label{eq:twisted-fusion-rules-theorem}
  \Fusion\cong \Cor[\Sigma_{1}((U^3)^\ast, M^1, U^2)]\cong ((U^3)^\ast\otimes_{A_g(V)}A_{g}(M^1)\otimes_{A_g(V)}U^2)^{\ast}.
  \end{equation}
  In particular, if $V$ is $g$-rational, then \labelcref{eq:twisted-fusion-rules-theorem} holds for any untwisted irreducible module $M^1$, and irreducible $g$-twisted modules $M^2$ and $M^3$. 
\end{theorem}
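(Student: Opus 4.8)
The plan is to read this theorem off as the composite of the isomorphisms already established, so that almost no new computation is needed; the only genuine work lies in the \emph{in particular} clause. First I would dispatch the left-hand isomorphism $\Fusion\cong\Cor[\Sigma_{1}((U^3)^\ast, M^1, U^2)]$. Under the standing hypothesis that $M^2$ and $(M^3)'$ are $g$- (resp.\ $g^{-1}$-) twisted generalized Verma modules, with bottom levels $U^2=M^2(0)$ and $(U^3)^\ast=M^3(0)^\ast$, this is precisely the content of \cref{coro:isomorphism-between-correlations}, which in turn chains \cref{thm:I=Cor} (intertwining operators versus type-$\fusion$ correlation functions) with \cref{thm:CorBottom} (extension from the bottom levels). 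So this strand requires only citing what is already in place.

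Next I would thread the restricted-conformal-block machinery to obtain the right-hand isomorphism. Applying \cref{thm:iso-restrictcfb-bottomcorrelation} identifies $\Cor[\Sigma_{1}((U^3)^\ast, M^1, U^2)]$ with the restricted conformal block space $\Cfb[(U^3)^\ast, M^1, U^2]$, which by \cref{def:resCfb} is the full linear dual of the coinvariant space $((U^3)^\ast\otimes M^1\otimes U^2)/J$. Then \cref{pro:iso-between-tensor} rewrites this coinvariant space as $(U^3)^\ast\otimes_{A_g(V)}B_{g,\lambda}(M^1)\otimes_{A_g(V)}U^2$ with $\lambda=h_2-h_3$ (here $[\upomega]$ acts as $h_3$ on $(U^3)^\ast$ and as $h_2$ on $U^2$), and \cref{pro:iso-between-different-bimodule-tensors} replaces the $B_{g,\lambda}(M^1)$-tensor by the $A_g(M^1)$-tensor. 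Dualizing this chain of isomorphisms of coinvariant spaces produces $\Cfb[(U^3)^\ast, M^1, U^2]\cong((U^3)^\ast\otimes_{A_g(V)}A_g(M^1)\otimes_{A_g(V)}U^2)^\ast$, and composing the two strands yields \labelcref{eq:twisted-fusion-rules-theorem}. I would stress that every arrow here is an isomorphism of (possibly infinite-dimensional) vector spaces with full linear duals, so no finiteness hypothesis enters at this stage.

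The step I expect to be the main obstacle is the \emph{in particular} clause, namely showing that $g$-rationality forces $M^2$ and $(M^3)'$ to be generalized Verma modules so that the theorem applies to arbitrary irreducibles. My plan is first to observe that the contragredient functor gives an anti-equivalence between admissible $g$-twisted and $g^{-1}$-twisted modules, whence $V$ is $g$-rational if and only if it is $g^{-1}$-rational and $(M^3)'$ is an irreducible $g^{-1}$-twisted module whenever $M^3$ is irreducible $g$-twisted. Then, for any irreducible admissible $g$- (or $g^{-1}$-) twisted module $M$, its bottom level $U=M(0)$ is an irreducible $A_g(V)$- (resp.\ $A_{g^{-1}}(V)$-) module by \cite{DLM1}, and the canonical surjection $\overline{M}(U)\epimorphism M$ exists. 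The crux is to upgrade this surjection to an isomorphism: by $g$-rationality $\overline{M}(U)$ is completely reducible, and since it is a lowest-weight module of a single conformal weight $h_U$ with irreducible bottom level $U=\overline{M}(U)(0)$, the conformal-weight grading forces exactly one irreducible summand to meet degree zero; that summand contains $U$, hence equals $\U(\L_g(V))U=\overline{M}(U)$, ruling out any further summand. Thus $M\cong\overline{M}(U)$ is a generalized Verma module, and the hypotheses become automatic for arbitrary irreducible $M^1$, $M^2$, $M^3$, with $U^2=M^2(0)$ and $(U^3)^\ast=M^3(0)^\ast$. The care required is entirely in this grading bookkeeping that excludes spurious summands; once it is in place, \labelcref{eq:twisted-fusion-rules-theorem} applies verbatim.
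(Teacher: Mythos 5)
Your proposal is correct and follows essentially the same route as the paper, whose proof is literally the one-line composition of \cref{thm:I=Cor}, \cref{coro:isomorphism-between-correlations}, \cref{thm:iso-restrictcfb-bottomcorrelation}, and \cref{pro:iso-between-different-bimodule-tensors} that you assemble. Your additional argument for the \emph{in particular} clause (complete reducibility of $\overline{M}(U)$ plus the irreducibility of the bottom level forcing $\overline{M}(U)$ to be irreducible) is sound and supplies a detail the paper leaves implicit.
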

\begin{proof}
    It follows from \cref{thm:I=Cor,thm:iso-restrictcfb-bottomcorrelation,coro:isomorphism-between-correlations,pro:iso-between-different-bimodule-tensors}.
\end{proof}

In general, we have the following upper bound of the $g$-twisted fusion rules by \cref{prop:injectivity-from-cor-to-corbot}: 
\begin{equation}\label{eq:half-twisted-fusion-rules-theorem}
  \Nusion \le \dim \left((M^3(0))^\ast\otimes_{A_g(V)}A_{g}(M^1)\otimes_{A_g(V)}M^2(0)\right)^{\ast},
\end{equation}
where $M^2$ and $M^3$ are irreducible $g$-twisted modules with bottom levels $M^2(0)$ and $M^3(0)$, respectively, and $V$ is an arbitrary VOA. 

\subsection{The finiteness of twisted conformal blocks and fusion rules}
In this subsection, we assume that $V$ is of CFT-type, i.e. $V=V_0\oplus V_+$, with $V_0=\C \vac$, and $V_+=\bigoplus_{n=1}^\infty V_n$.

The finiteness of fusion rules is one of the standard assumptions for rational conformal field theory \cite{MS89}. Li proved the finiteness of fusion rules among three irreducible untwisted modules $M^1, M^2$, and $M^3$ when the module $M^1$ is $C_2$-cofinite \cite{Li2}. Using \cref{thm:twisted-fusion-rules-theorem} and \labelcref{eq:half-twisted-fusion-rules-theorem}, we can show the finiteness of $g$-twisted fusion rules under the same condition.

We observe that the filtrations on $A(V)$ and $A(M)$ studied in \cite{Liu} also have a $g$-twisted analog. Define a filtration: $0=A_g(V)_{-1}\subset A_g(V)_{0}\subset A_g(V)_{1}\subset \cdots$ on $A_g(V)$ by 
\begin{equation}\label{eq:filtration}
A_g(V)_n:=\left(\bigoplus_{i=0}^n V_i+ O_g(V)\right)/O_g(V),\quad n\in \Z.
\end{equation}
It is clear that $A_g(V)_m\ast_g A_g(V)_n\subset A_g(V)_{m+n}$ for any $m,n\in \N$. Denote the associated graded algebra by $\gr A_g(V)=\bigoplus_{n=0}^\infty (A_g(V)_n/A_g(V)_{n-1})$. The product $\cdot\ast_g \cdot$ on $\gr A_g(V)$ is given by 
\begin{equation}\label{eq:product-in-filtration}
([a]+A_g(V)_{m-1})\ast_g ([b]+A_g(V)_{n-1}):=[a]\ast_g [b]+A_g(V)_{m+n-1}, 
\end{equation}
where $[a]\in A_g(V)_{m}$ and $[b]\in A_g(V)_{n}$. It is easy to check this product is commutative. 

Recall that $R(V)=V/C_2(V)$ is a commutative associative algebra with product $(a+C_2(V))\cdot (b+C_2(V))=\vo{a}{-1}b+C_2(V)$, see \cite{Z}. The following Proposition is a $g$-twisted version of \cite[Theorem 2.6]{Liu} and a refinement of \cite[Proposition 3.6]{DLM00}: 
\begin{proposition}\label{pro:R(V)}
  The linear map $\phi: V\longrightarrow \gr A_g(V)$, $\phi(a)=[a]+A_g(V)_{m-1}$ for $a\in \oplus_{i=0}^m V_i$, factors through $C_2(V)$. It induces an epimorphism of commutative associative algebras: 
  \begin{equation}\label{def:R(V)-map}
  \phi: R(V)=V/C_2(V)\longrightarrow \gr A_g(V),\quad \phi(a+C_2(V))=[a]+A_g(V)_m,\ \mathrm{where}\ a\in \oplus_{i=0}^m V_i.
  \end{equation}
\end{proposition}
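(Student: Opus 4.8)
The plan is to verify three things in turn: that $\phi$ annihilates $C_2(V)$, that the induced map is multiplicative, and that it is surjective. Throughout I would work with homogeneous $a\in V_p$, $b\in V_q$ and, since every construction is linear, further decompose $a$ into its $g$-eigencomponents $a\in V^r$. The single mechanism behind all three steps is that the defining relations of $A_g(V)$ let one trade the top-weight term of a product for strictly lower-weight terms: for $a\in V^0$ one has $a\ast_g b=\sum_{i\ge 0}\binom{\wt a}{i}\vo{a}{i-1}b$ and $a\circ_g b=\sum_{i\ge 0}\binom{\wt a}{i}\vo{a}{i-2}b$, whereas for $a\in V^r$ with $r\neq 0$ one has $a\circ_g b=\sum_{i\ge 0}\binom{\wt a-1+\frac{r}{T}}{i}\vo{a}{i-1}b$. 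In each sum the $i=0$ term carries the largest weight and the remaining terms lower the weight by at least one, so they already lie in the next lower piece of the filtration \labelcref{eq:filtration}.

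First I would show $\phi$ factors through $C_2(V)$, i.e. that $[\vo{a}{-2}b]\in A_g(V)_{p+q}$, so that $\phi(\vo{a}{-2}b)=0$ in $\gr A_g(V)$. If $a\in V^0$, the relation $a\circ_g b\in O_g(V)$ reads $\vo{a}{-2}b\equiv-\sum_{i\ge 1}\binom{\wt a}{i}\vo{a}{i-2}b\pmod{O_g(V)}$, and every term on the right has weight $\le p+q$, giving the claim. The subtle case, which I expect to be the main obstacle, is $a\in V^r$ with $r\neq 0$: here $\delta(r)=0$, so the circle product has leading term $\vo{a}{-1}b$ rather than $\vo{a}{-2}b$, and the relation does not directly reach $\vo{a}{-2}b$. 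I would circumvent this with the $\vo{L}{-1}$-derivative identity $\vo{(\vo{L}{-1}a)}{n}=-n\,\vo{a}{n-1}$, which at $n=-1$ gives $\vo{a}{-2}b=\vo{(\vo{L}{-1}a)}{-1}b$; applying the circle relation to $\vo{L}{-1}a\in V^r$ then expresses $\vo{a}{-2}b$, modulo $O_g(V)$, through the strictly lower-weight elements $\vo{(\vo{L}{-1}a)}{i-1}b=-(i-1)\vo{a}{i-2}b$ for $i\ge 1$. (When the $g$-weight $r+s$ of $\vo{a}{-2}b$ is nonzero the class vanishes outright, since $V^{r+s}\subseteq O_g(V)$.)

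Next I would check multiplicativity, $\phi(\vo{a}{-1}b)=\phi(a)\ast_g\phi(b)$ in $A_g(V)_{p+q}/A_g(V)_{p+q-1}$. For $a\in V^0$ the expansion of $a\ast_g b$ shows $[a]\ast_g[b]\equiv[\vo{a}{-1}b]\pmod{A_g(V)_{p+q-1}}$, which is exactly the required identity of classes. For $a\in V^r$ with $r\neq 0$ both sides vanish in the associated graded: $\phi(a)=0$ because $a\in V^r\subseteq O_g(V)$, while $\phi(\vo{a}{-1}b)=0$ because either $r+s\not\equiv 0$ (so $[\vo{a}{-1}b]=0$) or else the circle relation places $[\vo{a}{-1}b]$ in $A_g(V)_{p+q-1}$. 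Units are preserved since $\phi(\vac)=[\vac]$ is the identity of $A_g(V)$ and $\vac\in V_0$. Finally, surjectivity is immediate: $A_g(V)_n/A_g(V)_{n-1}$ is by construction spanned by the classes of $V_n$, and these are precisely the images $\phi(a)$ with $a\in V_n$. Assembling these facts, $\phi$ descends to a unital epimorphism $R(V)\to\gr A_g(V)$ of commutative associative algebras (commutativity of $\gr A_g(V)$ being noted just before the statement, and associativity being inherited from $\ast_g$ on $A_g(V)$).
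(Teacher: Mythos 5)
Your proof is correct and follows the same overall strategy as the paper's: at every step one invokes an $O_g(V)$-relation whose leading mode is the one in question and whose remaining terms have strictly smaller weight, hence already lie lower in the filtration. The one place you genuinely diverge is the case $a\in V^r$ with $r\neq 0$ in the $C_2$-vanishing step. The paper reaches $\vo{a}{-2}b$ directly by citing the stronger family of relations $\Res_z Y(a,z)b\,(1+z)^{\wt a-1+\delta(r)+r/T}z^{-1-\delta(r)-k}\in O_g(V)$ for all $k\ge 0$ from \cite[Lemma 2.2]{DLM1} and taking $k=1$, so that the leading mode becomes $\vo{a}{-2}$. You instead apply the basic ($k=0$) circle relation to $\vo{L}{-1}a$ and use $\vo{(\vo{L}{-1}a)}{-1}=\vo{a}{-2}$; since the $k\ge 1$ relations of \cite{DLM1} are themselves consequences of the $\vo{L}{-1}$-derivative property, the two arguments are equivalent in substance, but yours is self-contained where the paper defers to an external lemma (at the cost of a slightly longer computation, and note the $i=1$ term in your expansion vanishes identically, which is worth saying explicitly). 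Everything else --- the $r=0$ case of the $C_2$-vanishing, the multiplicativity check (both sides vanishing in $\gr A_g(V)$ when $r\neq 0$ via $V^r\subseteq O_g(V)$, and the $\ast_g$-expansion when $r=0$), and the surjectivity observation --- matches the paper's computation.
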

\begin{proof}
	First we show that $\phi(C_2(V))=0$. Let $a\in V^r_m$ and $b\in V_n$. Since $\wt (\vo{a}{-2}b)=m+n+1$, we have $\phi(\vo{a}{-2}b)=[\vo{a}{-2}b]+A_g(V)_{m+n}$. By Lemma 2.2 in \cite{DLM1}, we have
	\begin{equation}\label{6.13''}
	\Res_z Y(a,z)b\frac{(1+z)^{m-1+\delta(r)+\frac{r}{T}}}{z^{1+\delta(r)+k}}\in O_g(V),\quad k\ge 0.
	\end{equation}
	We may choose $k=0$ if $\delta(r)=1$, and $k=1$ if $\delta(r)=0$. Then, 
	\[[\vo{a}{-2}b]+A_g(V)_{m+n}=-\sum_{j\ge 0} \binom{m-1+\delta(r)+\frac{r}{T}}{j+1} [\vo{a}{j-1}b]+A_g(V)_{m+n}=[0]+A_g(V)_{m+n} \]
	since $\wt \vo{a}{j-1}b=m+n-j\le m+n $ for any $j\ge 0$. Hence $\phi$ in \labelcref{def:R(V)-map} is well-defined. 
	
	Moreover, $\phi((a+C_2(v))\cdot (b+C_2(V)))=\phi(\vo{a}{-1}b+C_2(V))=[\vo{a}{-1}b]+A_g(V)_{m+n-1}$. If $0<r<T$, by \labelcref{6.13''} with $k=0$, \labelcref{def:g-star-product} and \labelcref{eq:product-in-filtration}, we have 
	\begin{align*}
	[\vo{a}{-1}b]+A_g(V)_{m+n-1}&=-\sum_{j\ge 0} \binom{m-1+\frac{r}{T}}{j+1}[\vo{a}{j}b]+A_g(V)_{m+n-1}=[0]+A_{g}(V)_{m+n-1}\\
	&=[a]\ast_g [b]+A_g(V)_{m+n-1}=\phi(a+C_2(V))\ast_g \phi(b+C_2(V))
	\end{align*}
	since $\wt \vo{a}{j}b=m+n-j-1\le m+n-1$ for any $j\ge 0$. Finally, if $r=0$, by \labelcref{def:g-star-product} again, 
	\begin{align*}
	[a]\ast_g [b]+A_g(V)_{m+n-1}&=\sum_{j\ge 0}\binom{m}{j}[\vo{a}{j-1}b]+A_g(V)_{m+n-1}=[\vo{a}{-1}b]+A_g(V)_{m+n-1}.
	\end{align*}
	Thus, $\phi$ is an epimorphism of graded commutative associative algebras. 	
\end{proof}
Recall that a CFT-type VOA $V$ is called $C_1$-cofinite, if $\dim V/C_1(V)<\infty$, where $C_1(V)=\spn\Set*{L_{(-1)}a:a\in V}\cup \Set*{a_{(-1)}b\given a,b\in V_+}$. Karel and Li proved in \cite{KL98} that $R(V)$ is finitely generated when $V$ is $C_1$-cofinite. 
As an immediate Corollary, we have the following fact about $A_g(V)$: 
\begin{corollary}
  $A_g(V)$ is an noetherian algebra if $V$ is $C_1$-cofinite. 
\end{corollary}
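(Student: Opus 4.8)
The plan is to deduce the Noetherian property of $A_g(V)$ from that of its associated graded algebra $\gr A_g(V)$ via the filtration \labelcref{eq:filtration}, invoking the standard principle in the theory of filtered rings that an exhaustive, positively filtered algebra inherits the (left and right) Noetherian property from its associated graded algebra.

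First I would establish that $\gr A_g(V)$ is Noetherian. By \cref{pro:R(V)}, the epimorphism $\phi$ in \labelcref{def:R(V)-map} realizes $\gr A_g(V)$ as a quotient of the commutative associative algebra $R(V)=V/C_2(V)$. Since $V$ is assumed to be of CFT-type, the hypothesis that $V$ is $C_1$-cofinite lets us invoke the theorem of Karel and Li \cite{KL98}, which asserts that $R(V)$ is a finitely generated commutative associative algebra. A quotient of a finitely generated commutative $\C$-algebra is again finitely generated and commutative, so $\gr A_g(V)$ is such an algebra; by Hilbert's basis theorem it is therefore Noetherian.

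Next I would apply the filtered-to-graded lifting lemma. The filtration $\{A_g(V)_n\}_{n\in\N}$ in \labelcref{eq:filtration} is ascending, exhausts $A_g(V)$ (because $V=\bigcup_n\bigoplus_{i=0}^n V_i$ for a CFT-type VOA), satisfies $A_g(V)_{-1}=0$ and $[\vac]\in A_g(V)_0$, and is multiplicative, i.e. $A_g(V)_m\ast_g A_g(V)_n\subset A_g(V)_{m+n}$. Under these hypotheses, the standard lemma from the theory of filtered rings guarantees that if $\gr A_g(V)$ is left (resp. right) Noetherian, then so is $A_g(V)$. Combined with the previous step, this yields that $A_g(V)$ is both left and right Noetherian, completing the argument.

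The only genuine point requiring care — and hence the main potential obstacle — is verifying that the filtered lifting lemma is applicable in our (a priori noncommutative) setting: one must confirm that the filtration is exhaustive and multiplicative with $A_g(V)_{-1}=0$, which are precisely the conditions recorded after \labelcref{eq:filtration}. Granting these, the lifting of a finite generating set of the graded one-sided ideal $\gr I$ to a generating set of an arbitrary one-sided ideal $I\subset A_g(V)$ is routine, so no substantive difficulty arises.
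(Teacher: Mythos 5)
Your argument is correct and is precisely the one the paper intends (the corollary is stated as "immediate" from \cref{pro:R(V)} and the Karel--Li result): $\gr A_g(V)$ is a quotient of the finitely generated commutative algebra $R(V)$, hence Noetherian, and the positive exhaustive multiplicative filtration \labelcref{eq:filtration} transfers this to $A_g(V)$. No gaps.
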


Let $M=\bigoplus_{n=0}^\infty M_{\lambda+n}$ be an ordinary untwisted module of conformal weight $\lambda$. We introduce a similar filtration $0=A_g(M)_{-1}\subset A_g(M)_{0}\subset A_g(M)_1\subset \cdots$ by 
\[A_g(M)_n:=\left(\bigoplus_{i=0}^n M_{\lambda+i}+ O_g(M)\right)/O_g(M),\quad n\in \Z.\]
It is clear that $A_g(M)$ becomes a filtered $A_g(V)$-bimodule with this filtration and filtration \labelcref{eq:filtration} of $A_g(V)$, and $\gr A_g(M)$ is a graded $\gr A_g(V)$-module. The following fact is similar to \cref{pro:R(V)}, and the proof is also similar: 
\begin{proposition}
  Let $M$ be an untwisted module. There exists a epimomorphism of $R(V)$-modules 
  \begin{equation}\label{eq:epimophism-R(V)}
  \psi: M/C_2(M)\longrightarrow \gr A_g(M), \quad \psi(u+C_2(M))=[u]+A_g(M)_{m},\ \mathrm{where}\ u\in \oplus_{i=0}^m M_{\lambda+i}. 
  \end{equation}
  \end{proposition}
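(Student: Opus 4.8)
The plan is to follow the blueprint of \cref{pro:R(V)} almost verbatim, replacing the algebra $V$ by the module $M$. First I would assemble the ingredients already in place: the filtration $0=A_g(M)_{-1}\subset A_g(M)_{0}\subset\cdots$ together with the induced graded $\gr A_g(V)$-module structure on $\gr A_g(M)$ (both recalled just above); the left $R(V)$-action on $M/C_2(M)$ given by $(a+C_2(V))\cdot(u+C_2(M))=\vo{a}{-1}u+C_2(M)$, which passes to $\gr A_g(M)$ through the epimorphism $\phi$ of \cref{pro:R(V)}; and the module analogue of \labelcref{6.13''}, namely
\[
  \Res_z Y_M(a,z)u\,\frac{(1+z)^{\wt a-1+\delta(r)+\frac{r}{T}}}{z^{1+\delta(r)+k}}\in O_g(M),\qquad k\ge 0,
\]
for $a\in V^r$ and $u\in M$. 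This last inclusion is the module version of \cite[Lemma 2.2]{DLM1}; it follows from the definition \labelcref{def:circle-g} of $a\circ_g u$ together with the $\vo{L}{-1}$-action, exactly as in the algebra case, and I expect it to be the genuinely new computational input.

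Second, I would prove that the naive map $u\mapsto[u]+A_g(M)_{m}$ (for $u\in\bigoplus_{i=0}^{m}M_{\lambda+i}$) annihilates $C_2(M)$. Since $C_2(M)$ is spanned by elements $\vo{a}{-2}u$ with $a\in V^r$ of weight $m$ and $u\in M_{\lambda+n}$, and $\wt(\vo{a}{-2}u)=\lambda+m+n+1$ places $\vo{a}{-2}u$ in $A_g(M)_{m+n+1}$, it suffices to check $[\vo{a}{-2}u]\in A_g(M)_{m+n}$. Choosing $k=0$ when $\delta(r)=1$ and $k=1$ when $\delta(r)=0$ in the displayed inclusion makes $\vo{a}{-2}u$ the leading ($p=0$) term, so that $[\vo{a}{-2}u]$ is congruent modulo $O_g(M)$ to a finite combination of terms $\vo{a}{p-2}u$ with $p\ge 1$; each of these has weight $\lambda+m+n+1-p\le\lambda+m+n$ and hence lies in $A_g(M)_{m+n}$. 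Thus $\psi$ descends to $M/C_2(M)$.

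Third, I would verify $R(V)$-linearity and surjectivity. For $a\in V^0$ of weight $m$ the left action \labelcref{eq:left-right-actions} gives $a\ast_g u=\sum_{j\ge 0}\binom{m}{j}\vo{a}{j-1}u$, whose leading term is $\vo{a}{-1}u$ and whose remaining terms drop a filtration level; comparing with \labelcref{eq:product-in-filtration} yields $[\vo{a}{-1}u]\equiv[a]\ast_g[u]\pmod{A_g(M)_{m+n-1}}$, which is precisely $\psi\bigl((a+C_2(V))\cdot(u+C_2(M))\bigr)=\phi(a+C_2(V))\ast_g\psi(u+C_2(M))$. For $a\in V^r$ with $r\ne 0$ both sides vanish: $[a]=0$ in $A_g(V)$ by \cite[Lemma 2.1]{DLM1}, so $\phi(a+C_2(V))=0$, while the displayed inclusion with $k=0$ and $\delta(r)=0$ shows $[\vo{a}{-1}u]\in A_g(M)_{m+n-1}$, so $\psi(\vo{a}{-1}u+C_2(M))=0$. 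Surjectivity is immediate, since every homogeneous generator $[u]+A_g(M)_{m-1}$ of $\gr A_g(M)$ (with $u\in M_{\lambda+m}$) is the image $\psi(u+C_2(M))$. The only real obstacle is the bookkeeping of the conformal-weight shift $\lambda$ through the filtration—one must keep each leading term in the correct graded piece while confirming that every correction term strictly decreases the filtration degree—together with establishing the module-level inclusion above; neither is difficult, but both demand the same care as in \cref{pro:R(V)}.
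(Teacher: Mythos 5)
Your proposal is correct and follows exactly the route the paper intends: the paper gives no written-out argument here, stating only that the proof is similar to \cref{pro:R(V)}, and your expansion — the module analogue of the inclusion \labelcref{6.13''} with the same choices $k=0$ for $\delta(r)=1$ and $k=1$ for $\delta(r)=0$ to kill $C_2(M)$, the leading-term comparison for $R(V)$-linearity, and the vanishing of both sides when $r\neq 0$ — is precisely that similar proof. No gaps.
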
 

\begin{corollary}
  If $V$ is $C_2$-cofinite, the 
  the twisted fusion rules among irreducible untwisted module $M^1$, and $g$-twisted modules $M^2$ and $M^3$ are finite.
  \end{corollary}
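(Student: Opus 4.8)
The plan is to read off finiteness directly from the upper bound \labelcref{eq:half-twisted-fusion-rules-theorem}, which is already available for an arbitrary VOA $V$ and arbitrary irreducible $g$-twisted modules $M^2$, $M^3$:
\[
  \Nusion \le \dim\left((M^3(0))^\ast\otimes_{A_g(V)}A_{g}(M^1)\otimes_{A_g(V)}M^2(0)\right)^\ast.
\]
Hence it suffices to prove that the three tensor factors $(M^3(0))^\ast$, $A_g(M^1)$, and $M^2(0)$ are each finite-dimensional over $\C$: the tensor product over $A_g(V)$ is then a quotient of the corresponding (finite-dimensional) tensor product over $\C$, so it is finite-dimensional, and so is its dual. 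I would organize the argument into three finiteness steps—for $A_g(V)$, for the two bottom levels, and for the bimodule $A_g(M^1)$.

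First I would show that $A_g(V)$ is finite-dimensional. Since $V$ is $C_2$-cofinite, $R(V)=V/C_2(V)$ is finite-dimensional, and \cref{pro:R(V)} supplies an epimorphism of commutative associative algebras $R(V)\twoheadrightarrow\gr A_g(V)$, so $\gr A_g(V)$ is finite-dimensional. As the filtration \labelcref{eq:filtration} is exhaustive, $\dim A_g(V)=\dim\gr A_g(V)<\infty$. An irreducible module over a finite-dimensional algebra is cyclic, hence finite-dimensional; since the bottom levels $M^2(0)$ and $M^3(0)$ of the irreducible $g$-twisted modules are irreducible $A_g(V)$-modules, both $M^2(0)$ and $(M^3(0))^\ast$ are finite-dimensional.

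Next I would treat the middle factor. The epimorphism \labelcref{eq:epimophism-R(V)} realizes $\gr A_g(M^1)$ as a quotient of $M^1/C_2(M^1)$, so the same exhaustive-filtration argument makes $A_g(M^1)$ finite-dimensional as soon as $M^1$ is $C_2$-cofinite, i.e.\ $\dim M^1/C_2(M^1)<\infty$. This is the crux of the proof and the only ingredient external to the machinery of this paper: the standing hypothesis is that $V$, not $M^1$, is $C_2$-cofinite. I would bridge the gap by observing that the irreducible module $M^1$ is generated by any single nonzero vector, hence is a finitely generated, lower-truncated $\N$-gradable weak $V$-module; the spanning-set theorem for $C_2$-cofinite vertex operator algebras of CFT type (Buhl; Gaberdiel--Neitzke; Abe--Buhl--Dong) then forces $\dim M^1/C_2(M^1)<\infty$, whence $\dim A_g(M^1)<\infty$.

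Combining the three steps, the tensor product in the displayed inequality is finite-dimensional, its dual is finite-dimensional, and therefore $\Nusion<\infty$, as claimed. The hard part is precisely the middle step—passing from $C_2$-cofiniteness of $V$ to $C_2$-cofiniteness of the irreducible untwisted module $M^1$; once that is secured, the rest reduces to the elementary fact that a quotient of a finite-dimensional space is finite-dimensional, together with \cref{pro:R(V)}, the module epimorphism \labelcref{eq:epimophism-R(V)}, and the bound \labelcref{eq:half-twisted-fusion-rules-theorem}.
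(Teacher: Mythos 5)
Your proof is correct and follows essentially the same route as the paper's: the paper likewise invokes Buhl's theorem that irreducible untwisted modules over a $C_2$-cofinite VOA are $C_2$-cofinite, deduces $\dim A_g(M^1)=\dim\gr A_g(M^1)<\infty$ from the epimorphism \labelcref{eq:epimophism-R(V)}, and concludes via the bound \labelcref{eq:half-twisted-fusion-rules-theorem}. Your additional step establishing finite-dimensionality of $A_g(V)$ and of the bottom levels via \cref{pro:R(V)} is a harmless (and correct) way of making explicit why the tensor product over $A_g(V)$ is finite-dimensional, a point the paper leaves implicit.
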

\begin{proof}
  It was proved by Buhl in \cite{Bu02} that an irreducible untwisted module $M$ is $C_2$-cofinite if $V$ is $C_2$-cofinite. In particular, we have $\dim A_g(M^1)=\dim \gr A_g(M^1)<\infty$ in view of \labelcref{eq:epimophism-R(V)}. Then, the conclusion follows from the estimate \labelcref{eq:half-twisted-fusion-rules-theorem}.
\end{proof}
\subsection{Twisted fusion rules and fusion rules for cyclic orbifold VOAs}
Let $V$ be a strongly rational VOA, and $g\in \Aut(V)$ be an automorphism of finite order $T$. According to \cite{M15,CM16}, the cyclic orbifold subVOA $V^{0}=V^{\langle g\rangle}$ is also strongly rational. 

We again let $M^1$ be an irreducible untwisted module, and $M^2$ and $M^3$ be irreducible $g$-twisted modules. Note that $M^1$, $M^2$, and $M^3$ are also ordinary $V^0$-modules. 
With \cref{thm:twisted-fusion-rules-theorem} and \cite[Theorem 2.11]{Li} or \cite[Theorem 4.19]{Liu}, we can find a concrete relation between the space of twisted intertwining operators $\Fusion$, and the space of ordinary intertwining operators of $V^0$-modules, which we denote by  $\mathfrak{I}_{V^0}\fusion$. 

By Theorem 3.1 in \cite{DRX17}, the twisted modules $M^1, M^2$, and $M^3$ decompose into direct sum of irreducible ordinary $V^0$-modules: 
$M^1=\bigoplus_{i=0}^{m_1} M^{1,i},$ $ M^2=\bigoplus_{j=0}^{m_2} M^{2,j},$ and $ M^3=\bigoplus_{k=0}^{m_3} M^{3,k},$
where the direct summands could appear multiple times. Denote the $A(V^0)$-bimodule $M^{1,i}/O_{V^0}(M^{1,i})$ by $A_{V^0}(M^{1,i})$ for all $i$. Since $a\circ M^{1,i}\subset M^{1,i}$ for all $i$ and $a\in V^0$, we have
 \[A_{V^0}(M^1)=M^1/O_{V^0}(M^1)\cong \bigoplus_{i=0}^{m_1} M^{1,i}/O_{V^0}(M^{1,i})=\bigoplus_{i=0}^{m_1} A_{V^0}(M^{1,i}).\] 
Moreover, since $V^0$ is also $C_2$-cofinite \cite{M15}, we have $\dim  \mathfrak{I}_{V^0}\fusion[M^{1,i}][M^{2,j}][M^{3,k}]<\infty$ for all $i,j,$ and $k$. By taking restrictions and projections onto the direct summands, using \cite[Theorem 4.19]{Liu} and the fact that $V^0$ is rational \cite{CM16}, we have the following identification of $\mathfrak{I}_{V^0}\fusion$: 
\begin{align*}
\mathfrak{I}_{V^0}\fusion&=\bigoplus_{i,j,k} \mathfrak{I}_{V^0}\fusion[M^{1,i}][M^{2,j}][M^{3,k}]
\cong \left( \bigoplus_{i,j,k} M^{3,k}(0)^{\ast}\o _{A(V^0)} A_{V^0}(M^{1,i})\otimes_{A(V^0)} M^{2,j}(0)\right)^{\ast} \\
&\cong (M^3(0)^{\ast}\o _{A(V^0)} M^1/O_{V^0}(M^1)\otimes_{A(V^0)} M^2(0))^{\ast}.
\end{align*}
On the other hand, we have $\Fusion\cong (M^3(0)^{\ast}\o _{A(V^0)} M^1/O_{g}(M^1)\otimes_{A(V^0)} M^2(0))^{\ast},$ in view of \cref{thm:twisted-fusion-rules-theorem} and \labelcref{eq:iso-between-different-bimodule-tensors}. By the proof of \cref{lem:bimodule}, $O_{g}(M^1)/O_{V^0}(M^1)$ is an $A(V^0)$-sub-bimodule of $A_g(M^1)=M^1/O_g(M^1)$. Thus, we have the following 
\begin{proposition}
	With the settings as above, we have the following relation between the fusion rules of $g$-twisted modules and fusion rules of ordinary $V^0$-modules: 
\begin{equation}\label{n6.16}
\begin{aligned}
    &\sum_{i,j,k}\Nusion[M^{1,i}][M^{2,j}][M^{3,k}]\\
    ={}&\Nusion+\dim  (M^3(0)^{\ast}\o _{A(V^0)} \frac{O_g(M^1)}{O_{V^0}(M^1)}\otimes_{A(V^0)} M^2(0)).
\end{aligned}
\end{equation} 
\end{proposition}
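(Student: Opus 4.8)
The plan is to compare the two fusion spaces by means of a short exact sequence of $A(V^0)$-bimodules and then take dimensions, the whole argument being powered by the semisimplicity of $A(V^0)$. First I would record the two structural facts that drive everything: that $V^0=V^{\langle g\rangle}$ is strongly rational by \cite{M15,CM16} (so $A(V^0)$ is a finite-dimensional semisimple associative algebra), and that $a\circ_g u=a\circ u$ for $a\in V^0$ (as noted in the proof of \cref{lem:bimodule}), which gives the inclusion $O_{V^0}(M^1)\subseteq O_g(M^1)$. Since both $O$-spaces are $A(V^0)$-sub-bimodules of $M^1$ and the $\ast_g$-products restrict to the $\ast$-products of $A(V^0)$ on $V^0$, this inclusion yields a short exact sequence of $A(V^0)$-bimodules
\[
  0\longrightarrow O_g(M^1)/O_{V^0}(M^1)\longrightarrow A_{V^0}(M^1)\longrightarrow A_g(M^1)\longrightarrow 0,
\]
where $A_{V^0}(M^1)=M^1/O_{V^0}(M^1)$ is the (untwisted) Frenkel--Zhu bimodule over $A(V^0)$ and $A_g(M^1)=M^1/O_g(M^1)$.

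Next I would apply the functor $M^3(0)^{\ast}\otimes_{A(V^0)}(-)\otimes_{A(V^0)}M^2(0)$ to this sequence. The crucial input is that $A(V^0)$ is semisimple, so every $A(V^0)$-module is projective and hence flat; in particular the right module $M^3(0)^{\ast}$ and the left module $M^2(0)$ are flat, which makes the double tensor functor exact. Exactness produces a short exact sequence of vector spaces, all finite-dimensional by the $C_2$-cofiniteness established in the previous subsection, so additivity of dimension gives
\[
\begin{aligned}
\dim\bigl(M^3(0)^{\ast}\otimes_{A(V^0)}A_{V^0}(M^1)\otimes_{A(V^0)}M^2(0)\bigr) ={}&\dim\Bigl(M^3(0)^{\ast}\otimes_{A(V^0)}\tfrac{O_g(M^1)}{O_{V^0}(M^1)}\otimes_{A(V^0)}M^2(0)\Bigr)\\
&{}+\dim\bigl(M^3(0)^{\ast}\otimes_{A(V^0)}A_g(M^1)\otimes_{A(V^0)}M^2(0)\bigr).
\end{aligned}
\]

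Finally I would identify the two outer terms with fusion numbers. Using the $V^0$-module decompositions $M^1=\bigoplus_i M^{1,i}$, $M^2=\bigoplus_j M^{2,j}$, $M^3=\bigoplus_k M^{3,k}$ together with the $V^0$-fusion rules isomorphism $\mathfrak{I}_{V^0}\fusion\cong(M^3(0)^{\ast}\otimes_{A(V^0)}A_{V^0}(M^1)\otimes_{A(V^0)}M^2(0))^{\ast}$ derived just above, the left-hand term equals $\dim\mathfrak{I}_{V^0}\fusion=\sum_{i,j,k}\Nusion[M^{1,i}][M^{2,j}][M^{3,k}]$; and by \cref{thm:twisted-fusion-rules-theorem} combined with \labelcref{eq:iso-between-different-bimodule-tensors}, the last term equals $\dim\Fusion=\Nusion$. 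Substituting these two identifications into the dimension identity yields exactly \labelcref{n6.16}.

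The main obstacle I anticipate is precisely the exactness of the double tensor functor: everything hinges on the flatness of $M^2(0)$ and $M^3(0)^{\ast}$ over $A(V^0)$, which is where the rationality (semisimplicity of $A(V^0)$) is genuinely used. Once that flatness is secured, the remainder is routine bookkeeping with dimensions of duals of finite-dimensional spaces.
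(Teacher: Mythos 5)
Your proposal is correct and follows essentially the same route as the paper: both arguments rest on the short exact sequence $0\to O_g(M^1)/O_{V^0}(M^1)\to A_{V^0}(M^1)\to A_g(M^1)\to 0$ of $A(V^0)$-bimodules, the identifications $\mathfrak{I}_{V^0}\fusion\cong(M^3(0)^{\ast}\otimes_{A(V^0)}A_{V^0}(M^1)\otimes_{A(V^0)}M^2(0))^{\ast}$ and $\Fusion\cong(M^3(0)^{\ast}\otimes_{A(V^0)}A_g(M^1)\otimes_{A(V^0)}M^2(0))^{\ast}$, and additivity of dimensions. The only difference is that you make explicit the exactness of the double tensor functor via semisimplicity of $A(V^0)$, a step the paper leaves implicit.
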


\section{Fusion rules among \texorpdfstring{$\theta$}{theta}-twisted modules over the Heisenberg and lattice VOAs}\label{sec7}
In this Section, we apply \cref{thm:twisted-fusion-rules-theorem} to compute the fusion rules among $\theta$-twisted modules over the Heisenberg VOA and rank one lattice VOA. We refer to \cite{FLM, D93, DL} for the detailed constructions of the Heisenberg VOAs and lattice VOAs. 

Here we recall the definition of involution $\theta$ in \cite{FLM}. Let $L$ be a positive definite even lattice of rank $d>0$, and $\theta:L\longrightarrow L$ be an involution of $L$ defined by $\theta(\alpha)=-\alpha$, for any $\alpha\in L$. Then, $\theta$ lifts to an involution of the Heisenberg VOA $M(1)$ associated to $\mathfrak{h}=\C\otimes_{\Z} L$ and the lattice VOA $V_L=M(1)\otimes \C^\epsilon[L]$ as follows: 
\begin{align}
&\theta: M(1)\longrightarrow M(1),\quad \theta(\alpha^1(-n_1)\cdots \alpha^k(-n_k)\vac):=(-1)^k \alpha^1(-n_1)\cdots \alpha^k(-n_k)\vac,\label{6.8}\\
&\theta: V_L\longrightarrow V_L,\quad \theta(\alpha^1(-n_1)\cdots \alpha^k(-n_k) e^\alpha):= (-1)^k\alpha^1(-n_1)\cdots \alpha^k(-n_k)e^{-\alpha},\label{6.9}
\end{align}
where $\alpha^1,\cdots, \alpha^k\in \mathfrak{h}$, $n_1,\cdots, n_k\ge 1$, and $\alpha\in L$. Clearly, $\theta^2=1$. The $\theta$-eigenspaces of eigenvalue $1$ in $M(1)$ and $V_L$ are denoted by $M(1)^+$ and $V_L^+$, respectively, while the $-1$-eigenspaces are denoted by $M(1)^-$ and $V_L^-$. 

Since $M(1)$ and $V_L$ are both simple VOAs, by \cite[Theorem 3.4]{X95}, the only possible nonzero intertwining operators among $\theta$-twisted modules are of the type $\fusion[1][\theta][\theta]$, $\fusion[\theta][1][\theta]$, or $\fusion[\theta][\theta][1]$. Here type $\fusion[g_1][g_2][g_3]$ means type $\fusion$, where $M^i$ is a $g_i$-twisted module for $i=1, 2 ,3$. On the other hand, by \cite[Corollary 5.2 and Corollary 6.2]{H18}, we have 
\[
\Fusion[\theta][\theta][1]\cong\Fusion[\theta][1][\theta^{-1}]\cong\Fusion[1][\theta][\theta].
\]
Therefore, in order to determine the fusion rules among $\theta$-twisted modules, where $V=M(1)$ or $V_L$, we only need to determine the space of $\Fusion[1][\theta][\theta]$-twisted intertwining operators. 

\subsection{The Heisenberg VOA case} Let $\mathfrak{h}=\C\otimes_\Z L$ or any $d$-dimensional $\C$-vector space, equipped with a non-degenerate bilinear form $(\cdot|\cdot)$. Recall the twisted affine algebra  $\hat{\mathfrak{h}}_{\Z+\frac{1}{2}}=\mathfrak{h}\o t^{1/2}\C[t,t^{-1}]\oplus \C K$, with Lie bracket given by 
\[
[a(m),b(n)]=\delta_{m+n,0}m(a|b)K,\quad  [K,\hat{\mathfrak{h}}_{\Z+\frac{1}{2}}]=0, \quad a,b\in \mathfrak{h}, m,n\in \Z+\frac{1}{2}.
\]
Let $M(1)_{\Z+\frac{1}{2}}$ be the induced module $U(\hat{\mathfrak{h}}_{\Z+\frac{1}{2}})\o _{U(\hat{\mathfrak{h}}_{\Z+1/2}^+\oplus \C K)}\C \vac$, where $\hat{\mathfrak{h}}_{\Z+1/2}^+\oplus \C K$ acts trivially on $\C \vac$ \cite{FLM,DN99}. By Corollary 3.9 in \cite{DN99}, the $\theta$-twisted Zhu's algebra $A_\theta(M(1))$ is isomorphic to $\C$, and $M(1)_{\Z+\frac{1}{2}}$ is the unique $\theta$-twisted $M(1)$-module.  

It is clear that $M(1)_{\Z+\frac{1}{2}}$ is an irreducible module over twisted affine Lie algebra $\hat{\mathfrak{h}}_{\Z+\frac{1}{2}}$. Since $M(1)_{\Z+\frac{1}{2}}$ is universal in the sense that any $\theta$-twisted $M(1)$-module with bottom level $\C \vac$ is a quotient module of $M(1)_{\Z+\frac{1}{2}}$, it is also the $\theta$-twisted generalized Verma module over $M(1)$. 

\begin{lemma}\label{lem: Heisenberg}
  Let $M(1,\la)=M(1)\otimes \C e^\la$ be the irreducible (untwisted) module over $M(1)$ associated to $\la\in \h$. Then, $A_{\theta}(M(1,\la))=\C [e^\la]$. 
  \end{lemma}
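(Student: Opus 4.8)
The plan is to compute $A_\theta(M(1,\la))=M(1,\la)/O_\theta(M(1,\la))$ directly from the definition \labelcref{6.2'} of the Jiang--Jiao bimodule, specialized to $V=M(1)$, $g=\theta$ and $T=2$, where the eigenspace decomposition is $V^0=M(1)^+$ and $V^1=M(1)^-$. The key observation is that every state $\alpha(-m)\vac$ ($m\ge 1$, $\alpha\in\h$) is built from a single creation operator, hence is $\theta$-odd and lies in $V^1$, for which $\delta(1)=0$ and $\tfrac rT=\tfrac12$. Thus \labelcref{def:circle-g} gives
\[
  \alpha(-m)\vac\circ_\theta v=\Res_x\frac{(1+x)^{m-\frac12}}{x}\,Y_{M(1,\la)}(\alpha(-m)\vac,x)v .
\]
Inserting $Y_{M(1,\la)}(\alpha(-m)\vac,x)=\sum_{j\in\Z}\binom{-j-1}{m-1}\alpha(j)x^{-j-m}$ and taking the residue, I would record the workhorse relation
\[
  \alpha(-m)\vac\circ_\theta v=\sum_{j\ge -m}\binom{-j-1}{m-1}\binom{m-\frac12}{j+m}\alpha(j)v\ \in\ O_\theta(M(1,\la)),
\]
whose leading ($j=-m$) coefficient equals $1$, so that the term $\alpha(-m)v$ is isolated.

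First I would use this relation to prove the spanning bound $\dim A_\theta(M(1,\la))\le 1$. Any PBW monomial is $\alpha^1(-n_1)\cdots\alpha^k(-n_k)e^\la$; applying the relation with $a=\alpha^1(-n_1)\vac$ and $v=\alpha^2(-n_2)\cdots\alpha^k(-n_k)e^\la$ yields, modulo $O_\theta(M(1,\la))$,
\[
  \alpha^1(-n_1)v\equiv-\sum_{j>-n_1}\binom{-j-1}{n_1-1}\binom{n_1-\frac12}{j+n_1}\alpha^1(j)v .
\]
On the right, $\alpha^1(0)$ acts by the scalar $(\la|\alpha^1)$, the modes $\alpha^1(j)$ with $j\ge1$ lower the $L(0)$-degree (after commuting to the right through the creation operators in $v$), and the modes $\alpha^1(j)$ with $-n_1<j<0$ are creation operators of strictly smaller weight $|j|<n_1$. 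In every case the resulting vectors have strictly smaller total $L(0)$-degree than $\alpha^1(-n_1)v$, so induction on the $L(0)$-eigenvalue reduces each monomial to a scalar multiple of the unique bottom vector $e^\la$ (of minimal degree $\tfrac{(\la|\la)}2$). Hence $A_\theta(M(1,\la))$ is spanned by $[e^\la]$.

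The main obstacle is the reverse inequality, i.e.\ showing $[e^\la]\neq 0$, equivalently $e^\la\notin O_\theta(M(1,\la))$; a naive projection onto the bottom level does \emph{not} visibly annihilate $O_\theta(M(1,\la))$, since $a\circ_\theta e^\la$ generally contributes a nonzero bottom component through its zero mode. To resolve this I would produce a single nonzero linear functional on $M(1,\la)$ vanishing on $O_\theta(M(1,\la))$ and nonzero on $e^\la$. Such a functional is furnished by a matrix coefficient of a nonzero $\theta$-twisted intertwining operator of type $\Fusion[M(1,\la)][W][W]$, where $W=M(1)_{\Z+\frac12}$ is the unique $\theta$-twisted $M(1)$-module with one-dimensional bottom level and is the $\theta$-twisted generalized Verma module; the existence of such an operator is classical, coming from twisted vertex operators between free-boson Fock spaces (cf.\ \cite{DN99}). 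Equivalently, since $A_\theta(M(1))\cong\C$ and $W(0)=\C$, the twisted Fusion Rules Theorem (\cref{thm:twisted-fusion-rules-theorem}) identifies $\Fusion[M(1,\la)][W][W]\cong A_\theta(M(1,\la))^{\ast}$, so nonvanishing of this space of intertwining operators forces $A_\theta(M(1,\la))\neq 0$. Combining the two directions gives $A_\theta(M(1,\la))=\C[e^\la]$.
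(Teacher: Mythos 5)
Your proof is correct, and the half of it that the paper actually proves---the spanning bound $\dim A_\theta(M(1,\la))\le 1$---follows the same strategy as the paper's: derive from $a\circ_\theta v\in O_\theta(M(1,\la))$ a congruence that trades the leading creation operator of a PBW monomial for terms of strictly smaller degree, then induct on the degree. The only cosmetic difference is that you use the relations coming from $\alpha(-m)\vac\circ_\theta v$ for all $m\ge 1$, whereas the paper uses only the weight-one element $\alpha(-1)\vac$ together with the shifted relations $\Res_x\,(1+x)^{1/2}x^{-1-k}Y(\alpha(-1)\vac,x)v\in O_\theta(M(1,\la))$ (its \labelcref{6.11}); both isolate the top mode with coefficient $1$ and give the same reduction. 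Where you genuinely go beyond the paper's proof is the nonvanishing $[e^\la]\neq 0$: the paper deliberately does not establish this inside the lemma---it remarks immediately afterwards that the lemma ``only shows $A_{\theta}(M(1,\la))$ is at most one-dimensional''---and instead combines the upper bound with the explicit nonzero twisted intertwining operator of \cite{ADL05} in the following proposition. Your route to $[e^\la]\neq 0$ via \cref{thm:twisted-fusion-rules-theorem} and that same operator is valid and not circular, since the fusion rules theorem is proved independently of this lemma; it is in substance the same computation the paper performs one step later, repackaged as a statement about the bimodule itself. The one hypothesis worth checking explicitly when you invoke \cref{thm:twisted-fusion-rules-theorem} is that $M(1)_{\Z+\frac{1}{2}}$ and its contragredient are generalized Verma modules, which holds here by the universality and self-duality of the unique $\theta$-twisted $M(1)$-module. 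Your observation that projection onto the bottom level does not kill $O_\theta(M(1,\la))$ (because of the zero-mode contribution $(\la|\alpha)e^\la$) correctly explains why some such external input is needed.
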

\begin{proof}
  The proof is similar to the proof of Lemma 3.7 and 3.8 in \cite{DN99}. We briefly sketch it. For any $\alpha\in \h$, since $\alpha(-1)\vac \in M(1)^-$, by \labelcref{def:O-quotient}, we have
  \begin{equation} \label{6.11}
  \alpha(-m-1)v\equiv -\sum_{j\ge 0} \binom{1/2}{j+1} \alpha(j-m)v\pmod{O_\theta(M(1,\la))},
  \end{equation}
  for any $m\ge 0$, $v\in M(1,\la)$. We use induction on the degree $n_1+\cdots +n_k=n$ of a spanning element $v=\alpha^1(-n_1)\cdots \alpha^k(-n_k)e^\la$ of $M(1,\la)$ to show that $v\equiv c e^\la\pmod{O_\theta}(M(1,\la))$, where $c\in \C$. The base case $n=0$ is clear. For $n>0$, by \labelcref{6.11} we have
    \begin{align*}
        \MoveEqLeft
        \alpha^1(-n_1)\cdots \alpha^k(-n_k)e^\la\\
        &\equiv -\sum_{j\ge  0}\binom{1/2}{j+1}\alpha^1(j-n_1+1)\alpha^2(-n_2)\cdots \alpha^k(-n_k)e^\la\pmod{O_\theta(M(1,\la))},
    \end{align*}
where $\deg (\alpha^1(j-n_1+1)\alpha^2(-n_2)\cdots \alpha^k(-n_k)e^\la)=n_1-j-1+n_2+\cdots +n_k=n-j-1<n$. Then, by the induction hypothesis, $v\equiv c e^\la \pmod{O_\theta(M(1,\la))}$. 
\end{proof}
 
We note that \cref{lem: Heisenberg} only shows $A_{\theta}(M(1,\la))$ is at most one-dimensional. It is not obvious that $[e^\la]\neq 0$. Using \cref{lem: Heisenberg}, \cref{thm:twisted-fusion-rules-theorem}, the Hom-tensor duality, and the fact $A_\theta(M(1))\cong \C$, we have 
\begin{equation}\label{6.12'}
 \Nusion[M(1,\la)][M_{\Z+\frac{1}{2}}(1)][M_{\Z+\frac{1}{2}}(1)]= \dim \Hom_{\C}(\C [e^\la]\otimes \C \vac, \C\vac)\le 1.
\end{equation}
 On the other hand,  Abe, Dong, and Li constructed a nonzero $\fusion[1][\theta][\theta]$-twisted intertwining operator in \cite{ADL05} based on twisted vertex operators from \cite{FLM}:
 \begin{equation}\label{6.13'}
 \begin{aligned}
& \mathcal{Y}_{\la}^{\mathrm{tw}}(\cdot,w): M(1,\la)\longrightarrow \End(M_{\Z+\frac{1}{2}}(1))\{w\},\\
&  \mathcal{Y}_{\la}^{\mathrm{tw}}(e^\la,w)=e^{-|\la|^2\log 2}z^{-\frac{|\la|^2}{2}}\exp(\sum_{n\in -\frac{1}{2}-\N} \frac{-\la(n)}{n}z^{-n}) \exp(\sum_{n\in \frac{1}{2}+\N} \frac{-\la(n)}{n}z^{-n}),
 \end{aligned}
 \end{equation}
see equation (4.7) in \cite{ADL05}.  
Using \labelcref{6.12'}, we have the following result about the fusion rules among $\theta$-twisted module over $M(1)$: 
\begin{proposition}
 Let $\la\in \h^{\ast}$. Then, $\Nusion[M(1,\la)][M_{\Z+\frac{1}{2}}(1)][M_{\Z+\frac{1}{2}}(1)]=1$. 
\end{proposition}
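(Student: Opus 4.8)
The plan is to sandwich the fusion rule between the upper bound already recorded in \labelcref{6.12'} and a matching lower bound coming from an explicit nonzero intertwining operator, so that the two estimates force the value to be exactly $1$.

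First I would assemble the upper bound. Since $M(1,\la)$ is an irreducible untwisted $M(1)$-module of lowest weight, and $M_{\Z+\frac{1}{2}}(1)$ is the unique $\theta$-twisted module — which is simultaneously the $\theta$-twisted generalized Verma module and, being self-contragredient up to isomorphism (as $\theta^{-1}=\theta$ and the twisted module is unique), so is its dual — the hypotheses of \cref{thm:twisted-fusion-rules-theorem} are met with $M^1=M(1,\la)$ and $M^2=M^3=M_{\Z+\frac{1}{2}}(1)$. The bottom levels are $U^2=U^3=\C\vac$, the one-dimensional module over $A_\theta(M(1))\cong\C$, and \cref{lem: Heisenberg} gives $A_\theta(M(1,\la))=\C[e^\la]$. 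Feeding these into the twisted Fusion Rules Theorem and applying Hom-tensor duality yields precisely \labelcref{6.12'}, hence $\Nusion[M(1,\la)][M_{\Z+\frac{1}{2}}(1)][M_{\Z+\frac{1}{2}}(1)]\le 1$.

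For the lower bound I would invoke the twisted vertex operator $\mathcal{Y}_\la^{\mathrm{tw}}$ of \labelcref{6.13'}, constructed by Abe, Dong, and Li. This is a nonzero $\theta$-twisted intertwining operator of type $\fusion[M(1,\la)][M_{\Z+\frac{1}{2}}(1)][M_{\Z+\frac{1}{2}}(1)]$; its leading term carries the nonzero scalar $e^{-|\la|^2\log 2}$, so $\mathcal{Y}_\la^{\mathrm{tw}}\neq 0$ and therefore $\Fusion[M(1,\la)][M_{\Z+\frac{1}{2}}(1)][M_{\Z+\frac{1}{2}}(1)]\neq 0$. Combining $\ge 1$ with the upper bound $\le 1$ gives the claimed equality.

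The main obstacle is the nonvanishing, not the upper bound: \cref{lem: Heisenberg} only shows that $A_\theta(M(1,\la))$ is \emph{at most} one-dimensional and does not by itself rule out $[e^\la]=0$, which would collapse the fusion rule to $0$. The real content is thus the existence of a genuinely nonzero intertwining operator, and the step requiring care is verifying that $\mathcal{Y}_\la^{\mathrm{tw}}$ satisfies the twisted Jacobi identity and the $\vo{L}{-1}$-derivative axiom of the present paper's definition (as opposed to the conventions of \cite{ADL05}) and that its image is not identically zero. Once that is granted, the two-sided estimate closes the argument immediately.
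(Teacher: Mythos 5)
Your proposal is correct and follows essentially the same route as the paper: the upper bound $\Nusion[M(1,\la)][M_{\Z+\frac{1}{2}}(1)][M_{\Z+\frac{1}{2}}(1)]\le 1$ comes from \cref{lem: Heisenberg}, \cref{thm:twisted-fusion-rules-theorem}, Hom-tensor duality, and $A_\theta(M(1))\cong\C$ (this is exactly \labelcref{6.12'}), while the matching lower bound is supplied by the nonzero twisted intertwining operator $\mathcal{Y}_\la^{\mathrm{tw}}$ of \cite{ADL05} recalled in \labelcref{6.13'}. Your remark that the genuine content lies in the nonvanishing — since \cref{lem: Heisenberg} alone does not exclude $[e^\la]=0$ — matches the paper's own caveat.
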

 
 \subsection{The rank one lattice VOA case} 
 In this subsection,  we assume that $L=\Z \alpha$, with $(\alpha|\alpha)=2$ and $\epsilon (\alpha,\alpha)=1$. i.e., $\epsilon: L\times L\longrightarrow <\pm 1>$ is trivial. Then, $L$ is the root lattice of type $A_1$, with dual lattice $L^\circ=\frac{1}{2}L=L\sqcup (L+\frac{1}{2}\alpha)$. We first recall some general results about twisted representations of $V_L$ in \cite{FLM,D93,D94,DN99}. 
 
 By \cite[Theorem 3.1]{D93}, $V_L$ has two untwisted irreducible modules $V_L$ and $V_{L+\frac{1}{2}\alpha}$. On the other hand, according to \cite[Theorem 3.5.1, Remark 3.5.3, and Remark 7.4.14]{FLM}, together with \cite[Theorem 3.1]{D94}, $V_L$ has two irreducible $\theta$-twisted modules $V_L^{T_\chi}$ and $V_L^{T_{-\chi}}$, with bottom levels $T_\chi=\C v_\chi$ and $T_{-\chi}=\C v_{-\chi}$, respectively, where $\chi\in \C^\times$, and
\[V_L^{T_\chi}=M_{\Z+\frac{1}{2}}(1)\otimes \C v_{\chi},\quad V_L^{T_{-\chi}}=M_{\Z+\frac{1}{2}}(1)\otimes \C v_{-\chi}.\]
Moreover, consider the Lie algebra $sl_2=\C e^\alpha+\C \alpha(-1)\vac+\C e^{-\alpha}=(V_L)_1$, let $E=e^\alpha+e^{-\alpha}$ and $F=e^{\alpha}-e^{-\alpha}$, and let 
\[\hat{sl_2}[\theta_2]=(\C E\o \C[t,t^{-1}])\oplus ((\C\alpha(-1)\vac+\C F)\o t^{1/2}\C[t,t^{-1}])\oplus \C K\] be the twisted affine Lie algebra associated to $sl_2$ and the involution
\[\theta_2: sl_2\longrightarrow sl_2,\quad e^\alpha\mapsto e^{-\alpha},\quad  \alpha(-1)\vac\mapsto -\alpha(-1)\vac,\quad e^{-\alpha}\mapsto e^{\alpha},\]
which is $\theta$ in \labelcref{6.9} restricted to $(V_L)_1$. See Chapters 2 and 3 of \cite{FLM} for more details. Then, $V_L^{T_\chi}$ and $V_L^{T_{-\chi}}$ are non-isomorphic irreducible modules over the twisted affine Lie algebra $\hat{sl_2}[\theta_2]$. Let $x(n):=x\o t^n\in \hat{sl_2}[\theta_2]$, for any $x\in sl_2$ and $n\in \Z$ or $\Z+\frac{1}{2}$. By the construction of twisted modules, the action of $\vo{E}{0}$ on $T_\chi$ and $T_{-\chi}$ are given by 
\begin{equation}\label{6.12}
\vo{E}{0}v_\chi=\frac{1}{2} v_\chi,\quad \vo{E}{0} v_{-\chi}=-\frac{1}{2} v_{-\chi}.
\end{equation}
See Section 5.1 in \cite{DN99(1)}. 

By Lemma 3.10 and Proposition 3.12 in \cite{DN99}, $A_\theta(V_L)$ has the following characterization: 
\begin{lemma}
  $A_\theta(V_L)\cong \C[\vac]\oplus\C[e^\alpha]$, with $[e^\alpha]=[e^{-\alpha}]$ and $[e^\alpha]\ast_\theta [e^\alpha]=4^{-(\alpha|\alpha)}[\vac]$. 
  \end{lemma}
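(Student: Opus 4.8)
The plan is to exploit the order-two grading $V_L=V_L^+\oplus V_L^-$ together with the general fact, cited from \cite{DLM1}, that $V^r\subseteq O_g(V)$ for $r\neq 0$; here this reads $V_L^-\subseteq O_\theta(V_L)$. Consequently $A_\theta(V_L)$ is a quotient of $V_L^+=V^0$, and since $e^{n\alpha}-e^{-n\alpha}\in V_L^-$ we get $[e^{n\alpha}]=[e^{-n\alpha}]$ for every $n$; in particular the relation $[e^\alpha]=[e^{-\alpha}]$ is immediate. Writing $E:=e^\alpha+e^{-\alpha}\in V_L^+$, we have $[e^\alpha]=\tfrac12[E]$, so the whole algebra is generated by the images of the $\theta$-fixed vectors and it remains to pin down its dimension and the single product.

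First I would prove the spanning statement $A_\theta(V_L)=\C[\vac]+\C[e^\alpha]$. Decomposing $V_L=\bigoplus_{n\in\Z}M(1,n\alpha)$ into charge sectors, the circle products $a\circ_\theta b$ with $a\in M(1)$ preserve the charge and reproduce exactly the module relations defining $A_\theta(M(1,n\alpha))$; hence by \cref{lem: Heisenberg} the image of each $M(1,n\alpha)$ is spanned by $[e^{n\alpha}]$, and so $A_\theta(V_L)$ is spanned by $\Set*{[\vac]}\cup\Set*{[e^{n\alpha}]\given n\ge 1}$. To collapse the higher sectors I would feed $a=E$ and $b=e^{(n-1)\alpha}+e^{-(n-1)\alpha}$ into the extended family of relations \labelcref{6.13''} and read off the charge-$n$ component of $\Res_z Y(E,z)b\,(1+z)z^{-2-k}$ for a suitable $k\ge 1$: the lattice operator product $Y(e^\alpha,z)e^{(n-1)\alpha}=z^{2(n-1)}(e^{n\alpha}+\cdots)$ has a single leading term, so choosing $k$ so that it contributes to the residue expresses $2[e^{n\alpha}]$ (using $[e^{n\alpha}]=[e^{-n\alpha}]$) as a combination of strictly lower-charge classes. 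Induction on $n$ then collapses everything into $\C[\vac]+\C[e^\alpha]$. This reduction of the higher charge sectors is the step I expect to be the main obstacle, since it requires careful bookkeeping of the lattice OPE together with the shifts $\tfrac{r}{T}$ and the auxiliary pole-order relations.

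Next I would verify the product rule. Because $e^\alpha\notin V^0$ it cannot serve as a left factor of $\ast_\theta$; instead one uses the representative $\tfrac12E$, so that $[e^\alpha]\ast_\theta[e^\alpha]=\tfrac14[E\ast_\theta E]$. Expanding $E\ast_\theta E=\Res_z\frac{1+z}{z}Y(E,z)E$ (note $\wt E=1$) into its four summands, the charge-$(\pm2)$ pieces $Y(e^{\pm\alpha},z)e^{\pm\alpha}=z^2(\cdots)$ contribute no residue, while the two cross terms $Y(e^\alpha,z)e^{-\alpha}$ and $Y(e^{-\alpha},z)e^\alpha$, each of the form $z^{-2}\exp(\pm\sum_{n\ge1}\tfrac{\alpha(-n)}{n}z^n)\vac$, combine after taking residues to the single element $\alpha(-1)^2\vac$ (the odd-power terms cancel and the two $\alpha(-1)^2$ terms add). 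Finally $[\alpha(-1)^2\vac]$ is reduced inside the Heisenberg part by the circle relation $\alpha(-1)\vac\circ_\theta\alpha(-1)\vac=\Res_z\frac{(1+z)^{1/2}}{z}\,\alpha(z)\,\alpha(-1)\vac=\alpha(-1)^2\vac-\tfrac14\vac\in O_\theta(V_L)$, whence $[\alpha(-1)^2\vac]=\tfrac14[\vac]$. Collecting constants gives $[e^\alpha]\ast_\theta[e^\alpha]=\tfrac14\cdot\tfrac14[\vac]=\tfrac1{16}[\vac]=4^{-(\alpha|\alpha)}[\vac]$, as claimed.

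It remains to show $[\vac]$ and $[e^\alpha]$ are linearly independent, so that $\dim A_\theta(V_L)=2$ rather than smaller. For the lower bound I would invoke the two non-isomorphic irreducible $\theta$-twisted modules $V_L^{T_\chi}$ and $V_L^{T_{-\chi}}$ with one-dimensional bottom levels $\C v_{\pm\chi}$: by \labelcref{6.12} the zero mode $o(E)=\vo{E}{0}$ acts on them by $+\tfrac12$ and $-\tfrac12$ respectively, so they determine distinct one-dimensional $A_\theta(V_L)$-modules. This yields an algebra surjection $A_\theta(V_L)\twoheadrightarrow\C\times\C$ with $[\vac]\mapsto(1,1)$ and $[E]\mapsto(\tfrac12,-\tfrac12)$, whose image already spans the target; since the source is spanned by the two elements $[\vac],[e^\alpha]$, the map is an isomorphism. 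Hence $A_\theta(V_L)$ is exactly two-dimensional with basis $[\vac],[e^\alpha]$, which together with the two relations established above completes the proof and exhibits $A_\theta(V_L)\cong\C[x]/(x^2-\tfrac1{16})\cong\C\times\C$.
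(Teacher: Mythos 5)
Your argument is correct, but note that the paper does not actually prove this lemma: it is quoted from Dong--Nagatomo \cite{DN99} (Lemma 3.10 and Proposition 3.12 there). What you have done is reprove it inside the paper's own framework, using exactly the toolkit the paper deploys for the companion computation of $A_\theta(V_{L+\frac{1}{2}\alpha})$ in \cref{lem:latice}: the inclusion $V_L^-\subseteq O_\theta(V_L)$, the charge decomposition $V_L=\bigoplus_{n}M(1,n\alpha)$ combined with the Heisenberg reduction of \cref{lem: Heisenberg}, the auxiliary relations \labelcref{6.13''} with $a=E=e^\alpha+e^{-\alpha}$, and the two inequivalent $\theta$-twisted modules of \labelcref{6.12} for the lower bound on the dimension. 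I checked the two computations you rely on: the cross terms of $E\ast_\theta E$ do sum to $\alpha(-1)^2\vac$ (the $\alpha(-1)$ and $\alpha(-2)$ contributions cancel between $Y(e^\alpha,z)e^{-\alpha}$ and $Y(e^{-\alpha},z)e^{\alpha}$), and $\alpha(-1)\vac\circ_\theta\alpha(-1)\vac=\alpha(-1)^2\vac-\tfrac14\vac$ since $\binom{1/2}{2}(\alpha|\alpha)=-\tfrac14$, giving $[e^\alpha]\ast_\theta[e^\alpha]=\tfrac14\cdot\tfrac14[\vac]=4^{-(\alpha|\alpha)}[\vac]$. The step you flag as the main obstacle also closes: taking $k=2n-3$ in \labelcref{6.13''} with $b=e^{(n-1)\alpha}+e^{-(n-1)\alpha}$, the charge-$(\pm n)$ part of the residue is exactly $e^{n\alpha}+e^{-n\alpha}$ (only the leading coefficient of $z^{2(n-1)}$ survives), while the charge-$(\pm(n-2))$ part is a Heisenberg descendant of $e^{\pm(n-2)\alpha}$ and hence, by \cref{lem: Heisenberg}, a multiple of $[e^{(n-2)\alpha}]$; so $2[e^{n\alpha}]\in\C[e^{(n-2)\alpha}]$ and the induction terminates at $[\vac]$ or $[e^\alpha]$. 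One small caution: when you write $[e^\alpha]\ast_\theta[e^\alpha]=\tfrac14[E\ast_\theta E]$ you are implicitly extending $\ast_\theta$ linearly over the decomposition $e^\alpha=\tfrac12E+\tfrac12F$ with the $F$-component killed; this is how \labelcref{def:g-star-product} is meant for non-homogeneous elements, and the two readings agree here, but it deserves a sentence. What your route buys is a self-contained verification (including an independent check of the constant $4^{-(\alpha|\alpha)}$) consistent with the methods of \cref{sec7}; what the paper's citation buys is brevity.
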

Moreover, by Theorem 3.13 in \cite{DN99}, $V_L$ is $\theta$-rational. Thus we can apply the twisted fusion rules formula \labelcref{eq:twisted-fusion-rules-theorem} for $V_L$. 
\begin{proposition}
  Let $M^1$ be the untwisted $V_L$-module $V_L$, we have
  \begin{equation}\label{6.13}
  \Nusion[V_L][V_L^{T_\chi}][V_L^{T_{\chi}}]=\Nusion[V_L][V_L^{T_{-\chi}}][V_L^{T_{-\chi}}]=1,\quad \Nusion[V_L][V_L^{T_\chi}][V_L^{T_{-\chi}}]=\Nusion[V_L][V_L^{T_{-\chi}}][V_L^{T_{\chi}}]=0.
  \end{equation}
  \end{proposition}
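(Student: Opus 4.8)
The plan is to specialize the \emph{$g$-twisted Fusion Rules Theorem} (\cref{thm:twisted-fusion-rules-theorem}) to $V=V_L$, $g=\theta$, and $M^1=V_L$, and then collapse the triple tensor product to a $\Hom$-space between the one-dimensional bottom levels $T_\chi$ and $T_{-\chi}$. Since $V_L$ is $\theta$-rational, \cref{thm:twisted-fusion-rules-theorem} applies to the irreducible choices $M^2,M^3\in\Set{V_L^{T_\chi},V_L^{T_{-\chi}}}$ and gives
\[
  \Fusion\cong\bigl(M^3(0)^\ast\otimes_{A_\theta(V_L)}A_\theta(M^1)\otimes_{A_\theta(V_L)}M^2(0)\bigr)^\ast.
\]
First I would observe that for the adjoint module $M^1=V_L$ one has $A_\theta(M^1)=A_\theta(V_L)$ as the \emph{regular} $A_\theta(V_L)$-bimodule: this is immediate from \cref{def:bimodule}, since for $M=V_L$ the subspace $O_\theta(M^1)$ coincides with the defining ideal $O_\theta(V_L)$ of Zhu's algebra and the bimodule actions $\ast_\theta$ reduce to the algebra product. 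Hence $A_\theta(M^1)\otimes_{A_\theta(V_L)}M^2(0)\cong M^2(0)$, and a Hom--tensor duality (legitimate because the bottom levels are finite-dimensional, so $M^3(0)^{\ast\ast}=M^3(0)$) yields
\[
  \Fusion\cong\Hom_{A_\theta(V_L)}\bigl(M^2(0),M^3(0)\bigr).
\]

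The heart of the argument is then to pin down the $A_\theta(V_L)$-module structure carried by the one-dimensional bottom levels $T_\chi=\C v_\chi$ and $T_{-\chi}=\C v_{-\chi}$. Using the structure of $A_\theta(V_L)\cong\C[\vac]\oplus\C[e^\alpha]$ recorded above, with $[e^\alpha]\ast_\theta[e^\alpha]=4^{-(\alpha|\alpha)}[\vac]$, I would compute the scalar by which the generator $[e^\alpha]$ acts on each bottom level. The key relation is that $F=e^\alpha-e^{-\alpha}\in V_L^1$ lies in $O_\theta(V_L)$, so $[F]=0$ and therefore $[e^\alpha]=[e^{-\alpha}]=\tfrac12[E]$ with $E=e^\alpha+e^{-\alpha}\in V_L^0$. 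Because $E$ is $\theta$-fixed, $[E]$ acts on $T_{\pm\chi}$ through the genuine zero mode $o(E)=\vo{E}{0}$, and \labelcref{6.12} gives $\vo{E}{0}v_{\pm\chi}=\pm\tfrac12 v_{\pm\chi}$. Thus $[e^\alpha]$ acts as $\pm\tfrac14$ on $T_{\pm\chi}$ — consistently with $(\pm\tfrac14)^2=4^{-2}=4^{-(\alpha|\alpha)}$ — so $T_\chi$ and $T_{-\chi}$ are precisely the two \emph{distinct} one-dimensional simple modules of the semisimple algebra $A_\theta(V_L)\cong\C\times\C$.

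Finally I would invoke Schur's lemma: $\Hom_{A_\theta(V_L)}(T_\chi,T_\chi)=\Hom_{A_\theta(V_L)}(T_{-\chi},T_{-\chi})=\C$, while $\Hom_{A_\theta(V_L)}(T_\chi,T_{-\chi})=\Hom_{A_\theta(V_L)}(T_{-\chi},T_\chi)=0$. Substituting the appropriate $M^2(0),M^3(0)\in\Set{T_\chi,T_{-\chi}}$ for each of the four fusion rules in \labelcref{6.13} then reads off the dimensions $1,1,0,0$. The main obstacle I anticipate is this middle module-structure computation: correctly translating the abstract generator $[e^\alpha]\in A_\theta(V_L)$ into its concrete scalar action on the twisted bottom levels, which hinges both on the vanishing $[F]=0$ (since $F\in V_L^1$) and on the normalization $\vo{E}{0}v_{\pm\chi}=\pm\tfrac12 v_{\pm\chi}$ recorded in \labelcref{6.12}. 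Once the two eigenvalues $\pm\tfrac14$ are separated, the remainder of the proof is formal.
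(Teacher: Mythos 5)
Your proposal is correct and follows essentially the same route as the paper's proof: both reduce via \cref{thm:twisted-fusion-rules-theorem} and Hom--tensor duality (using that $A_\theta(V_L)$ is the regular bimodule for $M^1=V_L$) to $\Hom_{A_\theta(V_L)}$-spaces between the one-dimensional bottom levels, and both distinguish $T_\chi$ from $T_{-\chi}$ by the eigenvalue $\pm\tfrac12$ of $o(E)=\vo{E}{0}$ from \labelcref{6.12}. Your additional remarks (the vanishing $[F]=0$ giving $[e^\alpha]=\tfrac12[E]$, and the consistency check $(\pm\tfrac14)^2=4^{-(\alpha|\alpha)}$) are correct elaborations of steps the paper leaves implicit.
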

\begin{proof}
  By \cref{thm:twisted-fusion-rules-theorem} and Hom-tensor duality, we have 
  \[\Fusion[V_L][V_L^{T_{\pm \chi}}][V_L^{T_{\pm\chi}}]\cong \Hom_{A_\theta(V_L)}(T_{\pm \chi},T_{\pm \chi}).\]
  Now \labelcref{6.13} follows from \labelcref{6.12} since $[E]=2[e^\alpha]\in A_{\theta}(V_L)$ acts on $T_{\pm \chi}=\C v_{\pm \chi}$ by $o(E)v_{\pm \chi}=\vo{E}{0}v_{\pm \chi}=\pm (1/2) v_{\pm \chi}$, and an element $f$ in  $\Hom_{A_\theta(V_L)}(T_{\pm \chi},T_{\pm \chi})$ preserves $\vo{E}{0}$. 
\end{proof}
It remains to consider the case when $M^1$ is the untwisted irreducible $V_L$-module $V_{L+\frac{1}{2}\alpha}$. We first give a spanning set of the $A_\theta(V_L)$-bimodule $A_\theta(V_{L+\frac{1}{2}\alpha})$. 
\begin{lemma}\label{lem:latice}
  $A_{\theta}(V_{L+\frac{1}{2}\alpha})=\C [e^{\frac{1}{2} \alpha}]+\C [e^{-\frac{1}{2}\alpha}]$, with 
  \begin{equation}\label{6.14}
  [E]\ast_\theta [e^{\frac{1}{2}\alpha}]-[e^{\frac{1}{2}\alpha}]\ast_\theta[E]=[e^{-\frac{1}{2}\alpha}],\quad   [E]\ast_\theta [e^{-\frac{1}{2}\alpha}]-[e^{-\frac{1}{2}\alpha}]\ast_\theta[E]=[e^{\frac{1}{2}\alpha}]. 
  \end{equation}
\end{lemma}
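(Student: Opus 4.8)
The goal is to compute the $A_\theta(V_L)$-bimodule $A_\theta(V_{L+\frac12\alpha})$ for the rank-one lattice $L=\Z\alpha$ with $(\alpha|\alpha)=2$. The plan is to follow the same strategy that worked for the Heisenberg module in \cref{lem: Heisenberg}: first produce a spanning set by reducing an arbitrary basis vector of $V_{L+\frac12\alpha}$ modulo $O_\theta(V_{L+\frac12\alpha})$ to a combination of the two ``lowest'' vectors $e^{\frac12\alpha}$ and $e^{-\frac12\alpha}$, and then identify the left/right $A_\theta(V_L)$-actions explicitly to obtain the commutation relations \labelcref{6.14}. Recall that $V_{L+\frac12\alpha}=M(1)\otimes(\C e^{\frac12\alpha}\oplus\C e^{-\frac12\alpha})$ as an $M(1)$-module, so a spanning element has the form $\beta^1(-n_1)\cdots\beta^k(-n_k)e^{\pm\frac12\alpha}$ with $n_i\ge 1$.

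\textbf{Step 1: the spanning set.} I would first establish, exactly as in \labelcref{6.11}, a congruence that lets one lower the mode index of a Heisenberg creation operator modulo $O_\theta(V_{L+\frac12\alpha})$. Since $\alpha(-1)\vac\in M(1)^-\subset V_L^-$ (the $\theta$-odd eigenspace), the definition \labelcref{def:circle-g} of $a\circ_\theta v$ for $a\in V^r$ with $r\ne 0$ produces the relation
\[
  \alpha(-m-1)v\equiv -\sum_{j\ge 0}\binom{1/2}{j+1}\alpha(j-m)v\pmod{O_\theta(V_{L+\frac12\alpha})},
  \qquad m\ge 0,\ v\in V_{L+\frac12\alpha}.
\]
An induction on the total degree $n_1+\cdots+n_k$, identical in structure to the induction in \cref{lem: Heisenberg}, then reduces any spanning element to a scalar multiple of $e^{\frac12\alpha}$ or $e^{-\frac12\alpha}$ (the grading by the $\alpha$-charge of the lattice factor is preserved throughout, so the two cosets $L+\frac12\alpha$ and $L-\frac12\alpha$ do not mix). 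This shows $A_\theta(V_{L+\frac12\alpha})=\C[e^{\frac12\alpha}]+\C[e^{-\frac12\alpha}]$.

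\textbf{Step 2: the bimodule relations.} For \labelcref{6.14} I would compute the commutators $[E]\ast_\theta[e^{\pm\frac12\alpha}]-[e^{\pm\frac12\alpha}]\ast_\theta[E]$ using the left and right actions from \labelcref{eq:left-right-actions}. Since $E=e^\alpha+e^{-\alpha}\in V^0$ has $\wt E=1$, the difference of the two products collapses to the single normal-ordered term governed by $\sum_{j\ge 0}\binom{\wt E-1}{j}E_{(j)}(\cdot)$, i.e. effectively $E_{(0)}$ acting on the module, analogous to \labelcref{eq:oaob-oboa}. The key input is the action of the vertex operators $Y_{V_{L+\frac12\alpha}}(e^{\pm\alpha},z)$ on the vectors $e^{\pm\frac12\alpha}$: because $(\alpha|\tfrac12\alpha)=1$, the operator $e^{\alpha}_{(0)}$ (suitably interpreted) sends $e^{-\frac12\alpha}$ to $e^{\frac12\alpha}$ and vice versa via $e^{-\alpha}$, which yields precisely the cross-terms on the right-hand sides of \labelcref{6.14}. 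I would carry this out by expanding $Y(e^{\pm\alpha},z)e^{\pm\frac12\alpha}$ through the standard lattice vertex-operator formula, extracting the relevant residue, and reducing the result using Step 1.

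\textbf{Main obstacle.} The principal difficulty will be Step 2: correctly tracking the cocycle/normalization factors $\epsilon(\alpha,\beta)$ and the fractional powers $z^{(\alpha|\beta)}$ coming from the lattice vertex operators, together with the twisted $\delta(r)+\frac rT$ shifts built into $\circ_\theta$ and $\ast_\theta$, so that the coefficients in \labelcref{6.14} come out to exactly $1$ rather than some stray power of $2$ or sign. Establishing $[e^{\frac12\alpha}]\neq 0$ and $[e^{-\frac12\alpha}]\neq 0$ (not merely that they span a space of dimension at most two) is the subtler part of Step 1; as in the Heisenberg case this non-vanishing is not automatic from the reduction and will have to be confirmed, most cleanly by exhibiting the nonzero intertwining operators / twisted modules on which these classes act nontrivially, or by the explicit $\hat{sl_2}[\theta_2]$-action recorded in \labelcref{6.12}.
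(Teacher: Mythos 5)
Your Step 2 is essentially the paper's computation: the commutator of the left and right $\ast_\theta$-actions of $E$ collapses to $[\vo{E}{0}e^{\pm\frac{1}{2}\alpha}]$, and the zero mode of $e^{\pm\alpha}$ interchanges $e^{\frac{1}{2}\alpha}$ and $e^{-\frac{1}{2}\alpha}$, giving \labelcref{6.14}. The problem is Step 1, where there is a genuine gap. You write $V_{L+\frac{1}{2}\alpha}=M(1)\otimes(\C e^{\frac{1}{2}\alpha}\oplus\C e^{-\frac{1}{2}\alpha})$ and assert that the two charge sectors ``do not mix,'' but the coset $L+\frac{1}{2}\alpha$ consists of \emph{all} half-odd-integer multiples $\tfrac{2r+1}{2}\alpha$, $r\in\Z$, so in fact $V_{L+\frac{1}{2}\alpha}=\bigoplus_{r\in\Z}M(1,\tfrac{2r+1}{2}\alpha)$. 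The Heisenberg-style congruence obtained from $\alpha(-1)\vac\in V_L^-$ only lowers the $M(1)$-modes within a fixed charge sector; it reduces $\alpha^1(-n_1)\cdots\alpha^k(-n_k)e^{\frac{2r+1}{2}\alpha}$ to a multiple of $e^{\frac{2r+1}{2}\alpha}$ and nothing more. At that point you have only shown that $A_\theta(V_{L+\frac{1}{2}\alpha})$ is spanned by the infinitely many classes $[e^{\frac{2r+1}{2}\alpha}]$, $r\in\Z$, which is not the claimed two-dimensional spanning set.

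The missing ingredient is a second induction, on $|r|$, collapsing the higher charge sectors: one must show $e^{\frac{2r+1}{2}\alpha}\equiv c_r\,e^{\pm\frac{1}{2}\alpha}\pmod{O_\theta(V_{L+\frac{1}{2}\alpha})}$. The paper does this using the relations
\[
  \vo{E}{-m-2}v+\vo{E}{-m-1}v\equiv 0\pmod{O_\theta(V_{L+\frac{1}{2}\alpha})},\qquad m\ge 0,
\]
which come from $a\circ_\theta v$ for the $\theta$-\emph{even} vector $E=e^{\alpha}+e^{-\alpha}\in V_L^+$ with $\wt E=1$ (your proposal only ever invokes the $\circ_\theta$-relations for the odd Heisenberg generator), combined with explicit lattice vertex-operator computations of $\vo{(e^{\pm\alpha})}{-2r-2}e^{\frac{2r+1}{2}\alpha}$ and $\vo{(e^{\pm\alpha})}{-2r-1}e^{\frac{2r+1}{2}\alpha}$: the $e^{\alpha}$-term produces $e^{\frac{2r+3}{2}\alpha}$ while the $e^{-\alpha}$-term lands in the lower sector $M(1,\tfrac{2r-1}{2}\alpha)$, and the relation above trades the former for the latter. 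This charge-lowering argument is the bulk of the paper's proof and cannot be omitted. (Your closing remarks about non-vanishing of $[e^{\pm\frac{1}{2}\alpha}]$ are apt; the paper likewise defers that point to the existence of the nonzero twisted intertwining operator of \cite{ADL05}.)
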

\begin{proof}
  Since $\alpha(-1)\vac \in V_L^-$, we have a congruence formula similar to \labelcref{6.11}:
$$
    \alpha(-m-1)v\equiv -\sum_{j\ge 0} \binom{1/2}{j+1} \alpha(j-m)v\pmod{O_\theta (V_{L+\frac{1}{2}\alpha})}, 
$$
  where $v\in V_{L+\frac{1}{2}\alpha}$ and $m\ge 0$. In particular, let $m=1$, we have 
    \begin{equation}\label{6.15}
  \alpha(-1)e^{\frac{1}{2}\alpha} \equiv -(1/2) e^{\frac{1}{2}\alpha}\quad \mathrm{and}\quad   \alpha(-1)e^{-\frac{1}{2}\alpha} \equiv (1/2) e^{-\frac{1}{2}\alpha} \pmod{O_\theta (V_{L+\frac{1}{2}\alpha})}. 
  \end{equation}

  Moreover, given $r\in \Z$ and $u=\alpha^1(-n_1)\cdots \alpha^k(-n_k)e^{\frac{2r+1}{2}\alpha}\in M(1, \frac{2r+1}{2}\alpha)\subset V_{L+\frac{1}{2}\alpha}$, using a similar induction process as Lemma~\ref{lem: Heisenberg} on the degree $n_1+\cdots +n_k$ of $u$, we can show that 
    \begin{equation}\label{6.16}
u=\alpha^1(-n_1)\cdots \alpha^k(-n_k)e^{\frac{2r+1}{2}\alpha}\equiv b_{u} e^{\frac{2r+1}{2}\alpha}\pmod{O_\theta (V_{L+\frac{1}{2}\alpha})},
\end{equation}
for some constant $b_u\in \C$.   Now we use induction on $r\in \N$ to show that 
  \begin{equation}\label{6.17}
u=\alpha^1(-n_1)\cdots \alpha^k(-n_k)e^{\frac{2r+1}{2}\alpha}\equiv c_{u} e^{\frac{1}{2}\alpha}\ \  \mathrm{or}\ \ d_{u} e^{-\frac{1}{2}\alpha}\pmod{O_\theta (V_{L+\frac{1}{2}\alpha})},
  \end{equation} 
for some constants $c_{u}, d_u\in \C$, where $k,r\ge 0$, $n_1\ge \cdots \ge n_k\ge 1$, and $\alpha^1,\cdots,\alpha^k\in \h$.
  
When $r=0$, \labelcref{6.17} follows from \labelcref{6.16}. Consider the case where $r=1$. Note that $E=e^\alpha+e^{-\alpha}\in V_L^+$ and $\wt E=1$. It follows from \labelcref{def:circle-g} and \labelcref{6.2'} that
\begin{equation}\label{6.18}
\vo{E}{-m-2}v+\vo{E}{-m-1}v\equiv 0\pmod{O_\theta (V_{L+\frac{1}{2}\alpha})}, \quad m\ge 0, v\in V_{L+\frac{1}{2}\alpha}.
\end{equation} 
By the definition of lattice vertex operators in \cite{FLM} and \labelcref{6.16}, we have 
\begin{align*}
\vo{(e^\alpha)}{-2}e^{\frac{1}{2}\alpha}\equiv& e^{\frac{3}{2}\alpha},\quad \vo{(e^\alpha)}{-1} e^{\frac{1}{2}\alpha}\equiv 0\pmod{O_\theta (V_{L+\frac{1}{2}\alpha})},\\
\vo{(e^{-\alpha})}{-2}e^{\frac{1}{2}\alpha}={}&\Res_z E^-(\alpha, z) z^{-3} e^{-\frac{1}{2}\alpha}=-\frac{\alpha(-2)}{2}e^{-\frac{1}{2}\alpha}+\frac{1}{2}\alpha(-1)^2e^{-\frac{1}{2}\alpha}\\
\equiv& \la e^{-\frac{1}{2}\alpha}\pmod{O_\theta (V_{L+\frac{1}{2}\alpha})},\\
\vo{(e^{-\alpha})}{-1} e^{\frac{1}{2}\alpha}={}&\Res_z E^-(\alpha, z) z^{-2}e^{-\frac{1}{2}\alpha}= -\alpha(-1) e^{-\frac{1}{2}\alpha}\equiv -(1/2) e^{-\frac{1}{2}\alpha}\pmod{O_\theta (V_{L+\frac{1}{2}\alpha})}. 
\end{align*}
Choose $m=0$ in \labelcref{6.18} we have: 
\[\vo{E}{-2}e^{\frac{1}{2}\alpha}+\vo{E}{-1}e^{\frac{1}{2}\alpha}\equiv e^{\frac{3}{2}\alpha}+ \la e^{-\frac{1}{2}\alpha}-(1/2)e^{-\frac{1}{2}\alpha}\equiv 0\pmod{O_\theta (V_{L+\frac{1}{2}\alpha})}. \]
Hence $\alpha^1(-n_1)\cdots \alpha^k(-n_k)e^{\frac{3}{2}\alpha}\equiv b_u e^{\frac{3}{2}\alpha}\equiv  b_u(1/2-\la)e^{-\frac{1}{2}\alpha}\pmod{O_\theta (V_{L+\frac{1}{2}\alpha})}$, in view of \labelcref{6.16}. This proves \labelcref{6.17} when $r=1$ since $b_u(1/2-\la)$ is a constant. 

Now suppose $r>1$, and the conclusion holds for smaller $r$.  Let $m=2r$ in \labelcref{6.18}, by the induction hypothesis, we have
  \begin{align*}
  \vo{(e^\alpha)}{-2r-2} e^{\frac{2r+1}{2}\alpha}={}&\Res_z z^{-2r-2} E^{-}(-\alpha, z)e^{\frac{2r+3}{2}\alpha} z^{2r+1}= e^{\frac{2r+3}{2}\alpha}\\
  \vo{(e^{-\alpha})}{-2r-2} e^{\frac{2r+1}{2}\alpha}={}&\Res_z E^{-}(\alpha,z)e^{\frac{2r-1}{2}\alpha}  z^{-4r-1}\in M(1,(2r-1)\alpha/2)\\ \equiv& c_{r-1} e^{\pm\frac{1}{2}\alpha} \pmod{O_\theta (V_{L+\frac{1}{2}\alpha})},  \\
  \vo{(e^{\alpha})}{-2r-1} e^{\frac{2r+1}{2}\alpha} ={}&\Res_z z^{-2r-1} E^-(-\alpha,z) e^{\frac{2r+3}{2}\alpha} z^{2r+1} =0,\\
  \vo{(e^{-\alpha})}{-2r-1} e^{\frac{2r+1}{2}\alpha} ={}&\Res_z  E^-(\alpha,z) e^{\frac{2r-1}{2}\alpha} z^{-4r-2} \in M(1, (2r-1)/2)\\
  \equiv& c'_{r-1}e^{\pm\frac{1}{2}\alpha}\pmod{O_\theta (V_{L+\frac{1}{2}\alpha})},
  \end{align*}
  where $e^{\pm \frac{1}{2}\alpha}$ attains the same sign in the second and fourth congruence equations.
By \labelcref{6.18}, 
  \begin{align*}
  e^{\frac{2r+3}{2}\alpha}&\equiv \vo{E}{-2r-2}e^{\frac{2r+1}{2}\alpha}-c_{r-1}e^{\pm\frac{1}{2}\alpha}\equiv -\vo{E}{-2r-1} e^{\frac{2r+1}{2}\alpha}-c_{r-1}e^{\pm \frac{1}{2}\alpha}\\ &\equiv -c'_{r-1}e^{\pm\frac{1}{2}\alpha}- c_{r-1}e^{\pm\frac{1}{2}\alpha}
  \equiv p_{r} e^{\pm\frac{1}{2}\alpha} \pmod{O_\theta (V_{L+\frac{1}{2}\alpha})}. 
  \end{align*}
  where $p_{r}=-c'_{r-1}-c_{r-1}$. Now it follows from \labelcref{6.16} that
  \[\alpha^1(-n_1)\cdots \alpha^k(-n_k)e^{\frac{2r+3}{2}\alpha}\equiv q_{r} e^{\frac{2r+3}{2}\alpha}\equiv p_{r}q_{r} e^{\pm\frac{1}{2}\alpha}=c_{r} e^{\pm \frac{1}{2}\alpha}\pmod{O_\theta (V_{L+\frac{1}{2}\alpha})}.\]
  This finishes the induction step and proves \labelcref{6.17} for any $r\ge 0$. By adopting a similar induction argument, we can also prove \labelcref{6.17}for $r\in \Z_{<0}$. Since $V_{L+\frac{1}{2}\alpha}=\bigoplus_{r\in \Z} M(1, \frac{2r+1}{2}\alpha)$, by \labelcref{6.17} we have $A_{\theta}(V_{L+\frac{1}{2}\alpha})=\C [e^{\frac{1}{2} \alpha}]+\C [e^{-\frac{1}{2}\alpha}]$. Finally, by \labelcref{eq:left-right-actions}, we have 
\begin{align*}
    &[E\ast_\theta e^{\frac{1}{2}\alpha}-e^{\frac{1}{2}\alpha}\ast_\theta E]\\
    ={}&\Res_z [Y(E,z)e^{\frac{1}{2}\alpha}(1+z)^0]=[\vo{E}{0}e^{\frac{1}{2}\alpha}]=[\vo{(e^\alpha)}{0}e^{\frac{1}{2}\alpha}+\vo{(e^{-\alpha})}{0} e^{\frac{1}{2}\alpha}]=[e^{-\frac{1}{2}\alpha}].
\end{align*}
  Similarly, $[E\ast_\theta e^{-\frac{1}{2}\alpha}-e^{-\frac{1}{2}\alpha}\ast_\theta E]=[e^{\frac{1}{2}\alpha}]$. This proves \labelcref{6.14}. 
  \end{proof}

\begin{proposition}
  Let $M^1$ be the untwisted $V_L$-module $V_{L+\frac{1}{2}\alpha}$, we have 
  \begin{align}
    &\Nusion[V_{L+\frac{1}{2}\alpha}][V_L^{T_\chi}][V_L^{T_{\chi}}]=\Nusion[V_{L+\frac{1}{2}\alpha}][V_L^{T_{-\chi}}][V_L^{T_{-\chi}}]=0,\label{6.19}\\
      &\Nusion[V_{L+\frac{1}{2}\alpha}][V_L^{T_{\chi}}][V_L^{T_{-\chi}}]=\Nusion[V_{L+\frac{1}{2}\alpha}][V_L^{T_{-\chi}}][V_L^{T_{\chi}}]=1.\label{6.21}
    \end{align}
  \end{proposition}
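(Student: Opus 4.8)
The plan is to feed the data straight into the twisted Fusion Rules Theorem (\cref{thm:twisted-fusion-rules-theorem}), exactly mimicking the computation of $\Nusion[V_L][V_L^{T_{\pm\chi}}][V_L^{T_{\pm\chi}}]$ in the previous proposition. Write $A:=A_\theta(V_L)$ and let $\epsilon_2,\epsilon_3\in\{1,-1\}$, so that $M^2(0)=T_{\epsilon_2\chi}$ and $M^3(0)=T_{\epsilon_3\chi}$. Since $V_L$ is $\theta$-rational, \cref{thm:twisted-fusion-rules-theorem} together with Hom--tensor duality (using $[\upomega]$ central and the bottom levels finite-dimensional) gives
\[
  \Fusion[V_{L+\frac12\alpha}][V_L^{T_{\epsilon_2\chi}}][V_L^{T_{\epsilon_3\chi}}]
  \cong \Hom_A(A_\theta(V_{L+\frac12\alpha})\otimes_A T_{\epsilon_2\chi},\,T_{\epsilon_3\chi}).
\]
Because $[E]\ast_\theta[E]=\tfrac14[\vac]$ in $A$ (from $[E]=2[e^\alpha]$ and $[e^\alpha]\ast_\theta[e^\alpha]=4^{-(\alpha|\alpha)}[\vac]$ in the recalled structure of $A_\theta(V_L)$), the algebra $A\cong\C[y]/(y^2-\tfrac14)$ is semisimple with exactly two non-isomorphic one-dimensional simples, namely $T_\chi$ and $T_{-\chi}$, distinguished by the eigenvalue $\pm\tfrac12$ of $[E]$ (cf. \labelcref{6.12}). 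Thus it suffices to identify the left $A$-module $A_\theta(V_{L+\frac12\alpha})\otimes_A T_{\epsilon_2\chi}$ and apply Schur's lemma.

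The key step is to compute the $[E]$-action on this tensor module using only the spanning set $\{[e^{\frac12\alpha}],[e^{-\frac12\alpha}]\}$ and the commutation relations \labelcref{6.14} of \cref{lem:latice}. Transporting $[E]$ across $\otimes_A$ by the right action and using $[E]\cdot v_{\epsilon_2\chi}=\tfrac{\epsilon_2}{2}v_{\epsilon_2\chi}$, relation \labelcref{6.14} yields, for $b_{\pm}:=[e^{\pm\frac12\alpha}]\otimes v_{\epsilon_2\chi}$,
\[
  [E]\cdot b_{+}=\tfrac{\epsilon_2}{2}\,b_{+}+b_{-},
  \qquad
  [E]\cdot b_{-}=\tfrac{\epsilon_2}{2}\,b_{-}+b_{+}.
\]
Now I would impose the constraint $[E]\ast_\theta[E]=\tfrac14[\vac]$, which must hold on any $A$-module: applying $[E]^2$ to $b_{+}$ and equating with $\tfrac14 b_{+}$ forces the relation $b_{-}=-\epsilon_2\,b_{+}$, and substituting back gives $[E]\cdot b_{+}=-\tfrac{\epsilon_2}{2}\,b_{+}$. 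Hence the tensor product is at most one-dimensional and, if nonzero, is isomorphic to $T_{-\epsilon_2\chi}$. Consequently $\Hom_A(A_\theta(V_{L+\frac12\alpha})\otimes_A T_{\epsilon_2\chi},T_{\epsilon_3\chi})\cong\Hom_A(T_{-\epsilon_2\chi},T_{\epsilon_3\chi})$ is one-dimensional when $\epsilon_3=-\epsilon_2$ and zero when $\epsilon_3=\epsilon_2$, which is precisely \labelcref{6.19} and \labelcref{6.21}.

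The main obstacle is therefore the nonvanishing of $A_\theta(V_{L+\frac12\alpha})\otimes_A T_{\epsilon_2\chi}$, equivalently $b_{+}=[e^{\frac12\alpha}]\otimes v_{\epsilon_2\chi}\neq 0$: the eigenvalue argument alone only shows ``$\cong T_{-\epsilon_2\chi}$ or $0$'', and a spurious collapse to $0$ would wrongly annihilate the two opposite-sign fusion rules as well. I would settle this by showing that $[e^{\frac12\alpha}]$ and $[e^{-\frac12\alpha}]$ act nontrivially between the bottom levels $T_{\pm\chi}$, so that in particular $\dim_\C A_\theta(V_{L+\frac12\alpha})=2$ and the pairing realizing the Hom-space is nondegenerate. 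Concretely, the zero mode of $e^{\frac12\alpha}$, computed from the explicit twisted lattice vertex operators of \cite{FLM} (of the same shape as \labelcref{6.13'}), sends $v_{\epsilon_2\chi}$ to a nonzero multiple of $v_{-\epsilon_2\chi}$; verifying this single nonvanishing is the one genuinely computational point, while the rest is the routine bookkeeping of transporting $[E]$ through $\otimes_A$ and invoking $A\cong\C\times\C$.
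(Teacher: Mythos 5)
Your proposal is correct and takes essentially the same route as the paper: both arguments rest on \cref{lem:latice}, the eigenvalue formula \labelcref{6.12}, and \cref{thm:twisted-fusion-rules-theorem}, and both must import an explicitly constructed nonzero twisted intertwining operator (the paper cites Proposition 5.10 of \cite{ADL05}) to exclude the collapse $A_\theta(V_{L+\frac{1}{2}\alpha})\otimes_{A_\theta(V_L)}T_{\epsilon_2\chi}=0$. The only difference is organizational: you work directly in the tensor module and use $[E]\ast_\theta[E]=\tfrac{1}{4}[\vac]$ to force $b_-=-\epsilon_2 b_+$, while the paper tests against homomorphisms into the one-dimensional targets and derives $\lambda=\mu=0$ (same sign) resp.\ $\mu=-\lambda$ (opposite sign); the two computations are equivalent. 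Do note that your phrase about $[e^{\pm\frac{1}{2}\alpha}]$ ``acting between the bottom levels'' only acquires meaning once the zero mode of a twisted intertwining operator from $V_{L+\frac{1}{2}\alpha}$ is actually constructed, so the deferred nonvanishing check is precisely the \cite{ADL05} computation rather than an independent shortcut.
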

\begin{proof}
  We show \labelcref{6.19} first. By \cref{lem:latice} we have 
    \[A_{\theta}(V_{L+\frac{1}{2}\alpha})\o _{A_\theta (V_L)} T_{\chi}=\C ([e^{\frac{1}{2}\alpha}]\o v_\chi)+  \C ([e^{-\frac{1}{2}\alpha}]\o v_{\chi}).\]
    Given $f\in \Hom_{A_\theta(V_L)}(A_{\theta}(V_{L+\frac{1}{2}\alpha})\o _{A_\theta (V_L)} T_{\chi}, T_\chi)$, we assume that 
    \begin{equation}\label{6.22}
    f([e^{\frac{1}{2}\alpha}]\o v_\chi)=\la v_\chi,\quad f([e^{-\frac{1}{2}\alpha}]\o v_\chi)=\mu v_\chi,\quad \la,\mu \in \C.
    \end{equation}
    Recall that $o(E)v_\chi=E(0)v_\chi=(1/2) v_\chi$, see \labelcref{6.12}. By \labelcref{6.14} and \labelcref{6.22} we have: 
    \begin{align*}
    f([E]\ast([e^{\frac{1}{2}\alpha}]\o v_\chi))&=f([E\ast_\theta e^{\frac{1}{2}\alpha}-e^{\frac{1}{2}\alpha}\ast_\theta E]\o v_\chi)+ f([e^{\frac{1}{2}\alpha}]\o o(E)v_\chi)\\
    &=f([e^{-\frac{1}{2}\alpha}]\o v_\chi)+ \frac{1}{2} f([e^{\frac{1}{2}\alpha}]\o v_\chi)=(\mu+\frac{\la}{2}) v_\chi,\\
      f([E]\ast([e^{-\frac{1}{2}\alpha}]\o v_\chi))&=f([E\ast_\theta e^{-\frac{1}{2}\alpha}-e^{-\frac{1}{2}\alpha}\ast_\theta E]\o v_\chi)+ f([e^{-\frac{1}{2}\alpha}]\o o(E)v_\chi)\\
    &=f([e^{\frac{1}{2}\alpha}]\o v_\chi)+ \frac{1}{2} f([e^{-\frac{1}{2}\alpha}]\o v_\chi)=(\la+\frac{\mu}{2}) v_\chi.
    \end{align*}
    On the other hand, we have $  [E]. f([e^{\frac{1}{2}\alpha}]\o v_\chi)=o(E)\la v_\chi=(\la /2) v_\chi$ and $  [E]. f([e^{-\frac{1}{2}\alpha}]\o v_\chi)=o(E)\mu v_\chi=(\mu/2) v_\chi$. Since $f$ is an $A_{\theta}(V_L)$-homomorphism, we have 
    $(\mu+(\la/2))v_\chi=(\la/2) v_\chi$ and $(\la+(\mu/2))v_\chi=(\mu/2) v_\chi$. It follows that $\la=\mu=0$, and $f=0$. By \cref{thm:twisted-fusion-rules-theorem}, we have 
    \[\Nusion[V_{L+\frac{1}{2}\alpha}][V_L^{T_\chi}][V_L^{T_{\chi}}]=\dim \Hom_{A_\theta(V_L)}(A_{\theta}(V_{L+\frac{1}{2}\alpha})\o _{A_\theta (V_L)} T_{\chi}, T_\chi)=0.\]
    Replacing $\chi$ by $-\chi$ in the argument above, it is easy to see that $\Fusion[V_{L+\frac{1}{2}\alpha}][V_L^{T_{-\chi}}][V_L^{T_{-\chi}}]=0.$ This proves \labelcref{6.19}. Next, we show \labelcref{6.21}. Given $f\in \Hom_{A_\theta(V_L)}(A_{\theta}(V_{L+\frac{1}{2}\alpha})\o _{A_\theta (V_L)} T_{\chi}, T_{-\chi}),$ assume
    \begin{equation}
      f([e^{\frac{1}{2}\alpha}]\o v_\chi)=\la v_{-\chi},\quad f([e^{-\frac{1}{2}\alpha}]\o v_\chi)=\mu v_{-\chi},\quad \la,\mu \in \C.
    \end{equation}
    With a similar argument as above, we have $(\mu+(\la/2))v_{-\chi}=-(\la/2) v_{-\chi},$ and $(\la+(\mu/2))v_{-\chi}=-(\mu/2) v_{-\chi}$. Thus, $\mu=-\la$, and $\dim \Hom_{A_\theta(V_L)}(A_{\theta}(V_{L+\frac{1}{2}\alpha})\o _{A_\theta (V_L)} T_{\chi}, T_{-\chi})\le 1$.  On the other hand, by Proposition 5.10 in \cite{ADL05}, there exists a nonzero twisted intertwining operator 
    \begin{equation}\label{6.25}
    \tilde{\mathcal{Y}}^{\mathrm{tw}}_{\alpha/2}(\cdot, w): V_{L+\frac{1}{2}\alpha}\longrightarrow \Hom(V_{L}^{T_\chi}, V_L^{T_{\chi}^{(\alpha/2)}}),\quad \tilde{\mathcal{Y}}^{\mathrm{tw}}_{\alpha/2}(u ,w)=\mathcal{Y}_{\alpha/2}^{\mathrm{tw}}(u,w)\o \eta_{(\alpha/2)+\beta},
    \end{equation}
    where $u\in M(1, (\alpha/2)+\beta)$, $\mathcal{Y}_{\alpha/2}^{\mathrm{tw}}$ is given by \labelcref{6.13'}, $\eta_{(\alpha/2)+\beta}:T_{\chi}\longrightarrow T_{\chi}^{(\alpha/2)}$ is a linear isomorphism, and $T_{\chi}^{(\alpha/2)}=T_{-\chi}$ by \labelcref{6.12} and the construction in Section 5.3 in \cite{ADL05}. Thus we have 
       \[\Nusion[V_{L+\frac{1}{2}\alpha}][V_L^{T_\chi}][V_L^{T_{-\chi}}]=\dim \Hom_{A_\theta(V_L)}(A_{\theta}(V_{L+\frac{1}{2}\alpha})\o _{A_\theta (V_L)} T_{\chi}, T_{-\chi})=1,\]
       and the second equality in \labelcref{6.21} can be proved by a similar method.
  \end{proof}

\begin{remark}
We need to use the nonzero twisted intertwining operator $  \tilde{\mathcal{Y}}^{\mathrm{tw}}_{\alpha/2}$ in \cite{ADL05} for the proof of \labelcref{6.21} since it is not clear from \cref{lem:latice} that $A_{\theta}(V_{L+\frac{1}{2}\alpha})=\C [e^{\frac{1}{2} \alpha}]+\C [e^{-\frac{1}{2}\alpha}]$ is nonzero. Although we cannot achieve here, we believe there is an intrinsic proof of the facts that $A_{\theta}(M(1,\la))=\C [e^\la]$ is nonzero for any $\la\in \h$, and that $A_{\theta}(V_{L+\frac{1}{2}\alpha})=\C[e^{\frac{1}{2} \alpha}]\oplus \C [e^{-\frac{1}{2}\alpha}]$ is a two-dimensional vector space.
\end{remark}

\backmatter





\bmhead{Acknowledgments}

We thank Professors Yi-Zhi Huang, James Lepowsky, and Angela Gibney for their valuable discussions and suggestions.

\section*{Declarations}

No funding was received for conducting this study.
The authors have no competing interests to declare that are relevant to the content of this article.

\addcontentsline{toc}{section}{References}
\bibliography{bib}

\end{document}